\documentclass[a4paper,reqno]{amsart}

\usepackage[T1]{fontenc}
\usepackage[utf8]{inputenc}
\usepackage{lmodern}
\usepackage[english]{babel}
\usepackage[margin=2.0cm]{geometry}

\usepackage{amsmath,amsthm,amssymb,mathrsfs,yhmath,stmaryrd}

\usepackage{bbold,yfonts}
\usepackage[bbgreekl]{mathbbol}

\usepackage{mathtools}
\usepackage{amsfonts}
\usepackage{enumerate}
\usepackage{enumitem}
\usepackage{hyperref}
\usepackage{colonequals,bbm}
\DeclareRobustCommand\longtwoheadrightarrow
	{\relbar\joinrel\twoheadrightarrow}
\DeclareRobustCommand\longhookrightarrow
	{\lhook\joinrel\longrightarrow}

\usepackage{tikz}
\usetikzlibrary{cd}

\newcommand*\encircle[1]{\tikz[baseline=(char.base)]{
		\node[shape=circle,draw,inner sep=1pt] (char) {#1};}}

\def\barroman#1{\sbox0{#1}\dimen0=\dimexpr\wd0+1pt\relax
	\makebox[\dimen0]{\rlap{\vrule width\dimen0 height 0.06ex depth 0.06ex}%
	\rlap{\vrule width\dimen0 height\dimexpr\ht0+0.03ex\relax 
		depth\dimexpr-\ht0+0.09ex\relax}%
	\kern.5pt#1\kern.5pt}}

\newcommand{\N}{\mathbb{N}}

\newcommand{\R}{\mathbb{R}}

\newcommand{\Hq}{\mathbb{H}}
\newcommand{\bk}{\mathbbm{k}}
\newcommand{\Spenc}{\mathcal{S}}




\newcommand{\Mod}{\operatorname{Mod}}
\newcommand{\AMod}{{}_A\!\Mod}

\newcommand{\ModA}{\Mod_A}
\newcommand{\ModB}{\Mod_B}
\newcommand{\AModA}{{}_A\!\Mod_A}
\newcommand{\AModB}{{}_A\!\Mod_B}

\newcommand{\Diff}{\operatorname{Diff}}
\newcommand{\EDiff}{\operatorname{\check{Diff}}}

\newcommand{\NatDiff}{\mathcal{D}\!i\!f\!f}
\newcommand{\Symb}{\operatorname{Symb}}
\newcommand{\symb}{\varsigma}

\newcommand{\phat}{\widehat{p}}

\newcommand{\EJ}{\check{J}}
\newcommand{\Ej}{\check{j}}
\newcommand{\Ephat}{\check{p}}
\newcommand{\Epi}{\check{\pi}}

\newcommand{\El}{\check{l}}

\newcommand{\ESpenc}{\check{\Spenc}}

\newcommand{\PJ}{\breve{J}}

\newcommand{\Pphat}{\breve{p}}
\newcommand{\Ppi}{\breve{\pi}}

\newcommand{\Pl}{\breve{l}}

\newcommand{\PSpenc}{\breve{\Spenc}}

\newcommand{\itJ}{\dot{J}}

\newcommand{\itpi}{\dot{\pi}}

\newcommand{\itl}{\dot{l}}
\newcommand{\itSpenc}{\dot{\Spenc}}

\newcommand{\sqhiota}[1]{t^{\{{#1}\}}_d}

\numberwithin{equation}{section}

\allowdisplaybreaks

\makeatletter
\setcounter{tocdepth}{2}

\renewcommand{\tocsection}[3]{%
	\indentlabel{\@ifnotempty{#2}{\bfseries\ignorespaces#1 #2\quad}}\bfseries#3}
\renewcommand{\tocsubsection}[3]{%
	\indentlabel{\@ifnotempty{#2}{\ignorespaces#1 #2\quad}}#3}

\newcommand\@dotsep{4.5}
\def\@tocline#1#2#3#4#5#6#7{\relax
	\ifnum #1>\c@tocdepth 
	\else
	\par \addpenalty\@secpenalty\addvspace{#2}%
	\begingroup \hyphenpenalty\@M
	\@ifempty{#4}{%
		\@tempdima\csname r@tocindent\number#1\endcsname\relax
	}{%
		\@tempdima#4\relax
	}%
	\parindent\z@ \leftskip#3\relax \advance\leftskip\@tempdima\relax
	\rightskip\@pnumwidth plus1em \parfillskip-\@pnumwidth
	#5\leavevmode\hskip-\@tempdima{#6}\nobreak
	\leaders\hbox{$\m@th\mkern \@dotsep mu\hbox{.}\mkern \@dotsep mu$}\hfill
	\nobreak
	\hbox to\@pnumwidth{\@tocpagenum{\ifnum#1=1\bfseries\fi#7}}\par
	\nobreak
	\endgroup
	\fi}
\AtBeginDocument{%
\expandafter\renewcommand\csname r@tocindent0\endcsname{0pt}
}
\def\l@subsection{\@tocline{2}{0pt}{2.5pc}{5pc}{}}
\makeatother

\makeatletter
\@namedef{subjclassname@2020}{\textup{2020} Mathematics Subject Classification}
\makeatother

\newcommand{\Hom}{\operatorname{Hom}}
\newcommand{\AHom}{{}_A\!\Hom}

\newcommand{\DO}{\mathcal{D}}


\DeclareMathOperator*{\coker}{co{\ker}}



\newcommand{\End}{\operatorname{End}}

\newcommand{\Tor}{\operatorname{Tor}}

\newcommand{\Sym}{s}



\newcommand{\smooth}[1]{\mathcal{C}^{\infty}\!{({#1})}}

\newcommand{\id}{\mathrm{id}}

\newcommand{\im}{\mathrm{Im}}

\newcommand{\ev}{\operatorname{ev}}


\newcommand{\trunc}[1]{\Lbag\!{#1}\!\Rbag}

\newcommand{\starhat}{\mathbin{\hat{\star}}} 
\newcommand{\circhat}{\mathbin{\hat{\circ}}} 


\renewcommand{\Re}{\operatorname{Re}}

\theoremstyle{definition}

\newtheorem{defi}{Definition}[section]
\newtheorem{eg}[defi]{Example}

\theoremstyle{plain}

\newtheorem{theo}[defi]{Theorem}
\newtheorem{prop}[defi]{Proposition}
\newtheorem{cor}[defi]{Corollary}
\newtheorem{lemma}[defi]{Lemma}

\newtheorem*{theo*}{Theorem}
\newtheorem*{prop*}{Proposition}
\newtheorem*{cor*}{Corollary}
\newtheorem*{lemma*}{Lemma}

\theoremstyle{remark}
\newtheorem{rmk}[defi]{Remark}

\newcommand{\jetsstorderwrtd}{Proposition~4.6, p.~15}
\newcommand{\jetscorspencerdeltajes}{Corollary~8.31, p.~53}
\newcommand{\jetsrmkrhodo}{Remark~2.10, p.~8}
\newcommand{\jetsdefnjet}{Definition~8.1, p.~41}
\newcommand{\jetsdefpiiotan}{Definition~8.8, p.~44}
\newcommand{\jetsdefrho}{(2.13), p.~8}
\newcommand{\jetsdefsesqui}{Definition~8.21, p.~48}
\newcommand{\jetsdefshj}{Definition~5.22, p.~22}
\newcommand{\jetsdefisesquisequence}{Definition~8.23, p.~48}
\newcommand{\jetseqDHI}{(5.47), p.~24}
\newcommand{\jetseqdecsjn}{(5.53), p.~25}
\newcommand{\jetseqdefdeltahk}{(6.30), p.~34}
\newcommand{\jetseqdefinitionNdE}{(2.19), p.~9}
\newcommand{\jetseqiotajn}{(8.2), p.~42}
\newcommand{\jetseqiotaprimestuff}{(5.46), p.~24}
\newcommand{\jetseqDHnaturalDO}{(7.31), p.~40}
\newcommand{\jetseqnhpink}{(5.2), p.~18}
\newcommand{\jetseqrhon}{(2.30), p.~10}
\newcommand{\jetslemmaecdcurvaturelinear}{Lemma~6.9, p.~30}
\newcommand{\jetslemmaextwedge}{Lemma~8.27, p.~51}
\newcommand{\jetslemmaholprol}{Lemma~8.13, p.~45}
\newcommand{\jetslemmaimageD}{Lemma~8.26, p.~50}
\newcommand{\jetsproponejses}{Proposition~2.19, p.~10}
\newcommand{\jetspropconnexionsplits}{Proposition~4.10, p.~17}
\newcommand{\jetspropdifferentialoperatorcomposition}{Proposition~10.3, p.~58}
\newcommand{\jetspropfunctorialtwojetseq}{Proposition~7.15, p.~41}
\newcommand{\jetsprophjc}{Proposition~8.11, p.~44}
\newcommand{\jetspropholinsemi}{Proposition~8.18, p.~46}
\newcommand{\jetsproplowspencercohom}{Proposition~6.24, p.~35}
\newcommand{\jetspropoperatorsalsohigherorder}{Proposition~10.2, p.~58}
\newcommand{\jetspropsemiholcpxfact}{Proposition~8.19, p.~47}
\newcommand{\jetsproptensorcomparison}{Proposition~8.7, p.~43}
\newcommand{\jetsrmkholprolpi}{Remark~8.15, p.~45}
\newcommand{\jetsrmkiotaholinj}{Remark~8.3, p.~42}
\newcommand{\jetsrmklowsemihol}{Remark~5.23, p.~22}
\newcommand{\jetsrmktensorNddiffop}{Remark~4.7, p.~16}
\newcommand{\jetsrmkweakOthreeflat}{Remark~8.29, p.~52}
\newcommand{\jetstheosjchar}{Theorem~5.36.(ii), p.~24}
\newcommand{\jetssdifferentialoperators}{§10}
\newcommand{\jetsssSpencer}{§6.3}
\newcommand{\jetssssSplitting}{§2.2.1, p.~8}
\newcommand{\jetssonejetfunctor}{§2}
\newcommand{\jetsssfunctorialityoftwojet}{§7.3}
\newcommand{\jetsssquaternions}{§10.2}
\newcommand{\jetsdiagdefiiotand}{(8.16), p.~44}
\newcommand{\jetsdefnonholjetfunctor}{Definition~5.1, p.~18}
\newcommand{\jetslemmaedhinkerspencer}{Lemma~8.28, p.~51}
\newcommand{\jetstheohigherwolves}{Theorem~8.30, p.~52}
\newcommand{\jetsdefiSterminal}{Definition~4.14, p.~17}
\newcommand{\jetsrmkproldop}{Remark~2.20, p.~10}
\newcommand{\jetsrmkLksecondorderDO}{Remark~10.16, p.~60}
\newcommand{\jetseqquaternionpartialderivatives}{(10.17), p.~60}
\newcommand{\jetseqquaternionLaplacian}{(10.21), p.~61}
\newcommand{\symbolsssjetmodulesrepresentingobjects}{§2.1}
\newcommand{\symbolssselementaljets}{§3.1}
\newcommand{\symbolsssrepresentabilitysymbols}{§4.1}
\newcommand{\symbolscorsymmformsNN}{Corollary~2.10, p.~6}
\newcommand{\symbolsdefelementalnjetfunctor}{Definition~3.1, p.~10}
\newcommand{\symbolsdefphatn}{(2.1), p.~5}
\newcommand{\symbolsdefsymgenprolrelations}{Definition~2.8, p.~6}
\newcommand{\symbolseqskelprimitivecompare}{(3.43), p.~18}
\newcommand{\symbolslemmaelementalsmm}{Lemma~3.22, p.~15}
\newcommand{\symbolslemmaelementalsmmtwo}{Lemma~3.22.(ii), p.~15}
\newcommand{\symbolslemmaskelprimitivecompare}{Lemma~3.29, p.~18}
\newcommand{\symbolsdefsymbolquotientdef}{Definition~4.1, p.~21}
\newcommand{\symbolslemmaprimitivesmm}{Lemma~3.34, p.~21}
\newcommand{\symbolsdefrestrictionsymbol}{Definition~4.6, p.~22}
\newcommand{\symbolsremidentifyrestrictionsymbol}{Remark~4.7, p.~22}
\newcommand{\symbolseqphatnepimonocomponents}{(3.1), p.~10}
\newcommand{\symbolspropcharacterizationsymmgeneratedbyprol}{Proposition~4.4, p.~22}
\newcommand{\symbolspropcriterionWDO}{Proposition~3.18, p.~14}
\newcommand{\symbolspropcriterionWDOone}{Proposition~3.18.(i), p.~14}
\newcommand{\symbolspropcriterionWDOthree}{Proposition~3.18.(iii), p.~14}
\newcommand{\symbolspropelementaljetproperties}{Proposition~3.2, p.~10}
\newcommand{\symbolspropelementaljetpropertiesone}{Proposition~3.2.(i), p.~10}
\newcommand{\symbolspropelementaljetpropertiestwo}{Proposition~3.2.(ii), p.~10}

\newcommand{\symbolspropelementaljetpropertiesfour}{Proposition~3.2.(iv), p.~10}
\newcommand{\symbolspropsymbolsrepresentationwelldefined}{Proposition~4.5, p.~22}
\newcommand{\symbolspropsymbolsofconnections}{Proposition~4.11, p.~25}
\newcommand{\symbolspropquantisation}{Proposition~4.22, p.~28}
\newcommand{\symbolspropsymbolcomposition}{Proposition~4.18, p.~27}
\newcommand{\symbolsdefisymbolalgebra}{Definition~4.21, p.~28}

\begin{document}

\title{Higher Order Connections in Noncommutative Geometry}
\author{Keegan J.~Flood, Mauro Mantegazza, Henrik Winther}
\address{Faculty of Mathematics and Computer Science\\
	UniDistance Suisse\\
	Schinerstrasse 18\\
	3900 Brig\\
	Switzerland}
	\email{keegan.flood@unidistance.ch}
\address{Department of Mathematics and Physics\\
	Charles University\\
	Sokolovsk\'{a} 49/83\\
	186 75 Prague 8\\
	Czech Republic}
	\email{mauro.mantegazza.uni@gmail.com}
\address{Department of Mathematics and Statistics\\
	UiT - The Arctic University of Norway\\
	Hansine Hansens veg 18\\
	N-9019 Tromsø\\
	Norway}
	\email{henrik.winther@uit.no}

\subjclass[2020]{Primary 58A20, 58B34, 53D55, 16S32, 81R60; Secondary 81S10, 16E45, 16S80}


\begin{abstract}
We prove that, in the setting of noncommutative differential geometry, a system of higher order connections is equivalent to a suitable generalization of the notion of phase space quantization (in the sense of Moyal star products on the symbol algebra).
Moreover, we show that higher order connections are equivalent to (ordinary) connections on jet modules.
This involves introducing the notion of natural linear differential operator, as well as an important family of examples of such operators, namely the Spencer operators, generalizing their corresponding classical analogues.
Spencer operators form the building blocks of this theory by providing a method of converting between the different manifestations of higher order connections.
A system of such higher order connections then gives a quantization, by which we mean a splitting of the quotient projection that defines symbols as classes of differential operators up to differential operators of lower order.
This yields a notion of total symbol and of star product, the latter of which corresponds, when restricted to the classical setting, to phase space quantization in the context of quantum mechanics.
In this interpretation, we allow the analogues of the position coordinates to form a possibly noncommutative algebra.
\end{abstract}

\maketitle

\tableofcontents

\section{Introduction}
\label{s:introduction}
Naturality in differential geometry can be seen from two distinct perspectives.
The first is more traditional: invariance of quantities under change of coordinates or diffeomorphisms.
The second point of view comes from category theory, utilizing functors and natural transformations.
The relationship between these two points of view is treated in the book \cite{NaturalOperations}.
In particular, they take the categorical picture to be the more fundamental one, and it is shown that classically, the diffeomorphism-equivariant picture emerges from it.
The present situation in noncommutative differential geometry is such that we have a much better grasp on the categorical picture.
Hence, that shall also be our starting point.
We develop natural differential operators from that perspective in §\ref{s:Natural_differential_operators}.
This subject is of independent interest, but it also provides essential constraints on any prospective theory of noncommutative diffeomorphisms.

Having laid the groundwork for natural operators, we then generalize a family of such operators, called Spencer operators, to our noncommutative setting in §\ref{s:Spencer_operators_and_the_Spencer_complex}.
This family comprises some of the most central examples of natural differential operators in the classical context.

The Spencer operators emerge from the work of D.~Spencer (and his collaborators) on the geometric approach to partial differential operators (cf.\ \cite{Spencer,goldschmidt1967existence}).
These operators were originally developed to give a canonical method of reducing the formal study of systems of partial differential equations to the study of first-order systems.
The Spencer operators are thus commonly used as a geometric tool to determine consistency and involutivity of systems of partial differential equations.

Partial differential equations and jets are two sides of the same coin, and thus it comes as no surprise that the Spencer operators can be used to give an elegant characterization of the holonomic jets amongst semiholonomic and nonholonomic jets (on manifolds or sections of vector bundles).
In the present paper, this property is carried through into the noncommutative setting, cf.\ Lemma \ref{lemma:sesquiholonomic_Spencer}.

On the other hand, the Spencer operator approach also found influential applications in the theory of Lie groups, Lie pseudogroups, $G$-structures, and finite-type geometries.
For example, one can give an account of the space of $G$-adapted connections, and a separation of their curvature and torsion into those components that are intrinsic to the structure, and those that are incidental or arbitrary.
This means that Spencer operators have several other important applications to mathematical physics, which are not immediately linked to each other conceptually, for example, variational calculus in continuum mechanics involving both forces and couplings; and field-matter couplings in gauge theory, to mention a few (cf.\ \cite{Pommaret}).

In the formalism of variational calculus, the variational functionals themselves are typically (integrals of) functions on a jet bundle over some fibered manifold \cite{variationsjets}.
To formulate variational theory in this way, however, it is most convenient to be working in the setting where we have a splitting (in the category of vector bundles) of the jet projection $\pi^{n,n-1}\colon J^n \mathcal{E} \rightarrow J^{n-1}\mathcal{E}$, which is to say an identification of the $(n-1)$-jet bundle $J^{n-1}\mathcal{E}$ with a subbundle of $J^n \mathcal{E}$ (rather than just a quotient) (cf.\ \cite{anderson1992introduction}).
Geometrically, a splitting $C^n\colon J^{n-1} \mathcal{E}\hookrightarrow J^n \mathcal{E}$ encodes a \textit{higher order connection} (\emph{$n$-connection}), which is also an interesting geometric object, and has been subject of study for its own sake (cf.\ \cite{eastwood2009higher, HolomorphicHigherCon, libermann1964, Ehresmannconnectionsdordresup, virsik1967non, yuen1971higher}).
For example, an $n$-connection can be seen as inducing, and being induced by, a vector bundle connection (satisfying some properties) on the $(n-1)$-jet bundle.
The tool for switching between these two points of view on higher order connections turns out to be precisely the aforementioned Spencer operators, cf.\ Theorem \ref{theo:higher_connections_are_connections}.

There is a natural notion of curvature for a given higher order connection $C^n$, generalizing that of curvature for a connection.
This curvature can be seen as an obstruction to the integrability of $C^n$, in that the map $J^1 (C^n)\circ C^n$ takes values in holonomic jets precisely when the curvature vanishes (cf.\ \cite{libermann1997}).
In this spirit, we generalize this notion to our setting, cf.\ §\ref{ss:Curvature_of_higher_connections} and §\ref{ss:Relation_with_connections_on_jet_modules}.

In differential geometry, splittings of the jet sequence are also closely related to splittings of the corresponding algebras of linear differential operators of finite order into homogeneous components.
This is equivalent to finding a way to associate, to a given (principal) symbol of degree $n$, a differential operator of order $n$ with that symbol.
In other words, it amounts to finding a section of the symbol projection mapping differential operators to their symbol (cf.\ \cite[Theorem~6, p.~89]{PalaisVectorBundles} and \cite[Section~3, p.~235]{Lychagin1999}).
We will refer to such a map as an $n$-\emph{quantization}.
Similarly, by \emph{full quantization} we mean a direct sum of $n$-quantizations for all $n$, which is to say, a map from the symbol algebra to the algebra of differential operators, which, when restricted to any given order $n$, is a section of the corresponding symbol map (note that the classical quantization terminology is not standardized).

A full quantization yields a notion of total symbol for differential operators.
In the absence of this structure, the symbol of a differential operator (also called principal symbol, to distinguish it from the total symbol), only captures the term of leading order.
In the presence of a quantization, one obtains an element of the symbol algebra, called the \emph{total symbol}, that also captures the lower order terms.
Essentially, in a local chart, one may identify a linear differential operator with a (multivariate) polynomial with coefficients in $\smooth{\R^n}$.
A differential operator can be interpreted as the Fourier multiplier of the polynomial corresponding to its total symbol, cf.\ \cite[Example~3.1, p.~27]{microlocaldiffop}.
We generalize the theory of total symbols to our noncommutative setting in §\ref{s:Total_symbols}.

In a broad sense, the term quantization refers to any procedure which generalizes the “canonical quantization”.
The canonical quantization consists of replacing commuting position $q_i$ and momentum $p^i$ variables on $\R^{2n} \cong T^\ast \R^n$, with new operator variables
\begin{align}
	\label{eq:CanonicalQuantization}
	\hat q_i = L_{q_i},
	&\hfill&
	\hat p^i = -i\hbar \partial_{q_i},
\end{align}
which famously satisfy the canonical commutation relations $[\hat q_i, \hat p^j] = i \hbar \delta^j_i L_1$, where $L$ denotes the left multiplication operator.
These differential operators are then represented as unbounded linear operators on the Hilbert space $L^2(\R^n)$.
See \cite{quantizationguide} for many examples of quantizations in this broader sense.
We will note that the most common pattern is to promote a commutative algebra of functions (on a phase space) to an algebra of differential operators, and then to represent this as an algebra of unbounded linear operators on a Hilbert space.

The generalization of canonical quantization from $\R^n$ to a smooth manifold $M$ was made by I.~R.~Segal (cf.\ \cite{SegalQuantization}).
Here, functions on the cotangent space $T^\ast M$ are turned into differential operators on $\smooth{M}$.
Then one can equip $M$ with a measure, and consider the linear differential operators arising in this way as unbounded operators on the Hilbert space $L^2(M)$ (two different measures give equivalent results up to unitary transformations, cf.\ \cite[p.~474]{SegalQuantization}).

The first part of the former framework is developed in geometric language in \cite[Section~4.1]{Lychagin1999}.
Here, the algebra of smooth functions over $T^\ast M$ is studied via the dense subalgebra of fiberwise polynomial functions, and the symbol algebra of differential operators on $\smooth{M}$ is identified with this subalgebra.
Then, the mapping from functions to differential operators is implemented in terms of splittings of the symbol map.
This construction is further extended to include \emph{quantizations of mechanical systems equipped with inner structure}.
This involves a vector bundle $\mathcal{E}$, where the r\^ole of $\smooth{M}$ is played by $\Gamma(M,\mathcal{E})$, and that of the algebra $\smooth{T^\ast M}$ is played by $\smooth{T^\ast M, \End(\mathcal{E},\mathcal{E})}$ and is studied via the dense subalgebra corresponding to symbols of differential operators $\Gamma(M,\mathcal{E})\to \Gamma(M,\mathcal{E})$.

Our approach to quantization in the noncommutative setting proceeds in the spirit of the latter work, which we develop in §\ref{s:Quantization}.
Since, in noncommutative geometry, there is no clear analogue of smooth functions on the cotangent space, we will follow a similar approach and consider the symbol algebra $\Symb^\bullet_d(E,E)$ for a left $A$-module $E$ instead.
The notion of quantization thus generalizes naturally to our setting by considering sections of the corresponding symbol projections.
The ingredients used in \cite[Section~4.1]{Lychagin1999} turn out to be connections on vector bundles, (tensor) connections on symmetric forms with values in bundles, and, implicitly, the symmetric tensor product.
We develop noncommutative analogues of the aforementioned ingredients, and we show that they are sufficient to replicate this type of quantization procedure, cf.\ Corollary \ref{cor:full_quantisation} and Corollary \ref{cor:generalization_of_classical_quantization}.

Classically, the functions on the cotangent bundle that correspond to symbols of differential operators as in \cite{Lychagin1999} are necessarily polynomial in the fiber variables (cf.\ \cite[§5.4]{alekseevskij28geometry} and \cite[Proposition~10.12]{nestruev2020smooth}), unless pseudodifferential operators are also admitted, which would take us into the domain of microlocal and semiclassical analysis (cf.\ \cite{semiclassical, microlocaldiffop}), and far outside the scope of the present paper.
We note that quantizations in the sense of mapping symbols to (Fourier integral) operators have been developed also in this field (cf.\ \cite[Chapter~4, p.~27]{Hintz}, \cite[Chapter~4]{semiclassical}).

Although we can generalize the first step in the quantization procedure, of promoting functions to differential operators, the second step, of promoting differential operators to unbounded operators on a Hilbert space, would necessitate departing from our setting.
This is because, as we are working with generic algebras over commutative rings, there is no apparent analogue of the notion of Hilbert space available.
To overcome this, in §\ref{ss:starproduct}, we construct a noncommutative generalization of an alternative classical notion, called phase space quantization, which arose from the works \cite{groenewold1946, moyal1949quantum}.
In the modern classical formulation (absent from the two original works) one equips the algebra of functions on the phase space with a new product, such that the algebra generated by the relevant classical observables (e.g.\ $q_i$ and $p^i$), satisfies the appropriate commutation relations (cf.\ \cite{originalstarproduct}).
This product is often denoted by $\star$, and is called the \emph{star} (or sometimes \emph{Moyal}) \emph{product}.
This enters the larger realm of deformation quantization (cf.\ e.g.\ \cite{sternheimer1998deformation}).

In summary, this work contributes to the theoretical foundations of noncommutative differential geometry by providing tools with applications in both mathematics and physics.
We point out a link between the geometric theory of higher order connections and the theory of quantization, extending these ideas to the noncommutative setting.
This relation is not only relevant to noncommutative geometry but also of interest to those studying the quantization of classical geometric structures.

\subsection{Notation and terminology}
In this article we work with the data of an an exterior algebra $\Omega^\bullet_d$ over an associative unital $\bk$-algebra $A$, where $\bk$ is a unital commutative ring (cf.\ \cite[Definition~6.1, p.~29]{FMW}).
We will be making use of some key notions which were developed in previous work \cite{FMW,Symbol}.
In particular the notion of nonholonomic, semiholonomic, sesquiholonomic, and holonomic jet functors, as well as their associated natural transformations, including jet projections and prolongations, can be found in \cite{FMW}.
For the notions of elemental and primitive jet functors see \cite{Symbol}.
The notion of linear differential operator of order at most $n$ and their jet lifts can be found in \cite{FMW}, and their elemental and primitive counterpart, as well as the notion of symbols and restriction symbols for all types of differential operators can be found in \cite{Symbol}.

\subsection*{Acknowledgments}
The authors thank Shahn Majid for useful discussions on split jet bundles, and Jan Slov\'{a}k for useful discussions on natural operators in differential geometry.
K.~J.~F.~ was partially supported by the DFG priority program {\it Geometry at Infinity} SPP 2026: ME 4899/1-2.
M.~M.~\ was supported by the GA\v{C}R/NCN grant \emph{Quantum Geometric Representation Theory and Non-commutative Fibrations} 24-11728K, HORIZON-MSCA-2022-SE-01-01 CaLIGOLA, and \emph{SCREAM: Symmetry, Curvature Reduction, and EquivAlence Methods} funded by the Norwegian Financial Mechanism 2014-2021 (project registration number 2019/34/H/ST1/00636).
H.~W.~\ was partially supported by the UiT Aurora project MASCOT.
This article/publication is based upon work from COST Action CA21109, supported by COST (European Cooperation in Science and Technology).

\section{Natural differential operators}\label{s:Natural_differential_operators}
The topic of natural operators is a central one in differential geometry.
Classically, these are natural maps between sections of natural bundles over manifolds.
In particular, they comprise the diffeomorphism invariant operations that one has available on a smooth manifold.
Giving an account of the theory of natural operators is one of the aims of the classical book \cite{NaturalOperations}.
In particular, there it is shown that local natural operators are locally differential operators, hence, natural differential operators.
The goal of this section is to generalize this categorical picture to our noncommutative setting.
We will define natural differential operators, where differential operator is meant in the sense of \cite[\jetssdifferentialoperators]{FMW}.
\begin{defi}\label{def:natural_DO}
Let $F,G\colon \AMod\to \AMod$, and let $U\colon \AMod\to \Mod$ be the canonical forgetful functor.
Let $\Delta\colon U F\to U G$ be a natural transformation of functors $\AMod\to \Mod$.
We say that $\Delta$ is a \emph{natural linear (holonomic) differential operator} of order at most $n$ in $\AMod$ if there exists a natural transformation $\widetilde{\Delta}\colon J^n_d F\to G$ of functors $\AMod\to \AMod$, called \emph{natural lift} of $\Delta$ to $J^n_d F$, such that the following diagram of natural transformations of functors $\AMod\to \Mod$ commutes.
\begin{equation}
\begin{tikzcd}
U J^n F\ar[dr,dashed,"U(\widetilde{\Delta})"]\\
U F\ar[u,"j^n_{d,F}"]\ar[r,"\Delta"]&U G
\end{tikzcd}
\end{equation}
When $n$ is minimal, we say that $\Delta$ is a \emph{natural (holonomic) differential operator of order $n$}.
If $\Delta$ is a differential operator of order at most $n$ for some $n$, then we say that it is a \emph{differential operator of finite order}.

An analogous definition can be given for natural nonholonomic, semiholonomic, sesquiholonomic, elemental, primitive differential operators of order at most $n$, by choosing the respective jet functor and prolongation in place of the holonomic one.
\end{defi}
Notice that in Definition \ref{def:natural_DO}, the natural transformation $j^n_{d,F\cdot}$ is denoted as a natural transformation from $U$ to $UJ^n_d$, which is what is done implicitly by seeing it as a natural transformation of $\bk$-linear maps.
\begin{eg}\label{eg:prolongation_natural_DO}
The holonomic (resp.\ nonholonomic, semiholonomic, sesquiholonomic, elemental, primitive) $n$-jet prolongation $j^n_d\colon \id_{\AMod}\to J^n_d$ is a natural holonomic (resp.\ nonholonomic, semiholonomic, sesquiholonomic, elemental, primitive) differential operator of order at most $n$.
The lift is the identity natural transformation at $J^n_d$, that is the natural transformation $\id_{J^n_d}\colon J^n_d\to J^n_d$ of functors $\AMod\to\AMod$ with the identity on $J^n_d E$ in each component $E$.
\end{eg}
We can characterize natural differential operators as follows
\begin{prop}
\label{prop:characterisation_natural_DOs}
Every component of a natural (holonomic, nonholonomic, semiholonomic, sesquiholonomic, elemental, primitive) differential operator of order at most $n$ is a differential operator of order at most $n$.

Consider now the functors $F,G\colon \AMod\to \AMod$, and let $U\colon \AMod\to \Mod$ be the canonical forgetful functor.
Let $\Delta\colon UF\to UG$ be a natural transformation between the functors $UF,UG\colon\AMod\to \Mod$.
\begin{enumerate}
\item\label{prop:characterisation_natural_DOs:1} If for all $E$ in $\AMod$, $\Delta_E$ is an elemental (resp.\ primitive) differential operator of order at most $n$, then $\Delta$ is a natural elemental (resp.\ primitive) differential operator of order at most $n$.
\item\label{prop:characterisation_natural_DOs:2} If $J^n_dF=\EJ^n_dF$ (where $\EJ^n_d$ denotes the elemental jets functor, cf.\ \cite{Symbol}), and for all $E$ in $\AMod$, $\Delta_E$ is a holonomic differential operator of order at most $n$, then $\Delta$ is a natural holonomic differential operator of order at most $n$.
\end{enumerate}
\end{prop}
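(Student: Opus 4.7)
The plan is to prove the three assertions in turn, with the bulk of the work being a uniqueness-driven naturality argument. The first assertion (every component is a DO of the matching type) is immediate from the definition: if $\widetilde{\Delta}\colon J^n_d F \to G$ is a natural holonomic lift of $\Delta$, then at each $E\in\AMod$ the equality $\Delta_E = U(\widetilde{\Delta}_E)\circ j^n_{d,FE}$ exhibits $\Delta_E$ as a holonomic DO of order at most $n$ with lift $\widetilde{\Delta}_E$. The same argument handles the remaining five variants verbatim, by substituting the corresponding jet functor and jet lift.

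For part~\ref{prop:characterisation_natural_DOs:1}, I would construct the lift componentwise and then verify naturality using the uniqueness of elemental (resp.\ primitive) lifts. By the characterization of elemental DOs in \cite{Symbol}, each $\Delta_E$ admits a \emph{unique} $A$-linear map $\widetilde{\Delta}_E\colon \EJ^n_d FE \to GE$ with $\Delta_E = U(\widetilde{\Delta}_E)\circ \Ej^n_{d,FE}$. To verify that $\{\widetilde{\Delta}_E\}_E$ is natural, fix an $A$-linear $f\colon E\to E'$: both $\widetilde{\Delta}_{E'} \circ \EJ^n_d F(f)$ and $G(f)\circ \widetilde{\Delta}_E$ are $A$-linear maps $\EJ^n_d FE\to GE'$. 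Precomposing each with $\Ej^n_{d,FE}$, the first reduces to $\Delta_{E'}\circ F(f)$ by naturality of $\Ej^n_d$, while the second reduces to the same expression by naturality of $\Delta$ as a transformation $UF\to UG$. Both are therefore $A$-linear lifts of the elemental DO $\Delta_{E'}\circ F(f)$, hence equal by uniqueness. The primitive case is verbatim with $\EJ^n_d, \Ej^n_d$ replaced by $\PJ^n_d, \Pj^n_d$.

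For part~\ref{prop:characterisation_natural_DOs:2}, the hypothesis $J^n_d F = \EJ^n_d F$ identifies the holonomic jet lift at $F$ with the elemental jet lift, so any holonomic lift of $\Delta_E$ is automatically an elemental lift, and hence unique by the argument just given. The naturality check from part~\ref{prop:characterisation_natural_DOs:1} then transfers verbatim, producing the required natural transformation $\widetilde{\Delta}\colon J^n_d F\to G$. The principal obstacle in the overall proof is securing the uniqueness of elemental and primitive lifts; if this is not immediate from the definitions in \cite{Symbol}, it should follow from the interpretation of $\EJ^n_d$ and $\PJ^n_d$ as representing objects for the respective functors of DOs, after which everything else is a routine diagram chase.
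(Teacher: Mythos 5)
Your proposal is correct and follows essentially the same route as the paper: both rest on the fact that elemental/primitive (and, under the hypothesis of (ii), holonomic) jet modules are $A$-linearly generated by prolongations, which the paper uses to check the naturality square directly on elements $\sum_i a_i j^n_{d,FE}(e_i)$ and which you equivalently package as uniqueness of $A$-linear lifts, concluding that the two composites around the square agree because both lift $\Delta_{E'}\circ F(f)$. The first assertion and the reduction of (ii) to the elemental case are handled exactly as in the paper.
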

\begin{proof}
The first statement follows from the definition of differential operator by taking $\widetilde{\Delta}_E$ as lift of $\Delta_E$.

The remaining points follow from the fact that elemental and primitive jets, and in the conditions of \eqref{prop:characterisation_natural_DOs:2}, also holonomic jets, are $A$-linear combinations of prolongations of elements in $E$.
When this happens, we have that the collection $\widetilde{\Delta}$ having as component in $E$ the (unique) lift $\widetilde{\Delta}_E$ of $\Delta_E$, is a natural transformation.
In fact, given a map $\varphi\colon E_1\to E_2$, we have to prove that the following square commutes
\begin{equation}
\begin{tikzcd}
J^n_d F E_1\ar[d,"J^n_d F(\varphi)"']\ar[r,"\widetilde{\Delta}_{E_1}"]& G E_1\ar[d,"G(\varphi)"]\\
J^n_d F E_2\ar[r,"\widetilde{\Delta}_{E_2}"]& G E_2\
\end{tikzcd}
\end{equation}
Given the stated hypotheses, it is sufficient to verify the equality on elements of the form $\sum_i a_i j^n_{d,FE_1}(e_i)$ for $a_i\in A$ and $e_i\in FE_1$.
By $A$-linearity of $G(\varphi)$, $J^n_d F(\varphi)$, $\widetilde{\Delta}_{E_1}$, and $\widetilde{\Delta}_{E_2}$, and by naturality of $j^n_d$ and $\Delta$, we have the following.
\begin{equation}
\begin{split}
G(\varphi)\circ \widetilde{\Delta}_{E_1} \left( \sum_i a_i j^n_{d,FE_1}(e_i) \right)
&=\sum_i a_i G(\varphi)\circ \widetilde{\Delta}_{E_1} j^n_{d,FE_1}(e_i)\\
&=\sum_i a_i G(\varphi)\circ \Delta_{E_1} (e_i)\\
&=\sum_i a_i \Delta_{E_2}\circ J^n_d F(\varphi) (e_i)\\
&=\sum_i a_i \widetilde{\Delta}_{E_2}\circ j^n_{d, J^n_d FE_2}\circ J^n_d F(\varphi) (e_i)\\
&=\sum_i a_i \widetilde{\Delta}_{E_2}\circ J^n_d F(\varphi) \circ j^n_{d, FE_1} (e_i)\\
&=\widetilde{\Delta}_{E_2}\circ J^n_d F(\varphi) \left( \sum_i a_i j^n_{d, FE_1} (e_i)\right)
\end{split}
\end{equation}
This proves the desired commutativity, and the naturality of $\widetilde{\Delta}$.
\end{proof}
An immediate consequence is the following result.
\begin{cor}
\label{cor:natDO_order_0,1}\
\begin{enumerate}
\item\label{cor:natDO_order_0,1:0} Natural linear (holonomic, nonholonomic, semiholonomic, sesquiholonomic, elemental, primitive) differential operators $F\to G$ of order $0$ are precisely $A$-linear natural transformations, i.e.\ natural transformations $F\to G$ of functors $\AMod\to \AMod$.
\item\label{cor:natDO_order_0,1:1} Natural linear (holonomic, nonholonomic, semiholonomic, sesquiholonomic, elemental, primitive) differential operators $F\to G$ of order at most $1$ are precisely natural transformations where each component is a differential operator of order at most $1$.
\end{enumerate}
\end{cor}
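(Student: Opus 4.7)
The plan is to reduce both parts to Proposition \ref{prop:characterisation_natural_DOs}, together with standard identifications of low-order differential operators with algebraic data. The forward direction in each case — that a natural differential operator of order at most $n$ has components which are differential operators of order at most $n$ — will follow directly from the first assertion of that proposition, so I only need to address the two converse implications.

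For part (\ref{cor:natDO_order_0,1:0}), the first step is to observe that a differential operator of order at most $0$ is precisely an $A$-linear map: by definition its lift must factor through $J^0_d F E$, which equals $F E$ (with jet projection the identity) in every jet flavour, so the lift coincides with $\Delta_E$ itself and is therefore $A$-linear. Combined with the forward direction this gives the $A$-linearity of every component. For the reverse implication, given an $A$-linear natural transformation $\Delta : F \to G$ of functors $\AMod \to \AMod$, I would simply take $\widetilde{\Delta} := \Delta$; the defining commutative square then becomes an identity under $J^0_d = \id$, and naturality of $\widetilde{\Delta}$ as a transformation of functors $\AMod \to \AMod$ is automatic.

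For the converse of part (\ref{cor:natDO_order_0,1:1}), assume each component $\Delta_E$ is a differential operator of order at most $1$. In the elemental and primitive cases, Proposition \ref{prop:characterisation_natural_DOs}.(\ref{prop:characterisation_natural_DOs:1}) directly supplies a natural lift, with no extra work. For the holonomic, nonholonomic, semiholonomic, and sesquiholonomic variants, I would invoke the coincidence of all jet flavours at first order — equivalently, the fact that $J^1_d F E$ is generated as a left $A$-module by the prolongations $j^1_{d, F E}(e)$, which is the driving property in the proof of Proposition \ref{prop:characterisation_natural_DOs}. This makes the identification $J^1_d F = \EJ^1_d F$ available and puts me in position to apply Proposition \ref{prop:characterisation_natural_DOs}.(\ref{prop:characterisation_natural_DOs:2}) to obtain the desired natural lift. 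The only step requiring genuine verification, rather than bookkeeping, is this order-$1$ coincidence among the six jet functors; everything else is naturality chasing that has already been done once in the proof of Proposition \ref{prop:characterisation_natural_DOs} and transports uniformly across all flavours.
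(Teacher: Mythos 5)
Your proposal is correct and follows essentially the same route as the paper, which simply notes that all jet functors coincide (and equal the elemental one) in degrees $0$ and $1$ and then invokes Proposition \ref{prop:characterisation_natural_DOs}.\eqref{prop:characterisation_natural_DOs:2}. Your slightly more explicit handling of the order-$0$ case (identifying the lift with $\Delta$ itself since $J^0_d=\id_{\AMod}$) is just an unwinding of that same reduction and introduces no genuinely different ingredient.
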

\begin{proof}
By definition, at degrees $0$ and $1$, all jet functors coincide, so we can apply Proposition \ref{prop:characterisation_natural_DOs}.\eqref{prop:characterisation_natural_DOs:2}, obtaining the desired statements.
\end{proof}
\begin{eg}
The natural transformations $\iota^n_d$, $\pi^{n,m}_d$, $l^{n,m}_d$, are all differential operators of order $0$.
\end{eg}
\begin{eg}
The natural transformation $\DH_d\colon J^1_d\to \Omega^1_d \ltimes \Omega^2_d$, cf.\ \cite[\jetseqDHnaturalDO]{FMW}, is a natural differential operator of order at most $1$ with lift $\widetilde{\DH}_d$.
\end{eg}
We will now prove some properties of natural differential operators that are analogous to those of differential operators, cf.\ \cite[\jetssdifferentialoperators]{FMW}.
\begin{prop}
\label{prop:properties_natDOs}\
\begin{enumerate}
\item\label{prop:degree_natDOs_increasable} Let $m\le n$, then a natural differential operator of order at most $m$ is also a natural differential operator of order at most $n$.
\item\label{prop:composition_natDOs_is_natDO} Consider the functors $F,G,H\colon \AMod\to\AMod$ and let $\Delta_1\colon F \to G$ and $\Delta_2\colon G \to H$ be natural differential operators of order at most $n$ and $m$, respectively.
	Then the composition $\Delta_2 \circ \Delta_1\colon F \to H$ is a differential operator of order at most $n+m$.
\item Natural differential operators of finite order form a category $\NatDiff_d$, where the objects are functors $\AMod\to\AMod$ and the arrows between two such functors $F,G$ are given by natural differential operators of finite order between them.
The set of morphisms is denoted by $\NatDiff_d(F,G)\colonequals\bigcup_n \NatDiff^n_d(F,G)$, where $\NatDiff^n_d(F,G)$ is the set of natural differential operators $F\to G$ of order at most $n$.
This is a subcategory of the category of functors $\AMod\to\Mod$ and natural transformations between them.
\item The category $\NatDiff_d$ is enriched over filtered $\bk$-vector spaces with filtration given by the grade.
\item $\NatDiff^0_d(F,G)$ is the space of natural transformations $F\to G$ of functors $\AMod\to\AMod$.
\end{enumerate}
\end{prop}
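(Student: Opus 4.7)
The plan is to handle items (i) and (ii) first, as the remaining statements (iii)--(v) follow largely formally from them combined with Example~\ref{eg:prolongation_natural_DO}.

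For item (i), given a natural differential operator $\Delta\colon UF\to UG$ of order at most $m$ with natural lift $\widetilde{\Delta}\colon J^m_d F\to G$, I propose as a lift at order $n$ the composite $\widetilde{\Delta}\circ \pi^{n,m}_{d,F}\colon J^n_d F\to J^m_d F\to G$, where $\pi^{n,m}_d\colon J^n_d\to J^m_d$ is the natural jet projection from \cite{FMW}. This is a natural transformation as a composition of natural transformations, and the lifting condition follows at once from the compatibility $\pi^{n,m}_{d,FE}\circ j^n_{d,FE}=j^m_{d,FE}$ between jet projections and jet prolongations, valid in each jet flavour (holonomic, nonholonomic, semiholonomic, sesquiholonomic, elemental, primitive).

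Item (ii) is the main technical step. Given natural lifts $\widetilde{\Delta_1}\colon J^n_d F\to G$ and $\widetilde{\Delta_2}\colon J^m_d G\to H$, I propose
\[
\widetilde{\Delta_2\circ \Delta_1}\colonequals \widetilde{\Delta_2}\circ J^m_d(\widetilde{\Delta_1})\circ \rho^{n,m}_{d,F}\colon J^{n+m}_d F\to J^m_d J^n_d F\to J^m_d G\to H,
\]
using the natural transformation $\rho^{n,m}_d\colon J^{n+m}_d\to J^m_d J^n_d$ from \cite[\jetseqrhon]{FMW} (and its analogues for the other jet flavours, cf.\ \cite[\jetsrmkproldop]{FMW}). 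Verifying the lifting property reduces to a diagram chase: applying $\rho^{n,m}_{d,F}\circ j^{n+m}_{d,F}=j^m_{d,J^n_d F}\circ j^n_{d,F}$, then the naturality of $j^m_d$, and finally the lifting equations for $\widetilde{\Delta_1}$ and $\widetilde{\Delta_2}$ in succession. The main obstacle here is ensuring the correct interaction of $\rho^{n,m}_d$ with the intermediate functor $J^m_d$ applied to the first lift, and checking that the construction behaves well for each of the six jet flavours.

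For (iii), the identity arrow on $F$ is $\id_F$, which is a natural differential operator of order $0$ (with lift $\id_F$ under the identifications $J^0_d F=F$ and $j^0_{d,F}=\id_F$); associativity is inherited from composition of natural transformations; closure under composition and preservation of finite order come from (ii). For (iv), the $\bk$-vector space structure on $\NatDiff^n_d(F,G)$ is pointwise (a $\bk$-linear combination of natural lifts of order at most $n$ is a natural lift of order at most $n$ of the corresponding combination), the inclusions $\NatDiff^m_d(F,G)\subseteq \NatDiff^n_d(F,G)$ for $m\le n$ come from (i), and bilinearity together with compatibility with the filtration follow from (ii). Finally, (v) is immediate from $J^0_d=\id_{\AMod}$ and $j^0_d=\id$: the lifting equation $U(\widetilde{\Delta})\circ j^0_{d,F}=\Delta$ forces $\widetilde{\Delta}=\Delta$, identifying a natural DO of order $0$ with an $A$-linear natural transformation $F\to G$.
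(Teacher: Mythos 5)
Your proposal is correct and follows essentially the same route as the paper, which proves (i) and (ii) \emph{mutatis mutandis} from the corresponding results for differential operators in \cite{FMW} using exactly your two constructions — precomposing the lift with the $A$-linear jet projection $\pi^{n,m}_d$ for (i), and taking $\widetilde{\Delta}_2\circ J^m_d(\widetilde{\Delta}_1)$ precomposed with the $A$-linear comparison map $J^{n+m}_d\to J^m_d J^n_d$ for (ii) — with (iii)–(v) then following formally as you say. The only slip is notational: the comparison map you call $\rho^{n,m}_d$ citing \cite[\jetseqrhon]{FMW} is actually the map $l^{m,n}_d$ (that citation refers to $\rho_d\colon J^1_d\to\Omega^1_d$, a different natural transformation), but this does not affect the argument.
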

\begin{proof}\
\begin{enumerate}
\item It can be proven \textit{mutatis mutandis} as \cite[\jetspropoperatorsalsohigherorder]{FMW}, since $\pi^{m,n}_d$ is an $A$-linear natural transformation.
\item Proven \textit{mutatis mutandis} as in \cite[\jetspropdifferentialoperatorcomposition]{FMW}, since $l^{m,n}_d$ is an $A$-linear natural transformation.
\item It follows from \eqref{prop:composition_natDOs_is_natDO}, since the composition is inherited from the composition of natural transformations and closed in this subcategory.
\item The enrichment over $\Mod$ is inherited from that of the category of natural transformations of functors $\AMod\to\Mod$, and the $\Hom$-spaces of $\NatDiff_d$ are subspaces of it.
The filtration is given by the subspaces $\NatDiff^n_d(F,G)$ of $\NatDiff_d(F,G)$ and the composition preserves the grading by \eqref{prop:composition_natDOs_is_natDO}.
\item It follows from Corollary \ref{cor:natDO_order_0,1}.\eqref{cor:natDO_order_0,1:0}.\qedhere
\end{enumerate}
\end{proof}
For many statements that hold naturally in the setting of differential operators, in the sense of \cite[\jetssdifferentialoperators]{FMW}, one can prove, \textit{mutatis mutandis}, analogues in the setting of natural differential operators.
Furthermore, under the appropriate conditions, one can do the same for all other types of jet functors appearing in \cite{FMW} and \cite{Symbol}, obtaining their corresponding natural differential operators and analogous results.
\subsection{Symbols of natural differential operators}
\label{ss:Symbols of natural differential operators}
The notion of symbol of a natural differential operator will be the symbol in each component, i.e.\ 
\begin{defi}
Let $\Delta\colon F\to G$ be a natural differential operator of order at most $n$, then its \emph{$n$-symbol} $\symb^n_d(\Delta)$ is a collection of symbols indexed by objects $E$ in $\AMod$, where we define the $E$ component as $\symb^n_{d,E}(\Delta)\colonequals \symb^n_d (\Delta_E)$.

If $\im(\iota^n_{d,F})\subseteq \EJ^n_dF$, we can define the \emph{$n$-restriction symbol} of $\Delta$, with natural lift $\widetilde{\Delta}\colon J^n_d F\to G$, as the natural transformation $\widetilde{\Delta}\circ \iota^n_{d,F}\colon S^n_d F\to G$, cf.\ \cite[\symbolsdefrestrictionsymbol]{Symbol}.
\end{defi}
The natural restriction symbol is well-defined, because given another natural lift $\widetilde{\Delta}'\colon J^n_d F\to G$ of $\Delta$, the difference $\widetilde{\Delta}-\widetilde{\Delta}'$ is a natural lift of the zero map, and thus we obtain $(\widetilde{\Delta}-\widetilde{\Delta}')\circ \iota^n_{d,F}=0$, cf.\ \cite[\symbolspropcharacterizationsymmgeneratedbyprol]{Symbol}.

In general, the vanishing of the $n$-symbol of a natural differential operator of order at most $n$ is not enough to prove that it is a natural differential operator of order $n-1$, but we have the following.
\begin{prop}
\label{prop:symb_and_reduction_n->n-1_NDOs}
Let $\Delta\colon F\to G$ be a natural differential operator of order at most $n$.
If $\Delta$ is a differential operator of order at most $n-1$, then $\symb^n_d(\Delta)=0$.
The opposite implication holds if $\pi^{n,n-1}_{d,F}$ is a natural epimorphism and $J^n_d F=\EJ^n_d F$.
Under this condition, a natural differential operator of order at most $n$ such that all of its components are differential operators of order $n-1$, is a natural differential operator of order at most $n-1$.
\end{prop}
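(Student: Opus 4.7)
The plan is to split the statement into its three implications, all proved by combining Proposition~\ref{prop:characterisation_natural_DOs} with the componentwise correspondence between vanishing $n$-symbols and operators of order at most $n-1$ established in \cite{Symbol,FMW}. For the forward implication, if $\Delta$ is a natural differential operator of order at most $n-1$, Proposition~\ref{prop:characterisation_natural_DOs} forces each component $\Delta_E$ to be a differential operator of order at most $n-1$, whose $n$-symbol vanishes classically (any lift to $J^n_d F E$ may be taken to factor through $\pi^{n,n-1}_{d,FE}$, whose composite with $\iota^n_{d,FE}$ is zero). Hence $\symb^n_d(\Delta)=0$ componentwise.

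For the remaining two implications, the hypothesis $\symb^n_d(\Delta)=0$ together with the epi property of $\pi^{n,n-1}_{d,FE}$ yields, by the componentwise symbol theory, that each $\Delta_E$ is a differential operator of order at most $n-1$. Both remaining claims thus reduce to the following: given that every component of $\Delta$ is of order at most $n-1$, produce a natural lift $J^{n-1}_d F\to G$. My plan is to apply Proposition~\ref{prop:characterisation_natural_DOs}.\eqref{prop:characterisation_natural_DOs:2} at order $n-1$, which requires the identity $J^{n-1}_d F=\EJ^{n-1}_d F$. Since $\pi^{n,n-1}_{d}$ sends prolongations to prolongations, it maps $\EJ^n_d F$ into $\EJ^{n-1}_d F$; combined with $J^n_d F=\EJ^n_d F$ and natural surjectivity of $\pi^{n,n-1}_{d,F}$, this gives $J^{n-1}_d F=\pi^{n,n-1}_{d,F}(J^n_d F)=\pi^{n,n-1}_{d,F}(\EJ^n_d F)\subseteq\EJ^{n-1}_d F$, hence equality.

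The main obstacle is precisely this derivation $J^{n-1}_d F=\EJ^{n-1}_d F$, which is exactly what forces the stated hypotheses: without it, the componentwise lifts to $J^{n-1}_d F E$ are not guaranteed to cohere into a natural transformation, because the uniqueness of a lift — which is what the naturality argument of Proposition~\ref{prop:characterisation_natural_DOs} exploits — only holds when jets are $A$-linearly generated by prolongations. Once $J^{n-1}_d F=\EJ^{n-1}_d F$ is in hand, Proposition~\ref{prop:characterisation_natural_DOs}.\eqref{prop:characterisation_natural_DOs:2} applied at order $n-1$ assembles the componentwise lifts into a natural one, completing both remaining implications.
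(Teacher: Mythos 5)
Your argument is correct, but it proves the hard direction by a genuinely different mechanism than the paper. The paper keeps everything at level $n$: it takes the componentwise lifts $\widetilde{\Delta}^{n-1}_E$ to $J^{n-1}_dFE$, precomposes the naturality square one wants with the epimorphism $\pi^{n,n-1}_{d,FE_1}$, uses the uniqueness of level-$n$ lifts (from $J^n_dF=\EJ^n_dF$) to identify $\widetilde{\Delta}_{E_i}=\widetilde{\Delta}^{n-1}_{E_i}\circ\pi^{n,n-1}_{d,FE_i}$, invokes naturality of $\pi^{n,n-1}_d$ and of $\widetilde{\Delta}$, and then cancels the epi — a self-contained diagram chase. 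You instead show that the hypotheses propagate down one order: since $\pi^{n,n-1}_d$ is $A$-linear and satisfies $\pi^{n,n-1}_d\circ j^n_d=j^{n-1}_d$, it carries $\EJ^n_dF=A\,j^n_{d,F}(F)$ into $\EJ^{n-1}_dF$, so surjectivity of $\pi^{n,n-1}_{d,F}$ forces $J^{n-1}_dF=\EJ^{n-1}_dF$, and then Proposition~\ref{prop:characterisation_natural_DOs}.\eqref{prop:characterisation_natural_DOs:2} applied at order $n-1$ assembles the componentwise lifts. This is valid (the compatibility of prolongations with projections is a standard fact the paper uses elsewhere) and arguably yields a sharper intermediate fact — that the elementalness hypothesis descends — at the cost of importing that compatibility; the paper's route needs only the epi cancellation and level-$n$ uniqueness. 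Two small imprecisions in your write-up, neither fatal: the conclusion that each $\Delta_E$ has order at most $n-1$ when $\symb^n_d(\Delta)=0$ is immediate from the quotient definition $\Symb^n_d=\Diff^n_d/\Diff^{n-1}_d$ and does not use the epi hypothesis (that hypothesis is only needed for your elementalness propagation, respectively the paper's cancellation step); and in the forward implication the vanishing of $\symb^n_d(\Delta_E)$ is likewise definitional, so the remark about lifts factoring through $\pi^{n,n-1}_{d,FE}$ is superfluous.
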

\begin{proof}
The first implication follows from Proposition \ref{prop:characterisation_natural_DOs}.

For the opposite implication, let $\Delta$ be a natural differential operator of order at most $n$ such that $\symb^n_d(\Delta)=0$, then by definition, every component of $\Delta$ is a differential operator of order at most $n-1$.
Let $\widetilde{\Delta}^{n-1}_{d,E}\colon J^{n-1}_d FE\to GE$ be a natural lift of $\Delta_E$ for all $E$ in $\AMod$.
We want to prove that this lift is natural.
For all $\phi\colon E_1\to E_2$, consider the following diagram
\begin{equation}
\label{diag:symb_and_reduction_n->n-1_NDOs}
\begin{tikzcd}[column sep=40pt, row sep=30pt]
J^n_d FE_1\ar[d,"J^n_d F(\phi)"']\ar[rr,bend left=20pt,"\widetilde{\Delta}_{E_1}"]\ar[r,two heads,"\pi^{n,n-1}_{d,FE_1}"']&J^{n-1}_d F E_1\ar[d,"J^{n-1}_d F(\phi)"]\ar[r,"\widetilde{\Delta}^{n-1}_{E_1}"']&GE_1\ar[d,"G(\phi)"]\\
J^n_d FE_2\ar[rr,bend right=20pt,"\widetilde{\Delta}_{E_2}"']\ar[r,two heads,"\pi^{n,n-1}_{d,FE_2}"]&J^{n-1}_d FE_2\ar[r,"\widetilde{\Delta}^{n-1}_{E_2}"]&GE_2
\end{tikzcd}
\end{equation}
The top triangle of \eqref{diag:symb_and_reduction_n->n-1_NDOs} commutes because both the curved morphism and the composition lift the same differential operator $\Delta_{E_1}$ and, since $J^n_dE_1=\EJ^n_dE_1$, the lift is unique, cf.\ \cite[\symbolspropelementaljetproperties]{Symbol}.
The same holds for the bottom curved triangle.
The left square of \eqref{diag:symb_and_reduction_n->n-1_NDOs} commutes by naturality of $\pi^{n,n-1}_{d,F}$ with respect to $F(\phi)$.
The external square commutes because $\widetilde{\Delta}$ is the natural lift of $\Delta$.
It follows that $G(\phi)\circ \widetilde{\Delta}^{n-1}_{E_1}\circ \pi^{n,n-1}_{d,FE_1}=\widetilde{\Delta}^{n-1}_{E_2}\circ J^{n-1}_d F(\phi)\circ \pi^{n,n-1}_{d,FE_1}$, and since $\pi^{n,n-1}_{d,FE_1}$ is an epi, we can cancel it from this equality.
This yield the commutativity of the right square of \eqref{diag:symb_and_reduction_n->n-1_NDOs}, and hence the naturality of $\widetilde{\Delta}^{n-1}$ defined to be $\widetilde{\Delta}^{n-1}_E$ at each component $E$ in $\AMod$.
This makes $\widetilde{\Delta}^{n-1}$ a natural lift of $\Delta$.
\end{proof}

Under sufficient regularity, the restriction symbol also allows us to determine whether a natural differential operator of order at most $n$ is a natural differential operator of order at most $n-1$.
\begin{prop}
Let $\im(\iota^n_{d,F})\subseteq \EJ^n_d F$ and let the $n$-jet sequence be right exact.
If $\Delta\colon F\to G$ is a natural differential operator of order at most $n$ such that for any (and hence all) lift $\widetilde{\Delta}$ we have $\widetilde{\Delta}\circ \iota^n_{d,F}=0$, then $\Delta$ is a natural differential operator of order at most $n-1$.
\end{prop}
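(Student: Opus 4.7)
The plan is to use right exactness of the $n$-jet sequence to factor any chosen natural lift $\widetilde{\Delta}$ through the projection $\pi^{n,n-1}_{d,F}$, and then promote the pointwise factorization to a natural transformation via the fact that epimorphisms can be cancelled on the right.

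First I would fix an arbitrary natural lift $\widetilde{\Delta}\colon J^n_d F\to G$ of $\Delta$, for which $\widetilde{\Delta}\circ\iota^n_{d,F}=0$ by hypothesis. Right exactness of the $n$-jet sequence says that, at each object $E$ in $\AMod$, the projection $\pi^{n,n-1}_{d,FE}$ is an epimorphism with kernel equal to $\im(\iota^n_{d,FE})$. Hence, for each $E$, the component $\widetilde{\Delta}_E$ vanishes on $\ker(\pi^{n,n-1}_{d,FE})$, so it factors uniquely as $\widetilde{\Delta}_E=\widetilde{\Delta}^{n-1}_E\circ \pi^{n,n-1}_{d,FE}$ for some $A$-linear map $\widetilde{\Delta}^{n-1}_E\colon J^{n-1}_d FE\to GE$.

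Next I would check that each $\widetilde{\Delta}^{n-1}_E$ is a componentwise lift of $\Delta_E$ to order $n-1$. Using the standard compatibility $\pi^{n,n-1}_{d,FE}\circ j^n_{d,FE}=j^{n-1}_{d,FE}$ between prolongations and jet projections, one has
\begin{equation}
\widetilde{\Delta}^{n-1}_E\circ j^{n-1}_{d,FE}
=\widetilde{\Delta}^{n-1}_E\circ \pi^{n,n-1}_{d,FE}\circ j^n_{d,FE}
=\widetilde{\Delta}_E\circ j^n_{d,FE}
=\Delta_E,
\end{equation}
so $\widetilde{\Delta}^{n-1}_E$ is a lift of the differential operator $\Delta_E$ to $J^{n-1}_d FE$.

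The main step, and the one requiring a little care, is the naturality of $\widetilde{\Delta}^{n-1}$. Given $\phi\colon E_1\to E_2$, I would compare the two sides of the naturality square after precomposing with the epimorphism $\pi^{n,n-1}_{d,FE_1}$: on one side, $G(\phi)\circ \widetilde{\Delta}^{n-1}_{E_1}\circ \pi^{n,n-1}_{d,FE_1}=G(\phi)\circ\widetilde{\Delta}_{E_1}$, while on the other, using naturality of $\pi^{n,n-1}_d$ in the form $J^{n-1}_d F(\phi)\circ \pi^{n,n-1}_{d,FE_1}=\pi^{n,n-1}_{d,FE_2}\circ J^n_d F(\phi)$, one gets $\widetilde{\Delta}^{n-1}_{E_2}\circ J^{n-1}_d F(\phi)\circ \pi^{n,n-1}_{d,FE_1}=\widetilde{\Delta}_{E_2}\circ J^n_d F(\phi)$. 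These two expressions agree by naturality of $\widetilde{\Delta}$, and then cancelling the epimorphism $\pi^{n,n-1}_{d,FE_1}$ yields the required naturality square for $\widetilde{\Delta}^{n-1}$. This exhibits $\widetilde{\Delta}^{n-1}\colon J^{n-1}_d F\to G$ as a natural lift of $\Delta$, proving that $\Delta$ is a natural differential operator of order at most $n-1$. The parenthetical "for any (and hence all) lift" in the hypothesis is harmless thanks to the well-definedness argument recalled after the definition of the restriction symbol: since $\im(\iota^n_{d,F})\subseteq \EJ^n_d F$, the composition $\widetilde{\Delta}\circ \iota^n_{d,F}$ is independent of the chosen lift, so the hypothesis truly depends only on $\Delta$.
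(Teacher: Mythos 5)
Your proposal is correct and follows essentially the same route as the paper's own (very terse) direct proof: the paper simply invokes the cokernel universal property of $\iota^n_{d,F}$, and your component-wise factorization through $\pi^{n,n-1}_{d,F}$, the check that it lifts $\Delta$, and the naturality verification by cancelling the epimorphism are exactly the unpacking of that argument in the functor category.
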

\begin{proof}
This result can be proven as a consequence of Proposition \ref{prop:symb_and_reduction_n->n-1_NDOs} and \cite[\symbolspropsymbolsrepresentationwelldefined]{Symbol}.
For a more direct proof, the natural lift of $\Delta$ is given by the cokernel universal property of $\iota^n_{d,F}$.
\end{proof}
\section{Spencer operators and the Spencer complex}
\label{s:Spencer_operators_and_the_Spencer_complex}
In this section we will introduce noncommutative generalizations of one of the most central examples of natural differential operators from differential geometry.
These are the Spencer operators, which operate on bundles of jet-valued differential forms, and generally have the effect of lowering the jet order while increasing the form degree.
Of course, this generalization will form examples of natural differential operators in our sense, and we will show that our generalization extends to capture most of their classical properties, such as their relationship to the Spencer $\delta$-operators (generalized to our setting in \cite[\jetsssSpencer]{FMW}), the Spencer complex and bicomplex.
Further, we use the Spencer operators to characterize holonomic jets amongst semiholonomic jets in our setting, c.f.\ Lemma \ref{lemma:sesquiholonomic_Spencer} and Remark \ref{rmk:holonomic_alternate_def_spencer}.
\subsection{Spencer operators on holonomic jets}
\label{ss:Spencer operators on holonomic jets}
In this section we generalize the Spencer complex to the setting of noncommutative geometry in the spirit of \cite{Spencer} and \cite{goldschmidt1967existence}.
\begin{defi}\label{def:Spenceroperator}
	We define the \emph{(holonomic) Spencer operators} as the following natural transformations for $n\ge 1$ and $m\ge 0$ with the following component at each $E$ in $\AMod$:
\begin{align}
\label{eq:def_Spencer_operator}
\Spenc_{d,E}^{n,m} \colon \Omega^m_d J^n_d E \longrightarrow \Omega^{m+1}_dJ^{n-1}_d E,
&\hfill&
\omega \otimes_A \sum_j [y_j \otimes z_j] \otimes_A \xi_j \longmapsto \sum_j d(\omega y_j) z_j \otimes_A \xi_j,
\end{align}
for all $\omega\in\Omega^m_d$ and $\sum_j [y_j\otimes z_j]\otimes_A \xi_j\in J^n_d E\subseteq J^1_d J^{n-1}_d E$.
\end{defi}
\begin{rmk}
When $E$ is in $\AModB$ for a $\bk$-algebra $B$, then we can extend this result to bimodules, obtaining that $\Spenc^{n,m}_{d,E}$ is a morphism in $\ModB$.
We can thus see $\Spenc^{n,m}_d$ as a natural transformation of functors $\AModB\to\ModB$.
\end{rmk}
\begin{prop}
\label{prop:Spencercorrespondence}
	The Spencer operator coincides with the operator given by the formula
	\begin{align}\label{eq:classicSpencer}
	\Spenc^{n,m}_{d,E} \colon \Omega^m_d J^n_dE \longrightarrow \Omega^{m+1}_d J^{n-1}_dE,
	&\hfill&
	\omega \otimes_A \xi\longmapsto
	d\omega \otimes_A \pi^{n,n-1}_{d,E} (\xi) + (-1)^{\deg(\omega)} \omega \wedge \Spenc^{n,0}_{d,E}(\xi).
	\end{align}
\end{prop}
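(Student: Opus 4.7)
The plan is to verify the claimed identity by applying the graded Leibniz rule to the factor $d(\omega y_j)$ in the original definition \eqref{eq:def_Spencer_operator} and then matching the two resulting pieces with the two terms on the right-hand side of \eqref{eq:classicSpencer}. Since both formulas are $\bk$-linear in their input, it suffices to check the equality on a representative element $\omega \otimes_A \xi$ with $\xi = \sum_j [y_j \otimes z_j] \otimes_A \xi_j$ regarded in $J^1_d J^{n-1}_d E$.

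First, I would apply the graded Leibniz rule of the exterior algebra to each summand:
\begin{equation*}
d(\omega\, y_j) \;=\; d\omega \cdot y_j + (-1)^{\deg(\omega)} \omega \wedge d(y_j).
\end{equation*}
Substituting into the defining formula \eqref{eq:def_Spencer_operator} and using $\bk$-linearity splits $\Spenc^{n,m}_{d,E}(\omega\otimes_A\xi)$ into
\begin{equation*}
\sum_j d\omega \cdot y_j z_j \otimes_A \xi_j \;+\; (-1)^{\deg(\omega)} \sum_j \omega \wedge d(y_j) z_j \otimes_A \xi_j.
\end{equation*}
For the first summand, since $y_j z_j \in A$, $A$-balancedness of $\otimes_A$ allows me to rewrite it as $d\omega \otimes_A \sum_j (y_j z_j) \xi_j$. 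Recognizing that $\pi^{n,n-1}_{d,E}$ acts on the representative $\xi = \sum_j[y_j\otimes z_j]\otimes_A \xi_j \in J^n_d E\subseteq J^1_d J^{n-1}_d E$ by collapsing the outer $J^1_d$-classes via $[y_j\otimes z_j]\mapsto y_j z_j$, this summand is exactly $d\omega \otimes_A \pi^{n,n-1}_{d,E}(\xi)$. For the second summand, the sum $\sum_j d(y_j) z_j \otimes_A \xi_j$ is precisely $\Spenc^{n,0}_{d,E}(\xi)$ read off from the $m=0$ case of \eqref{eq:def_Spencer_operator} with the exterior factor $\omega=1$. Pulling $\omega\wedge$ outside the sum by linearity of the wedge then identifies the second summand with $(-1)^{\deg(\omega)}\omega \wedge \Spenc^{n,0}_{d,E}(\xi)$.

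The only genuinely delicate point is the identification of $\pi^{n,n-1}_{d,E}$ applied to $\xi$ in the $J^1_d J^{n-1}_d E$-presentation. This relies on how the inclusion $J^n_d E \hookrightarrow J^1_d J^{n-1}_d E$ interacts with the first jet projection, but this is part of the standing setup from \cite{FMW} and causes no real obstacle once invoked. Everything else is purely a formal manipulation using the graded Leibniz rule and $A$-bilinearity of $\otimes_A$; no compatibility with the defining relations of $J^n_d E$ needs to be rechecked, since we are working directly from the given formula in Definition~\ref{def:Spenceroperator} which is already assumed well-defined.
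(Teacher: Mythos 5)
Your proposal is correct and follows essentially the same route as the paper's proof: apply the graded Leibniz rule to $d(\omega y_j)$ in the defining formula, identify the first resulting term via $\pi^{n,n-1}_{d,E}([y_j\otimes z_j]\otimes_A\xi_j)=y_jz_j\xi_j$, and recognize the second as $(-1)^{\deg(\omega)}\omega\wedge\Spenc^{n,0}_{d,E}(\xi)$. No gaps; the one point you flag as delicate (how $\pi^{n,n-1}_{d,E}$ acts in the $J^1_dJ^{n-1}_dE$ presentation) is handled identically in the paper.
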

\begin{proof}
	We apply the graded Leibniz rule for $d$ to \eqref{eq:def_Spencer_operator}.
	For $\xi=\sum_j [y_j,z_j]\otimes_A \xi_j$, this yields
	\begin{equation}
		\Spenc^{n.m}_d(\omega \otimes_A \xi)
		=\sum_j d(\omega y_j) z_j \otimes_A \xi_j
		=\sum_j \left(d\omega \otimes y_j z_j \xi_j + (-1)^{\deg \omega} \omega \wedge (dy_j)z_j \otimes_A \xi_j\right),
	\end{equation}
	where we have $\pi^{n,n-1}_{d,E}([y_j \otimes z_j] \otimes_A \xi_j) = y_jz_j \xi_j$ and $\Spenc_{d, E}^{n,0}(\xi_j) = (dy_j)z_j \otimes_A \xi_j$.
\end{proof}
\begin{rmk}
	In the classical setting $A = \smooth{M}$ with the de Rham exterior derivative $d=d_{dR}$, equation \eqref{eq:classicSpencer} coincides with the classical formula for the Spencer operator, cf.\ \cite[Proposition~1.3.1, p.~187]{Spencer}.
\end{rmk}

\begin{prop}
\label{prop:spencopsymbol}
	The Spencer operator $\Spenc^{n,m}_d$ is a natural linear differential operator of order at most $1$ for $n\ge 1$ and $m\ge 0$.
	Moreover, it has (restriction) symbol $\wedge^{1,m} \otimes_A \pi^{n,n-1}_d$.
\end{prop}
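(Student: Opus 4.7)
My plan is to invoke Corollary~\ref{cor:natDO_order_0,1}.\eqref{cor:natDO_order_0,1:1}, which reduces the order-$1$ claim to showing that each component $\Spenc^{n,m}_{d,E}$ is a linear differential operator of order at most $1$ together with the naturality of $\Spenc^{n,m}_d$ as a transformation. Naturality is immediate from the defining formula~\eqref{eq:def_Spencer_operator}, since $d$, $\wedge$, the left $A$-action, and the structural data defining $J^n_d E$ are all natural in $E$.

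To establish the order, I would fix $E$ and compute the commutator of $\Spenc^{n,m}_{d,E}$ with left multiplication by an arbitrary $a \in A$. Starting from the case $m = 0$, I apply \eqref{eq:def_Spencer_operator} to $\xi = \sum_j [y_j \otimes z_j] \otimes_A \xi_j \in J^n_d E$ and use $a \cdot \xi = \sum_j [a y_j \otimes z_j] \otimes_A \xi_j$ together with the Leibniz rule $d(a y_j) = (da) y_j + a\, dy_j$ to obtain
\begin{equation}
\Spenc^{n,0}_{d,E}(a \xi) - a \Spenc^{n,0}_{d,E}(\xi) = \sum_j (da)\, y_j z_j \otimes_A \xi_j = da \otimes_A \pi^{n,n-1}_{d,E}(\xi),
\end{equation}
which is visibly $A$-linear in $\xi$, so $\Spenc^{n,0}_{d,E}$ has order at most $1$. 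For $m \geq 1$, I would use the equivalent formula from Proposition~\ref{prop:Spencercorrespondence} and the graded Leibniz rule to derive
\begin{equation}
\Spenc^{n,m}_{d,E}(a\omega \otimes_A \xi) - a \Spenc^{n,m}_{d,E}(\omega \otimes_A \xi) = (da) \wedge \omega \otimes_A \pi^{n,n-1}_{d,E}(\xi),
\end{equation}
since the $a\, d\omega$ term and the $(-1)^{\deg\omega}a\omega \wedge \Spenc^{n,0}_{d,E}(\xi)$ term are absorbed into $a \Spenc^{n,m}_{d,E}(\omega \otimes_A \xi)$. The right-hand side is $A$-linear in $\omega \otimes_A \xi$, so $\Spenc^{n,m}_{d,E}$ has order at most $1$; combined with naturality, Corollary~\ref{cor:natDO_order_0,1}.\eqref{cor:natDO_order_0,1:1} delivers the natural-DO statement.

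For the restriction symbol, I would pass from the commutator computation to the symbol via the identification $S^1_d F \cong \Omega^1_d \otimes_A F$ and the standard correspondence (as developed in \cite{Symbol}) between the restriction symbol of a first-order differential operator and its commutator with $L_a$. Componentwise, this identifies the restriction symbol on $E$ as the map
\begin{equation}
\alpha \otimes_A \omega \otimes_A \xi \longmapsto (\alpha \wedge \omega) \otimes_A \pi^{n,n-1}_{d,E}(\xi),
\end{equation}
which is exactly $\wedge^{1,m} \otimes_A \pi^{n,n-1}_{d,E}$. Naturality in $E$ upgrades this component-wise identification to the claimed equality of natural transformations.

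The only real subtlety will be matching conventions with the definition of the restriction symbol in \cite{Symbol} and confirming, via Proposition~\ref{prop:characterisation_natural_DOs}, that the natural lifts $\widetilde{\Spenc^{n,m}_{d,E}}$ produced componentwise assemble into a natural transformation; the actual computations reduce to routine applications of the Leibniz rule.
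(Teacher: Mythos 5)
Your proposal is correct in substance, but it is organized differently from the paper's proof. The paper proceeds by writing down an explicit $A$-linear jet lift $\widetilde{\Spenc}^{n,m}_{d,E}\colon J^1_d\Omega^m_dJ^n_dE\to\Omega^{m+1}_dJ^{n-1}_dE$, $[a\otimes b]\otimes_A\omega\otimes_A\sum_j[y_j\otimes z_j]\otimes_A\xi_j\mapsto\sum_j a\,d(b\omega y_j)z_j\otimes_A\xi_j$, checking by hand that these lifts are natural in $E$, and then computing the restriction symbol by evaluating this lift on $\iota^1_{d}(a\,db\otimes_A\omega\otimes_A\xi)=[a\otimes b-ab\otimes1]\otimes_A\omega\otimes_A\xi$. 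You instead avoid writing the lift: you reduce to a componentwise statement via Corollary~\ref{cor:natDO_order_0,1}.\eqref{cor:natDO_order_0,1:1} (which is legitimate, since at order $1$ all jet functors coincide, so the abstract lift assembles naturally by Proposition~\ref{prop:characterisation_natural_DOs}), establish order $\le 1$ through the Leibniz/commutator identity $\Spenc^{n,m}_{d,E}(a\omega\otimes_A\xi)-a\Spenc^{n,m}_{d,E}(\omega\otimes_A\xi)=(da)\wedge\omega\otimes_A\pi^{n,n-1}_{d,E}(\xi)$, and read the restriction symbol off that identity via the identification of the restriction symbol with $\widetilde{\Delta}\circ\iota^1_{d}$, i.e.\ $a\,db\otimes x\mapsto a\bigl(\Delta(bx)-b\Delta(x)\bigr)$; this is the same identification the paper itself invokes, and the resulting symbol computation is the same Leibniz calculation in different packaging. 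The trade-off: the paper's explicit lift gives a concrete formula (reused later, e.g.\ in \eqref{eq:lift_Spencer_n,0} and in diagram chases), while your route is shorter and leans on the general machinery of §\ref{s:Natural_differential_operators} and the first-order criterion from \cite{FMW}. One caution on phrasing: in the noncommutative setting, "the commutator with $L_a$ is $A$-linear in $\xi$" is not by itself the criterion for order $\le 1$; what is needed is that the commutator factors through $da$ via an $A$-linear map on $\Omega^1_d\otimes_A(\Omega^m_dJ^n_dE)$ (equivalently, that the universal lift kills $N^1_d$, so that $\sum_ja_j\Spenc(b_j\,\cdot)=0$ whenever $\sum_ja_jb_j=0$ and $\sum_ja_j\,db_j=0$). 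Your displayed identity does exhibit exactly this factorization, with $A$-linear factor $\wedge^{1,m}\otimes_A\pi^{n,n-1}_{d}$, so the argument goes through once you cite that criterion rather than bare $A$-linearity of the commutator.
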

\begin{proof}
For $n\ge 1$ and $m\ge 0$, we have that $\Spenc^{n,m}_d$ is a natural differential operator of order at most $1$ since we can construct a jet lift of each component $E$, extending by $A$-linearity, the following map:
	\begin{align}\label{eq:Spencerlift}
		\widetilde{\Spenc}^{n,m}_{d,E}\colon J^1_d \Omega^m_d J^n_d E\longrightarrow \Omega^{m+1}_d J^{n-1}_d E,
		&\hfill&
		[a \otimes b] \otimes_A \omega \otimes_A \sum_j [y_j \otimes z_j] \otimes_A \xi_j \longmapsto \sum_j a d(b \omega y_j) z_j \otimes_A \xi_j,
	\end{align}
	where $[a\otimes b]\in J^1_d A$, $\omega\in\Omega^m_d$, and $\sum_j [y_j\otimes z_j]\otimes_A \xi_j\in J^n_d E$.
	This map is well-defined by the Leibniz rule, and precomposing it with $j^1_{d,\Omega^m_d J^n_d E}$ yields $\Spenc^{n,m}_{d,E}$.
	Furthermore, we show that these are the components of a natural transformation since, given a morphism $\varphi\colon E\to E'$ in $\AMod$, we have the following
	\begin{equation}
	\begin{split}
	&\Omega^{m+1}_d J^{n-1}_d(\varphi)\circ\widetilde{\Spenc}^{n,m}_{d,E}\left([a \otimes b] \otimes_A \omega \otimes_A \sum_j [y_j \otimes z_j] \otimes_A \xi_j\right)\\
	&\qquad =\Omega^{m+1}_d J^{n-1}_d(\varphi)\left(\sum_j a d(b \omega y_j) z_j \otimes_A \xi_j\right)\\
	&\qquad =\sum_j a d(b \omega y_j) z_j \otimes_A J^{n-1}_d(\varphi)(\xi_j)\\
	&\qquad =\widetilde{\Spenc}^{n,m}_{d,E'}\left([a \otimes b] \otimes_A \omega \otimes_A \sum_j [y_j \otimes z_j] \otimes_A J^{n-1}_d(\varphi)(\xi_j)\right)\\
	&\qquad =\widetilde{\Spenc}^{n,m}_{d,E'}\left([a \otimes b] \otimes_A \omega \otimes_A J^n_d(\varphi)\left(\sum_j [y_j \otimes z_j] \otimes_A \xi_j\right)\right)\\
	&\qquad =\widetilde{\Spenc}^{n,m}_{d,E'}\circ J^1_d \Omega^m_d J^n_d(\varphi)\left([a \otimes b] \otimes_A \omega \otimes_A \sum_j [y_j \otimes z_j] \otimes_A \xi_j\right).
	\end{split}
	\end{equation}
	Since $\Spenc^{n,m}_{d,E}$ is a natural linear differential operator of order at most $1$, its symbol can be identified with the restriction of its jet lift along $\iota^1_{d,E}$, cf.\ \cite[\symbolsremidentifyrestrictionsymbol]{Symbol}.
	This amounts to evaluating $\widetilde{\Spenc}^{n,m}_{d,E}$, as in \eqref{eq:Spencerlift}, on elements of the form $a db\otimes_A \omega \otimes_A \sum_j [y_j \otimes z_j] \otimes_A \xi_j\in \Omega^1_d \Omega^m_d J^n_d E$.
	Applying $\widetilde{\Spenc}^{n,m}_{d,E}$ to the element $\iota^1_{d,\Omega^m_d J^n_d E}(a db\otimes_A \omega \otimes_A \sum_j [y_j \otimes z_j] \otimes_A \xi_j) = [a \otimes b - ab \otimes 1]\otimes_A \omega \otimes_A \sum_j [y_j \otimes z_j] \otimes_A \xi_j$, yields, via Leibniz, the desired equation.
\end{proof}
\begin{rmk}
\label{rmk:Spencer_m=0}
	For $m=0$, the Spencer operator $\Spenc^{n,m}_d$ at the component $E$ can be written as
	\begin{equation}\label{eq:explicit_Sh0}
		\Spenc_{d,E}^{n,0}
		=-\rho_{d,J^{n-1}_d E}\circ l^{1,n-1}_{d,E}\colon J^n_d E \longrightarrow \Omega^1_d J^{n-1}_dE,
	\end{equation}
	where $\rho_d\colon J^1_d \to \Omega^1_d$, cf.\ \cite[\jetseqrhon]{FMW}, is the natural transformation with component $\rho_{d,E}=\rho_{d,A}\otimes_A \id_E$ at $E$, such that $\rho_{d,A}\colon J^1_d A \to \Omega^1_d$ is the map
	\begin{equation}
		[a \otimes b] \longmapsto -(da)b
	\end{equation}
	from \cite[\jetsdefrho]{FMW}.
	Therefore, Proposition \ref{prop:Spencercorrespondence} gives us an an alternative explicit description of a generic Spencer operator as follows
	\begin{equation}\label{eq:classicSpencerexplicit}
		\Spenc^{n,m}_{d,E}(\omega\otimes_A \xi)
		=d\omega \otimes_A \pi^{n,n-1}_{d,E} (\xi) - (-1)^{\deg(\omega)} \omega \wedge \rho_{d,J^{n-1}_d E}\circ l^{1,n-1}_{d,E}(\xi)
	\end{equation}
	In particular, for $\Spenc^{n,0}_d$, equation \eqref{eq:classicSpencer} can be regarded as a (graded) Leibniz rule
	\begin{equation}
		\Spenc^{n,0}_{d,E}(f\xi)
		=df \otimes_A \pi^{n,n-1}_{d,E} (\xi) + (-1)^{\deg(\omega)} f \Spenc_{d,E}^{n,0}(\xi),
	\end{equation}
	in accordance with \cite[\jetsrmkrhodo]{FMW}.
	The natural jet lift of $\Spenc^{n,0}_{d,E}$ is
	\begin{equation}
	\label{eq:lift_Spencer_n,0}
		\widetilde{\Spenc}^{n,0}_d
		=\widetilde{\DH}^I_{J^{n-1}_d} \circ J^1_d(l^{1,n-1}_d )\colon J^1_d J^n_d\longrightarrow\Omega^1_d J^{n-1}_d,
	\end{equation}
	Further, the symbol of $\Spenc^{n,0}_{d,E}$ from Proposition \ref{prop:spencopsymbol} reduces to
	\begin{equation}
		\Omega^1_d(\pi^{n,n-1}_{d,E})
		=\id_{\Omega^1_d}\otimes_A \pi^{n,n-1}_{d,E}.
	\end{equation}
\end{rmk}
\begin{rmk}
The map $\mu\colon \Omega^2_d J^1_d\to \Omega^3_d$ appearing in \cite[\jetslemmaextwedge]{FMW} coincides with $-\Spenc^{1,2}_d$, cf.\ \eqref{eq:classicSpencerexplicit}.
As such, it is a natural differential operator of order at most $1$.
\end{rmk}

Before continuing, we give the following result, expressing Spencer operators on a jet functor in terms of Spencer operators of lower index.
\begin{lemma}\label{lemma:SpencerDO_wrt_low_indices}
The Spencer differential operator $\Spenc^{1,m}_d$ is a natural epimorphism and for all $n\ge 1$ and $m\ge 0$ we have $\Spenc^{n,m}_d=\Spenc^{1,m}_{d,J^{n-1}_d}\circ \Omega^m_d(l^{1,n-1}_d)$, i.e.\ the following diagram commutes
\begin{equation}\label{diag:SpencerDO_wrt_low_indices}
\begin{tikzcd}[column sep=50pt,row sep=30pt]
\Omega^m_d J^n_d\ar[d,"\Omega^m_d(l^{1,n-1}_d)"']\ar[dr,"\Spenc^{n,m}_d"]\\
\Omega^m_d J^1_d J^{n-1}_d\ar[r,two heads,"\Spenc^{1,m}_{d,J^{n-1}_d}"']&\Omega^{m+1}_d J^{n-1}_d
\end{tikzcd}
\end{equation}
\end{lemma}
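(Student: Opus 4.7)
The lemma has two parts: the factorization identity $\Spenc^{n,m}_d = \Spenc^{1,m}_{d,J^{n-1}_d}\circ \Omega^m_d(l^{1,n-1}_d)$ witnessed by diagram \eqref{diag:SpencerDO_wrt_low_indices}, and the surjectivity of $\Spenc^{1,m}_d$. I would handle them separately.

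For the factorization, my plan is to trace a generic element $\omega \otimes_A \sum_j [y_j \otimes z_j] \otimes_A \xi_j$ of $\Omega^m_d J^n_d E$ through both paths of the diagram. Applying $\Omega^m_d(l^{1,n-1}_{d,E})$ sends it to the same formal expression now regarded in $\Omega^m_d J^1_d J^{n-1}_d E$, since $l^{1,n-1}_d$ is precisely the inclusion already implicit in the ambient embedding $J^n_d E \subseteq J^1_d J^{n-1}_d E$ that is used to state the Spencer formula in Definition \ref{def:Spenceroperator}. Applying $\Spenc^{1,m}_{d,J^{n-1}_d E}$ by that same formula, with $J^{n-1}_d E$ playing the role of the source module and $\xi_j \in J^{n-1}_d E$ playing the role of the source module element, returns $\sum_j d(\omega y_j) z_j \otimes_A \xi_j$, matching $\Spenc^{n,m}_{d,E}$ of the starting element. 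Since naturality of all components is already known, this component-wise equality gives the desired identity of natural transformations.

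For the surjectivity, my plan is to exhibit a $\bk$-linear spanning set of $\Omega^{m+1}_d E$ inside the image of $\Spenc^{1,m}_{d,E}$. Because $\Omega^1_d$ is generated as an $A$-bimodule by $d(A)$ and $\Omega^{m+1}_d = \Omega^m_d \wedge \Omega^1_d$, the $\bk$-module $\Omega^{m+1}_d E$ is spanned by elements of the form $\omega \wedge db \otimes_A e$ with $\omega \in \Omega^m_d$, $b \in A$, and $e \in E$. I will display each such element as a difference of two Spencer values. From Definition \ref{def:Spenceroperator} together with the graded Leibniz rule,
\begin{equation*}
\Spenc^{1,m}_{d,E}(\omega \otimes_A [b \otimes 1] \otimes_A e) = d(\omega b) \otimes_A e = d\omega\cdot b \otimes_A e + (-1)^m \omega \wedge db \otimes_A e,
\end{equation*}
while, using the $A$-balancing of $\otimes_A$ in $\Omega^{m+1}_d \otimes_A E$,
\begin{equation*}
\Spenc^{1,m}_{d,E}(\omega \otimes_A [1 \otimes 1] \otimes_A be) = d\omega \otimes_A be = d\omega\cdot b \otimes_A e.
\end{equation*}
Subtracting these two Spencer values produces $(-1)^m \omega \wedge db \otimes_A e$ in the image, completing the argument.

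The main items requiring care are the bookkeeping of the bimodule structure on $\Omega^\bullet_d$ when shuttling factors across $\otimes_A$, and the correct sign in the graded Leibniz rule; beyond these routine details, no conceptual obstacle is anticipated.
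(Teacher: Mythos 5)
Your proposal is correct and follows essentially the same route as the paper: the commutativity of \eqref{diag:SpencerDO_wrt_low_indices} is read off directly from Definition \ref{def:Spenceroperator}, since $l^{1,n-1}_d$ is exactly the inclusion $J^n_d\subseteq J^1_dJ^{n-1}_d$ used there, and surjectivity of $\Spenc^{1,m}_d$ is obtained from the surjectivity condition of the exterior algebra by hitting degree-wise generators. The only cosmetic difference is in the epimorphism step: the paper picks a single preimage whose $\Omega^m_d$-factor is a product of exact forms (so the $d\omega$ term vanishes), whereas you keep $\omega$ generic and realize $\omega\wedge db\otimes_A e$ as a difference of two Spencer values, which is equally valid and works uniformly in every component $E$.
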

\begin{proof}
We show that $\Spenc^{1,m}_{d,A}\colon \Omega^m_dJ^1_d A\to \Omega^{m+1}_d$ is an epimorphism via the surjectivity condition, which ensures that $\Omega^{m+1}_d$ is generated by elements of the form $da_m\wedge\dots\wedge (da_1)a_0$ for $a_0,\dots a_m\in A$.
We will generate such an element as follows
\begin{equation}
\Spenc^{1,m}_{d,A}\left((-1)^m da_m\wedge \cdot \wedge da_1\otimes_A [1,a_0]\right)
=da_m\wedge\dots\wedge (da_1)a_0.
\end{equation}
The commutativity of \eqref{diag:SpencerDO_wrt_low_indices} follows from the definition of a Spencer operator, as $\Spenc^{1,m}_{d,E}(\omega\otimes_A [y,z])=d(\omega y)z$.
\end{proof}
Since $J^1_d$ is the holonomic, elemental, primitive, nonholonomic, semiholonomic, sesquiholonomic jet functor of order $1$, Lemma \ref{lemma:SpencerDO_wrt_low_indices} provides us with a way of generalizing the Spencer operators to the case of all other notions of jets, as long as that notion of jet admits a natural transformation $l^{1,n-1}_d\colon J^n_d\to J^1_d J^{n-1}_d$ compatible with the jet projections.
\subsection{Spencer operators on other jet functors}
\label{ss:Spencer operators on other jet functors}
We would further like to define Spencer operators for more general notions of jet functors than just the holonomic jets, which we treated in §\ref{ss:Spencer operators on holonomic jets}.
The case of the sesquiholonomic jet functor is particularly relevant to our subsequent developments, cf.\ Lemma \ref{lemma:sesquiholonomic_Spencer}.
In order to obtain all relevant cases with a minimum of redundancy, we gather in the following list the necessary data to generalize the results of \S\ref{ss:Spencer operators on holonomic jets}, as well as several results from the theory of jets.
\begin{enumerate}[label=(\textbf{J\arabic*})]
	\item\label{desid:Spencerable:1} A family of functors $\itJ^n_d \colon \AMod\to \AMod$ for $n\ge 0$, such that $\itJ^0_d \simeq \id_{\AMod}$ and $\itJ^1_d \simeq J^1_d$;
	\item\label{desid:Spencerable:2} A family of $A$-linear natural transformations $\itl^{1,n-1}_d\colon \itJ^n_d\to \itJ^1_d\itJ^{n-1}_d$, for $n\ge 1$, where $\itl^{1,0}_d\colonequals \id_{J^1_d}$;
	\item\label{desid:Spencerable:3} A family of $A$-linear natural transformations $\itpi^{n,n-1}_d \colonequals \pi^{1,0}_{d,\itJ^n_d}\circ \itl^{1,n-1}_d$, for $n\ge 1$.
\end{enumerate}
Under suitable conditions, the jet constructions presented in \cite{FMW} and \cite{Symbol}, are of this form.
\begin{prop}
\label{prop:Spencerable_jets}
The following collections of data satisfy \ref{desid:Spencerable:1}, \ref{desid:Spencerable:2}, \ref{desid:Spencerable:3}.
\begin{enumerate}
\item Holonomic jet functors: $J^n_d$, $l^{1,n-1}_d=l^n_d$, $\pi^{n,n-1}_d$;
\item Holonomic jet tensor functors: $J^n_d A\otimes -$, $l^{1,n-1}_{d,A}\otimes_A -=l^n_{d,A}\otimes_A -$, $\pi^{n,n-1}_{d,A}\otimes_A -$;
\item Nonholonomic jet functors: $J^{(n)}_d$, $l^{(n)}_d=\id_{J^{(n)}_d}$, $\pi^{(n,n-1)}_d$;
\item\label{prop:Spencerable_jets:4} Sesquiholonomic jet functors: $J^{\{n\}}_d$, $l^{\{1,n-1\}}_d\colonequals J^1_d(\sqhiota{n-1})\circ l^{\{n\}}_d$, $\sqhiota{n-1}\circ \pi^{(n,n-1)}_d$, where $\sqhiota{n}\colon J^n_d \hookrightarrow J^{\{n\}}_d$ is the natural inclusion.
\end{enumerate}
Further, the following also satisfy \ref{desid:Spencerable:1}, \ref{desid:Spencerable:2}, \ref{desid:Spencerable:3}, provided that $\Omega^1_d$ is flat in $\ModA$.
\begin{enumerate}[resume]
\item Semiholonomic jet functors: $J^{[n]}_d$, $l^{[n]}_d$, $\pi^{[n,n-1]}_d$;
\item Elemental jet functors: $\EJ^n_d$, $\El^{1,n-1}_d$, $\Epi^{n,n-1}_d$;
\item Primitive jet functors: $\PJ^n_d$, $\Pl^{1,n-1}_d$, $\Ppi^{n,n-1}_d$.
\end{enumerate}
\end{prop}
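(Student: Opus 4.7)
The plan is to verify \ref{desid:Spencerable:1}, \ref{desid:Spencerable:2}, \ref{desid:Spencerable:3} case by case, leveraging the jet functors and natural transformations constructed in \cite{FMW,Symbol}. For each entry, \ref{desid:Spencerable:1} is immediate from the definitions of the respective jet functors at orders $0$ and $1$; \ref{desid:Spencerable:2} requires identifying the natural transformation $\itl^{1,n-1}_d$, verifying its $A$-linearity, and checking $\itl^{1,0}_d=\id_{J^1_d}$; \ref{desid:Spencerable:3} requires showing that the prescribed composite $\pi^{1,0}_{d,\itJ^{n-1}_d}\circ\itl^{1,n-1}_d$ coincides with the canonical projection already available from prior work.

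First I would treat the four cases that do not require flatness. For holonomic jets, I take $\itl^{1,n-1}_d\colonequals l^n_d$ from \cite[\jetsdefshj]{FMW} and invoke the compatibility $\pi^{n,n-1}_d=\pi^{1,0}_{d,J^{n-1}_d}\circ l^n_d$ proved there; the case $n=1$ gives $l^1_d=\id_{J^1_d}$. The holonomic jet tensor case follows by tensoring on the right by an arbitrary $E\in\AMod$, using that $l^n_{d,A}$ and $\pi^{n,n-1}_{d,A}$ are left $A$-linear so that $l^n_{d,A}\otimes_A\id_E$ and $\pi^{n,n-1}_{d,A}\otimes_A\id_E$ are natural in $E$. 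For nonholonomic jets, the definition $J^{(n)}_d=J^1_d J^{(n-1)}_d$ forces $l^{(n)}_d=\id$, and then $\itpi^{(n,n-1)}_d=\pi^{1,0}_{d,J^{(n-1)}_d}=\pi^{(n,n-1)}_d$ by construction. The sesquiholonomic case is the most delicate of these: the natural transformation $l^{\{n\}}_d\colon J^{\{n\}}_d\to J^1_d J^{n-1}_d$ is given in \cite{FMW}, and postcomposition with $J^1_d(\sqhiota{n-1})$ lands in $J^1_d J^{\{n-1\}}_d$; the required identity \ref{desid:Spencerable:3} then reduces to $\sqhiota{n-1}\circ\pi^{(n,n-1)}_d=\pi^{1,0}_{d,J^{\{n-1\}}_d}\circ J^1_d(\sqhiota{n-1})\circ l^{\{n\}}_d$, which follows from naturality of $\pi^{1,0}_d$ in its argument together with the compatibility of $\sqhiota{n-1}$ with the jet projections.

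For the remaining three cases (semiholonomic, elemental, primitive), the relevant $l$-maps $l^{[n]}_d$, $\El^{1,n-1}_d$, $\Pl^{1,n-1}_d$ are already constructed in \cite{FMW,Symbol} under the assumption that $\Omega^1_d$ is flat in $\ModA$. Flatness enters precisely because each of these jet functors is built as a subquotient involving $\Omega^1_d\otimes_A -$, and the natural transformations $l^{1,n-1}_d$ descend to, or restrict to, these subquotients only when the functor $\Omega^1_d\otimes_A -$ preserves the relevant exact sequences. Granting these constructions, the $A$-linearity and the identity $\itpi^{n,n-1}_d=\pi^{1,0}_{d,\itJ^{n-1}_d}\circ\itl^{1,n-1}_d$ propagate from the holonomic or nonholonomic case by precomposing or postcomposing with the structural inclusions and projections defining each functor, again using naturality of $\pi^{1,0}_d$.

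The main obstacle, in my view, is the sesquiholonomic verification, since here one must carefully track how $l^{\{n\}}_d$ interacts with $\sqhiota{n-1}$ and with the nonholonomic projection $\pi^{(n,n-1)}_d$ to produce \ref{desid:Spencerable:3} as written. The flatness-dependent cases, by contrast, are essentially a bookkeeping exercise once the constructions of \cite{FMW,Symbol} are in place; the only subtlety is confirming that flatness of $\Omega^1_d$ as a right $A$-module is the sharp hypothesis needed for each of the three, rather than, say, flatness on both sides.
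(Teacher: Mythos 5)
Your proposal is correct and follows essentially the same route as the paper: each case is handled by citing the prior constructions of the jet functors with their $l$- and $\pi$-maps, the sesquiholonomic identity for \ref{desid:Spencerable:3} coming from naturality of $\pi^{1,0}_d$ with respect to $\sqhiota{n-1}$ combined with the defining relation $\pi^{\{n,n-1\}}_d=\pi^{1,0}_{d,J^{n-1}_d}\circ l^{\{n\}}_d$, and the flatness-dependent cases from the semiholonomic, elemental, and primitive results of \cite{FMW,Symbol}. The only step the paper makes explicit that you take as given is the construction of the natural inclusion $\sqhiota{n}\colon J^n_d\hookrightarrow J^{\{n\}}_d$ via the kernel universal property comparing the two defining sequences, which is harmless since the statement itself posits that inclusion.
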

\begin{proof}\
\begin{enumerate}
\item All the conditions are true by definition.
In particular, for \ref{desid:Spencerable:1} and \ref{desid:Spencerable:2}, cf.\ \cite[\jetsdefnjet]{FMW}, and \cite[\jetsdefpiiotan]{FMW}.
\item In this case, the conditions follow from the previous point restricted in the component $A$, and via the bifunctoriality of $-\otimes_A -$.
\item All three desiderata follow by definition, cf.\ \cite[\jetsdefnonholjetfunctor]{FMW} and \cite[\jetseqnhpink]{FMW}.
\item We obtain \ref{desid:Spencerable:1} by definition, cf.\ \cite[\jetsdefsesqui]{FMW}.
 The following diagram yields the essentially unique natural monomorphism $\sqhiota{n}\colon J^n_d\hookrightarrow J^{\{n\}}_d$ via the kernel universal property
\begin{equation}\label{diag:inclusion_hol_sesqui}
\begin{tikzcd}
0\ar[r]&J^n_d\ar[d,hook,dashed,"\sqhiota{n}"']\ar[r,hook,"l^{n}_d"]&J^1_d J^{n-1}_d\ar[d,equals] \ar[r,"\widetilde{\DH}_{J^{n-2}_d}\circ J^1_d(l^{n-1}_d)"]&[60pt](\Omega^1_d\ltimes \Omega^2_d)J^{n-2}_d\ar[d,two heads]\\
0\ar[r]&J^{\{n\}}_d\ar[r,hook,"l^{\{n\}}_d"]&J^1_d J^{n-1}_d \ar[r,"\widetilde{\DH}^I_{J^{n-2}_d}\circ J^1_d(l^{n-1}_d)"]&\Omega^1_d J^{n-2}_d
\end{tikzcd}
\end{equation}
where the rightmost vertical map is the natural projection of $\Omega^1_d\ltimes \Omega^2_d$ onto its first component.
Notice that we can set $\sqhiota{0}=\id$ and $\sqhiota{1}=\id_{J^1_d}$.

We can now define the maps for \ref{desid:Spencerable:2} via $l^{\{n\}}_d$, cf.\ \cite[\jetsdefsesqui]{FMW}, as
\begin{equation}\label{eq:defi_sesqui_l}
l^{\{1,n-1\}}_d\colonequals J^1_d(\sqhiota{n-1})\circ l^{\{n\}}_d \colon J^{\{n\}}_d\longrightarrow J^1_d J^{\{n-1\}}_d.
\end{equation}
In low dimension we have $l^{\{1,0\}}_d=\id_{J^1_d}$ by definition.

Finally, we obtain the maps for \ref{desid:Spencerable:3} as
\begin{equation}
\sqhiota{n}\circ\pi^{\{n,n-1\}}_d\colon J^{\{n\}}_d \hookrightarrow J^{\{n-1\}}_d.
\end{equation}
The required property is given by the following commutative diagram
\begin{equation}
\begin{tikzcd}[column sep=60pt,row sep=20pt]
J^{\{n\}}_d\ar[rr,bend left=15pt,"\pi^{\{n,n-1\}}_d"]\ar[r,near end,hook,"l^{\{n\}}_d"']\ar[dr,"l^{\{1,n-1\}}_d"']&J^1_d J^{\{n-1\}}_d\ar[d,"J^1_d(\sqhiota{n-1})"]\ar[r,two heads,"\pi^{1,0}_{d,J^{n-1}_d}"']&J^{n-1}_d\ar[d,hook,"\sqhiota{n-1}"]\\
&J^1_d J^{n-1}_d\ar[r,two heads,"\pi^{1,0}_{d,J^{\{n-1\}}_d}"']&J^{\{n-1\}}_d
\end{tikzcd}
\end{equation}
The top triangle commutes by definition, cf.\ \cite[\jetsdefisesquisequence]{FMW}.
The left triangle commutes by definition, cf.\ \eqref{eq:defi_sesqui_l}, and finally the right square commutes by naturality of $\pi^{1,0}_d$ with respect to $\sqhiota{n-1}$.
\item We obtain \ref{desid:Spencerable:1} by definition, cf.\ \cite[\jetsdefshj]{FMW} and \cite[\jetsrmklowsemihol]{FMW}.
If $\Omega^1_d$ is flat in $\ModA$, then we obtain \ref{desid:Spencerable:2} by \cite[\jetstheosjchar]{FMW}, while for \ref{desid:Spencerable:3} it is a consequence of the commutativity of the following diagram because by definition $\pi^{[n,n-1]}_d$ is the unique restriction of each, and hence of all, the morphisms $\pi^{(n,n-1;m)}_d$ to $J^{[n]}_d$.
\begin{equation}\label{diag:semihol_projection_is_J3}
\begin{tikzcd}[column sep=40pt, row sep=20pt]
J^{[n]}_d\ar[r,hook,"l^{[n]}_d"]\ar[d,hook,"\iota_{J^{[n]}_d}"']&J^1_d J^{[n-1]}_d\ar[r,two heads,"\pi^{1,0}_{d,J^{[n-1]}_d}"]\ar[d,hook,"J^1_d(\iota_{J^{[n-1]}_d})"']&J^{[n-1]}_d\ar[d,hook,"\iota_{J^{[n-1]}_d}"]\\
J^{(n)}_d\ar[r,equals,"l^{(n)}_d"]\ar[rr,bend right=15pt,two heads,"\pi^{(n,n-1)}_d"']&J^1_d J^{(n-1)}_d\ar[r,two heads,"\pi^{1,0}_{d,J^{(n-1)}_d}"]&J^{(n-1)}_d
\end{tikzcd}
\end{equation}
The left square of \eqref{diag:semihol_projection_is_J3} commutes by \cite[\jetseqdecsjn]{FMW}, and the right square commutes by naturality of $\pi^{1,0}_d$ with respect to $\iota^{[n]}_d$.
By uniqueness, it follows that $\pi^{[n,n-1]}_d=\pi^{1,0}_{d,J^{[n-1]}_d}$.
\item We obtain \ref{desid:Spencerable:1} by \cite[\symbolspropelementaljetproperties]{Symbol}, and \ref{desid:Spencerable:2} by \cite[\symbolslemmaelementalsmmtwo]{Symbol}.
Since $\Epi^{n,n-1}_d$ is defined as the restriction of $\pi^{n,n-1}_d$ to $\EJ^n$, we obtain \ref{desid:Spencerable:3}, \textit{mutatis mutandis}, as in the proof for the semiholonomic analogue, where, for instance, $\EJ^n_d$ takes the r\^{o}le of $J^{[n]}_d$, and $J^{n}_d$ takes the r\^{o}le of $J^{(n)}_d$.
\item Under the given hypotheses, elemental and primitive jets coincide, cf.\ \cite[\symbolslemmaskelprimitivecompare]{Symbol}.\qedhere
\end{enumerate}
\end{proof}

Whenever we have \ref{desid:Spencerable:1}, \ref{desid:Spencerable:2}, and \ref{desid:Spencerable:3}, we can define Spencer operators as follows.
\begin{defi}\label{def:generalised_Spencer}
We define the \emph{Spencer differential operator} as the map
\begin{equation}
\itSpenc^{n,m}_d\colonequals \Spenc^{1,m}_{d,\itJ^{n-1}_d}\circ \Omega^m_d(\itl^{1,n-1}_d)\colon \Omega^m_d \itJ^n_d\longrightarrow\Omega^{m+1}_d \itJ^{n-1}_d.
\end{equation}
We will denote the Spencer operators for the cases of Proposition \ref{prop:Spencerable_jets} as the corresponding notation, namely $\Spenc^{n,m}_d$, $\Spenc^{n,m}_{d,A}\otimes_A -$, $\Spenc^{(n,m)}_d$, $\Spenc^{\{n,m\}}_d$, $\Spenc^{[n,m]}_d$, $\ESpenc^{n,m}_d$, $\PSpenc^{n,m}_d$.
When necessary, we will refer to them by appending the corresponding adjective in front of Spencer operator.
\end{defi}
We will now prove that whenever we have \ref{desid:Spencerable:1}, \ref{desid:Spencerable:2}, and \ref{desid:Spencerable:3}, we automatically obtain the results of §\ref{ss:Spencer operators on holonomic jets}.
\begin{prop}
	The following results hold whenever we have \ref{desid:Spencerable:1}, \ref{desid:Spencerable:2}, and \ref{desid:Spencerable:3}.
	\begin{enumerate}
		\item $\itSpenc_{d,E}^{n,m}(\omega \otimes_A \xi)= d\omega \otimes_A \itpi^{n,n-1}_{d,E} (\xi) + (-1)^{\deg(\omega)} \omega \wedge \itSpenc^{n,0}_{d,E}(\xi)$, for all $\omega\otimes_A \xi\in \Omega^m_d \itJ^n_dE$.
		\item $\itSpenc^{n,m}_d$ is a natural differential operator of order at most $1$ with restriction symbol $\wedge^{1,m}_d\otimes \itpi^{n,n-1}_d$.
	\end{enumerate}
\end{prop}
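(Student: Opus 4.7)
My plan is to reduce both items to known facts about the holonomic Spencer operator $\Spenc^{1,m}_d$, exploiting the defining identity $\itSpenc^{n,m}_d=\Spenc^{1,m}_{d,\itJ^{n-1}_d}\circ\Omega^m_d(\itl^{1,n-1}_d)$ of Definition \ref{def:generalised_Spencer}, together with the compatibility $\pi^{1,0}_{d,\itJ^{n-1}_d}\circ\itl^{1,n-1}_d=\itpi^{n,n-1}_d$ from \ref{desid:Spencerable:3}. Both item (1) (the Leibniz-type formula) and item (2) (differential order and restriction symbol) become one-line arguments once this machinery is in place.

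For (1), I would evaluate the definition on $\omega\otimes_A \xi\in\Omega^m_d\itJ^n_dE$ and then apply Proposition \ref{prop:Spencercorrespondence} to the holonomic Spencer operator $\Spenc^{1,m}_{d,\itJ^{n-1}_dE}$ at the argument $\omega\otimes_A\itl^{1,n-1}_{d,E}(\xi)\in \Omega^m_dJ^1_d\itJ^{n-1}_dE$. This produces the sum
\begin{equation*}
d\omega\otimes_A \pi^{1,0}_{d,\itJ^{n-1}_dE}\bigl(\itl^{1,n-1}_{d,E}(\xi)\bigr)+(-1)^{\deg\omega}\,\omega\wedge \Spenc^{1,0}_{d,\itJ^{n-1}_dE}\bigl(\itl^{1,n-1}_{d,E}(\xi)\bigr).
\end{equation*}
By \ref{desid:Spencerable:3} the first summand is $d\omega\otimes_A \itpi^{n,n-1}_{d,E}(\xi)$, and by the $m=0$ case of Definition \ref{def:generalised_Spencer} the last factor of the second summand is precisely $\itSpenc^{n,0}_{d,E}(\xi)$, which is the desired identity.

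For (2), I would factor $\itSpenc^{n,m}_d$ as the composition of $\Spenc^{1,m}_{d,\itJ^{n-1}_d}$, a natural differential operator of order at most $1$ by Proposition \ref{prop:spencopsymbol}, with $\Omega^m_d(\itl^{1,n-1}_d)$, which is $A$-linear by \ref{desid:Spencerable:2} and hence a natural differential operator of order $0$ by Corollary \ref{cor:natDO_order_0,1}.\eqref{cor:natDO_order_0,1:0}. Proposition \ref{prop:properties_natDOs}.\eqref{prop:composition_natDOs_is_natDO} then gives at once that the composite has order at most $1$. To read off the restriction symbol, I would use that for the $A$-linear factor one obtains a natural lift of $\itSpenc^{n,m}_d$ simply as $\widetilde{\Spenc}^{1,m}_{d,\itJ^{n-1}_d}\circ J^1_d\bigl(\Omega^m_d(\itl^{1,n-1}_d)\bigr)$; precomposing with $\iota^1_d$ and invoking its naturality collapses the computation to $\symb^1_d(\Spenc^{1,m}_d)_{\itJ^{n-1}_d}\circ\bigl(\id_{\Omega^1_d}\otimes_A \Omega^m_d(\itl^{1,n-1}_d)\bigr)$. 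Substituting the holonomic symbol $\wedge^{1,m}_d\otimes_A\pi^{1,0}_{d,\itJ^{n-1}_d}$ from Proposition \ref{prop:spencopsymbol} and applying \ref{desid:Spencerable:3} one last time yields exactly $\wedge^{1,m}_d\otimes_A\itpi^{n,n-1}_d$.

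No genuine obstacle is expected: Definition \ref{def:generalised_Spencer} was engineered, via Lemma \ref{lemma:SpencerDO_wrt_low_indices}, precisely so that $\itl^{1,n-1}_d$ absorbs all the structural differences between the various flavours of jets, leaving only the holonomic Spencer operator to do the actual differential-geometric work. The main care required is purely bookkeeping, namely keeping straight which module each holonomic Spencer operator is being evaluated on and on which tensor factor each $A$-linear morphism operates.
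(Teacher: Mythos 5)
Your proposal is correct and follows essentially the same route as the paper: item (1) by evaluating the defining composition and invoking Proposition \ref{prop:Spencercorrespondence} together with \ref{desid:Spencerable:3}, and item (2) by combining Proposition \ref{prop:spencopsymbol}, Corollary \ref{cor:natDO_order_0,1}, and Proposition \ref{prop:properties_natDOs} with the lift $\widetilde{\Spenc}^{1,m}_{d,\itJ^{n-1}_d}\circ J^1_d\Omega^m_d(\itl^{1,n-1}_d)$. The paper packages your symbol computation into one commutative diagram, but the content is identical.
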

\begin{proof}\
	\begin{enumerate}
		\item We prove this equality by a direct computation and via Proposition \ref{prop:Spencercorrespondence} and \ref{desid:Spencerable:3}
		\begin{equation}
		\begin{split}
			\itSpenc^{n,m}_{d,E}(\omega \otimes_A \xi)
			&=\Spenc^{1,m}_{d,\itJ^{n-1}_d E}\circ \Omega^m_d(\itl^{1,n-1}_{d,E})(\omega\otimes_A \xi)\\
			&=\Spenc^{1,m}_{d,\itJ^{n-1}_d E}(\omega\otimes_A \itl^{1,n-1}_{d,E}(\xi))\\
			&= d\omega \otimes_A \pi^{1,0}_{d,\itJ^{n-1}_d E}\circ \itl^{1,n-1}_{d,E} (\xi) + (-1)^{\deg(\omega)} \omega \wedge \Spenc^{1,0}_{d,\itJ^{n-1}_d E}\circ \itl^{1,n-1}_{d,E}(\xi)\\
			&= d\omega \otimes_A \itpi^{n,n-1}_{d,E} (\xi) + (-1)^{\deg(\omega)} \omega \wedge \itSpenc^{n,0}_{d,E}(\xi).
		\end{split}
		\end{equation}
		\item By definition, $\itSpenc^{n,m}_d\colonequals \Spenc^{1,m}_{d,\itJ^{n-1}_d}\circ \Omega^m_d(\itl^{1,n-1}_d)$.
		By Proposition \ref{prop:spencopsymbol} we know that $\Spenc^{1,m}_d$ is a natural differential operator of order at most $1$, and since it is $A$-linear, $\Omega^m_d(\itl^{1,n-1}_d)$ is a natural differential operator of order $0$, cf.\ Corollary \ref{cor:natDO_order_0,1}.\eqref{cor:natDO_order_0,1:0}.
		By Proposition \ref{prop:properties_natDOs}.\eqref{prop:composition_natDOs_is_natDO}, the composition is also a natural differential operator of order at most $1$.
		
		In order to compute the restriction symbol of $\itSpenc^{n,m}_d$, we consider the following diagram
		\begin{equation}
		\label{diag:acorn}
		\begin{tikzcd}[column sep=70pt,row sep=30pt]
			\Omega^1_d\Omega^m_d\itJ^n_d\ar[drr,bend left=55pt,"\wedge^{1,m}\otimes_A\itpi^{n,n-1}_d"'] \ar[d,hook,"\iota^1_{d,\Omega^m_d\itJ^n_d}"']\ar[r,"\Omega^1_d\Omega^m_d(\itl^{1,n-1}_d)"]&\Omega^1_d\Omega^m_d J^1_d \itJ^{n-1}_d\ar[d,hook,"\iota^1_{d,\Omega^m_dJ^1_d \itJ^{n-1}_d}"']\ar[dr,near start,two heads,"\wedge^{1,m}\otimes_A \pi^{1,0}_{d,\itJ^{n-1}_d}"]\\
			J^1_d\Omega^m_d\itJ^n_d \ar[r,"J^1_d\Omega^m_d(\itl^{1,n-1}_d)"]&J^1_d\Omega^m_d J^1_d \itJ^{n-1}_d\ar[r,near start,"\widetilde{\Spenc}^{1,m}_{d,\itJ^{n-1}_d}"]&\Omega^{m+1}_d \itJ^{n-1}_d\\
			\Omega^m_d\itJ^n_d\ar[urr,bend right=55pt,"\itSpenc^{n,m}_d"]\ar[u,hook,"j^1_{d,\Omega^m_d\itJ^n_d}"] \ar[r,"\Omega^m_d(\itl^{1,n-1}_d)"']&\Omega^m_d J^1_d \itJ^{n-1}_d\ar[u,hook,"j^1_{d,\Omega^m_dJ^1_d \itJ^{n-1}_d}"]\ar[ur,near start,"\Spenc^{1,m}_{d, \itJ^{n-1}_d}"']
		\end{tikzcd}
		\end{equation}
		The top and bottom squares of \eqref{diag:acorn} commute by naturality with respect to $\Omega^m_d(\itl^{1,n-1}_d)$ of $\iota^1_d$ and $j^1_d$, respectively.
		The internal top right triangle commutes by Proposition \ref{prop:spencopsymbol}, and the internal bottom right triangle commutes by definition of lift of a differential operator.
		The top triangle commutes because of \ref{desid:Spencerable:3}, since
		\begin{equation}
			\wedge^{1,m}\otimes_A\pi^{1,0}_{d,\itJ^{n-1}_d}\circ \Omega^1_d\Omega^m_d(\itl^{1,n-1}_d)
			=\wedge^{1,m}\otimes_A(\pi^{1,0}_{d,\itJ^{n-1}_d}\circ \itl^{1,n-1}_d)
			=\wedge^{1,m}\otimes_A\itpi^{n,n-1}_d.
		\end{equation}
		The bottom triangle commutes by definition of $\itSpenc^{n,m}_d$.
		Diagram \eqref{diag:acorn}, proves that the lift of $\itSpenc^{n,m}_d$ to $J^1_d\Omega^m_d \itJ^n_d$ is $\widetilde{\Spenc}^{1,m}_{d,\itJ^{n-1}_d}\circ J^1_d\Omega^m_d(\itl^{1,n-1}_d)$, and also that its restriction symbol is $\wedge^{1,m}\otimes_A\itpi^{n,n-1}_d$.\qedhere
	\end{enumerate}
\end{proof}
The construction of the Spencer operator from the data of \ref{desid:Spencerable:1}, \ref{desid:Spencerable:2}, and \ref{desid:Spencerable:3} is functorial, in a sense made more precise by the following result.
\begin{lemma}
\label{lemma:functoriality_Spencer_construction}
	Let $\dot{J}^n_d$ and $\ddot{J}^n_d$ be two families of functors as in \ref{desid:Spencerable:1}, and let $\dot{l}^{1,n-1}_d$ and $\ddot{l}^{1,n-1}_d$ be their respective natural transformations as in \ref{desid:Spencerable:2}.
	Let $\alpha^n\colon\dot{J}^n_d\to \ddot{J}^n_d$ be a family of $A$-linear natural transformations such that
	\begin{enumerate}
		\item\label{lemma:functoriality_Spencer_construction:1} $\alpha^0=\id$ and $\alpha^1=\id_{J^1_d}$.
		\item\label{lemma:functoriality_Spencer_construction:2} the following diagram commutes for all $n\ge 1$
		\begin{equation}
		\label{diag:l_compatible_with_alphas}
		\begin{tikzcd}[column sep=40pt,row sep=20pt]
			\dot{J}^n_d\ar[d,"\alpha^n"']\ar[r,"\dot{l}^{1,n-1}_d"]&J^1_d\dot{J}^{n-1}_d\ar[d,"J^1_d(\alpha^{n-1})"]\\
			\ddot{J}^n_d\ar[r,"\ddot{l}^{1,n-1}_d"]&J^1_d\ddot{J}^{n-1}_d
		\end{tikzcd}
		\end{equation}
	\end{enumerate}
	Then the natural transformations $\alpha^n$ are compatible with $\dot{\pi}^{n,n-1}_d$ as given in \ref{desid:Spencerable:3} and with the Spencer operators as given in Definition \ref{def:generalised_Spencer}, i.e.\ the following diagrams commute for all $n\ge 1$ and $m\ge 0$.
	\begin{equation}
	\label{diag:pi_and_Spencer_functorial}
		\begin{tikzcd}[column sep=40pt,row sep=20pt]
			\dot{J}^n_d\ar[d,"\alpha^n"']\ar[r,"\dot{\pi}^{n,n-1}_d"]&\dot{J}^{n-1}_d\ar[d,"\alpha^{n-1}"]&\Omega^m_d\dot{J}^n_d\ar[d,"\Omega^m_d(\alpha^n)"']\ar[r,"\dot{\Spenc}^{n,m}_d"]&\Omega^{m+1}_d\dot{J}^{n-1}_d\ar[d,"\Omega^{m+1}_d(\alpha^{n-1})"]\\
			\ddot{J}^n_d\ar[r,"\ddot{\pi}^{n,n-1}_d"]&\ddot{J}^{n-1}_d&\Omega^m_d\ddot{J}^n_d\ar[r,"\ddot{\Spenc}^{n,m}_d"]&\Omega^{m+1}_d\ddot{J}^{n-1}_d
		\end{tikzcd}
	\end{equation}
\end{lemma}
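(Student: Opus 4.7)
My plan is to derive both commuting diagrams of \eqref{diag:pi_and_Spencer_functorial} by unpacking the definitions given in \ref{desid:Spencerable:3} and Definition \ref{def:generalised_Spencer}, and then combining the hypothesis \eqref{diag:l_compatible_with_alphas} with the naturality of $\pi^{1,0}_d$ and of the primary Spencer operator $\Spenc^{1,m}_d$ with respect to $\alpha^{n-1}$. No direct computation on elements should be needed; the argument is a pure diagram chase at the level of natural transformations.

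For the left square of \eqref{diag:pi_and_Spencer_functorial}, the idea is to expand $\dot{\pi}^{n,n-1}_d=\pi^{1,0}_{d,\itJ^{n-1}_d}\circ \dot{l}^{1,n-1}_d$ and $\ddot{\pi}^{n,n-1}_d=\pi^{1,0}_{d,\ddot{J}^{n-1}_d}\circ \ddot{l}^{1,n-1}_d$ according to \ref{desid:Spencerable:3}. Naturality of $\pi^{1,0}_d$ with respect to $\alpha^{n-1}$ then exchanges $\alpha^{n-1}\circ\pi^{1,0}_{d,\itJ^{n-1}_d}$ for $\pi^{1,0}_{d,\ddot{J}^{n-1}_d}\circ J^1_d(\alpha^{n-1})$, and the resulting middle factor $J^1_d(\alpha^{n-1})\circ \dot{l}^{1,n-1}_d$ is replaced by $\ddot{l}^{1,n-1}_d\circ \alpha^n$ using precisely the hypothesis \eqref{diag:l_compatible_with_alphas}.

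For the right square, I would reason in the same spirit using the definition $\dot{\Spenc}^{n,m}_d=\Spenc^{1,m}_{d,\itJ^{n-1}_d}\circ \Omega^m_d(\dot{l}^{1,n-1}_d)$ (and analogously for $\ddot{\Spenc}^{n,m}_d$). Naturality of $\Spenc^{1,m}_d$ with respect to $\alpha^{n-1}$ yields
\begin{equation}
\Omega^{m+1}_d(\alpha^{n-1})\circ \Spenc^{1,m}_{d,\itJ^{n-1}_d}=\Spenc^{1,m}_{d,\ddot{J}^{n-1}_d}\circ \Omega^m_d J^1_d(\alpha^{n-1}),
\end{equation}
after which functoriality of $\Omega^m_d$ allows me to combine $\Omega^m_d J^1_d(\alpha^{n-1})\circ \Omega^m_d(\dot{l}^{1,n-1}_d)=\Omega^m_d(J^1_d(\alpha^{n-1})\circ \dot{l}^{1,n-1}_d)$, and applying \eqref{diag:l_compatible_with_alphas} inside the $\Omega^m_d$ reduces this to $\Omega^m_d(\ddot{l}^{1,n-1}_d)\circ \Omega^m_d(\alpha^n)$, which is precisely what is needed to recognise $\ddot{\Spenc}^{n,m}_d\circ \Omega^m_d(\alpha^n)$.

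I do not foresee a serious obstacle: the argument is essentially a bookkeeping exercise, relying on naturality in the right variable and on the bifunctoriality of $\Omega^m_d(-)$ applied to a natural transformation of jet-type functors. The only mildly subtle point is ensuring that the naturality of $\Spenc^{1,m}_d$ is interpreted correctly, namely as naturality of a natural transformation of functors $\AMod\to \AMod$ applied to the natural transformation $\alpha^{n-1}$; this is legitimate because $\Spenc^{1,m}_d$ was constructed as a natural transformation (so its components satisfy a square for every morphism, in particular for each component of $\alpha^{n-1}$), and the verification can then be carried out componentwise.
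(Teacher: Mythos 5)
Your proposal is correct and follows essentially the same route as the paper: both unpack $\dot{\pi}^{n,n-1}_d$ and $\dot{\Spenc}^{n,m}_d$ via \ref{desid:Spencerable:3} and Definition \ref{def:generalised_Spencer}, then combine the hypothesis \eqref{diag:l_compatible_with_alphas} (applied under $\Omega^m_d$ where needed) with the componentwise naturality of $\pi^{1,0}_d$ and $\Spenc^{1,m}_d$ with respect to $\alpha^{n-1}$. Your closing remark about interpreting that naturality componentwise is exactly the reading the paper relies on, so there is no gap.
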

\begin{proof}
First, observe that conditions \eqref{lemma:functoriality_Spencer_construction:1} and \eqref{lemma:functoriality_Spencer_construction:2} are compatible, since when $n=1$, the diagram \eqref{diag:l_compatible_with_alphas} commutes.

In order to prove the commutativity of the left diagram of \eqref{diag:pi_and_Spencer_functorial}, we consider the following diagram, which commutes by definition of the maps \ref{desid:Spencerable:3}, by \eqref{diag:l_compatible_with_alphas}, and by naturality of $\pi^{1,0}_d$ with respect to $\alpha^{n-1}$.
 \begin{equation}
		\begin{tikzcd}[column sep=50pt,row sep=30pt]
			\dot{J}^n_d\ar[rr,bend left=15pt,"\dot{\pi}^{n,n-1}_d"]\ar[d,"\alpha^n"']\ar[r,"\dot{l}^{1,n-1}_d"']&J^1_d\dot{J}^{n-1}_d\ar[d,"J^1_d(\alpha^{n-1})"']\ar[r,two heads,"\pi^{1,0}_{d,\dot{J}^{n-1}_d}"']&\dot{J}^{n-1}_d\ar[d,"\alpha^{n-1}"]\\
			\ddot{J}^n_d\ar[rr,bend right=15pt,"\ddot{\pi}^{n,n-1}_d"']\ar[r,"\ddot{l}^{1,n-1}_d"]&J^1_d\ddot{J}^{n-1}_d\ar[r,two heads,"\pi^{1,0}_{d,\ddot{J}^{n-1}_d}"]&\ddot{J}^{n-1}_d
		\end{tikzcd}
		\end{equation}

In order to prove the commutativity of the right square in \eqref{diag:pi_and_Spencer_functorial} instead we consider the following diagram
 \begin{equation}
		\begin{tikzcd}[column sep=50pt,row sep=30pt]
			\Omega^m_d\dot{J}^n_d\ar[rr,bend left=15pt,"\dot{\Spenc}^{n,m}_d"]\ar[d,"\Omega^m_d(\alpha^n)"']\ar[r,"\Omega^m_d(\dot{l}^{1,n-1}_d)"']&\Omega^m_dJ^1_d\dot{J}^{n-1}_d\ar[d,"\Omega^m_d J^1_d(\alpha^{n-1})"']\ar[r,two heads,"\Spenc^{1,m}_{d,\dot{J}^{n-1}_d}"']&\Omega^{m+1}_d\dot{J}^{n-1}_d\ar[d,"\Omega^{m+1}_d(\alpha^{n-1})"]\\
			\Omega^m_d\ddot{J}^n_d\ar[rr,bend right=15pt,"\ddot{\Spenc}^{n,n-1}_d"']\ar[r,"\Omega^m_d(\ddot{l}^{1,n-1}_d)"]&\Omega^m_dJ^1_d\ddot{J}^{n-1}_d\ar[r,two heads,"\Spenc^{1,m}_{d,\ddot{J}^{n-1}_d}"]&\Omega^{m+1}_d\ddot{J}^{n-1}_d
		\end{tikzcd}
		\end{equation}
		The top and bottom triangles commute by definition of the Spencer operators.
		The left square commutes by applying the functor $\Omega^m_d$ to \eqref{diag:l_compatible_with_alphas}, and the right square commutes by naturality of $\Spenc^{m,1}_d$ with respect to $\alpha^{n-1}$.
\end{proof}
By using Lemma \ref{lemma:functoriality_Spencer_construction}, we can now prove the relations between the Spencer operators corresponding to the jet functors of Proposition \ref{prop:Spencerable_jets}.
\begin{prop}
\label{prop:relations_Spencer_ops}
	The following natural transformations $\alpha^n\colon \dot{J}^n_d\to \ddot{J}^n_d$ commute with the corresponding Spencer operators, i.e.\ the following diagram commutes.
		\begin{equation}
	\label{diag:Spencer_functorial}
		\begin{tikzcd}[column sep=40pt,row sep=20pt]
			\Omega^m_d\dot{J}^n_d\ar[d,"\Omega^m_d(\alpha^n)"']\ar[r,"\dot{\Spenc}^{n,m}_d"]&\Omega^{m+1}_d\dot{J}^{n-1}_d\ar[d,"\Omega^{m+1}_d(\alpha^{n-1})"]\\
			\Omega^m_d\ddot{J}^n_d\ar[r,"\ddot{\Spenc}^{n,m}_d"]&\Omega^{m+1}_d\ddot{J}^{n-1}_d
		\end{tikzcd}
	\end{equation}
	\begin{enumerate}
		\item\label{prop:relations_Spencer_ops:1} $\alpha^n=\gamma^n_d\colon J^n_d A\otimes_A -\to J^n_d$, cf.\ \cite[\jetsproptensorcomparison]{FMW};
		\item\label{prop:relations_Spencer_ops:2} $\alpha^n=\iota_{J^n_d}\colon J^n_d\to J^{(n)}_d$, cf.\ \cite[\jetseqiotajn]{FMW};
		\item\label{prop:relations_Spencer_ops:3} $\alpha^n=\sqhiota{n}\colon J^n_d\hookrightarrow J^{\{n\}}_d$, cf.\ Proposition \ref{prop:Spencerable_jets}.\eqref{prop:Spencerable_jets:4}.
	\end{enumerate}
	If we also assume $\Omega^1_d$ flat in $\ModA$, we also obtain \eqref{diag:Spencer_functorial} for the following natural transformations
	\begin{enumerate}[resume]
		\item\label{prop:relations_Spencer_ops:4} $\alpha^n=\iota_{J^{[n]}_d}\colon J^{[n]}_d\hookrightarrow J^{(n)}_d$, cf.\ \cite[\jetsdefshj]{FMW};
		\item\label{prop:relations_Spencer_ops:5} $\alpha^n=h^n_d\colon J^n_d\hookrightarrow J^{[n]}_d$, cf.\ \cite[\jetspropholinsemi]{FMW};
		\item\label{prop:relations_Spencer_ops:6} $\alpha^n=\iota_{\EJ^{n}_d}\colon \EJ^{n}_d\hookrightarrow J^{n}_d$, cf.\ \cite[\symbolseqphatnepimonocomponents]{Symbol};
		\item\label{prop:relations_Spencer_ops:7} $\alpha^n=\Pphat^n_d\colon\EJ^{n}_d\xrightarrow{\sim} \PJ^{n}_d$, cf.\ \cite[\symbolseqskelprimitivecompare]{Symbol}, and in particular, the natural isomorphisms induce an isomorphism between the Spencer operators.
	\end{enumerate}
\end{prop}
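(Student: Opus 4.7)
The strategy is to apply Lemma \ref{lemma:functoriality_Spencer_construction} to each of the seven natural transformations $\alpha^n$ in turn. For each case we need only verify the two hypotheses of that lemma: (a) $\alpha^0 = \id$ and $\alpha^1 = \id_{J^1_d}$, and (b) the square \eqref{diag:l_compatible_with_alphas} relating $\alpha^n$ to the $\dot{l}^{1,n-1}_d$ and $\ddot{l}^{1,n-1}_d$ commutes. Granted these, the right square of \eqref{diag:pi_and_Spencer_functorial} gives precisely the desired square \eqref{diag:Spencer_functorial}, so no further computation with the Spencer operators themselves is required.

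The verification in low degree is immediate in each case, because all the functors $J^n_d$, $J^{(n)}_d$, $J^{\{n\}}_d$, $J^{[n]}_d$, $\EJ^n_d$, $\PJ^n_d$, and $J^n_d A \otimes_A -$ are (canonically isomorphic to) $\id_{\AMod}$ at $n=0$ and to $J^1_d$ at $n=1$, and all the natural transformations in the list are constructed so as to reduce to the identity in these degrees. The $l$-compatibility in (b) is the nontrivial input, and I would treat each case by quoting the construction of $\alpha^n$ from the relevant reference:
\begin{enumerate}
\item For $\gamma^n_d$, combine \cite[\jetsproptensorcomparison]{FMW} with bifunctoriality of $-\otimes_A -$ and the corresponding identity for holonomic $l^n_d$.
\item For $\iota_{J^n_d}$, the diagram is the defining relation \cite[\jetseqiotajn]{FMW} between the holonomic and nonholonomic $l$.
\item For $\sqhiota{n}$, the diagram is already embedded in the construction of $l^{\{1,n-1\}}_d$ in Proposition \ref{prop:Spencerable_jets}.\eqref{prop:Spencerable_jets:4}, namely the defining formula \eqref{eq:defi_sesqui_l}.
\item For $\iota_{J^{[n]}_d}$, it comes from the definition of $l^{[n]}_d$ as the restriction of $l^{(n)}_d$ to $J^{[n]}_d$, cf.\ \cite[\jetsdefshj]{FMW}.
\item For $h^n_d$, the compatibility with $l^{[n]}_d$ follows from \cite[\jetspropholinsemi]{FMW}.
\item For $\iota_{\EJ^n_d}$, the compatibility is \cite[\symbolslemmaelementalsmmtwo]{Symbol}, which in particular says that $\El^{1,n-1}_d$ is the restriction of $l^{1,n-1}_d$ to elemental jets.
\item For $\Pphat^n_d$, we invoke \cite[\symbolseqskelprimitivecompare]{Symbol}: since $\Pphat^n_d$ is a natural isomorphism intertwining $\El^{1,n-1}_d$ and $\Pl^{1,n-1}_d$, condition (b) holds; the fact that $\Omega^m_d$ preserves isomorphisms then shows that the corresponding Spencer operators are intertwined by an isomorphism.
\end{enumerate}
For the cases (4)--(7) the flatness assumption on $\Omega^1_d$ in $\ModA$ is precisely what is needed to make the jet data in Proposition \ref{prop:Spencerable_jets} available for the semiholonomic, elemental, and primitive functors.

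The main obstacle is bookkeeping: for each $\alpha^n$ one must identify which prior result ensures that the square relating $\dot{l}^{1,n-1}_d$, $\ddot{l}^{1,n-1}_d$, and $\alpha^{n-1}$ commutes, and in some cases (such as (3) and (4)) this square is actually built into the very definition of $\dot{l}^{1,n-1}_d$, so the verification amounts to unfolding definitions and citing the correct lemma. I do not expect any genuinely new computations; the only risk is a sign or indexing mismatch, which is easily guarded against by also checking directly the $n=1$ base case where $\dot{l}^{1,0}_d = \ddot{l}^{1,0}_d = \id_{J^1_d}$ and hence \eqref{diag:l_compatible_with_alphas} trivially commutes.
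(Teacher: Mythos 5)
Your proposal is correct and follows essentially the same route as the paper: you verify the two hypotheses of Lemma \ref{lemma:functoriality_Spencer_construction} for each $\alpha^n$, with the low-degree condition holding by construction and the $l$-compatibility square \eqref{diag:l_compatible_with_alphas} established case by case by citing the defining constructions or the external results, which is exactly what the paper does. The only slight imprecision is in case (3), where beyond the defining formula \eqref{eq:defi_sesqui_l} one also needs the left square of \eqref{diag:inclusion_hol_sesqui} (i.e.\ $l^{\{n\}}_d\circ \sqhiota{n}=l^{n}_d$), which is the ingredient the paper composes with $J^1_d(\sqhiota{n-1})$ to get the required square.
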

\begin{proof}
The proof for each map consists in verifying that the family of maps considered satisfies the assumptions of Lemma \ref{lemma:functoriality_Spencer_construction}.
Notice that all of the maps considered satisfy the condition Lemma \ref{lemma:functoriality_Spencer_construction}.\eqref{lemma:functoriality_Spencer_construction:1}, essentially by definition.
Now we will prove that Lemma \ref{lemma:functoriality_Spencer_construction}.\eqref{lemma:functoriality_Spencer_construction:2} is satisfied for each map.
\begin{enumerate}
	\item Cf.\ \cite[\jetsproptensorcomparison]{FMW}.
	\item This follows by construction, cf.\ \cite[\jetseqiotajn]{FMW}.
	\item It follows from composing both sides of the left commutative square in \eqref{diag:inclusion_hol_sesqui} with $J^1_d(\sqhiota{n-1})$.
	\item Cf.\ \cite[\jetseqdecsjn]{FMW}.
	\item Cf.\ \cite[\jetspropsemiholcpxfact]{FMW}.
	\item Cf.\ \cite[\symbolslemmaelementalsmmtwo]{Symbol}.
	\item It follows from \cite[\symbolslemmaprimitivesmm]{Symbol} because $\PJ^1_d=J^1_d$.\qedhere
\end{enumerate}
\end{proof}
\begin{rmk}
If we assume $\Omega^1_d$ to be flat in $\ModA$, the natural transformations $\alpha^n$ involved in Proposition \ref{prop:relations_Spencer_ops}, with the exception of $\gamma^n_d$, are all monomorphisms (for \eqref{prop:relations_Spencer_ops:2}, cf.\ \cite[\jetsrmkiotaholinj]{FMW}).
Furthermore, if for any given $m$ we assume $\Omega^m_d$ and $\Omega^{m+1}_d$ are flat in $\ModA$, then the homonymous functors preserve monomorphisms.
Thus, for a pair of functors related by a monomorphism $\dot{J}^n_d \hookrightarrow \ddot{J}^n_d$ as in Proposition \ref{prop:relations_Spencer_ops}, the corresponding Spencer operator on the domain $\dot{\Spenc}^{n,m}_d$ is the restriction of the one on the codomain $\ddot{\Spenc}^{n,m}_d$.
\end{rmk}

\subsection{Spencer complex}
By catenating the jet prolongation and the Spencer operators, we obtain the so-called \emph{Spencer sequence}.
\begin{equation}\label{diag:Spencer_complex}
\begin{tikzcd}[column sep=28pt]
0\ar[r]&[-10pt]\id_{\AMod}\ar[r,"j^n_d"]&J^n_d\ar[r,"\Spenc^{n,0}_d"]&\Omega^1_dJ^{n-1}_d\ar[r,"\Spenc^{n-1,1}_d"]&\Omega^2_dJ^{n-2}_d\ar[r,"\Spenc^{n-2,2}_d"]&\cdots\ar[r,"\Spenc^{2,n-2}_d"]&\Omega^{n-1}_dJ^1_d\ar[r,"\Spenc^{1,n-1}_d"]&\Omega^n_d\ar[r]&[-10pt]0
\end{tikzcd}
\end{equation}
In order to prove that the Spencer sequence is a complex, we first need the following technical lemma involving the functor $\Omega^1_d\ltimes \Omega^2_d$, cf.\ \cite[\jetsssfunctorialityoftwojet]{FMW}.
\begin{lemma}\
\label{lemma:Spencersquared}
\begin{enumerate}
\item\label{lemma:Spencersquared:1} For all $m\ge 0$, the following map is a natural linear differential operator of order at most $1$
\begin{align}
\nu^m_d\colon \Omega^m_d(\Omega^1_d\ltimes\Omega^2_d)\longrightarrow \Omega^{m+2}_d,
&\hfill&
\omega\otimes_A (\alpha+\beta)\longmapsto (-1)^{\deg(\omega)}d\omega\wedge \alpha + \omega\wedge \beta.
\end{align}
Therefore, it induces a natural linear differential operator $\nu^\bullet_d\colon \Omega^\bullet_d(\Omega^1_d\ltimes\Omega^2_d)\to \Omega^\bullet_d$ of order at most $1$.
\item\label{lemma:Spencersquared:2} The $\bk$-linear projection
\begin{align}
\Omega^1_d\ltimes \Omega^2_d\longrightarrow \Omega^2_d,
&\hfill&
\alpha+\beta\longmapsto \beta.
\end{align}
coincides with $\nu^0_d$, and as such it is a natural differential operator of order at most $1$.
\item\label{lemma:Spencersquared:3} $\Spenc^{1,m+1}_d \circ \Spenc^{1,m}_{d, J^1_d}= -\nu^m_d\circ\Omega^m_d(\widetilde{\DH}_d)$.
\item\label{lemma:Spencersquared:4} $\Spenc^{1,1}_d \circ \Spenc^{1,0}_{d, J^1_d}= -\widetilde{\DH}^{II}_d$.
\end{enumerate}
\end{lemma}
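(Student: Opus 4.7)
The plan is to handle (1) and (2) via a direct jet-lift construction, prove (4) by explicit computation, and derive (3) from (4) using a Leibniz-type manipulation of Spencer operators. The logical flow is (1) $\Rightarrow$ (2), then (4) directly, then (3) from (4) (with (4) also recoverable from (3) and (2) as a cross-check).

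For (1), I would define a natural jet lift $\widetilde{\nu}^m_d \colon J^1_d \Omega^m_d(\Omega^1_d \ltimes \Omega^2_d) \to \Omega^{m+2}_d$ on generators by
\[
[a \otimes b] \otimes_A \omega \otimes_A (\alpha + \beta) \longmapsto (-1)^{\deg \omega}\, a\, d(b\omega) \wedge \alpha + ab\, \omega \wedge \beta.
\]
Well-definedness on $J^1_d$ follows from the Leibniz rule applied to $d(b\omega)$; evaluation at $a = b = 1$ recovers $\nu^m_d$; and naturality in $E$ is automatic, since both source and target functors act on $\AMod$-morphisms only through a final $E$-slot via the identity. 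For (2), specialize (1) to $m = 0$: under the canonical isomorphism $A \otimes_A X \cong X$, the formula reduces to $\alpha + \beta \mapsto d(1) \wedge \alpha + \beta = \beta$, which is precisely the projection onto $\Omega^2_d$.

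For (4), compute directly on a generic element $\eta = [y \otimes z] \otimes_A [y' \otimes z'] \otimes_A \xi \in J^1_d J^1_d E$. The definition gives $\Spenc^{1,0}_{d, J^1_d E}(\eta) = dy \cdot z \otimes_A [y' \otimes z'] \otimes_A \xi$, and applying $\Spenc^{1,1}_{d,E}$ yields $d(dy \cdot z y')\, z' \otimes_A \xi = -dy \wedge d(zy')\, z' \otimes_A \xi$ by $d^2 = 0$ and the graded Leibniz rule. Comparing with the explicit formula for $\widetilde{\DH}^{II}_d$ from \cite[\jetsssfunctorialityoftwojet]{FMW} yields $\Spenc^{1,1}_d \circ \Spenc^{1,0}_{d, J^1_d} = -\widetilde{\DH}^{II}_d$.

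For (3), apply Proposition \ref{prop:Spencercorrespondence} twice to the LHS on $\omega \otimes_A \eta$; the $d(d\omega)$-term vanishes by $d^2 = 0$. The remaining pieces can be reorganized via the Leibniz-type identity $\Spenc^{1,m+1}_d(\omega \wedge \rho) = d\omega \wedge \Omega^1_d(\pi^{1,0}_d)(\rho) + (-1)^m \omega \wedge \Spenc^{1,1}_d(\rho)$ for $\rho \in \Omega^1_d J^1_d$ (itself derived from the same Proposition), together with the naturality of $\Spenc^{1,0}_d$ relating $\Omega^1_d(\pi^{1,0}_d) \circ \Spenc^{1,0}_{d,J^1_d}$ to $\Spenc^{1,0}_{d,E} \circ J^1_d(\pi^{1,0}_d)$. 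Collecting terms, the LHS takes the form $(-1)^{m+1} d\omega \wedge \widetilde{\DH}^I_d(\eta) - \omega \wedge \widetilde{\DH}^{II}_d(\eta)$, where the $d\omega$-factor is identified with $\widetilde{\DH}^I_d$ via Remark \ref{rmk:Spencer_m=0} (at $n = 1$, where $l^{1,0}_d = \id_{J^1_d}$), and the $\omega$-factor by (4). By the definition of $\nu^m_d$ this equals $-\nu^m_d \circ \Omega^m_d(\widetilde{\DH}_d)(\omega \otimes_A \eta)$, proving the claim. The main obstacle is the careful bookkeeping of signs and bimodule actions in the Leibniz expansions, especially in matching the $d\omega$-coefficient with $\widetilde{\DH}^I_d$, which requires tracking how the jet lift $\widetilde{\Spenc}^{1,0}_d$ encodes the gap between the two a priori distinct natural transformations $\pi^{1,0}_{d, J^1_d}$ and $J^1_d(\pi^{1,0}_d)$ from $J^1_d J^1_d$ to $J^1_d$.
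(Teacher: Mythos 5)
Your proposal is correct, but it organizes the dependencies in the opposite order from the paper: the paper proves (3) directly by a single computation on generators $\omega\otimes_A[a\otimes b]\otimes_A[c\otimes e]$ of $\Omega^m_dJ^{(2)}_dE$ (expanding $d(d(\omega a)bc)$ by the graded Leibniz rule and reading off the $\widetilde{\DH}^I_d$- and $\widetilde{\DH}^{II}_d$-terms), and then obtains (4) as the $m=0$ case using $\nu^0_d\circ\widetilde{\DH}_d=\widetilde{\DH}^{II}_d$ from (2); you instead prove (4) by the $m=0$ element computation and then recover (3) structurally from Proposition \ref{prop:Spencercorrespondence}, a derived identity $\Spenc^{1,m+1}_d(\omega\wedge\rho)=d\omega\wedge\Omega^1_d(\pi^{1,0}_d)(\rho)+(-1)^m\omega\wedge\Spenc^{1,1}_d(\rho)$, and naturality of $\Spenc^{1,0}_d$. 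I checked that your route closes: the $d\omega$-coefficient you must match with $\widetilde{\DH}^I_d$ is $\Spenc^{1,0}_{d,E}\circ\pi^{1,0}_{d,J^1_dE}-\Omega^1_d(\pi^{1,0}_{d,E})\circ\Spenc^{1,0}_{d,J^1_dE}$, and by naturality this equals $\Spenc^{1,0}_{d,E}\circ\bigl(\pi^{1,0}_{d,J^1_dE}-J^1_d(\pi^{1,0}_{d,E})\bigr)=-\Spenc^{1,0}_{d,E}\circ\iota^1_{d,E}\circ\widetilde{\DH}^I_{d,E}=\widetilde{\DH}^I_{d,E}$, using $\iota^1_d\circ\widetilde{\DH}^I_d=J^1_d(\pi^{1,0}_d)-\pi^{1,0}_{d,J^1_d}$ and $\Spenc^{1,0}_d\circ\iota^1_d=-\rho_d\circ\iota^1_d=-\id_{\Omega^1_d}$; you should make these two facts explicit rather than gesturing at the ``gap'' between $\pi^{1,0}_{d,J^1_d}$ and $J^1_d(\pi^{1,0}_d)$, since that is where all the content sits. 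What each approach buys: the paper's argument is shorter and keeps everything at the level of one explicit element computation, while yours isolates (4) as the only place where explicit representatives (and the formula for $\widetilde{\DH}^{II}_d$ from FMW) are needed and then propagates to general $m$ by Leibniz-type identities, which is arguably more reusable for the generalized Spencer operators of \S\ref{ss:Spencer operators on other jet functors}. Parts (1) and (2) are essentially the paper's proof, with the same explicit jet lift; the paper additionally verifies $A$-balancedness of $\nu^m_d$ over $\otimes_A$ and invokes the order-$1$ criterion from FMW, details you should include but which pose no difficulty.
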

\begin{proof}\
\begin{enumerate}
\item We first prove that $\nu^m_d$ is well-defined.
Let $E$ be in $\AMod$, $\omega\in\Omega^m_d$, $\alpha+\beta\in\Omega^1_d\ltimes \Omega^2_d(E)$, and $\lambda\in A$.
By the graded Leibniz rule, we compute the following:
\begin{equation}
\begin{split}
\nu^m_{d,E}(\omega\otimes_A\lambda (\alpha+\beta))
&=\nu^m_{d,E}(\omega\otimes_A(\lambda \alpha+d\lambda\wedge\alpha+\lambda\beta))\\
&=(-1)^{\deg(\omega)}d\omega\wedge\lambda \alpha+\omega\wedge (d\lambda\wedge\alpha+\lambda\beta)\\
&=(-1)^{\deg(\omega)}(d\omega)\lambda\wedge \alpha+\omega\wedge d\lambda\wedge\alpha+\omega\lambda\wedge\beta\\
&=(-1)^{\deg(\omega)}d(\omega\lambda)\wedge\alpha+\omega\lambda\wedge\beta\\
&=\nu^m_{d,E}(\omega\lambda\otimes_A (\alpha+\beta)).
\end{split}
\end{equation}
Thus, $\nu^m_d$ is well-defined.
Furthermore, it forms a natural transformation via the tensor product.

We show that $\nu^m_d$ is a differential operator of order at most $1$ by showing that each component $E$ is a differential operator of order at most $1$, cf.\ Corollary \ref{cor:natDO_order_0,1}\eqref{cor:natDO_order_0,1:1}.
We do so via the criterion given by \cite[\jetsstorderwrtd]{FMW}.
We thus show that the universal lift of $\nu^m_{d,E}$ vanishes on elements in $N^1_d\Omega^m_d(\Omega^1_d\ltimes\Omega^2_d)(E)$.
Thanks to \cite[\jetsrmktensorNddiffop]{FMW}, it is sufficient to show that
\begin{equation}
\sum_j a_j\nu^m_{d,E}(b_j\omega\otimes_A (\alpha+\beta))
=0
\end{equation}
for all $\sum_j a_j\otimes b_j\in N^1_d(A)$ and $\omega\otimes_A (\alpha+\beta)\in\Omega^m_d(\Omega^1_d\ltimes \Omega^2_d)(E)$, cf.\ \cite[\jetsrmktensorNddiffop]{FMW}.
We have indeed
\begin{equation}
\begin{split}
&\sum_j a_j\nu^m_{d,E}(b_j\omega\otimes_A (\alpha+\beta))\\
&\qquad=\sum_j a_j(-1)^{\deg(b_j\omega)}d(b_j\omega)\wedge\alpha+\sum_j a_jb_j\omega\wedge\beta\\
&\qquad=(-1)^{\deg(\omega)}\sum_j a_jdb_j\wedge\omega\wedge\alpha+(-1)^{\deg(\omega)}\sum_j a_jb_jd\omega\wedge\alpha+\sum_j a_jb_j\omega\wedge\beta.
\end{split}
\end{equation}
Every term in the last expression vanishes since $\sum_j a_j\otimes b_j\in N^1_d(A)$, cf.\ \cite[\jetseqdefinitionNdE]{FMW}.
Explicitly, the lift has the form:
\begin{align}
\widetilde{\nu}^m_d\colon J^1_d\Omega^m_d(\Omega^1_d\ltimes\Omega^2_d)\longrightarrow \Omega^{m+2}_d,
&\hfill&
[a\otimes b]\otimes_A \omega\otimes_A (\alpha+\beta)\longmapsto a(-1)^{\deg(\omega)}d(b\omega)\wedge \alpha + ab\omega\wedge \beta.
\end{align}

The natural differential operator $\nu^\bullet_d$ is obtained by acting as $\nu^m_d$ on the component $m$.
\item This follows directly by restricting \eqref{lemma:Spencersquared:1} to the case $m=0$.
\item We will show this for every component $E$ in $\AMod$, so let $\omega \otimes_A [a\otimes b] \otimes_A [c\otimes e]$ be an element of $\Omega^m_d J^{(2)}_d E$.
We compute the following
\begin{equation}\label{eq:Spencersquared}
\begin{split}
&\Spenc^{1,m+1}_{d,E} \circ \Spenc^{1,m}_{d,J^1_d E} (\omega \otimes_A[a\otimes b] \otimes_A [c\otimes e])\\
&\qquad=\Spenc^{1,m+1}_{d,E} (d(\omega a)b \otimes_A [c\otimes e])\\
&\qquad = d(d(\omega a)bc)\otimes_A e\\
&\qquad = 0-(-1)^{\deg(\omega)} d(\omega a) \wedge d(b c)\otimes_A e\\
&\qquad = -(-1)^{\deg(\omega)} d\omega\wedge ad(bc)\otimes_A e -\omega\wedge da \wedge d(bc) \otimes_A e\\
&\qquad = -(-1)^{\deg(\omega)}d\omega \wedge \widetilde{\DH}^I_{d,E}([a\otimes b] \otimes_A [c\otimes e]) - \omega\wedge \widetilde{\DH}^{II}_{d,E}([a\otimes b] \otimes_A [c\otimes e])\\
&\qquad = -\nu^m_d\circ\Omega^m_d(\widetilde{\DH}_{d,E})(\omega \otimes_A [a\otimes b] \otimes_A [c\otimes e]),
\end{split}
\end{equation}
which completes the proof.
\item This formula is obtained by \eqref{lemma:Spencersquared:3} for $m=0$, where $\nu^0_d\circ\widetilde{\DH}=\widetilde{\DH}^{II}_d$ by definition of $\widetilde{\DH}$ and \eqref{lemma:Spencersquared:2}.
\qedhere
\end{enumerate}
\end{proof}
\begin{theo}[Spencer complex]\label{theo:Spencer_is_complex}
The Spencer sequence is a complex, i.e.
\begin{enumerate}
\item $\Spenc^{n,0}_d \circ j^n_d=0$;
\item $\Spenc^{n-1,m+1}_d \circ \Spenc^{n,m}_d = 0$ for all $n\ge 2$ and $m\ge 0$.
\end{enumerate}
\end{theo}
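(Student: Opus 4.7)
The plan is to reduce both statements to properties of the first-order Spencer operators $\Spenc^{1,m}_d$ via Lemma \ref{lemma:SpencerDO_wrt_low_indices}, and then to exploit the defining inclusion of $J^n_d$ into $J^1_d J^{n-1}_d$ recorded in diagram \eqref{diag:inclusion_hol_sesqui}.

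For (i), I would apply Lemma \ref{lemma:SpencerDO_wrt_low_indices} to write $\Spenc^{n,0}_d = \Spenc^{1,0}_{d,J^{n-1}_d} \circ l^n_d$. Since the holonomic prolongation satisfies $l^n_d \circ j^n_d = j^1_{d,J^{n-1}_d} \circ j^{n-1}_d$ (this is essentially the inductive construction of $J^n_d$ in \cite{FMW}), the problem reduces to showing $\Spenc^{1,0}_{d} \circ j^1_d = 0$. This is immediate from Definition \ref{def:Spenceroperator}: for $e \in E$, $j^1_{d,E}(e) = [1 \otimes 1] \otimes_A e$, and the formula gives $\Spenc^{1,0}_{d,E}([1 \otimes 1] \otimes_A e) = d(1) \cdot 1 \otimes_A e = 0$. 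Equivalently, one can invoke the identification $\Spenc^{1,0}_d = -\rho_d$ from Remark \ref{rmk:Spencer_m=0} together with $\rho_d \circ j^1_d = 0$.

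For (ii), again using Lemma \ref{lemma:SpencerDO_wrt_low_indices} twice, I rewrite
\begin{equation}
\Spenc^{n-1,m+1}_d \circ \Spenc^{n,m}_d
= \Spenc^{1,m+1}_{d,J^{n-2}_d} \circ \Omega^{m+1}_d(l^{n-1}_d) \circ \Spenc^{1,m}_{d,J^{n-1}_d} \circ \Omega^m_d(l^n_d).
\end{equation}
Naturality of the first-order Spencer operator applied to the morphism $l^{n-1}_d \colon J^{n-1}_d \to J^1_d J^{n-2}_d$ yields
\begin{equation}
\Omega^{m+1}_d(l^{n-1}_d) \circ \Spenc^{1,m}_{d,J^{n-1}_d}
= \Spenc^{1,m}_{d,J^1_d J^{n-2}_d} \circ \Omega^m_d J^1_d(l^{n-1}_d),
\end{equation}
after which Lemma \ref{lemma:Spencersquared}.\eqref{lemma:Spencersquared:3}, evaluated componentwise at $J^{n-2}_d$, collapses the double first-order Spencer operator to $-\nu^m_{d,J^{n-2}_d} \circ \Omega^m_d(\widetilde{\DH}_{d,J^{n-2}_d})$. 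Putting the pieces together gives
\begin{equation}
\Spenc^{n-1,m+1}_d \circ \Spenc^{n,m}_d
= -\nu^m_{d,J^{n-2}_d} \circ \Omega^m_d\!\left(\widetilde{\DH}_{d,J^{n-2}_d} \circ J^1_d(l^{n-1}_d) \circ l^n_d\right).
\end{equation}

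To conclude, I appeal to the top row of diagram \eqref{diag:inclusion_hol_sesqui}, which realizes $J^n_d$ as the kernel of $\widetilde{\DH}_{d,J^{n-2}_d} \circ J^1_d(l^{n-1}_d)$ inside $J^1_d J^{n-1}_d$; hence precomposing with $l^n_d$ yields zero, and the whole composition vanishes. For $n=2$ this interprets via $l^1_d = \id_{J^1_d}$ as the defining identity $\widetilde{\DH}_d \circ l^2_d = 0$. The only delicate point is tracking the naturality in the correct direction so that the intermediate prolongation $l^{n-1}_d$ ends up \emph{inside} the argument of $\widetilde{\DH}$, where the holonomicity constraint is actually available; once this is arranged the vanishing is automatic.
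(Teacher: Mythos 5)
Your proposal is correct and takes essentially the same route as the paper: part (i) via $\Spenc^{n,0}_d=-\rho_{d,J^{n-1}_d}\circ l^{1,n-1}_d$, the compatibility $l^{1,n-1}_d\circ j^n_d=j^1_{d,J^{n-1}_d}\circ j^{n-1}_d$, and $\rho_d\circ j^1_d=0$; part (ii) via Lemma \ref{lemma:SpencerDO_wrt_low_indices}, naturality of $\Spenc^{1,m}_d$ with respect to $l^{1,n-2}_d$, Lemma \ref{lemma:Spencersquared}.\eqref{lemma:Spencersquared:3}, and the vanishing $\widetilde{\DH}_{J^{n-2}_d}\circ J^1_d(l^{1,n-2}_d)\circ l^{1,n-1}_d=0$ from the kernel definition of $J^n_d$. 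The only cosmetic difference is that the paper packages the part (ii) computation as the commutative diagram \eqref{diag:beak} rather than as your chain of equalities.
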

\begin{proof}\
\begin{enumerate}
\item By \eqref{eq:explicit_Sh0}, we have the following equality.
\begin{equation}
\Spenc^{n,0}_d\circ j^n_d
= -\rho_{d,J^{n-1}_d}\circ l^{1,n-1}_d \circ j^n_d
= -\rho_{d,J^{n-1}_d}\circ j^1_{d,J^{n-1}_d} \circ j^{n-1}_d
=0.
\end{equation}
The second equality follows from the definition of $j^n_d$, cf.\ \cite[\jetslemmaholprol]{FMW}, and the last follows from the fact that $\rho_d\circ j^1_d=0$, essentially by definition of $\rho_d$ as a split in the biproduct structure given by the $1$-jet exact sequence in $\Mod$, cf.\ \cite[\jetssssSplitting]{FMW}.

\item Now let $n\ge 2$ and $m\ge 0$ and consider the following diagram.
\begin{equation}\label{diag:beak}
\begin{tikzcd}[column sep=50pt,row sep=30pt]
\Omega^m_d J^n_d\ar[d,"\Omega^m_d(l^{1,n-1}_d)"']\ar[dr,"\Spenc^{n,m}_d"]\\
\Omega^m_d J^1_d J^{n-1}_d\ar[d,"\Omega^m_dJ^1_d(l^{1,n-2}_d)"']\ar[r,two heads,"\Spenc^{1,m}_{d,J^{n-1}_d}"]&\Omega^{m+1}_d J^{n-1}_d\ar[d,"\Omega^{m+1}_d(l^{1,n-2}_d)"']\ar[dr,"\Spenc^{n-1,m+1}_d"]\\
\Omega^m_d J^1_d J^1_d J^{n-2}_d\ar[d,two heads,"\Omega^m_d (\widetilde{\DH}_{J^{n-2}_d})"']\ar[r,two heads,"\Spenc^{1,m}_{d,J^1_d J^{n-2}_d}"']&\Omega^{m+1}_d J^1_d J^{n-2}_d\ar[r,two heads,"\Spenc^{1,m+1}_{d,J^{n-2}_d}"']&\Omega^{m+2}_d J^{n-2}_d\\
\Omega^m_d(\Omega^1_d\ltimes \Omega^2_d) J^{n-2}_d\ar[rru,bend right=10pt,"-\nu^m_{d J^{n-2}_d}"']
\end{tikzcd}
\end{equation}
By Lemma \ref{lemma:SpencerDO_wrt_low_indices}, the two triangles commute.
By the naturality of $\Spenc^{1,m}_d$ with respect to $l^{1,n-2}_d$ we obtain the commutativity of the central square, and the commutativity of the bottom square follows from Lemma \ref{lemma:Spencersquared}.\eqref{lemma:Spencersquared:3}.

Since \eqref{diag:beak} commutes, we can prove that the top right diagonal composition vanishes by proving that the left vertical composition vanishes.
This follows from the definition of holonomic jet functor, cf.\ \cite[\jetsdefnjet]{FMW}.\qedhere
\end{enumerate}
\end{proof}
\begin{defi}
We call the cohomology of the Spencer complex \eqref{diag:Spencer_complex} the \emph{Spencer cohomology}, and we denote the cohomology group at $\Omega^m_d J^n_d$ by $H^{n,m}_{\Spenc_d}$.
\end{defi}
The reason we do not need a symbol for the cohomology at $J^n_d$ is that it is always zero, cf.\ Proposition \ref{prop:Spencer_cohomology_vanishing_at_extremals}.

We will prove that the Spencer complex is always exact in the extremal degrees, but in order to do that, we first need the following lemma.
\begin{lemma}
\label{lemma:pullback_j1_l}
The following is a pullback square in $\Mod$
\begin{equation}
\begin{tikzcd}
\id_{\AMod} \arrow[dr, phantom, "\lrcorner", very near start]\ar[r,hook,"j^n_d"]\ar[d,hook,"j^{n-1}_d"']&J^n_d\ar[d,hook,"l^{1,n-1}_d"]\\
J^{n-1}_d\ar[r,hook,"j^1_{d,J^{n-1}_d}"']&J^1_d J^{n-1}_d
\end{tikzcd}
\end{equation}
In other words, the intersection of $J^n_d E$ and $j^1_{d,J^{n-1}_d E}(J^{n-1}_d E)$ in $J^1_d J^{n-1}_d E$ is $l^{1,n-1}_{d,E}(j^n_d(E))$.
\end{lemma}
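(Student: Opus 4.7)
The plan is to establish the pullback by induction on $n$, reducing the problem to exactness of the Spencer complex at $J^n_d$. Since both $l^{1,n-1}_d\circ j^n_d$ and $j^1_{d,J^{n-1}_d}\circ j^{n-1}_d$ agree by \cite[\jetslemmaholprol]{FMW}, the square commutes, and uniqueness of the universal factorization follows from the fact that $j^n_d$ is a pointwise monomorphism (it admits $\pi^{n,0}_d$ as a retraction). Hence the content reduces, component-wise, to the following: whenever $\xi\in J^n_d E$ and $\zeta\in J^{n-1}_d E$ satisfy $l^{1,n-1}_{d,E}(\xi)=j^1_{d,J^{n-1}_d E}(\zeta)$, there exists $e\in E$ with $\xi=j^n_d(e)$. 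Note that applying $\pi^{1,0}_{d,J^{n-1}_d E}$ to the hypothesis forces $\zeta=\pi^{n,n-1}_{d,E}(\xi)$.

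For the base case $n=1$, one has $l^{1,0}_d=\id_{J^1_d}$ and $J^0_d=\id_{\AMod}$, so $\xi=j^1_d(\zeta)$ immediately gives $e=\zeta\in E$.

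For the inductive step, recall from diagram \eqref{diag:inclusion_hol_sesqui} that $J^n_d E$ sits inside $J^1_d J^{n-1}_d E$ as the kernel of $\widetilde{\DH}_{J^{n-2}_d E}\circ J^1_d(l^{1,n-2}_{d,E})$. By the key identification \eqref{eq:lift_Spencer_n,0}, the first component of this map is precisely $\widetilde{\Spenc}^{n-1,0}_{d,E}$. Evaluating on $l^{1,n-1}_{d,E}(\xi)=j^1_{d,J^{n-1}_d E}(\zeta)$ and invoking the lift property $\widetilde{\Spenc}^{n-1,0}_d\circ j^1_d=\Spenc^{n-1,0}_d$ yields $\Spenc^{n-1,0}_{d,E}(\zeta)=0$. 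Via \eqref{eq:explicit_Sh0}, this rewrites as $\rho_{d,J^{n-2}_d E}(l^{1,n-2}_{d,E}(\zeta))=0$. Since the $1$-jet short exact sequence splits in $\Mod$ with sections $j^1_d$ and $\rho_d$ (so $\ker\rho=\im j^1_d$ in $\Mod$), we conclude $l^{1,n-2}_{d,E}(\zeta)\in\im j^1_{d,J^{n-2}_d E}$. The inductive hypothesis applied to $\zeta$ then produces $e\in E$ with $\zeta=j^{n-1}_d(e)$. Using \cite[\jetslemmaholprol]{FMW} again gives $l^{1,n-1}_{d,E}(\xi)=j^1_{d,J^{n-1}_d E}(j^{n-1}_d(e))=l^{1,n-1}_{d,E}(j^n_d(e))$, and monicity of $l^{1,n-1}_d$ (cf.\ \eqref{diag:inclusion_hol_sesqui}) concludes $\xi=j^n_d(e)$.

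The main obstacle is the conceptual step of recognizing that the first component of the kernel-defining natural transformation for $J^n_d$ is exactly the Spencer lift $\widetilde{\Spenc}^{n-1,0}_d$. Once this identification is in hand, the splitting of the $1$-jet sequence in $\Mod$ converts the kernel condition at level $n$ into an image condition at level $n-1$ that feeds directly into the inductive hypothesis; everything else is bookkeeping with naturality and monicity.
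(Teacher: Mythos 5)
Your proof is correct and follows essentially the same route as the paper's: induction on $n$, using the kernel description of $J^n_d$ from \eqref{diag:inclusion_hol_sesqui} to deduce $\rho_{d,J^{n-2}_dE}\circ l^{1,n-2}_{d,E}(\zeta)=0$ (you obtain this abstractly from $\widetilde{\Spenc}^{n-1,0}_d=\widetilde{\DH}^I_{J^{n-2}_d}\circ J^1_d(l^{1,n-2}_d)$ and the lift property, while the paper performs the same computation directly on elements), followed by the biproduct splitting $\ker\rho_d=\operatorname{Im} j^1_d$ and the inductive hypothesis. One small caveat: despite your opening phrase about ``exactness of the Spencer complex at $J^n_d$,'' your argument does not (and must not) invoke Proposition \ref{prop:Spencer_cohomology_vanishing_at_extremals}, which is itself proved from this lemma; as written, your steps avoid that circularity, so the proof stands.
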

\begin{proof}
Since the pullback of monomorphisms corresponds to their intersection, we will prove this result in this second formulation, and we will do so by induction on $n$ and at each component $E$ in $\AMod$.
More precisely, we will prove that for all $\xi\in J^{n-1}_d E$, if $j^1_{d,J^{n-1} E}(\xi)\in J^n_d E$, then $j^1_{d,J^{n-1}_d E}(\xi)=l^{1,n-1}_{d,E}\circ j^n_{d,E}(\pi^{n-1,0}_{d,E} (\xi))$.

For $n=1$ it is tautologically true.

For $n>1$, we compute the conditions for which the element $j^1_{d,J^{n-1}_d E}(\xi)$ belongs to $J^n_d E$, namely when it vanishes if we apply $\widetilde{\DH}_{d,J^{n-2}_d E}\circ J^1_d(l^{1,n-2}_{d,E})$.
Let $l^{1,n-2}_{d,E}(\xi)=\sum_j [a_j\otimes b_j]\otimes_A \xi_j\in J^1_d J^{n-2}_d E$, then we have:
\begin{equation}
\begin{split}
0
&=\widetilde{\DH}_{d,J^{n-2}_d E}\circ J^1_d(l^{1,n-2}_{d,E})\left(j^1_{d,J^{n-1}_d E}(\xi)\right)\\
&=\sum_j \widetilde{\DH}_{d,J^{n-2}_d E}\left(j^1_{d,J^1_d J^{n-2}_d E}(l^{1,n-2}_{d,E}(\xi))\right)\\
&=\sum_j \widetilde{\DH}_{d,J^{n-2}_d E}\left([1\otimes 1]\otimes_A [a_j\otimes b_j]\otimes_A \xi_j\right)\\
&=\sum_j \widetilde{\DH}_{d,A}\left([1\otimes 1]\otimes_A [a_j\otimes b_j]\right)\otimes_A \xi_j\\
&=\sum_j (da_j)b_j\otimes_A \xi_j+0\\
&=-\rho_{d,J^{n-2}_d E}\left(\sum_j [a_j\otimes b_j]\otimes_A \xi_j\right)\\
&=-\rho_{d,J^{n-2}_d E} \circ l^{1,n-2}_{d,E}(\xi)
\end{split}
\end{equation}
We thus infer that $l^{1,n-2}_{d,E}(\xi)\in \ker(\rho_{d,J^{n-2}_d E})$.
Since $\rho_d$ and $j^1_d$ are the left and right splits that realize the $1$-jet short exact sequence as a biproduct, cf.\ \cite[\jetssssSplitting]{FMW}, we know that $\ker(\rho_d)=\im(j^1_d)$.
It follows that there exists $\xi'\in J^{n-2}_d$ such that $l^{1,n-2}_{d,E}(\xi)=j^1_{d,J^{n-2}_d E}(\xi')$.
By applying $\pi^{1,0}_{d,J^{n-2}_d E}$ to both terms of the last equality, we obtain $\xi'=\pi^{1,0}_{d,J^{n-2}_d E}(l^{1,n-2}_{d,E}(\xi))=\pi^{n-1,n-2}_{d,E}(\xi)$, and thus
\begin{equation}
l^{1,n-2}_{d,E}(\xi)
=j^1_{d,J^{n-2}_d E}\left(\pi^{n-1,n-2}_{d,E}(\xi)\right).
\end{equation}
Now we have an element $\pi^{n-1,n-2}_{d,E}(\xi)\in J^{n-2}_d E$ such that $j^1_{d,J^{n-2}_d E}\left(\pi^{n-1,n-2}_{d,E}(\xi)\right)=l^{1,n-2}_{d,E}(\xi)$ and it thus belongs to $J^{n-1}_d E$.
Hence, we can apply the inductive hypothesis, to obtain
\begin{equation}
l^{1,n-2}_{d,E}(\xi)
=j^1_{d,J^{n-2}_d E}(\pi^{n-1,n-2}_{d,E}(\xi))
=l^{1,n-2}_{d,E}\circ j^{n-1}_{d,E}(\pi^{n-2,0}_{d,E} (\pi^{n-1,n-2}_{d,E}(\xi)))
=l^{1,n-2}_{d,E}\circ j^{n-1}_{d,E}(\pi^{n-1,0}_{d,E}(\xi)).
\end{equation}
Since $l^{1,n-2}_{d,E}$ is a mono, we have that $\xi=j^{n-1}_{d,E}(\pi^{n-1,0}_{d,E}(\xi))$, which in turn implies
\begin{equation}
j^1_{d,J^{n-1} E}(\xi)
=j^1_{d,J^{n-1} E}(j^{n-1}_{d,E}(\pi^{n-1,0}_{d,E}(\xi)))
=l^{1,n-1}_d\circ j^n_{d,E}(\pi^{n-1,0}_{d,E}(\xi)),
\end{equation}
thus proving the inductive step.
The statement follows by induction.
\end{proof}
\begin{prop}\label{prop:Spencer_cohomology_vanishing_at_extremals}
The Spencer complex is exact in degrees $0$, $1$, and $n+1$, so in particular $H^{n,0}_{\Spenc_d}=H^{0,n}_{\Spenc_d}=0$.
\end{prop}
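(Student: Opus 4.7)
The plan is to handle the three exactness claims separately, with the middle one being the only non-immediate task.

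First, at degree $0$: the map $j^n_d$ is a split monomorphism, because $\pi^{n,0}_d \circ j^n_d = \id_{\AMod}$ by definition of the jet prolongation. So exactness at $\id_{\AMod}$ is immediate.

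Next, at degree $n+1$ (the last position): I need $\Spenc^{1,n-1}_d$ to be a componentwise epimorphism onto $\Omega^n_d$. But this is exactly the first statement of Lemma \ref{lemma:SpencerDO_wrt_low_indices}, applied with $m = n-1$.

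The interesting step is exactness at $J^n_d$ (degree $1$). We already know from Theorem \ref{theo:Spencer_is_complex} that $\im(j^n_d)\subseteq \ker(\Spenc^{n,0}_d)$, so I just need the reverse inclusion. Fix $E$ in $\AMod$ and let $\xi \in J^n_d E$ with $\Spenc^{n,0}_{d,E}(\xi)=0$. Using the explicit expression $\Spenc^{n,0}_d = -\rho_{d,J^{n-1}_d}\circ l^{1,n-1}_d$ from Remark \ref{rmk:Spencer_m=0}, this says $l^{1,n-1}_{d,E}(\xi)\in \ker(\rho_{d,J^{n-1}_d E})$. Since $\rho_d$ and $j^1_d$ realize the $1$-jet sequence as a biproduct in $\Mod$, we have $\ker(\rho_d) = \im(j^1_d)$, so $l^{1,n-1}_{d,E}(\xi) \in j^1_{d,J^{n-1}_d E}(J^{n-1}_d E)$. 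Combined with $l^{1,n-1}_{d,E}(\xi)\in l^{1,n-1}_{d,E}(J^n_d E)$, Lemma \ref{lemma:pullback_j1_l} gives
\begin{equation}
l^{1,n-1}_{d,E}(\xi) \in l^{1,n-1}_{d,E}\bigl(j^n_{d,E}(E)\bigr).
\end{equation}
Since $l^{1,n-1}_d$ is a monomorphism, we conclude $\xi \in j^n_{d,E}(E) = \im(j^n_{d,E})$, as required.

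The main (and really only) obstacle is degree $1$, and it is handled completely by Lemma \ref{lemma:pullback_j1_l}: once the pullback description is in hand, translating the Spencer kernel condition into the hypothesis of that lemma via the biproduct splitting of the $1$-jet sequence is essentially mechanical. The extremal vanishing statement $H^{n,0}_{\Spenc_d}=H^{0,n}_{\Spenc_d}=0$ then follows as a special case: $H^{n,0}_{\Spenc_d}$ is the cohomology at $J^n_d$ quotiented by $\im(j^n_d)$, and $H^{0,n}_{\Spenc_d}$ is the cohomology at $\Omega^n_d$, both of which we have just shown to vanish.
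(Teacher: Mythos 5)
Your proof is correct and follows essentially the same route as the paper: degree $0$ from $j^n_d$ being mono, degree $n+1$ from the epimorphy of $\Spenc^{1,n-1}_d$ in Lemma \ref{lemma:SpencerDO_wrt_low_indices}, and degree $1$ from the formula \eqref{eq:explicit_Sh0} together with $\ker(\rho_d)=\im(j^1_d)$ and the pullback Lemma \ref{lemma:pullback_j1_l}. The only difference is cosmetic: the paper packages the degree-$1$ step in a snake-lemma diagram to produce the comparison element, whereas you extract it directly from the biproduct splitting of the $1$-jet sequence, which is a legitimate streamlining of the same argument.
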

\begin{proof}
The Spencer complex is exact in degree $0$ because $j^n_d$ is a (natural) monomorphism.
Exactness in $n-1$ follows from the fact that $\Spenc^{1,n-1}_d$ is a (natural) epimorphism, cf.\ Lemma \ref{lemma:SpencerDO_wrt_low_indices}.

We will now prove the exactness in degree $1$, which in this setting is equivalent to showing that $j^n_d$ is the kernel inclusion of $\Spenc^{n,0}_d$.
Consider the following diagram in the functor category $\AMod\to \Mod$.
\begin{equation}\label{diag:exactnessSpencer1snake}
\begin{tikzcd}[column sep=50pt]
&[-20pt]0\ar[r]\ar[d]&0\ar[r]\ar[d]\ar[draw=none]{ddd}[name=X, anchor=center]{}&\ker(\Spenc^{n,0}_d)\ar[d,hook]
\ar[hook,rounded corners=12pt,
	to path={ -- ([xshift=50pt]\tikztostart.east)
		|- ([yshift=5pt]X.center) \tikztonodes
		-| ([xshift=-60pt]\tikztotarget.west)
		-- (\tikztotarget)},near start,"\partial"]{dddll}
&[-20pt]\\
&0\ar[r]\ar[d]&J^n_d\ar[r,equals]\ar[d,near end,hook,"l^{1,n-1}_d"]&J^n_d\ar[r]\ar[d,near end,"\Spenc^{n,0}_d"]&0\\[10pt]
0\ar[r]&J^{n-1}_d\ar[d,equals]\ar[r,hook,"j^1_{d,J^{n-1}_d}"]&J^1_d J^{n-1}_d\ar[r,two heads,"\rho_{d,J^{n-1}_d}"]\ar[d,two heads]&\Omega^1_d J^{n-1}_d\ar[r]\ar[d,two heads]&0\\
&J^{n-1}_d\ar[r]&\coker(l^{1,n-1}_d)\ar[r,two heads]&\coker(\Spenc^{n,0}_d)\ar[r]&0
\end{tikzcd}
\end{equation}
The diagram is constructed by taking the central right square, obtained from \eqref{eq:explicit_Sh0}, completing it to a morphism of short exact sequences by adding the kernel of the two horizontal maps.
The other maps are the kernels and the cokernels of the vertical maps, which constitute a long exact sequence by the snake lemma.

By the snake lemma, we can see $\partial$ as an inclusion of $\ker(\Spenc^{n,0}_d)$ into $J^{n-1}_d$.
By the way $\partial$ is constructed in this case, we can deduce more about $\ker(\Spenc^{n,0}_d)$.
For that purpose, consider $E$ in $\AMod$ and let $\xi\in \ker(\Spenc^{n,0}_{d,E})$.
We can see $\xi$ in $J^n_d E$ via its natural inclusion, and then in turn embed it into $J^1_d J^{n-1}_d E$ via $l^{1,n-1}_{d,E}$.
In this particular case, where the bottom leftmost vertical morphism in \eqref{diag:exactnessSpencer1snake} is the identity, the construction of $\partial_E$ tells us that $l^{1,n-1}_{d,E}(\xi)=j^1_{d,J^{n-1}_d E}\circ \partial_E(\xi)$.
Consequently, we know that $\xi$ factors through the pullback of $l^{1,n-1}_{d,E}$ and $j^1_{d,J^{n-1}_d E}$, cf.\ Lemma \ref{lemma:pullback_j1_l}.
This yields that $\xi$ is in the image of $j^n_{d,E}$, or, in other words, that every element in $\ker(\Spenc^{n,0}_{d,E})$ is contained in the image of $j^n_{d,E}$.
Vice versa, we know that every element in the image of $j^n_{d,E}$ belongs to $\ker(\Spenc^{n,0}_{d,E})$ by Theorem \ref{theo:Spencer_is_complex}, and thus we have a double inclusion, proving that $\ker(\Spenc^{n,0}_{d,E})=E$ with kernel inclusion given by $j^n_{d,E}$.
This in turns shows that $j^n_d$ is the kernel inclusion of $\Spenc^{n,0}_d$.
\end{proof}

For applications in subsequent sections, we will consider Spencer operators in the context of sesquiholonomic jet functors.
To that end, we define the following map:
\begin{equation}
\overline{\Spenc}^{\{n,m\}}_d\colonequals \Spenc^{1,m}_{d,J^{n-1}_d}\circ \Omega^m_d\left(l^{\{n\}}_d\right)\colon \Omega^m_d J^{\{n\}}_d\longrightarrow\Omega^{m+1}_d J^{n-1}_d.
\end{equation}
We will now show a few results involving this map and the sesquiholonomic Spencer operators.
\begin{prop}
Let $\Omega^\bullet_d$ be an exterior algebra on a $\bk$-algebra $A$, then the following diagram commutes for all $m\ge 0$ and $n\ge 1$.
\begin{equation}\label{diag:sesquiSpenc_sub_holSpenc}
\begin{tikzcd}[column sep=50pt]
\Omega^m_d J^n_d\ar[r,"\Spenc^{n,m}_d"]\ar[d,"\Omega^m_d(\sqhiota{n})"']&\Omega^{m+1}_d J^{n-1}_d\ar[d,"\Omega^{m+1}_d(\sqhiota{n-1})"]\\
\Omega^m_d J^{\{n\}}_d\ar[ru,"\overline{\Spenc}^{\{n,m\}}_d"]\ar[r,"\Spenc^{\{n,m\}}_d"']&\Omega^{m+1}_d J^{\{n-1\}}_d
\end{tikzcd}
\end{equation}
In particular, the holonomic Spencer complex is a subsequence of the semiholonomic Spencer sequence.
\end{prop}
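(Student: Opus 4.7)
The plan is to verify the diagram by checking each of the two triangles separately, then note how the outer square (and the subsequence claim) follows.

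For the upper triangle, namely the equality $\overline{\Spenc}^{\{n,m\}}_d \circ \Omega^m_d(\sqhiota{n}) = \Spenc^{n,m}_d$, I would simply unpack the definition $\overline{\Spenc}^{\{n,m\}}_d = \Spenc^{1,m}_{d,J^{n-1}_d} \circ \Omega^m_d(l^{\{n\}}_d)$, pull the $\Omega^m_d(-)$ functor through the composition, and use the identity $l^{\{n\}}_d \circ \sqhiota{n} = l^{1,n-1}_d$, which is the left square of diagram \eqref{diag:inclusion_hol_sesqui}. What remains on the right is exactly the definition of $\Spenc^{n,m}_d$.

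For the lower triangle, the equality $\Omega^{m+1}_d(\sqhiota{n-1}) \circ \overline{\Spenc}^{\{n,m\}}_d = \Spenc^{\{n,m\}}_d$, the key move is to invoke naturality of the first-order Spencer operator $\Spenc^{1,m}_d$ with respect to the $A$-linear natural transformation $\sqhiota{n-1}\colon J^{n-1}_d\to J^{\{n-1\}}_d$. This lets me exchange $\Omega^{m+1}_d(\sqhiota{n-1})\circ \Spenc^{1,m}_{d,J^{n-1}_d}$ with $\Spenc^{1,m}_{d,J^{\{n-1\}}_d}\circ \Omega^m_d J^1_d(\sqhiota{n-1})$. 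Then, recognizing $J^1_d(\sqhiota{n-1})\circ l^{\{n\}}_d = l^{\{1,n-1\}}_d$ from \eqref{eq:defi_sesqui_l} recovers the definition of $\Spenc^{\{n,m\}}_d$ inside the composition.

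The outer square then commutes by composing the two triangles, and in fact this outer square is already Proposition \ref{prop:relations_Spencer_ops}.\eqref{prop:relations_Spencer_ops:3}; the diagonal map $\overline{\Spenc}^{\{n,m\}}_d$ is just a useful intermediate factorization. For the final assertion that the holonomic Spencer complex is a subsequence of the sesquiholonomic one, I would recall that each $\sqhiota{n}$ is a natural monomorphism by construction \eqref{diag:inclusion_hol_sesqui}, and that naturality of $j^n_d$ together with $l^{\{n\}}_d\circ\sqhiota{n}=l^{1,n-1}_d$ and $\sqhiota{0}=\id$ also makes $j^n_d$ factor through $\sqhiota{n}\circ j^{\{n\}}_d$; combined with the commutativity just established, this embeds the holonomic Spencer complex into the sesquiholonomic one as a subcomplex. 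I do not anticipate serious obstacles here: all pieces are available from \eqref{diag:inclusion_hol_sesqui}, \eqref{eq:defi_sesqui_l}, and naturality, so the verification is essentially bookkeeping.
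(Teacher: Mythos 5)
Your proposal is correct and follows essentially the same route as the paper: both arguments rest on the factorization of Lemma \ref{lemma:SpencerDO_wrt_low_indices} (note this factorization of $\Spenc^{n,m}_d$ is a lemma rather than its definition), the identity $l^{\{n\}}_d\circ\sqhiota{n}=l^{1,n-1}_d$ from \eqref{diag:inclusion_hol_sesqui}, the definition \eqref{eq:defi_sesqui_l} of $l^{\{1,n-1\}}_d$, and naturality of $\Spenc^{1,m}_d$ with respect to $\sqhiota{n-1}$; the paper merely assembles these into one composite diagram where you check the two triangles separately. No gaps.
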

\begin{proof}
Consider the following diagram
\begin{equation}
\begin{tikzcd}[column sep=60pt, row sep=50pt]
\Omega^m_d J^n_d\ar[rr,bend left=12,"\Spenc^{n,m}_d"]\ar[d,"\Omega^m_d(\sqhiota{n})"']\ar[r,"\Omega^m_d (l^{1,n-1}_d)"']&\Omega^m_d J^1_d J^{n-1}_d\ar[r,two heads,"\Spenc^{1,m}_{d,J^{n-1}_d}"']\ar[d,"\Omega^m_d J^1_d(\sqhiota{n-1})"]&\Omega^{m+1}_d J^{n-1}_d\ar[d,"\Omega^{m+1}_d(\sqhiota{n-1})"]\\
\Omega^m_d J^{\{n\}}_d\ar[ru,"\Omega^m_d(l^{\{n\}}_d)"]\ar[rr,bend right=12,"\Spenc^{\{n,m\}}_d"']\ar[r,"\Omega^m_d (l^{\{1,n-1\}}_d)"]&\Omega^m_d J^1_d J^{\{n-1\}}_d\ar[r,two heads,"\Spenc^{1,m}_{d,J^{\{n-1\}}_d}"]&\Omega^{m+1}_d J^{\{n-1\}}_d
\end{tikzcd}
\end{equation}
The top and bottom triangles commute by Lemma \ref{lemma:SpencerDO_wrt_low_indices} and the definition of sesquiholonomic Spencer operators.
The triangles in the left square commute by definition of $\sqhiota{n}$ and $l^{\{1,n-1\}}_d$.
Finally, the square on the right commutes by naturality of $\Spenc^{1,m}_d$ with respect to $\sqhiota{n-1}$.
We can deduce \eqref{diag:sesquiSpenc_sub_holSpenc} by definition of $\overline{\Spenc}^{\{n,m\}}_d$.
\end{proof}
In the following lemma we show, in particular, that the sesquiholonomic Spencer sequence is not a complex unless $J^n_d$ is the zero functor.
\begin{lemma}
\label{lemma:sesquiholonomic_Spencer}
Let $\Omega^\bullet_d$ be an exterior algebra over the $\bk$-algebra $A$, then:
\begin{enumerate}
\item \label{lemma:sesquiholonomic_Spencer:1}$\Spenc^{n-1,1}_d\circ \overline{\Spenc}^{\{n,0\}}_d =-\widetilde{\DH}^{II}_{J^{n-2}_d} \circ J^1_d(l^{1,n-2}_d)\circ l^{\{n\}}_d$;
\item\label{lemma:sesquiholonomic_Spencer:2} $\ker(\Spenc^{n-1,1}_d\circ \overline{\Spenc}^{\{n,0\}}_d)=J^n_d$;
\item\label{lemma:sesquiholonomic_Spencer:3} $\ker(\Spenc^{\{n-1,1\}}_d\circ \Spenc^{\{n,0\}}_d)\supseteq J^n_d$, and the equality holds if $\Omega^2_d$ is flat in $\Mod_A$.
\end{enumerate}
\end{lemma}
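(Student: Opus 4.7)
The three parts are to be proved in order, each building on its predecessor.

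For \eqref{lemma:sesquiholonomic_Spencer:1}, the plan is to expand both Spencer operators using Lemma \ref{lemma:SpencerDO_wrt_low_indices} and the defining formula for $\overline{\Spenc}^{\{n,0\}}_d$, yielding
\begin{equation*}
\Spenc^{n-1,1}_d\circ \overline{\Spenc}^{\{n,0\}}_d
=\Spenc^{1,1}_{d,J^{n-2}_d}\circ \Omega^1_d(l^{1,n-2}_d)\circ \Spenc^{1,0}_{d,J^{n-1}_d}\circ l^{\{n\}}_d.
\end{equation*}
I would then slide $\Omega^1_d(l^{1,n-2}_d)$ past $\Spenc^{1,0}_{d,J^{n-1}_d}$ using the naturality of $\Spenc^{1,0}_d$ along the $A$-linear map $l^{1,n-2}_d\colon J^{n-1}_d\to J^1_d J^{n-2}_d$, producing the factor $\Spenc^{1,0}_{d,J^1_d J^{n-2}_d}\circ J^1_d(l^{1,n-2}_d)$. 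Evaluating Lemma \ref{lemma:Spencersquared}.\eqref{lemma:Spencersquared:4} at the component $J^{n-2}_d$ then collapses $\Spenc^{1,1}_{d,J^{n-2}_d}\circ \Spenc^{1,0}_{d,J^1_d J^{n-2}_d}$ to $-\widetilde{\DH}^{II}_{d,J^{n-2}_d}$, giving the claimed formula.

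For \eqref{lemma:sesquiholonomic_Spencer:2}, I would use the characterization of $J^{\{n\}}_d$ and $J^n_d$ as the upper row of the pullback from diagram \eqref{diag:inclusion_hol_sesqui}. On $J^{\{n\}}_d$ the $\Omega^1_d$-component $\widetilde{\DH}^I_{J^{n-2}_d}\circ J^1_d(l^{1,n-2}_d)\circ l^{\{n\}}_d$ vanishes by the very definition of sesquiholonomic jets; thus inside $J^{\{n\}}_d$ the kernel of the full $\widetilde{\DH}_{J^{n-2}_d}\circ J^1_d(l^{1,n-2}_d)\circ l^{\{n\}}_d$ coincides with the kernel of its $\Omega^2_d$-component $\widetilde{\DH}^{II}_{J^{n-2}_d}\circ J^1_d(l^{1,n-2}_d)\circ l^{\{n\}}_d$. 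Combined with the identification from \eqref{lemma:sesquiholonomic_Spencer:1} and the fact (again from \eqref{diag:inclusion_hol_sesqui}) that the former kernel is $\sqhiota{n}(J^n_d)$, this gives the equality of subfunctors.

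For \eqref{lemma:sesquiholonomic_Spencer:3}, I would first establish the inclusion $J^n_d\subseteq\ker(\Spenc^{\{n-1,1\}}_d\circ \Spenc^{\{n,0\}}_d)$ by two applications of \eqref{diag:sesquiSpenc_sub_holSpenc} (equivalently, two uses of Proposition \ref{prop:relations_Spencer_ops}.\eqref{prop:relations_Spencer_ops:3}), yielding
\begin{equation*}
\Spenc^{\{n-1,1\}}_d\circ \Spenc^{\{n,0\}}_d\circ \sqhiota{n}
=\Omega^2_d(\sqhiota{n-2})\circ \Spenc^{n-1,1}_d\circ \Spenc^{n,0}_d,
\end{equation*}
which vanishes by Theorem \ref{theo:Spencer_is_complex}. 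For the converse under flatness of $\Omega^2_d$, I would instead factor $\Spenc^{\{n,0\}}_d = \Omega^1_d(\sqhiota{n-1})\circ \overline{\Spenc}^{\{n,0\}}_d$ via \eqref{diag:sesquiSpenc_sub_holSpenc} and use the analogous commutation to obtain
\begin{equation*}
\Spenc^{\{n-1,1\}}_d\circ \Spenc^{\{n,0\}}_d
=\Omega^2_d(\sqhiota{n-2})\circ \Spenc^{n-1,1}_d\circ \overline{\Spenc}^{\{n,0\}}_d.
\end{equation*}
When $\Omega^2_d$ is flat in $\Mod_A$, the functor $\Omega^2_d(-)$ preserves monomorphisms, so $\Omega^2_d(\sqhiota{n-2})$ is monic and can be cancelled from the kernel; the result then follows from \eqref{lemma:sesquiholonomic_Spencer:2}.

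The main obstacle is step \eqref{lemma:sesquiholonomic_Spencer:2}: one must correctly exploit that on $J^{\{n\}}_d$ only the $\widetilde{\DH}^{II}$ part survives, so that the kernel computation reduces to the pullback description of $J^n_d$ inside $J^{\{n\}}_d$. Once this geometric identification is in place, \eqref{lemma:sesquiholonomic_Spencer:3} is a purely formal manipulation, with the flatness hypothesis entering solely to guarantee that the induced map on $\Omega^2_d$ is injective.
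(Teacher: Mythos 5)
Your proposal is correct and follows essentially the same route as the paper: part \eqref{lemma:sesquiholonomic_Spencer:1} via the same decomposition (Lemma \ref{lemma:SpencerDO_wrt_low_indices}, the definition of $\overline{\Spenc}^{\{n,0\}}_d$, naturality of $\Spenc^{1,0}_d$ along $l^{1,n-2}_d$, and Lemma \ref{lemma:Spencersquared}.\eqref{lemma:Spencersquared:4}), part \eqref{lemma:sesquiholonomic_Spencer:2} by the same kernel identification the paper attributes to the definition of $J^n_d$ and the image lemma for $\widetilde{\DH}$ on sesquiholonomic jets, and part \eqref{lemma:sesquiholonomic_Spencer:3} via the same factorization through $\Omega^2_d(\sqhiota{n-2})$ with flatness giving injectivity. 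The only cosmetic deviation is that you obtain the inclusion in \eqref{lemma:sesquiholonomic_Spencer:3} by precomposing with $\sqhiota{n}$ and invoking Theorem \ref{theo:Spencer_is_complex}, whereas the paper reads it off directly from the same commutative diagram together with \eqref{lemma:sesquiholonomic_Spencer:2}; both are immediate.
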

\begin{proof}\
\begin{enumerate}
\item Consider the following diagram
\begin{equation}\label{diag:sesquiholonomic_Spencer:1}
\begin{tikzcd}[column sep=60pt, row sep=30pt]
J^{\{n\}}_d\ar[d,hook,"l^{\{n\}}_d"']\ar[dr,"\overline{\Spenc}^{\{n,0\}}_d"]\\
J^1_d J^{n-1}_d\ar[d,"J^1_d(l^{1,n-2}_d)"']\ar[r,two heads,"\Spenc^{1,0}_{d,J^{n-1}_d}"]&\Omega^1_d J^{n-1}_d\ar[d,"\Omega^1_d (l^{1,n-2}_d)"']\ar[dr,"\Spenc^{n-1,1}_d"]\\
J^1_d J^1_d J^{n-2}_d\ar[rr,two heads,bend right=15pt,"-\widetilde{\DH}^{II}_d"']\ar[r,two heads,"\Spenc^{1,0}_{d,J^1_d J^{n-2}_d}"]&\Omega^1_d J^1_d J^{n-2}_d\ar[r,two heads,"\Spenc^{1,1}_{d,J^{n-2}_d}"]&\Omega^2_d J^{n-2}_d
\end{tikzcd}
\end{equation}
The top triangle commutes by the definition of $\overline{\Spenc}^{\{n,0\}}_d$, the right triangle commutes by Lemma \ref{lemma:SpencerDO_wrt_low_indices}, and the bottom triangle commutes by Lemma \ref{lemma:Spencersquared}.\eqref{lemma:Spencersquared:4}.
The square commutes by naturality of $\Spenc^{1,0}_d$ with respect to $l^{1,n-2}_d$.
The commutativity of \eqref{diag:sesquiholonomic_Spencer:1} gives \eqref{lemma:sesquiholonomic_Spencer:1}.
\item It follows from the definition of holonomic jet functor, cf.\ \cite[\jetsdefnjet]{FMW}, and \cite[\jetslemmaimageD]{FMW} that the kernel of $\widetilde{\DH}^{II}_{J^{n-2}_d} \circ J^1_d(l^{1,n-2}_d)\circ l^{\{n\}}_d$ is precisely the subfunctor $J^n_d$.
\item Consider the following commutative diagram, obtained by composing two consecutive diagrams of the form \eqref{diag:sesquiSpenc_sub_holSpenc}.
\begin{equation}\label{diag:sesquiSpenc_sub_holSpenc_lowm}
\begin{tikzcd}[column sep=50pt]
J^n_d\ar[r,"\Spenc^{n,0}_d"]\ar[d,"\sqhiota{n}"']&\Omega^1_d J^{n-1}_d\ar[d,"\Omega^1_d(\sqhiota{n-1})"]\ar[r,"\Spenc^{n-1,1}_d"]&\Omega^2_d J^{n-2}_d\ar[d,"\Omega^2_d(\sqhiota{n-2})"]\\
J^{\{n\}}_d\ar[ru,"\overline{\Spenc}^{\{n,0\}}_d"]\ar[r,"\Spenc^{\{n,0\}}_d"']&\Omega^1_d J^{\{n-1\}}_d\ar[r,"\Spenc^{\{n-1,1\}}_d"']&\Omega^2_d J^{\{n-2\}}_d
\end{tikzcd}
\end{equation}
In particular, it shows that $\Spenc^{\{n-1,1\}}_d\circ \Spenc^{\{n,0\}}_d=\Omega^2_d(\sqhiota{n-2}) \circ\Spenc^{n-1,1}_d\circ \overline\Spenc^{\{n,0\}}_d$, which, together with \eqref{lemma:sesquiholonomic_Spencer:2}, implies the inclusion
\begin{equation}
\ker\left(\Spenc^{\{n-1,1\}}_d\circ \Spenc^{\{n,0\}}_d\right)
=\ker\left(\Omega^2_d(\sqhiota{n-2}) \circ\Spenc^{n-1,1}_d\circ \overline\Spenc^{\{n,0\}}_d\right)
\supseteq
\ker\left(\Spenc^{n-1,1}_d\circ \overline\Spenc^{\{n,0\}}_d\right)
=J^n_d,
\end{equation}
which is an equality if $\Omega^2_d(\sqhiota{n-2})$ is a monomorphism, which in turn happens if $\Omega^2_d$ is flat in $\ModA$.\qedhere
\end{enumerate}
\end{proof}
\begin{rmk}\label{rmk:holonomic_alternate_def_spencer}
	Lemma \ref{lemma:sesquiholonomic_Spencer}.\eqref{lemma:sesquiholonomic_Spencer:2} can be used as an alternative definition of holonomic jet functors $J^n_d$.
	The degrees $n=0,1$ are given as in \cite[\jetssonejetfunctor]{FMW}, and the higher grades are defined by induction on $n$ as $J^n_d\colonequals\ker(\Spenc^{n-1,1}_d\circ \overline{\Spenc}^{\{n,0\}}_d)$ (or even $\ker(\Spenc^{\{n-1,1\}}_d\circ \Spenc^{\{n,0\}}_d)$ when $\Omega^2_d$ is flat in $\ModA$).
	Lemma \ref{lemma:sesquiholonomic_Spencer} shows that this definition is equivalent to \cite[\jetsdefnjet]{FMW}.
	Notice that this definition is well-posed, as the inductive hypothesis gives us all the objects and morphisms appearing in \eqref{diag:sesquiSpenc_sub_holSpenc_lowm} except for those involving $J^n_d$, and once we obtain the latter, we can build the objects and maps that are necessary for proving the step $n+1$.
\end{rmk}
\subsubsection{Elemental and primitive Spencer complex}
In Lemma \ref{lemma:sesquiholonomic_Spencer}, we have shown that holonomic jet functors are, in a certain sense, maximal with respect to the property of the Spencer sequence being a complex.
However, we can still investigate conditions for Spencer sequences on subfunctors of the holonomic jet functors to be complexes.
We will now study the case of elemental and primitive jet functors.
Throughout this subsection we will assume $\Omega^1_d$ to be flat in $\ModA$, so that we can build elemental and primitive Spencer operators, cf.\ Definition \ref{def:generalised_Spencer} and Proposition \ref{prop:Spencerable_jets}.
\begin{rmk}
Under the assumption that $\Omega^1_d$ is flat in $\ModA$, we have that $\EJ^n_d\simeq \PJ^n_d$, so all the results derived in this subsection concerning the elemental jets could equivalently be phrased in terms of the primitive jets.
\end{rmk}

Analogously to the holonomic case, we can construct the \emph{elemental Spencer sequence} by catenating the elemental jet prolongation and the appropriate elemental Spencer operators.
\begin{equation}
\label{diag:elemental_Spencer_complex}
\begin{tikzcd}[column sep=28pt]
0\ar[r]&[-10pt]\id_{\AMod}\ar[r,"\Ej^n_d"]&\EJ^n_d\ar[r,"\ESpenc^{n,0}_d"]&\Omega^1_d\EJ^{n-1}_d\ar[r,"\ESpenc^{n-1,1}_d"]&\Omega^2_d\EJ^{n-2}_d\ar[r,"\ESpenc^{n-2,2}_d"]&\cdots\ar[r,"\ESpenc^{2,n-2}_d"]&\Omega^{n-1}_d\EJ^1_d\ar[r,"\ESpenc^{1,n-1}_d"]&\Omega^n_d\ar[r]&[-10pt]0
\end{tikzcd}
\end{equation}
We can prove that this construction yields a complex via the following result.
\begin{theo}[Elemental Spencer complex]\label{theo:Elemental_Spencer_is_complex}
	The elemental Spencer sequence is a complex, i.e.
	\begin{enumerate}
		\item $\ESpenc^{n,0}_d \circ \Ej^n_d=0$;
		\item $\ESpenc^{n-1,m+1}_d \circ \ESpenc^{n,m}_d = 0$ for all $n\ge 2$ and $m\ge 0$.
	\end{enumerate}
\end{theo}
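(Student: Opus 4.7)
The plan is to adapt the proof of the holonomic Spencer complex (Theorem \ref{theo:Spencer_is_complex}) to the elemental setting by transporting the vanishing statements through the natural monomorphism $\iota_{\EJ^n_d}\colon \EJ^n_d\hookrightarrow J^n_d$. The main tool is the compatibility square from Proposition \ref{prop:relations_Spencer_ops}.\eqref{prop:relations_Spencer_ops:6}, namely
\begin{equation*}
\Omega^{m+1}_d(\iota_{\EJ^{n-1}_d})\circ \ESpenc^{n,m}_d = \Spenc^{n,m}_d\circ \Omega^m_d(\iota_{\EJ^n_d}),
\end{equation*}
together with the compatibility $\iota_{\EJ^n_d}\circ \Ej^n_d = j^n_d$ of the elemental prolongation with the holonomic one (analogous to \cite[\jetslemmaholprol]{FMW}).

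For part (i) I would simply compose and invoke Theorem \ref{theo:Spencer_is_complex}:
\begin{equation*}
\Omega^1_d(\iota_{\EJ^{n-1}_d})\circ \ESpenc^{n,0}_d\circ \Ej^n_d = \Spenc^{n,0}_d\circ \iota_{\EJ^n_d}\circ \Ej^n_d = \Spenc^{n,0}_d\circ j^n_d = 0.
\end{equation*}
Since $\Omega^1_d$ is flat in $\ModA$ by the standing assumption of the subsection and $\iota_{\EJ^{n-1}_d}$ is a monomorphism, the map $\Omega^1_d(\iota_{\EJ^{n-1}_d})$ is monic, so it can be cancelled to yield $\ESpenc^{n,0}_d\circ \Ej^n_d = 0$.

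For part (ii), I would replicate diagram \eqref{diag:beak} and the calculation from the proof of Theorem \ref{theo:Spencer_is_complex}, substituting the elemental data throughout. The elemental version of Lemma \ref{lemma:SpencerDO_wrt_low_indices}, namely $\ESpenc^{n,m}_d = \Spenc^{1,m}_{d,\EJ^{n-1}_d}\circ \Omega^m_d(\El^{1,n-1}_d)$, is built into Definition \ref{def:generalised_Spencer}. Combining this with naturality of $\Spenc^{1,m}_d$ with respect to $\El^{1,n-2}_d$ and Lemma \ref{lemma:Spencersquared}.\eqref{lemma:Spencersquared:3} gives
\begin{equation*}
\ESpenc^{n-1,m+1}_d\circ \ESpenc^{n,m}_d = -\nu^m_{d,\EJ^{n-2}_d}\circ \Omega^m_d\!\left(\widetilde{\DH}_{\EJ^{n-2}_d}\circ J^1_d(\El^{1,n-2}_d)\circ \El^{1,n-1}_d\right),
\end{equation*}
so everything reduces to showing $\widetilde{\DH}_{\EJ^{n-2}_d}\circ J^1_d(\El^{1,n-2}_d)\circ \El^{1,n-1}_d = 0$. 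Applying the identity $J^1_d(\iota_{\EJ^{n-1}_d})\circ \El^{1,n-1}_d = l^{1,n-1}_d\circ \iota_{\EJ^n_d}$ twice, functoriality of $J^1_d$, and naturality of $\widetilde{\DH}_d$ with respect to $\iota_{\EJ^{n-2}_d}$ transports this identity to the holonomic one $\widetilde{\DH}_{J^{n-2}_d}\circ J^1_d(l^{1,n-2}_d)\circ l^{1,n-1}_d\circ \iota_{\EJ^n_d} = 0$, which holds by definition of $J^n_d$.

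The main obstacle lies in the final cancellation: the transport argument above gives the desired vanishing only after postcomposition with $(\Omega^1_d\ltimes \Omega^2_d)(\iota_{\EJ^{n-2}_d})$, and to remove this factor we need it to be monic. This is automatic under flatness of $\Omega^2_d$ in $\ModA$, which, combined with the standing flatness of $\Omega^1_d$, guarantees that the biproduct functor $\Omega^1_d\ltimes \Omega^2_d$ preserves the monomorphism $\iota_{\EJ^{n-2}_d}$. Without this extra hypothesis one might instead attempt a component-wise verification exploiting the explicit description of $\EJ^n_d$ as a submodule of $J^n_d$ from \cite{Symbol}, avoiding additional flatness requirements at the cost of a more laborious argument.
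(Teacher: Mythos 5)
Part (i) of your argument is exactly the paper's proof: transport along $\iota_{\EJ^n_d}$, use the compatibility of Proposition \ref{prop:relations_Spencer_ops}.\eqref{prop:relations_Spencer_ops:6}, and cancel the mono $\Omega^1_d(\iota_{\EJ^{n-1}_d})$, which is legitimate because flatness of $\Omega^1_d$ is the standing assumption of the subsection.

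For part (ii) there is a genuine gap in your primary route. Your reduction, via Definition \ref{def:generalised_Spencer}, naturality of $\Spenc^{1,m}_d$, and Lemma \ref{lemma:Spencersquared}.\eqref{lemma:Spencersquared:3}, to the vanishing of $\widetilde{\DH}_{d,\EJ^{n-2}_d}\circ J^1_d(\El^{1,n-2}_d)\circ \El^{1,n-1}_d$ is correct. But the transport of this identity into the holonomic setting only yields the vanishing after postcomposition with $(\Omega^1_d\ltimes\Omega^2_d)(\iota_{\EJ^{n-2}_d})$, and cancelling that factor requires $\Omega^2_d$ to be flat in $\ModA$ — a hypothesis that is \emph{not} part of the theorem (only $\Omega^1_d$ is assumed flat throughout the subsection). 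As written, your proof therefore establishes the statement only under an additional assumption. The paper closes this by going in the opposite direction: instead of pushing forward along the monomorphism into $J^n_d$, it pulls back along the natural epimorphism $\Ephat^n_d\colon A\otimes - \twoheadrightarrow \EJ^n_d$ (elemental jets are generated by prolongations), precomposes with the induced epi $\Omega^m_d(\Ephat^n_d)$, and verifies the vanishing explicitly on elements $\omega\otimes e$, where it reduces to $d^2\omega\otimes_A e=0$; the epi is then cancelled, with no flatness of $\Omega^2_d$ needed. Your fallback remark about a ``component-wise verification'' is indeed the right fix and, once executed, essentially coincides with the paper's argument: all maps in your composite are left $A$-linear, $\EJ^n_dE$ is generated by prolongations, and on $\Ej^n_{d,E}(e)$ the composite $J^1_d(\El^{1,n-2}_{d,E})\circ\El^{1,n-1}_{d,E}$ lands on $[1\otimes 1]\otimes_A[1\otimes 1]\otimes_A \Ej^{n-2}_{d,E}(e)$, which $\widetilde{\DH}$ annihilates since $d1=0$. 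But in your write-up this is only gestured at, so the proof as proposed is incomplete for the theorem as stated.
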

\begin{proof}\
	\begin{enumerate}
		\item We consider the following diagram:
		\begin{equation}
		\label{diag:Ej_is_jernel}
		\begin{tikzcd}[column sep=28pt]
			\id_{\AMod}\ar[d,equals]\ar[r,hook,"\Ej^n_d"]&\EJ^n_d\ar[r,"\ESpenc^{n,0}_d"]\ar[d,hook,"\iota_{\EJ^n_d}"]&\Omega^1_d\EJ^{n-1}_d\ar[d,hook,"\Omega^1_d(\iota_{\EJ^{n-1}_d})"]\\
			\id_{\AMod}\ar[r,hook,"j^n_d"]&J^n_d\ar[r,"\Spenc^{n,0}_d"]&\Omega^1_d J^{n-1}_d
		\end{tikzcd}
		\end{equation}
		The left square commutes by definition of $\Ej^n_d$, cf.\ \cite[\symbolssselementaljets]{Symbol}, and the right square commutes by Proposition \ref{prop:relations_Spencer_ops}.\eqref{prop:relations_Spencer_ops:6}.
		Since $\Omega^1_d(\iota_{\EJ^{n-1}_d})$ is a monomorphism and the bottom horizontal composition vanishes, so does the top horizontal composition, which proves the statement.
		\item Now let $n\ge 2$ and $m\ge 0$ consider the natural epi $\Ephat^n_d\colon A\otimes -\twoheadrightarrow \EJ^n_d$.
		By the right exactness of tensor functors, we obtain, another natural epimorphism with component at $E$ in $\AMod$ as follows:
		\begin{align}
			\Omega^m_d(\Ephat^n_d)\colon \Omega^m_d \otimes_A A\otimes E\cong\Omega^m_d \otimes E\longtwoheadrightarrow \Omega^m_d\EJ^n_d E,
			&\hfill&
			\omega\otimes e\longmapsto \omega\otimes_A \Ej^n_{d,E}(e).
		\end{align}
		 Now consider the following diagram.
		\begin{equation}
		\label{diag:elemental_Spencer_is_complex}
		\begin{tikzcd}[column sep=45pt,row sep=30pt]
			&\Omega^m_d\otimes \id_{\AMod}\ar[ddl,"\Omega^m_d\otimes\Ej^{n-2}_d"']\ar[d,"\Omega^m_d\otimes\Ej^{n-1}_d"]\ar[r,two heads,"\Omega^m_d(\Ephat^n_d)"]&\Omega^m_d \EJ^n_d\ar[d,"\Omega^m_d(\El^{1,n-1}_d)"']\ar[dr,"\ESpenc^{n,m}_d"]\\
			&\Omega^m_d\otimes\EJ^{n-1}_d\ar[d,"\Omega^m_d\otimes\El^{1,n-2}_d"]\ar[r,two heads,"\Omega^m_d(\phat^1_{d,\EJ^{n-1}_d})"]&\Omega^m_d J^1_d \EJ^{n-1}_d\ar[d,"\Omega^m_dJ^1_d(\El^{1,n-2}_d)"']\ar[r,two heads,"\Spenc^{1,m}_{d,\EJ^{n-1}_d}"]&[-10pt]\Omega^{m+1}_d \EJ^{n-1}_d\ar[d,"\Omega^{m+1}_d(\El^{1,n-2}_d)"']\ar[dr,"\ESpenc^{n-1,m+1}_d"]\\
			\Omega^m_d\otimes \EJ^{n-2}_d\ar[rr,bend right=20pt,"\Omega^m_d\otimes_A j^{(2)}_{d,\EJ^{n-2}_d}=\Omega^m_d(\phat^{(2)}_{d,J^{n-2}_d})"']\ar[r,"\Omega^m\otimes j^1_{d,\EJ^{n-2}_d}"]&\Omega^m_d\otimes J^1_d\EJ^{n-2}_d\ar[r,two heads,"\Omega^m_d(\phat^1_{d,J^1_d\EJ^{n-1}_d})"]&\Omega^m_d J^1_d J^1_d \EJ^{n-2}_d\ar[r,two heads,"\Spenc^{1,m}_{d,J^1_d \EJ^{n-2}_d}"']&\Omega^{m+1}_d J^1_d \EJ^{n-2}_d\ar[r,two heads,"\Spenc^{1,m+1}_{d,\EJ^{n-2}_d}"']&[-10pt]\Omega^{m+2}_d \EJ^{n-2}_d
		\end{tikzcd}
		\end{equation}
		By Definition \ref{def:generalised_Spencer}, the two triangles on the right commute.
		The rightmost square commutes by naturality of $\ESpenc^{1,m}_d$ with respect to $\El^{1,n-2}_d$.
		In order to prove the naturality of the top central square, consider the following commutative square, cf.\ \cite[\symbolslemmaelementalsmm]{Symbol}.
		\begin{equation}
		\label{diag:commutativity_Ejs}
		\begin{tikzcd}
			\id_{\AMod}\ar[r,hook,"\Ej^n_d"]\ar[d,hook,"\Ej^{n-1}_d"']&\EJ^n\ar[d,hook,"\El^{1,n-1}_d"]\\
			\EJ^{n-1}_d\ar[r,hook,"j^1_{d,\EJ^{n-1}_d}"]&J^1_d \EJ^{n-1}_d
		\end{tikzcd}
		\end{equation}
		Applying $A\otimes -$ to \eqref{diag:commutativity_Ejs} and composing it with the naturality square for the left $A$-action with respect to $\El^{1,n-1}_d$, we obtain the following commutative diagram of natural transformations of functors $\AMod\to \Mod$.
		\begin{equation}
		\label{diag:commutativity_phats}
		\begin{tikzcd}[column sep=44pt,row sep=30pt]
			A\otimes\id_{\AMod}\ar[r,"A\otimes\Ej^n_d"']\ar[rr,bend left=20pt,two heads,"\Ephat^n_d"]\ar[d,"\id_A\otimes\Ej^{n-1}_d"']&A\otimes \EJ^n\ar[d,"A\otimes \El^{1,n-1}_d"]\ar[r,two heads,"\cdot"']&\EJ^n\ar[d,hook,"\El^{1,n-1}_d"]\\
			A\otimes \EJ^{n-1}_d\ar[r,"A\otimes j^1_{d,\EJ^{n-1}_d}"]\ar[rr,bend right=20pt,two heads,"\phat^1_{d,\EJ^{n-1}_d}"']&A\otimes J^1_d \EJ^{n-1}_d\ar[r,two heads,"\cdot"]&J^1_d \EJ^{n-1}_d
		\end{tikzcd}
		\end{equation}
		where $\phat^1_d=\Ephat^1_d$, cf.\ \cite[\symbolsdefphatn]{Symbol}.
		If we apply the functor $\Omega^m_d$ to \eqref{diag:commutativity_phats}, we obtain the top central square of \eqref{diag:elemental_Spencer_is_complex}.
		If we now consider \eqref{diag:commutativity_Ejs} for $n-1$ instead of $n$, and apply the functor $\Omega^m_d\otimes -$, we also obtain the leftmost square of \eqref{diag:elemental_Spencer_is_complex}.
		The curved bottom triangle commutes by definition of $j^{(2)}_d$; in fact, we have $j^{(2)}_d=j^1_{J^1_d}\circ j^1_d$, and we obtain the desired triangle if we consider the component $\EJ^{n-2}_d$, we apply the functor $\Omega^m_d\otimes -$, and finally we compose both members with the map $\otimes_A\colon \Omega^m_d\otimes J^1_d J^1_d\EJ^{n-2}_d\to \Omega^m_d J^1_d J^1_d\EJ^{n-2}_d$.
		Finally, the bottom central square in \eqref{diag:elemental_Spencer_is_complex} is the naturality square of $\phat^n_d$ with respect to $\El^{1,n-1}_d$ to which we apply the functor $\Omega^m_d$, and as such, it commutes.
		It follows that \eqref{diag:elemental_Spencer_is_complex} commutes.
		Now consider the bottom composition of \eqref{diag:elemental_Spencer_is_complex}.
		It vanishes because $
		\Spenc^{1,m+1}_d\circ\Spenc^{1,m}_{d,J^1_d}\circ (\Omega^m_d\otimes_A j^{(2)}_d)=0$.
		We verify this by computing the component at $E$ for all $\omega\otimes e\in\Omega^m_d \otimes E$.
		\begin{equation}
		\begin{split}
			\Spenc^{1,m+1}_{d,E}\circ\Spenc^{1,m}_{d,J^1_d E}\circ \left(\Omega^m_d\otimes_A j^{(2)}_{d,E}\right)(\omega\otimes e)
			&=\Spenc^{1,m+1}_{d,E}\circ\Spenc^{1,m}_{d,J^1_d E}(\omega\otimes_A [1\otimes 1]\otimes_A [1\otimes e])\\
		&=\Spenc^{1,m+1}_{d,E}(d\omega\otimes_A [1\otimes e])\\
		&=d^2\omega\otimes_A e\\
		&=0
		\end{split}
		\end{equation}
		It follows from \eqref{diag:elemental_Spencer_is_complex} that $\ESpenc^{n-1,m+1}_d\circ \ESpenc^{n,m}_d\circ \Omega^m_d(\Ephat^n_d)=0$, and since $\Omega^m_d(\Ephat^n_d)$ is an epi, we obtain $\ESpenc^{n-1,m+1}_d\circ \ESpenc^{n,m}_d=0$ as claimed.\qedhere
	\end{enumerate}
\end{proof}
By virtue of this theorem, we can define the elemental analogue of Spencer cohomology as follows.
\begin{defi}
We call the cohomology of the elemental Spencer complex \eqref{diag:elemental_Spencer_complex} the \emph{elemental Spencer cohomology}, and we denote the cohomology group at $\Omega^m_d \EJ^n_d$ by $H^{n,m}_{\ESpenc_d}$.
\end{defi}
As for the holonomic case we can prove the exactness of the elemental Spencer sequence at the extremals.
\begin{prop}\label{prop:elemental_Spencer_cohomology_vanishing_at_extremals}
The elemental Spencer complex is exact in degrees $0$, $1$, and $n+1$, so $H^{n,0}_{\ESpenc_d}=H^{0,n}_{\ESpenc_d}=0$.
\end{prop}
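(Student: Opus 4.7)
The plan is to mirror Proposition~\ref{prop:Spencer_cohomology_vanishing_at_extremals}, exploiting the commutative ladder~\eqref{diag:Ej_is_jernel} to transfer exactness from the holonomic Spencer complex to the elemental one. Under the standing hypothesis that $\Omega^1_d$ is flat in $\ModA$, Proposition~\ref{prop:relations_Spencer_ops}.\eqref{prop:relations_Spencer_ops:6} ensures that $\iota_{\EJ^n_d}\colon\EJ^n_d\hookrightarrow J^n_d$ is a (natural) monomorphism, and hence so is $\Omega^1_d(\iota_{\EJ^{n-1}_d})$ by flatness of $\Omega^1_d$. The two extremal degrees then reduce to known results: for degree $0$, the left square of \eqref{diag:Ej_is_jernel} gives $j^n_d=\iota_{\EJ^n_d}\circ\Ej^n_d$, so $\Ej^n_d$ inherits monomorphicity from $j^n_d$; for degree $n+1$, the equality $\EJ^1_d=J^1_d$ makes $\ESpenc^{1,n-1}_d=\Spenc^{1,n-1}_d$, which is a natural epimorphism by Lemma~\ref{lemma:SpencerDO_wrt_low_indices}.

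The substantive step is exactness at $\EJ^n_d$. Theorem~\ref{theo:Elemental_Spencer_is_complex} already yields $\im(\Ej^n_d)\subseteq\ker(\ESpenc^{n,0}_d)$, and I would establish the reverse inclusion componentwise. Fix $E$ in $\AMod$ and $\xi\in\EJ^n_dE$ with $\ESpenc^{n,0}_{d,E}(\xi)=0$. The right square of \eqref{diag:Ej_is_jernel} gives the identity of natural transformations $\Spenc^{n,0}_d\circ\iota_{\EJ^n_d}=\Omega^1_d(\iota_{\EJ^{n-1}_d})\circ\ESpenc^{n,0}_d$, so $\Spenc^{n,0}_{d,E}(\iota_{\EJ^n_d,E}(\xi))=0$. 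By Proposition~\ref{prop:Spencer_cohomology_vanishing_at_extremals}, one can write $\iota_{\EJ^n_d,E}(\xi)=j^n_{d,E}(e)=\iota_{\EJ^n_d,E}(\Ej^n_{d,E}(e))$ for some $e\in E$, and monomorphicity of $\iota_{\EJ^n_d}$ yields $\xi=\Ej^n_{d,E}(e)$, as required.

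I do not expect a serious obstacle: the essential cohomological content has already been absorbed into the holonomic statement Proposition~\ref{prop:Spencer_cohomology_vanishing_at_extremals}, and the present argument amounts to pulling that exactness back along the monomorphic inclusion $\iota_{\EJ^\bullet_d}$ into the holonomic setting. The only subtle point is that this inclusion, and its image under $\Omega^1_d$, must be monic, which is precisely guaranteed by the flatness of $\Omega^1_d$ in force throughout this subsection; beyond that, no snake lemma diagram chase analogous to \eqref{diag:exactnessSpencer1snake} is needed.
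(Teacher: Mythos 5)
Your proposal is correct and follows essentially the same route as the paper: both reduce the extremal exactness of the elemental complex to the holonomic result (Proposition \ref{prop:Spencer_cohomology_vanishing_at_extremals}) via the commutative ladder \eqref{diag:Ej_is_jernel}, plus monomorphicity of $\iota_{\EJ^\bullet_d}$ for degrees $0$–$1$ and the identification $\ESpenc^{1,n-1}_d=\Spenc^{1,n-1}_d$ (an epi) for degree $n+1$. The only difference is cosmetic: the paper verifies that $\Ej^n_d$ satisfies the kernel universal property at the level of natural transformations, while you run the same diagram chase componentwise, which is equivalent since exactness in the functor category is checked objectwise.
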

\begin{proof}
	The vanishing in degree $0$ and $1$ is equivalent to proving that $\Ej^n_d$ is the kernel of $\ESpenc^{n,0}_d$.
For this purpose, consider the commutative diagram \eqref{diag:Ej_is_jernel}.
We verify that $\Ej^n_d$ satisfies the kernel universal property for $\ESpenc^{n,0}_d$ in the category of functors $\AMod\to\Mod$ and natural transformations between them.
Consider a functor $\AMod\to \Mod$ and a $\bk$-linear natural transformation $f\colon F\to \EJ^n_d$ such that $\ESpenc^{n,0}_d\circ f=0$.
Then by commutativity of \eqref{diag:Ej_is_jernel}, we obtain
\begin{equation}
	0
	=\Omega^1_d(\iota_{\EJ^{n-1}_d})\circ\ESpenc^{n,0}_d\circ f
	=\Spenc^{n,0}_d\circ \iota_{\EJ^n_d}\circ f
\end{equation}
By the kernel universal property, $\iota_{\EJ^n_d}\circ f$ factors through the kernel of $\Spenc^{n,0}_d$.
That is, there exists a unique $\overline{f}\colon F\to \id_{\AMod}$ such that $\iota_{\EJ^n_d}\circ f=j^n_d\circ\overline{f}$, and thus $f=\Ej^n_d\circ\overline{f}$.
It follows that $\ker(\ESpenc^{n,0}_d)=\Ej^n_d$.

The last grade of the cohomology vanishes because $\ESpenc^{n-1,1}_d=\Spenc^{n-1,1}_d$ is an epi by Proposition \ref{prop:Spencer_cohomology_vanishing_at_extremals}.
\end{proof}
\begin{rmk}
One can prove an analogue of Lemma \ref{lemma:pullback_j1_l} for the elemental (and primitive) case, i.e.\ the following diagram is a pullback square
\begin{equation}
\begin{tikzcd}
\id_{\AMod} \arrow[dr, phantom,"\lrcorner", very near start]\ar[r,hook,"\Ej^n_d"]\ar[d,hook,"\Ej^{n-1}_d"']&\EJ^n_d\ar[d,hook,"\El^{1,n-1}_d"]\\
\EJ^{n-1}_d\ar[r,hook,"\Ej^1_{d,\EJ^{n-1}_d}"']&\EJ^1_d \EJ^{n-1}_d
\end{tikzcd}
\end{equation}
\end{rmk}
\subsection{Spencer bicomplex}
We can now consider the following diagram relating the Spencer $\delta$-complex, cf.\ \cite[\jetsssSpencer]{FMW}, and the Spencer complex.
\begin{equation}\label{diag:Spencer_bicomplex}
\begin{tikzcd}[column sep=60pt,row sep=30pt]
&[-30pt]&0\ar[d]&0\ar[d]\\[-10pt]
0\ar[r]&0\ar[d]\ar[r]&\id_{\AMod}\ar[d,"j^n_d"]\ar[r,equals]&\id_{\AMod}\ar[d,"j^{n-1}_d"]\ar[r]&[-30pt]0\\
0\ar[r]&S^n_d\ar[d,"-\delta^{n,0}_d"']\ar[r,"\iota^n_d"]&J^n_d\ar[d,"\Spenc^{n,0}_d"]\ar[r,"\pi^{n,n-1}_d"]&J^{n-1}_d\ar[d,"\Spenc^{n-1,0}_d"]\ar[r]&0\\
0\ar[r]&\Omega^1_d S^{n-1}_d\ar[d,"-\delta^{n-1,1}_d"']\ar[r,"\Omega^1_d(\iota^{n-1}_d)"]&\Omega^1_dJ^{n-1}_d\ar[d,"\Spenc^{n-1,1}_d"]\ar[r,"\Omega^1_d(\pi^{n-1,n-2}_d)"]&\Omega^1_d J^{n-2}_d\ar[d,"\Spenc^{n-2,1}_d"]\ar[r]&0\\
0\ar[r]&\Omega^2_dS^{n-2}_d\ar[d,"-\delta^{n-2,2}_d"']\ar[r,"\Omega^2_d(\iota^{n-2}_d)"]&\Omega^2_dJ^{n-2}_d\ar[d,"\Spenc^{n-2,2}_d"]\ar[r,"\Omega^2_d(\pi^{n-2,n-3}_d)"]&\Omega^2_dJ^{n-3}_d\ar[d,"\Spenc^{n-3,2}_d"]\ar[r]&0\\
&\vdots&\vdots&\vdots
\end{tikzcd}
\end{equation}
\begin{prop}
\label{prop:Spencer bicomplex_is_bicomplex}
The diagram \eqref{diag:Spencer_bicomplex} is a bicomplex.
\end{prop}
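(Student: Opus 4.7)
The plan is to verify the three defining properties of a bicomplex: every column is a complex, every row is a complex, and every square commutes. None of these will require deep new ideas, but some care is needed with signs and with identifying the restriction of the Spencer operator to the symbol subfunctor with the Spencer $\delta$-operator.

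First I would handle the columns. The middle and rightmost columns are the Spencer complexes based at $J^n_d$ and $J^{n-1}_d$, which are complexes by Theorem \ref{theo:Spencer_is_complex}. The leftmost column is the Spencer $\delta$-complex on $S^n_d$, which is a complex by \cite[\jetsssSpencer]{FMW} (the sign on each $\delta$ does not affect this, since $(-\delta)(-\delta) = \delta^2 = 0$). The zeros at the top of the leftmost two columns make those portions trivially into complexes.

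Next, every row (after the trivial top one $0 \to 0 \to \id_{\AMod} \xrightarrow{\id} \id_{\AMod} \to 0$) is obtained by applying $\Omega^m_d$ to the jet short exact sequence
\begin{equation}
0 \longrightarrow S^{n-m}_d \xrightarrow{\iota^{n-m}_d} J^{n-m}_d \xrightarrow{\pi^{n-m,n-m-1}_d} J^{n-m-1}_d \longrightarrow 0,
\end{equation}
and since $\pi^{n-m,n-m-1}_d \circ \iota^{n-m}_d = 0$, applying $\Omega^m_d$ yields a complex regardless of any flatness hypothesis.

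Finally, I would verify commutativity of the squares, which split into three types. The squares in the top two rows commute by the compatibility $\pi^{n,n-1}_d \circ j^n_d = j^{n-1}_d$ of jet prolongations (cf.\ \cite[\jetslemmaholprol]{FMW}) and the vanishing $\Spenc^{n,0}_d \circ j^n_d = 0$ from Theorem \ref{theo:Spencer_is_complex}. The right-hand squares at level $m$,
\begin{equation}
\begin{tikzcd}
\Omega^m_d J^{n-m}_d \ar[r, "\Omega^m_d(\pi^{n-m,n-m-1}_d)"] \ar[d, "\Spenc^{n-m,m}_d"'] & \Omega^m_d J^{n-m-1}_d \ar[d, "\Spenc^{n-m-1,m}_d"] \\
\Omega^{m+1}_d J^{n-m-1}_d \ar[r, "\Omega^{m+1}_d(\pi^{n-m-1,n-m-2}_d)"'] & \Omega^{m+1}_d J^{n-m-2}_d,
\end{tikzcd}
\end{equation}
commute by writing $\Spenc^{n,m}_d = \Spenc^{1,m}_{d,J^{n-1}_d} \circ \Omega^m_d(l^{1,n-1}_d)$ via Lemma \ref{lemma:SpencerDO_wrt_low_indices} and invoking naturality of $l^{1,\bullet}_d$ and $\Spenc^{1,m}_d$ with respect to the jet projection. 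The left-hand squares require the identity
\begin{equation}
\Spenc^{n-m,m}_d \circ \Omega^m_d(\iota^{n-m}_d) = -\,\Omega^{m+1}_d(\iota^{n-m-1}_d) \circ \delta^{n-m,m}_d,
\end{equation}
which is where the sign on each $\delta^{\bullet,\bullet}_d$ in the diagram comes from.

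The main obstacle is the left-square commutativity, since it amounts to identifying the Spencer operator restricted to symbols with the negative of the Spencer $\delta$-operator. I expect this to be a direct computation: for $\xi \in S^{n-m}_d \subseteq J^{n-m}_d$, the term $d\omega \otimes_A \pi^{n-m,n-m-1}_d(\xi)$ appearing in Proposition \ref{prop:Spencercorrespondence} vanishes because $\xi$ lies in the kernel of $\pi^{n-m,n-m-1}_d$, so the Spencer operator reduces to $(-1)^{\deg \omega}\omega \wedge \Spenc^{n-m,0}_d(\xi)$, and by \eqref{eq:explicit_Sh0} this is $-(-1)^{\deg \omega}\omega \wedge \rho_{d,J^{n-m-1}_d} \circ l^{1,n-m-1}_d(\xi)$. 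Comparing this with the definition of $\delta^{n-m,m}_d$ from \cite[\jetseqdefdeltahk]{FMW} gives the required sign and verifies that the image lands in $\Omega^{m+1}_d(\iota^{n-m-1}_d)(\Omega^{m+1}_d S^{n-m-1}_d)$, closing the argument.
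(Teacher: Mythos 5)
Your overall strategy coincides with the paper's: columns via Theorem \ref{theo:Spencer_is_complex} and the Spencer $\delta$-complex, rows by applying $\Omega^m_d$ to the (complex) jet sequences, and the left-hand squares by reducing, through Proposition \ref{prop:Spencercorrespondence}, \eqref{eq:explicit_Sh0} and the defining factorization of $\iota^{n}_d$ through $\iota^n_{\wedge}$, to the identity $\Spenc^{h,k}_d\circ\Omega^k_d(\iota^h_d)=-\Omega^{k+1}_d(\iota^{h-1}_d)\circ\delta^{h,k}_d$; that part of your sketch matches the paper's argument in substance.

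The genuine gap is in your treatment of the right-hand squares. You justify them by ``naturality of $l^{1,\bullet}_d$ and $\Spenc^{1,m}_d$ with respect to the jet projection.'' Naturality of $\Spenc^{1,m}_d$ with respect to the $A$-linear map $\pi^{h-1,h-2}_{d,E}\colon J^{h-1}_dE\to J^{h-2}_dE$ is legitimate (and is what the paper uses for the lower half of each such square), but there is no naturality of $l^{1,h-1}_d$ ``in the jet order'': $l^{1,h-1}_d$ is natural only in the module variable $E$, and $\pi^{h,h-1}_{d,E}$ is not of the form $J^h_d(\varphi)$ for a map $\varphi$ of modules. What you actually need is the tower compatibility
\begin{equation}
l^{1,h-2}_d\circ\pi^{h,h-1}_d
=J^1_d(\pi^{h-1,h-2}_d)\circ l^{1,h-1}_d,
\end{equation}
and this is not formal: the paper derives it from $\pi^{h,h-1}_d=\pi^{1,0}_{d,J^{h-1}_d}\circ l^{1,h-1}_d$, naturality of $\pi^{1,0}_d$ with respect to $l^{1,h-2}_d$, and, crucially, the holonomicity condition $\widetilde{\DH}^I_{d,J^{h-2}_d}\circ J^1_d(l^{1,h-2}_d)\circ l^{1,h-1}_d=0$, which allows one to replace $\pi^{1,0}_{d,J^1_dJ^{h-2}_d}$ by $J^1_d(\pi^{1,0}_{d,J^{h-2}_d})$ on the image of $J^1_d(l^{1,h-2}_d)\circ l^{1,h-1}_d$. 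Without supplying this argument (or an equivalent citation), the commutativity of the right-hand squares is unproved in your proposal. A minor additional remark: the vanishing $\Spenc^{n,0}_d\circ j^n_d=0$ is the column condition, not what makes the squares between the identity row and the jet row commute; those commute because the top-left entry is zero and because $\pi^{n,n-1}_d\circ j^n_d=j^{n-1}_d$.
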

\begin{proof}
All three columns of \eqref{diag:Spencer_bicomplex} are complexes: the first is (up to sign) the Spencer $\delta$-complex, cf.\ \cite[\jetsssSpencer]{FMW}, and the second and third are Spencer complexes, cf.\ Theorem \ref{theo:Spencer_is_complex}.

The rows are complexes because they are obtained by applying the functor $\Omega^k_d$ to the jet sequence of order $n-k$, cf.\ \cite[\jetsprophjc]{FMW}.

We now need to prove that all the squares in \eqref{diag:Spencer_bicomplex} commute.
The top left square commutes because the top left element is the zero object.
The top right square commutes by the compatibility of jet prolongations and jet projections, cf.\ \cite[\jetsrmkholprolpi]{FMW}.

The other squares are of two forms corresponding to the left column and to the right column, respectively.
The left one is of the form
\begin{equation}
\begin{tikzcd}[column sep=60pt,row sep=30pt]
\Omega^k_d S^h_d\ar[d,"-\delta^{h,k}_d"']\ar[r,"\Omega^k_d(\iota^h_d)"]&\Omega^k_d J^h_d\ar[d,"\Spenc^{h,k}_d"]\\
\Omega^{k+1}_d S^{h-1}_d\ar[r,"\Omega^{k+1}_d(\iota^{h-1}_d)"]&\Omega^{k+1}_d J^{h-1}_d
\end{tikzcd}
\end{equation}
This diagram commutes because we can see it as the following composition of commutative diagrams
\begin{equation}
\begin{tikzcd}[column sep=70pt,row sep=30pt]
\Omega^k_d S^h_d\ar[d,"\Omega^k_d(\iota^h_\wedge)"]\ar[dd,bend right=50pt,"-\delta^{h,k}_d"']\ar[rr,"\Omega^k_d(\iota^h_d)"]&&\Omega^k_d J^h_d\ar[d,"\Omega^k_d(l^{1,h-1}_d)"']\ar[dd,bend left=50pt,"\Spenc^{h,k}_d"]\\
\Omega^k_d \Omega^1_d S^{h-1}_d\ar[d,two heads,"-(-1)^k\wedge^{k,1}_{S^{h-1}_d}"]\ar[r,"\Omega^k_d\Omega^1_d(\iota^{h-1}_d)"]&\Omega^k_d\Omega^1_dJ^{h-1}_d\ar[r,"\Omega^k_d(\iota^1_{d,J^{h-1}_d})"]\ar[dr,two heads,"-(-1)^k\wedge^{k,1}_{J^{h-1}_d}"']&\Omega^k_d J^1_d J^{h-1}_d\ar[d,two heads,"\Spenc^{1,k}_{d,J^{h-1}_d}"']\\
\Omega^{k+1}_d S^{h-1}_d\ar[rr,"\Omega^{k+1}_d(\iota^{h-1}_d)"']&&\Omega^{k+1}_d J^{h-1}_d
\end{tikzcd}
\end{equation}
Here the top pentagon commutes by definition of $\iota^n_d$, cf.\ \cite[\jetsdiagdefiiotand]{FMW}, the left triangle commutes by definition of $\delta^{h,k}_d$, cf.\ \cite[\jetseqdefdeltahk]{FMW}, the commutativity of the bottom left square follows from the naturality of $\wedge^{k,1}_d$ with respect to $\iota^{h-1}_d$, and the rightmost triangle commutes by Lemma \ref{lemma:SpencerDO_wrt_low_indices}.
It remains to show that the bottom right triangle commutes, or more generally that the following triangle commutes
\begin{equation}
\begin{tikzcd}[column sep=60pt,row sep=30pt]
\Omega^k_d\Omega^1_d\ar[r,"\Omega^k_d(\iota^1_d)"]\ar[dr,two heads,"-(-1)^k\wedge^{k,1}"']&\Omega^k_d J^1_d \ar[d,two heads,"\Spenc^{1,k}_d"]\\
&\Omega^{k+1}_d
\end{tikzcd}
\end{equation}
This can be seen by writing $\Spenc^{1,k}_d$ in the form \eqref{eq:classicSpencerexplicit}.
For all $E$ in $\AMod$ and $\omega\otimes_A \alpha\in \Omega^k_d\Omega^1_d(E)$, we thus have
\begin{equation}
\Spenc^{1,k}_{d,E}\circ \Omega^k_d(\iota^1_{d,E})(\omega\otimes_A \alpha)
=\Spenc^{1,k}_{d,E}(\omega\otimes_A \iota^1_{d,E}(\alpha))
=d\omega \otimes_A \pi^{1,0}_{d,E}\circ \iota^1_{d,E} (\alpha) - (-1)^k \omega \wedge \rho_{d,E}\circ\iota^1_{d,E}(\alpha)
=0-(-1)^k \omega \wedge\alpha.
\end{equation}

The other type of square appearing in \eqref{diag:Spencer_bicomplex} is of the form
\begin{equation}
\begin{tikzcd}[column sep=70pt,row sep=30pt]
\Omega^k_d J^h_d\ar[d,"\Spenc^{h,k}_d"']\ar[r,"\Omega^k_d(\pi^{h,h-1}_d)"]&\Omega^k_d J^{h-1}_d\ar[d,"\Spenc^{h-1,k}_d"]\\
\Omega^{k+1}_d J^{h-1}_d\ar[r,"\Omega^{k+1}_d(\pi^{h-1,h-2}_d)"]&\Omega^{k+1}_d J^{h-2}_d
\end{tikzcd}
\end{equation}
Similarly, this diagram can be decomposed in commutative diagrams as follows
\begin{equation}\label{diag:cell_Spencer_bicomplex}
\begin{tikzcd}[column sep=70pt,row sep=30pt]
\Omega^k_d J^h_d\ar[dd,bend right=50pt,"\Spenc^{h,k}_d"']\ar[d,"\Omega^k_d(l^{1,h-1}_d)"]\ar[r,"\Omega^k_d(\pi^{h,h-1}_d)"]&\Omega^k_d J^{h-1}_d\ar[dd,bend left=50pt,"\Spenc^{h-1,k}_d"]\ar[d,"\Omega^k_d(l^{1,h-2}_d)"']\\
\Omega^k_d J^1_d J^{h-1}_d\ar[d,two heads,"\Spenc^{1,k}_{d,J^{h-1}_d}"]\ar[r,"\Omega^k_dJ^1_d(\pi^{h-1,h-2}_d)"]&\Omega^k_d J^1_d J^{h-2}_d\ar[d,two heads,"\Spenc^{1,k}_{d,J^{h-2}_d}"']\\
\Omega^{k+1}_d J^{h-1}_d\ar[r,"\Omega^{k+1}_d(\pi^{h-1,h-2}_d)"]&\Omega^{k+1}_d J^{h-2}_d
\end{tikzcd}
\end{equation}
The left and right triangles commute by Lemma \ref{lemma:SpencerDO_wrt_low_indices}, and the bottom square commutes by the naturality of $\Spenc^{1,k}_d$ with respect to $\pi^{h-1,h-2}_d$.
In order to show the commutativity of the top square, consider the following diagram
\begin{equation}
\label{diag:trianglesquarebicomplex}
\begin{tikzcd}[column sep=70pt,row sep=30pt]
J^h_d\ar[d,hook,"l^{1,h-1}_d"']\ar[rr,"\pi^{h,h-1}_d"]&&J^{h-1}_d\ar[d,hook,"l^{1,h-2}_d"]\\
J^1_d J^{h-1}_d\ar[r,"J^1_d(l^{1,h-2}_d)"']\ar[rru,two heads,"\pi^{1,0}_{d,J^{h-1}_d}"]&J^1_d J^1_d J^{h-2}_d\ar[r,"\pi^{1,0}_ {d,J^1_dJ^{h-2}_d}"']&J^1_d J^{h-2}_d
\end{tikzcd}
\end{equation}
The top triangle commutes by definition of the jet projection, cf.\ \cite[\jetsdefpiiotan]{FMW}, whereas the bottom square commutes by the naturality of $\pi^{1,0}_d$ with respect to $l^{1,h-2}_d$.

Since $\widetilde{\DH}^I_{d,J^{h-2}_d}\circ J^1_d(l^{1,h-2}_d)\circ l^{1,h-1}_d=0$, we have that
\begin{equation}
\pi^{1,0}_{d,J^1_d J^{h-2}_d}\circ J^1_d(l^{1,h-2}_d)\circ l^{1,h-1}_d=J^1_d(\pi^{1,0}_{d,J^{h-2}_d})\circ J^1_d(l^{1,h-2}_d)\circ l^{1,h-1}_d.
\end{equation}
Thus, from the commutativity of \eqref{diag:trianglesquarebicomplex}, we obtain the commutativity of the pentagon in the diagram below.
\begin{equation}
\begin{tikzcd}[column sep=70pt,row sep=30pt]
J^h_d\ar[d,hook,"l^{1,h-1}_d"']\ar[rr,"\pi^{h,h-1}_d"]&&J^{h-1}_d\ar[d,hook,"l^{1,h-2}_d"]\\
J^1_d J^{h-1}_d\ar[r,"J^1_d(l^{1,h-2}_d)"']\ar[rr,bend right=25pt,"J^1_d(\pi^{h-1,h-2}_d)"]&J^1_d J^1_d J^{h-2}_d\ar[r,two heads,"J^1_d(\pi^{1,0}_ {d,J^{h-2}_d})"']&J^1_d J^{h-2}_d
\end{tikzcd}
\end{equation}
Here, the bottom triangle is obtained by applying $J^1_d$ to the definition of the jet projection.
By applying $\Omega^k_d$ to this diagram we obtain the top square in \eqref{diag:cell_Spencer_bicomplex}, which is thus also commutative.
\end{proof}
\begin{defi}
We call the diagram \eqref{diag:Spencer_bicomplex} the \emph{Spencer bicomplex}.
\end{defi}
When its rows are exact, the Spencer bicomplex allows us to relate the Spencer cohomology to the Spencer $\delta$-cohomology.
\begin{theo}\label{theo:Spencer_les}
Let $\Omega^{\bullet}_d$ be an exterior algebra over the $\bk$-algebra $A$.
\begin{enumerate}
\item\label{theo:Spencer_les:1} Suppose the following is a short exact sequence
\begin{equation}
\begin{tikzcd}\label{diag:Spencer_bicomplex_row}
0\ar[r]&\Omega^k_d S^{h}_d\ar[r,"\Omega^k_d(\iota^{h}_d)"]&[60pt]\Omega^k_dJ^{h}_d\ar[r,"\Omega^k_d(\pi^{h,h-1}_d)"]&[60pt]\Omega^k_d J^{h-1}_d\ar[r]&0
\end{tikzcd}
\end{equation}
for all $h+k=n$, then we have the following long exact sequence
\begin{equation}
\label{diag:SChLong_exact_sequence}
\begin{tikzcd}[column sep=50pt,row sep=10pt]
0\ar[r]&H^{n-1,1}_{\Spenc_d}\ar[r,hook,"\Omega^1_d(\pi^{n-1,n-2}_d)"]\ar[draw=none]{d}[name=X1, anchor=center]{}&H^{n-2,1}_{\Spenc_d}
\ar[rounded corners=6pt,
	to path={ -- ([xshift=10pt]\tikztostart.east)
		|- ([yshift=1pt]X1.center) \tikztonodes
		-| ([xshift=-10pt]\tikztotarget.west)
		-- (\tikztotarget)},near start,"\partial^1"]{dll}\\
H^{n-2,2}_{\delta_d}\ar[r,"\Omega^2_d(\iota^{n-2}_d)"]&H^{n-2,2}_{\Spenc_d}\ar[r,"\Omega^2_d(\pi^{n-2,n-3}_d)"]\ar[draw=none]{d}[name=X2, anchor=center]{}&H^{n-3,2}_{\Spenc_d}
\ar[rounded corners=6pt,
	to path={ -- ([xshift=10pt]\tikztostart.east)
		|- ([yshift=1pt]X2.center) \tikztonodes
		-| ([xshift=-10pt]\tikztotarget.west)
		-- (\tikztotarget)},near start,"\partial^2"]{dll}\\
H^{n-3,3}_{\delta_d}\ar[r,"\Omega^3_d(\iota^{n-3}_d)"]&\vphantom{H^{n-2,2}_{\Spenc_d}}\cdots\ar[draw=none]{d}[name=X3, anchor=center]{}\ar[r,"\Omega^{n-3}_d(\pi^{3,2}_d)"]& H^{2,n-3}_{\Spenc_d}\ar[rounded corners=6pt,
	to path={ -- ([xshift=10pt]\tikztostart.east)
		|- ([yshift=1pt]X3.center) \tikztonodes
		-| ([xshift=-10pt]\tikztotarget.west)
		-- (\tikztotarget)},near start,"\partial^{n-3}"]{dll}\\
H^{2,n-2}_{\delta_d}\ar[r,"\Omega^{n-2}_d(\iota^2_d)"]&H^{2,n-2}_{\Spenc_d}\ar[r,"\Omega^{n-2}_d(\pi^{2,1}_d)"]\ar[draw=none]{d}[name=X4, anchor=center]{}&H^{1,n-2}_{\Spenc_d}
\ar[rounded corners=6pt,
	to path={ -- ([xshift=10pt]\tikztostart.east)
		|- ([yshift=1pt]X4.center) \tikztonodes
		-| ([xshift=-10pt]\tikztotarget.west)
		-- (\tikztotarget)},near start,"\partial^{n-2}"]{dll}\\
H^{1,n-1}_{\delta_d}\ar[r,two heads,"\Omega^{n-1}_d(\iota^1_d)"]&H^{1,n-1}_{\Spenc_d}\ar[r]&0
\end{tikzcd}
\end{equation}
If, instead, we only know that \eqref{diag:Spencer_bicomplex_row} is exact for $m-1\le k\le M+1$, for given $m\le M$, then we can still deduce that the sequence \eqref{diag:SChLong_exact_sequence} is exact in the rows $k$ for $m\le k \le M$, where the rows are numbered by the second index in the cohomology.
\item\label{theo:Spencer_les:2} If $\Omega^1_d$ is flat in $\ModA$, then $H^{1,1}_{\Spenc_d}=0$.
Therefore, $H^{h,k}_{\Spenc_d}=0$ for $h+k\le 2$.
\item\label{theo:Spencer_les:3} Suppose that $\Omega^1_d$ and $\Omega^2_d$ are flat in $\ModA$.
If the $h$-jet exact sequence is exact for all $h\le n$, then $H^{h,1}_{\Spenc_d}=0$ for all $h< n$.
\item\label{theo:Spencer_les:4} Let $n\ge 3$, and let $2\le M\le n$.
Let $\Omega^k_d$ be flat in $\ModA$ for $k\le M+1$.
If $H^{h,k}_{\delta_d}=0$ for all $h<n$ and $k\le M$ then $H^{h,k}_{\Spenc_d}=0$ for the same $h$ and $k$.
\end{enumerate}
\end{theo}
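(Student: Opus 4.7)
The plan is to obtain all four statements from the Spencer bicomplex of Proposition \ref{prop:Spencer bicomplex_is_bicomplex} via the long exact sequence in cohomology associated to a short exact sequence of cochain complexes, augmented by the extremal-degree vanishing in Proposition \ref{prop:Spencer_cohomology_vanishing_at_extremals}.

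For part \eqref{theo:Spencer_les:1}, the three columns of the Spencer bicomplex assemble, under the row-exactness hypothesis \eqref{diag:Spencer_bicomplex_row}, into a short exact sequence $0\to L^\bullet\to M^\bullet\to R^\bullet\to 0$ of cochain complexes in the abelian functor category, with $L^\bullet$ the signed Spencer $\delta$-complex, $M^\bullet$ the Spencer complex for $J^n_d$, and $R^\bullet$ the Spencer complex for $J^{n-1}_d$. The snake lemma yields a long exact sequence $\cdots\to H^p(L)\to H^p(M)\to H^p(R)\xrightarrow{\partial} H^{p+1}(L)\to\cdots$, and a direct bookkeeping identifies $H^p(M)\cong H^{n-p+1,p-1}_{\Spenc_d}$, $H^p(R)\cong H^{n-p,p-1}_{\Spenc_d}$, and $H^p(L)\cong H^{n-p+1,p-1}_{\delta_d}$. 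Proposition \ref{prop:Spencer_cohomology_vanishing_at_extremals} supplies the vanishing $H^p(M)=H^p(R)=0$ at the first two and at the terminal chain degrees, and, combined with the low-degree vanishing $H^{n-1,1}_{\delta_d}=0$ (which one can reference from \cite{FMW}), this truncates the LES to the form \eqref{diag:SChLong_exact_sequence}. For the truncated hypothesis, the same argument applies to the sub-bicomplex consisting of the rows in the admissible form-degree range $[m-1,M+1]$, yielding a partial LES that is exact precisely in the stated rows.

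For parts \eqref{theo:Spencer_les:2} and \eqref{theo:Spencer_les:3}, I would apply (i) in specific instances. For \eqref{theo:Spencer_les:2}, take $n=2$: the $2$-jet sequence is exact under flatness of $\Omega^1_d$; the $1$-jet sequence tensored by $\Omega^1_d$ is automatically exact because the $1$-jet sequence splits via $j^1_d$ and $\rho_d$ (cf.\ \cite[\jetssssSplitting]{FMW}); and the final row degenerates since $J^{-1}_d=0$. The LES then yields $0\to H^{1,1}_{\Spenc_d}\hookrightarrow H^{0,1}_{\Spenc_d}=0$, so $H^{1,1}_{\Spenc_d}=0$, and the remaining cases with $h+k\le 2$ reduce directly to Proposition \ref{prop:Spencer_cohomology_vanishing_at_extremals}. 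For \eqref{theo:Spencer_les:3}, I would induct on $h$, with base case $H^{0,1}_{\Spenc_d}=0$ from Proposition \ref{prop:Spencer_cohomology_vanishing_at_extremals}; the inductive step applies the truncated LES of (i) to the bicomplex of $J^{h+1}_d$ around form degree $k=1$. The relevant rows there are the $(h+1)$-jet sequence, the $h$-jet sequence tensored with $\Omega^1_d$, and the $(h-1)$-jet sequence tensored with $\Omega^2_d$; the assumed jet-sequence exactness combined with the flatness of $\Omega^1_d$ and $\Omega^2_d$ makes all three short exact, and the truncated LES then gives the injection $H^{h,1}_{\Spenc_d}\hookrightarrow H^{h-1,1}_{\Spenc_d}=0$.

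For part \eqref{theo:Spencer_les:4}, I would induct on $h$ for each fixed $k\in\{1,\ldots,M\}$, with base case $h=0$ from Proposition \ref{prop:Spencer_cohomology_vanishing_at_extremals}. The inductive step reads the exact segment $H^{h,k}_{\delta_d}\to H^{h,k}_{\Spenc_d}\to H^{h-1,k}_{\Spenc_d}$ coming from (i): the left term vanishes by hypothesis and the right by induction, forcing $H^{h,k}_{\Spenc_d}=0$. The main obstacle here is verifying the SES hypothesis of (i) for the bicomplex of $J^{h+k}_d$ at form degrees $k-1,k,k+1$, which reduces to left-exactness of the $(h-1)$-, $h$-, and $(h+1)$-jet sequences (all of degree at most $n$). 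This in turn requires an auxiliary result, in the spirit of the classical Spencer--Goldschmidt criterion and presumably available from \cite{FMW}, that derives jet-sequence left-exactness from the vanishing of low-degree $\delta$-cohomology together with the flatness of $\Omega^k_d$ for $k\le M+1$. Granted this, the flatness propagates exactness to the tensored rows, and the induction closes.
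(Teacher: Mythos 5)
Your proposal follows essentially the same route as the paper: realize the Spencer bicomplex as a short exact sequence of complexes (columns: signed $\delta$-complex, Spencer complex of order $n$, Spencer complex of order $n-1$), take the associated long exact sequence, kill the extremal terms via Proposition \ref{prop:Spencer_cohomology_vanishing_at_extremals} and the low $\delta$-cohomology vanishing from \cite[\jetsproplowspencercohom]{FMW}, truncate for the partial statement, and then specialize: $n=2$ for \eqref{theo:Spencer_les:2}, an injectivity chain/induction in $h$ for \eqref{theo:Spencer_les:3}, and, for \eqref{theo:Spencer_les:4}, jet-sequence exactness deduced from the $\delta$-cohomology hypothesis plus flatness. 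The auxiliary result you presume for \eqref{theo:Spencer_les:4} is indeed available and is exactly what the paper invokes, namely \cite[\jetscorspencerdeltajes]{FMW}; your induction on $h$ for fixed $k$ is just a repackaging of the paper's chains of isomorphisms/inclusions terminating at $H^{0,k}_{\Spenc_d}=0$.

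One justification in \eqref{theo:Spencer_les:2} is incorrect as stated: you argue that the row $0\to\Omega^1_dS^1_d\to\Omega^1_dJ^1_d\to\Omega^1_dJ^0_d\to 0$ is ``automatically exact because the $1$-jet sequence splits via $j^1_d$ and $\rho_d$.'' That splitting is only $\bk$-linear (it realizes the $1$-jet sequence as a biproduct in $\Mod$, cf.\ \cite[\jetssssSplitting]{FMW}; an $A$-linear splitting would be a connection and need not exist), so it is not preserved by the functor $\Omega^1_d\otimes_A-$ and does not yield exactness of the tensored row. The correct justification is the one the paper uses: $\Omega^1_d$ is flat in $\ModA$ by the hypothesis of \eqref{theo:Spencer_les:2}, so applying the exact functor $\Omega^1_d$ to the $1$-jet short exact sequence \cite[\jetsproponejses]{FMW} preserves exactness. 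Since flatness is assumed anyway, this is a repairable slip rather than a structural gap, but as written that step would not stand.
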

\begin{proof}\
\begin{enumerate}
\item By the stated hypotheses, the Spencer bicomplex \eqref{diag:Spencer_bicomplex} is a short exact sequence of complexes, and thus, via homological algebra, we obtain the long exact sequence \eqref{diag:SChLong_exact_sequence}.
The cohomology modules that are left and right of the ones appearing in \eqref{diag:SChLong_exact_sequence} vanish, cf.\ Proposition \ref{prop:Spencer_cohomology_vanishing_at_extremals}, and \cite[\jetsproplowspencercohom]{FMW}.

In order to prove the last statement of \eqref{theo:Spencer_les:1}, it is sufficient to truncate the vertical complexes in \eqref{diag:Spencer_bicomplex} by substituting every object in rows $k$ with $0$, for $k< m-1$ and $k>M$.
In this way we obtain a new short exact sequence of complexes, whose corresponding long exact sequence coincides with \eqref{diag:SChLong_exact_sequence} in the rows $m\le k\le M$.
\item Consider \eqref{diag:Spencer_bicomplex} for $n=2$.
Given the stated hypothesis, we have that row $0$ is the $2$-jet sequence, which is exact, cf.\ \cite[\jetspropfunctorialtwojetseq]{FMW}.
Row $1$ is obtained by applying the exact functor $\Omega^1_d$ to the $1$-jet short exact sequence, cf.\ \cite[\jetsproponejses]{FMW}, and row $2$ is evidently exact.
By \eqref{theo:Spencer_les:1}, we obtain the long exact sequence
\begin{equation}
\begin{tikzcd}
0\ar[r]&H^{1,1}_{\Spenc_d}\ar[r]&0,
\end{tikzcd}
\end{equation}
which forces $H^{1,1}_{\Spenc_d}=0$.
The remaining statements follow from Proposition \ref{prop:Spencer_cohomology_vanishing_at_extremals}.
\item By the stated hypotheses, the first rows in \eqref{diag:Spencer_bicomplex} are exact.
More precisely they are exact until row $2$, so by Theorem \ref{theo:Spencer_is_complex}.\eqref{theo:Spencer_les:1}, we know that row $1$ in \eqref{diag:SChLong_exact_sequence} is exact.
Therefore, we obtain $H^{n-1,1}_{\Spenc_d}\subseteq H^{n-2,1}_{\Spenc_d}$.
By the same hypotheses, we have that the same result holds if instead of $n$, we consider any $h\le n$, giving us the following sequence of inclusions.
\begin{equation}
H^{n-1,1}_{\Spenc_d}\subseteq H^{n-2,1}_{\Spenc_d}\subseteq H^{n-3,1}_{\Spenc_d}\subseteq \cdots\subseteq H^{1,1}_{\Spenc_d}=0,
\end{equation}
where the last equality follows from \eqref{theo:Spencer_les:2}.
Hence, all elements in this sequence of inclusions must be $0$.
\item By the stated hypotheses, we have that the top rows of \eqref{diag:Spencer_bicomplex} are exact until row $M+1$, and the same holds if, instead of $n$, we have $h\le n$.
This is due to the fact that the $h$-jet sequences are exact for all $h\le n$, cf.\ \cite[\jetscorspencerdeltajes]{FMW} and that $\Omega^k_d$ is an exact functor.
By \eqref{theo:Spencer_les:1}, we obtain the long exact sequence \eqref{diag:SChLong_exact_sequence} restricted to the top $M$ rows, and for all $h\le n$ instead of $n$.

Furthermore, the vanishing of the Spencer $\delta$-cohomology in the specified degrees induces an isomorphism $H^{h-1,k}_{\Spenc_d}\simeq H^{h-2,k}_{\Spenc_d}$ for all $h\le n$ and $k< M$, and an inclusion $H^{h-M,M}_{\Spenc_d}\subseteq H^{h-M-1,M}_{\Spenc_d}$ for all $h\le n$.
We thus have the following chain of isomorphisms
\begin{equation}\label{eq:inclusion_Spencer<M}
H^{n-1,k}_{\Spenc_d}\simeq H^{n-2,k}_{\Spenc_d}\simeq\cdots\simeq H^{1,k}_{\Spenc_d}\simeq H^{0,k}_{\Spenc_d},
\end{equation}
and the following chain of inclusions
\begin{equation}\label{eq:inclusion_Spencer=M}
H^{n-1,M}_{\Spenc_d}\subseteq H^{n-2,M}_{\Spenc_d}\subseteq\cdots\subseteq H^{1,M}_{\Spenc_d}\subseteq H^{0,M}_{\Spenc_d}.
\end{equation}
In both \eqref{eq:inclusion_Spencer<M} and \eqref{eq:inclusion_Spencer=M}, the rightmost term vanishes by Proposition \ref{prop:Spencer_cohomology_vanishing_at_extremals}, yielding the vanishing of all the terms involved in them, namely $H^{h,k}_{\Spenc_d}$ for all $h<n$ and $k\le M$.\qedhere
\end{enumerate}
\end{proof}
As a consequence, the exactness of the Spencer $\delta$-complex implies the exactness of the Spencer complex.
\begin{cor}\label{cor:Spencer_delta_exact_implies_Spencer_exact}
If $\Omega^{\bullet}_d$ is flat in $\ModA$ with vanishing Spencer $\delta$-cohomology $H^{\bullet,\bullet}_{\delta_d}$, then the Spencer cohomology vanishes, i.e.\ $H^{\bullet,\bullet}_{\Spenc_d}=0$.
\end{cor}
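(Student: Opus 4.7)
The plan is to derive this corollary as a direct bookkeeping application of the three vanishing results already at our disposal: the extremal-degree vanishing of Proposition \ref{prop:Spencer_cohomology_vanishing_at_extremals}, the low-degree vanishing in Theorem \ref{theo:Spencer_les}.\eqref{theo:Spencer_les:2}, and the general comparison given by Theorem \ref{theo:Spencer_les}.\eqref{theo:Spencer_les:4}. For every bidegree $(h,k)$ we will select parameters $n$ and $M$ in the latter large enough to cover $(h,k)$.

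First I would dispose of the extremal cases. When $k=0$ or $h=0$, the vanishings $H^{h,0}_{\Spenc_d}=0$ and $H^{0,k}_{\Spenc_d}=0$ are immediate from Proposition \ref{prop:Spencer_cohomology_vanishing_at_extremals}. When $h+k\le 2$ with $h,k\ge 1$, the only remaining bidegree is $(1,1)$; here the flatness of $\Omega^1_d$ is a special case of the hypothesis that $\Omega^\bullet_d$ is flat in $\ModA$, so Theorem \ref{theo:Spencer_les}.\eqref{theo:Spencer_les:2} applies and gives $H^{1,1}_{\Spenc_d}=0$.

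For any remaining interior bidegree $(h,k)$ with $h,k\ge 1$ and $h+k\ge 3$, I would apply Theorem \ref{theo:Spencer_les}.\eqref{theo:Spencer_les:4} with the choice $n\colonequals \max(h+1,3)$ and $M\colonequals \max(k,2)$. These satisfy $n\ge 3$ and $2\le M\le n$, so they are admissible. The flatness hypothesis on $\Omega^j_d$ for $j\le M+1$ is subsumed by the global flatness of $\Omega^\bullet_d$ in $\ModA$, and the assumption $H^{h',k'}_{\delta_d}=0$ for $h'<n$ and $k'\le M$ is subsumed by the global vanishing hypothesis on the Spencer $\delta$-cohomology. Theorem \ref{theo:Spencer_les}.\eqref{theo:Spencer_les:4} then yields $H^{h,k}_{\Spenc_d}=0$ for this bidegree.

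There is no real obstacle beyond arranging the numerical constraints; the only subtlety is to verify that the tuple $(n,M)$ we pick simultaneously satisfies the inequalities $n\ge 3$, $2\le M\le n$, $h<n$ and $k\le M$ required by Theorem \ref{theo:Spencer_les}.\eqref{theo:Spencer_les:4}, which is precisely what the choice above accomplishes. Collecting the three cases exhausts all bidegrees $(h,k)$, and the corollary follows.
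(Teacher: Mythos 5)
Your overall strategy is exactly the intended one: the paper offers no separate argument for this corollary, treating it as an immediate consequence of Proposition \ref{prop:Spencer_cohomology_vanishing_at_extremals} together with Theorem \ref{theo:Spencer_les}.\eqref{theo:Spencer_les:2} and \eqref{theo:Spencer_les:4}, applied with parameters large enough to reach any given bidegree, which is precisely your case division.

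However, your specific choice of parameters does not do what you claim. With $n\colonequals\max(h+1,3)$ and $M\colonequals\max(k,2)$, the constraint $M\le n$ of Theorem \ref{theo:Spencer_les}.\eqref{theo:Spencer_les:4} fails whenever $k>\max(h+1,3)$: for instance at $(h,k)=(1,4)$ you get $n=3$ and $M=4$, so the tuple is not admissible and the theorem cannot be invoked as stated, contrary to your closing assertion that the inequalities are ``precisely what the choice above accomplishes.'' The gap is only in the bookkeeping and is repaired by enlarging $n$ as well, e.g.\ taking $M\colonequals\max(k,2)$ and $n\colonequals\max(h+1,M,3)$ (or simply $n\colonequals h+k+1$ and $M\colonequals\max(k,2)$): since the corollary assumes flatness of all $\Omega^j_d$ and vanishing of the full Spencer $\delta$-cohomology, the hypotheses of Theorem \ref{theo:Spencer_les}.\eqref{theo:Spencer_les:4} hold for every admissible pair $(n,M)$, and with this corrected choice one has $n\ge 3$, $2\le M\le n$, $h<n$, and $k\le M$, so the theorem yields $H^{h,k}_{\Spenc_d}=0$ for the given interior bidegree. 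With that one-line repair, your three cases exhaust all bidegrees and the proof is complete.
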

\begin{rmk}
Classically, given a smooth manifold $M$, the algebra $A=\smooth{M}$ endowed with $\Omega^\bullet_{dR}(M)$ satisfies the hypotheses of Corollary \ref{cor:Spencer_delta_exact_implies_Spencer_exact}.
This recovers the classical result on the exactness of the Spencer complex, cf.\ \cite[Proposition~1.3.1, p.~187]{Spencer}.
\end{rmk}
\section{Higher order connections}
\label{s:Higher order connections}
Both in the classical setting (cf.\ \cite{Ehresmannconnectionsdordresup} or \cite[§17.1]{NaturalOperations}) and in the noncommutative setting \cite[\jetspropconnexionsplits]{FMW}, connections are equivalent to splittings of the $1$-jet short exact sequence, as vector bundles or as left modules, respectively.
Correspondingly, one considers the geometric object which is responsible for splittings of the higher order jet exact sequences.
Classically, these objects are termed higher order connections.
We show that given relatively mild assumptions (satisfied trivially in the classical case) the theory of higher order connections from classical differential geometry carries over to the noncommutative setting.
Therefore, for the most part of this section we will assume that the higher order jet sequences are exact, cf.\ \cite[\jetstheohigherwolves]{FMW}.
\subsection{Splittings of jet exact sequences}
\label{ss:Splittings_of_jet_exact_sequences}
Inspired by the equivalence between connections and (right) splittings of the $1$-jet exact sequence, we extend the notion of connection to higher orders with the following definition.
\begin{defi}\label{def:higher_connection}
	Let $E$ be in $\AMod$.
	A (left) \emph{n-connection} on $E$ is a section $C^n\colon J^{n-1}_dE\hookrightarrow J^n_dE$ in $\AMod$ of the jet projection $\pi^{n,n-1}_{d,E}\colon J^n_d E\to J^{n-1}_d E$.
\end{defi}
\begin{rmk}
The existence of an $n$-connection makes $\pi^{n,n-1}_{d,E}$ a retraction, i.e.\ a split epi.
As such, it is an epi and every functor will map it into a retraction.
At the same time, $C^n$ is a section, i.e.\ a split mono, and as such it is mapped into a section by any functor.
\end{rmk}
In particular, if the $n$-jet sequence at $E$ is exact, connections are in bijective correspondence with right splittings of said sequence
	\begin{equation}
		\begin{tikzcd}
			0\ar[r]& S^n_dE\ar[r,hook,"\iota^n_{d,E}"]&[25pt] J^n_dE\ar[r,two heads,"\pi^{n,n-1}_{d,E}"]&[25pt] J^{n-1}_dE\ar[r]\ar[l,bend left=20pt,hook,"C^n"]&0.
		\end{tikzcd}
	\end{equation}
	This definition generalizes the one presented in \cite{eastwood2009higher}.
\begin{rmk}
A (left) $1$-connection on $E$ is a (left) connection on $E$, cf.\ \cite[\jetspropconnexionsplits]{FMW}.
\end{rmk}

	In \cite{eastwood2009higher}, another characterization of higher order connections is also given.
	There, they are also presented as differential operators from a given bundle to the corresponding bundle of symmetric $n$-forms valued in it, having as symbol the identity.
	If we assume that symmetric $n$-forms on $E$ are generated by prolongations, i.e.\ $\im(\iota^n_{d,E})\subseteq Aj^n_{d,E}(E)$, cf.\ \cite[\symbolsdefsymgenprolrelations]{Symbol}, we can generalize this definition to the noncommutative case.
	Under these conditions, the notion of restriction symbol of a differential operator is in fact well defined, cf.\ 
\cite[\symbolspropsymbolsrepresentationwelldefined]{Symbol} and \cite[\symbolsdefrestrictionsymbol]{Symbol}, and it can be used to make sense of the classical definition of higher order connection.
	The following proposition presents this characterization of higher order connections and shows that it is equivalent to Definition \ref{def:higher_connection}, under certain regularity conditions including representability of differential operators, there expressed in terms of the elemental jet functor $\EJ^n_d E\colonequals Aj^n_{d,E} E$, cf.\ \cite[\symbolsdefelementalnjetfunctor]{Symbol}.
\begin{prop}
\label{prop:characterization_higher_connections}
Let $E$ be in $\AMod$ such that the $n$-jet sequence at $E$ is exact, then there is a bijective correspondence between $n$-connections $C^n\colon J^{n-1}_dE\hookrightarrow J^n_dE$ and left splittings $J^n_d E\twoheadrightarrow S^n_d E$ of the $n$-jet exact sequence.

Furthermore, if we also assume $J^n_dE=\EJ^n_d E$, then there is a bijective correspondence between $n$-connections $C^n\colon J^{n-1}_dE\hookrightarrow J^n_dE$, and linear differential operators
\begin{equation}
\nabla^n\colon E \longrightarrow S^n_dE
\end{equation}
of order at most $n$ with restriction symbol $r^n_{d,E,F}(\symb^n_d(\nabla^n))\colonequals \widetilde{\nabla}^n\circ \iota^n_{d,E}=\id_{S^n_d E}$.
The correspondence maps a differential operator into the right splitting associated to the unique left splitting given by its unique lift.
\end{prop}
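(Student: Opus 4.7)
The proof naturally decomposes into two parts, and both are essentially formal once the right identifications are made.

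For the first bijection, the starting point is the splitting lemma for short exact sequences of left $A$-modules applied to
\begin{equation*}
0 \to S^n_d E \xrightarrow{\iota^n_{d,E}} J^n_d E \xrightarrow{\pi^{n,n-1}_{d,E}} J^{n-1}_d E \to 0,
\end{equation*}
which, being exact by hypothesis, yields a bijection between right splittings and left splittings. Concretely, I would give the inverse constructions explicitly: to an $n$-connection $C^n$, which satisfies $\pi^{n,n-1}_{d,E}\circ C^n=\id$, associate the unique left splitting $\sigma\colon J^n_d E\twoheadrightarrow S^n_d E$ characterized by $\sigma\circ \iota^n_{d,E}=\id_{S^n_d E}$ and $\sigma\circ C^n=0$; this is well-defined because $J^n_d E=\im(C^n)\oplus \im(\iota^n_{d,E})$. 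Conversely, to a left splitting $\sigma$, associate the right splitting obtained by identifying $\ker(\sigma)$ with $J^{n-1}_d E$ via the isomorphism induced by $\pi^{n,n-1}_{d,E}$. The two assignments are mutually inverse by direct verification.

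For the second bijection, the hypothesis $J^n_d E=\EJ^n_d E$ ensures, via \cite[\symbolspropelementaljetpropertiesone]{Symbol} and the fact that elemental jets are $A$-spanned by the images of prolongations, that every differential operator $\nabla^n\colon E\to S^n_d E$ of order at most $n$ admits a \emph{unique} lift $\widetilde{\nabla}^n\colon J^n_d E\to S^n_d E$ satisfying $\widetilde{\nabla}^n\circ j^n_{d,E}=\nabla^n$. This makes the assignment $\nabla^n\mapsto \widetilde{\nabla}^n$ a bijection between linear differential operators of order at most $n$ from $E$ to $S^n_d E$ and $A$-linear maps $J^n_d E\to S^n_d E$, with inverse given by precomposition with $j^n_{d,E}$.

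To finish, I would observe that under this last bijection, the restriction symbol condition $\widetilde{\nabla}^n\circ \iota^n_{d,E}=\id_{S^n_d E}$ is precisely the condition for $\widetilde{\nabla}^n$ to be a left splitting of the $n$-jet exact sequence. Composing the two bijections then gives the desired correspondence between such differential operators $\nabla^n$ and $n$-connections $C^n$, and it manifestly matches the description in the final sentence of the statement, since $\widetilde{\nabla}^n$ is simultaneously the unique jet lift of $\nabla^n$ and the left splitting dual to $C^n$. The only subtle point, rather than a genuine obstacle, is checking that the restriction symbol terminology from \cite[\symbolsdefrestrictionsymbol]{Symbol} indeed identifies with $\widetilde{\nabla}^n\circ \iota^n_{d,E}$ under the hypothesis $J^n_d E=\EJ^n_d E$, which is exactly where the uniqueness of the lift is used to make the restriction symbol well-defined.
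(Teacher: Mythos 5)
Your proposal is correct and follows essentially the same route as the paper: the first bijection is the standard left/right splitting correspondence for the exact $n$-jet sequence, and the second uses the hypothesis $J^n_dE=\EJ^n_dE$ to get uniqueness of jet lifts (equivalently, representability of $\Diff^n_d(E,-)$ by $J^n_dE$), so that the restriction symbol condition $\widetilde{\nabla}^n\circ\iota^n_{d,E}=\id_{S^n_dE}$ is exactly the left-splitting condition. The paper additionally remarks that exactness makes the restriction symbol agree with the symbol in the quotient sense, but this is a consistency observation rather than a step your argument is missing.
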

\begin{proof}
The first part follows from the fact that given a short exact sequence, there is a bijective correspondence between left and right splittings.

For the second part, since we have $J^n_dE=\EJ^n_d E$ and the $n$-jet sequence is exact, we also have $J^{n-1}_d E=\EJ^{n-1}_d E$.
This also yields $\im(\iota^n_{d,E})\subseteq Aj^n_d(E)$, cf.\ \cite[\symbolscorsymmformsNN]{Symbol}.
This in turn implies that the restriction symbol is well-defined and coincides with the notion of symbol by the exactness of the $n$-jet sequence at $E$, cf.\ \cite[\symbolspropsymbolsrepresentationwelldefined]{Symbol}.

Suppose we are given a right splitting $C^n$.
As a right splitting, $C^n$ induces a left splitting in $\AMod$, which we denote by $\lambda^n\colon J^n_d E\to S^n_d E$.
Define $\nabla^n\colonequals \lambda^n \circ j^n_{d,E}\colon E \longrightarrow S^n_dE$.
By construction, $\nabla^n$ is a differential operator of order at most $n$ with lift given by $\widetilde{\nabla}^n=\lambda^n$, whose restriction symbol is
\begin{equation}
\widetilde{\nabla}^n\circ \iota^n_{d,E}
=\lambda^n\circ \iota^n_{d,E}=\id_{S^n_dE},
\end{equation}
since $\lambda^n$ is a left splitting.

Conversely, suppose we are given a differential operator $\nabla^n\colon E \to S^n_dE$ with restriction symbol $\id_{S^n_dE}$.
The lift to $J^n_d E$ of $\nabla^n$, which we denote by $\widetilde{\nabla}^n$, is unique since $J^n_d E=\EJ^n_d E$, cf.\ \cite[\symbolspropelementaljetproperties]{Symbol}.
Furthermore, due to its symbol, it provides a left splitting in $\AMod$ of the $n$-jet short exact sequence.
We define $C^n$ to be the induced unique right splitting in $\AMod$.

These constructions are inverse to one another by the uniqueness of the choices at each step.
\end{proof}
This characterization as differential operators generalizes that of linear connections (viewed as differential operators), cf.\ \cite[\symbolspropsymbolsofconnections]{Symbol}.
It also allows us to generalize the property that linear connections differ by tensors, in a way we make precise in the following statement.
\begin{prop}\label{prop:higherconsformaffinespace}
Let $E$ be in $\AMod$ such that $J^n_dE=\EJ^n_d E$ and such that the $n$-jet sequence at $E$ is exact.
Let $\nabla^n_1,\nabla^n_2\in\Diff^n_d(E, S^n_d E)$ with restriction symbol $\id_{S^n_d E}$ (cf.\ Proposition \ref{prop:characterization_higher_connections}).
Then the following properties hold
\begin{enumerate}
	\item $\nabla^n_2-\nabla^n_1 \in \Diff^{n-1}_d(E,S^n_dE)$;
	\item If $\Upsilon^{n-1} \in \Diff^{n-1}_d(E,S^n_dE)$, then $\nabla^n_1 + \Upsilon^{n-1}\in\Diff^n_d(E,S^n_d E)$ with restriction symbol $\id_{S^n_d E}$.
\end{enumerate}
Hence, the set of $n$-connections on $E$ forms an affine space over the additive group of $\Diff^{n-1}_d(E,S^n_dE)$.
\end{prop}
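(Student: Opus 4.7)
The plan is to translate the statement into the differential-operator picture via Proposition~\ref{prop:characterization_higher_connections}, where it becomes an assertion about $\Diff^n_d(E, S^n_d E)$ with prescribed restriction symbol. Since $J^n_d E = \EJ^n_d E$, the lift $\widetilde{\nabla}^n$ of any such operator to $J^n_d E$ is unique, cf.\ \cite[\symbolspropelementaljetproperties]{Symbol}, so the restriction symbol $\widetilde{\nabla}^n \circ \iota^n_{d,E}$ is unambiguously defined. The whole argument will be a formal manipulation at the level of these lifts, exploiting only exactness of the $n$-jet sequence at $E$.

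For (i), I set $\Delta \colonequals \nabla^n_2 - \nabla^n_1 \in \Diff^n_d(E, S^n_d E)$, whose unique lift is $\widetilde{\nabla}^n_2 - \widetilde{\nabla}^n_1$ by $\bk$-linearity. Precomposing with $\iota^n_{d,E}$ yields $\id_{S^n_d E} - \id_{S^n_d E} = 0$, so the lift vanishes on $\im(\iota^n_{d,E})$. Exactness of the $n$-jet sequence realizes $\pi^{n,n-1}_{d,E}$ as the cokernel projection of $\iota^n_{d,E}$, so the lift factors uniquely through some $\widetilde{\Delta}^{n-1} \colon J^{n-1}_d E \to S^n_d E$. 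Using $\pi^{n,n-1}_{d,E} \circ j^n_{d,E} = j^{n-1}_{d,E}$, cf.\ \cite[\jetslemmaholprol]{FMW}, this map lifts $\Delta$ to $J^{n-1}_d E$, placing $\Delta$ in $\Diff^{n-1}_d(E, S^n_d E)$ as required.

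For (ii), I take as lift of $\nabla^n_1 + \Upsilon^{n-1}$ to $J^n_d E$ the map $\widetilde{\nabla}^n_1 + \widetilde{\Upsilon}^{n-1} \circ \pi^{n,n-1}_{d,E}$; precomposing with $j^n_{d,E}$ and using the same prolongation identity recovers the sum, confirming that $\nabla^n_1 + \Upsilon^{n-1} \in \Diff^n_d(E, S^n_d E)$. Its restriction symbol is then $\id_{S^n_d E} + \widetilde{\Upsilon}^{n-1} \circ \pi^{n,n-1}_{d,E} \circ \iota^n_{d,E} = \id_{S^n_d E}$, by exactness ($\pi^{n,n-1}_{d,E} \circ \iota^n_{d,E} = 0$). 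Combining (i) and (ii) exhibits the subset of $\Diff^n_d(E, S^n_d E)$ with restriction symbol $\id_{S^n_d E}$ as a torsor under the additive group $\Diff^{n-1}_d(E, S^n_d E)$, and Proposition~\ref{prop:characterization_higher_connections} transports this to the claimed affine structure on the set of $n$-connections. I do not expect any serious obstacle here; the delicate point, already flagged above, is the interplay between uniqueness of lifts (ensured by $J^n_d E = \EJ^n_d E$) and exactness of the $n$-jet sequence, both of which are explicit hypotheses.
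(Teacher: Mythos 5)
Your proposal is correct and follows essentially the same route as the paper: both arguments reduce everything to restriction symbols of the (unique) lifts, computing $\id_{S^n_dE}-\id_{S^n_dE}=0$ for (i) and $\id_{S^n_dE}+0$ for (ii). The only difference is that where the paper simply cites \cite[\symbolspropsymbolsrepresentationwelldefined]{Symbol} to conclude that vanishing restriction symbol forces order at most $n-1$, you reprove that fact inline via the cokernel universal property of $\iota^n_{d,E}$ granted by exactness, which is precisely the content of the cited result.
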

\begin{proof}\
	\begin{enumerate}
		\item 
	We compute $r^n_E(\symb^n_E( \nabla^n_2-\nabla^n_1)) = \id_{S^n_dE}-\id_{S^n_dE}=0$, so by \cite[\symbolspropsymbolsrepresentationwelldefined]{Symbol}, $\nabla^n_2-\nabla^n_1$ is a differential operator of order at most $n-1$.
		\item Since $\Upsilon^{n-1}\in\Diff^{n-1}_d(E,S^n_d E)\subseteq \Diff^n_d(E,S^n_d E)$, we have $ \nabla^n_1 + \Upsilon^{n-1}\in \Diff^n_d(E,S^n_d E)$.
		Further, $r^n_E(\symb^n_E( \nabla^n_1 + \Upsilon^{n-1})) = \id_{S^n_dE}+0$.\qedhere
	\end{enumerate}
\end{proof}

\subsection{Curvature of higher order connections}
\label{ss:Curvature_of_higher_connections}
Given a higher order connection, we can define an associated notion of curvature in the spirit of \cite[§IV.1]{libermann1997}.
\begin{defi}
\label{def:curvature}
The \emph{curvature} of an $n$-connection $C^n\colon J^{n-1}_dE\hookrightarrow J^n_dE$ is defined to be the map 
\begin{equation}
R_{C^n}
\colonequals -\widetilde{\DH}^{II}_{J^{n-1}_d E}\circ J^1_d(l^{1,n-1}_{d,E})\circ J^1_d(C^n) \circ l^{1,n-1}_{d,E}\circ C^n\colon J^{n-1}_dE \longrightarrow \Omega^2_dJ^{n-1}_dE.
\end{equation}
\end{defi}
We will show in Proposition \ref{prop:higher_connection_curvature_is_curvature} that this is equivalent to the more na\"{i}ve generalization of the notion of curvature to our setting.

We now proceed to study some properties of higher order connections and their curvature, starting with the following lemma involving part of the curvature expression.
\begin{lemma}\label{lemma:theclaw}
	Let $C^n\colon J^{n-1}_dE\hookrightarrow J^n_dE$ be an $n$-connection on $E$.
	The morphism
		\begin{equation}
		J^1_d(C^n) \circ l^{1,n-1}_{d,E}\circ C^n \colon J^{n-1}_dE\longhookrightarrow J^1_dJ^n_dE
		\end{equation}
	factors (uniquely) through the inclusion $l^{\{n+1\}}_{d,E}\colon J^{\{n+1\}}_d E\hookrightarrow J^1_dJ^n_dE$.
\end{lemma}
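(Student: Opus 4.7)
The plan is to invoke the kernel universal property of $l^{\{n+1\}}_{d,E}$. Shifting $n\mapsto n+1$ in diagram \eqref{diag:inclusion_hol_sesqui} identifies $l^{\{n+1\}}_{d,E}\colon J^{\{n+1\}}_d E \hookrightarrow J^1_d J^n_d E$ as the kernel of the natural transformation $\widetilde{\DH}^I_{J^{n-1}_d E}\circ J^1_d(l^{1,n-1}_{d,E})$. The desired factorization therefore exists, and is automatically unique since $l^{\{n+1\}}_{d,E}$ is a monomorphism, as soon as one establishes
\begin{equation*}
\widetilde{\DH}^I_{J^{n-1}_d E}\circ J^1_d(l^{1,n-1}_{d,E})\circ J^1_d(C^n)\circ l^{1,n-1}_{d,E}\circ C^n = 0.
\end{equation*}

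The key auxiliary identity I will prove is
\begin{equation*}
\widetilde{\DH}^I_F \;=\; \rho_{d,F}\circ J^1_d(\pi^{1,0}_{d,F}) \;-\; \rho_{d,F}\circ \pi^{1,0}_{d,J^1_d F},
\end{equation*}
as $\bk$-linear natural transformations $J^1_d J^1_d F \to \Omega^1_d F$, valid for any left $A$-module $F$. I verify this on spanning elements $[a\otimes b]\otimes_A[c\otimes f]\in J^1_d J^1_d F$ by combining the explicit description $\widetilde{\DH}^I_F([a\otimes b]\otimes_A[c\otimes f]) = a\,d(bc)\otimes_A f$ (which can be read off from the calculation in \eqref{eq:Spencersquared}) with the formula $\rho_{d,A}([a\otimes b]) = -(da)b$ and the Leibniz rule $d(abc) = (da)bc + a\,d(bc)$. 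Conceptually, this identity expresses that $\widetilde{\DH}^I_F$ records, through $\rho$, the failure of the two canonical projections $J^1_d J^1_d F \rightrightarrows J^1_d F$, namely the outer $\pi^{1,0}_{d,J^1_d F}$ and the inner $J^1_d(\pi^{1,0}_{d,F})$, to agree.

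Granting this identity, the vanishing is a routine chase through naturality and functoriality. For the first summand, functoriality of $J^1_d$ together with the defining equality $\pi^{n,n-1}_{d,E} = \pi^{1,0}_{d,J^{n-1}_d E}\circ l^{1,n-1}_{d,E}$ from \ref{desid:Spencerable:3} and the section property $\pi^{n,n-1}_{d,E}\circ C^n = \id$ yield
\begin{equation*}
\rho_{d,J^{n-1}_d E}\circ J^1_d(\pi^{1,0}_{d,J^{n-1}_d E})\circ J^1_d(l^{1,n-1}_{d,E})\circ J^1_d(C^n)\circ l^{1,n-1}_{d,E}\circ C^n \;=\; \rho_{d,J^{n-1}_d E}\circ l^{1,n-1}_{d,E}\circ C^n.
\end{equation*}
For the second summand, applying naturality of $\pi^{1,0}_d$ with respect to $l^{1,n-1}_{d,E}$ and then to $C^n$, followed by the same two identities, rewrites it as $\rho_{d,J^{n-1}_d E}\circ l^{1,n-1}_{d,E}\circ C^n\circ \pi^{n,n-1}_{d,E}\circ C^n$, which collapses to the identical expression $\rho_{d,J^{n-1}_d E}\circ l^{1,n-1}_{d,E}\circ C^n$. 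The two terms cancel. The main technical hurdle is establishing the identity for $\widetilde{\DH}^I_F$ itself; beyond that, the argument is purely formal, relying only on naturality and the section property of $C^n$.
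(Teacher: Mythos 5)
Your proposal is correct and follows essentially the same route as the paper: invoke the kernel universal property defining $J^{\{n+1\}}_dE$ inside $J^1_dJ^n_dE$, and kill the composition with $\widetilde{\DH}^I_{d,J^{n-1}_dE}\circ J^1_d(l^{1,n-1}_{d,E})$ using naturality of $\pi^{1,0}_d$, the identity $\pi^{n,n-1}_{d,E}=\pi^{1,0}_{d,J^{n-1}_dE}\circ l^{1,n-1}_{d,E}$, and the section property of $C^n$. The only cosmetic difference is that your auxiliary identity is the cited defining relation $\iota^1_d\circ\widetilde{\DH}^I_d=J^1_d(\pi^{1,0}_d)-\pi^{1,0}_{d,J^1_d}$ post-composed with the splitting $\rho_d$ (verified on spanning elements instead of quoted), whereas the paper quotes that relation and cancels the monomorphism $\iota^1_d$, presenting the rest as the commutative diagram \eqref{diag:theclaw}.
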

\begin{proof}
	We will show the unique factorization via the kernel universal property, since, by definition, $J^{\{n+1\}}_d E$ is the kernel of
	\begin{equation}
		\widetilde{\DH}^I_{d,J^{n-1}_d E}\circ J^1_d(l^{1,n-1}_{d,E}) \colon J^1_d J^n_d E\longrightarrow \Omega^1_d J^{n-1}_d E.
	\end{equation}
	Recall that by definition of $\widetilde{\DH}^I_d$, cf.\ \cite[\jetseqDHI]{FMW}, we have
	\begin{equation}
		\iota^1_d\circ\widetilde{\DH}^I_d
		= J^1_d (\pi^{1,0}_d)- \pi^{1,0}_{d,J^1_d}.
	\end{equation}
	Thus, since $\iota^1_d$ is a mono, it is sufficient to prove that the following composition vanishes
	\begin{equation}\label{eq:ImJ1Cn_is_sesqui}
	\begin{split}
	&\iota^1_d\circ\widetilde{\DH}^I_{d,J^{n-1}_d E}\circ J^1_d(l^{1,n-1}_{d,E})\circ J^1_d(C^n) \circ l^{1,n-1}_{d,E}\circ C^n\\
	&\qquad=\left(J^1_d (\pi^{1,0}_{d,J^{n-1}_d E})- \pi^{1,0}_{d,J^1_d J^{n-1}_d E}\right)\circ J^1_d(l^{1,n-1}_{d,E})\circ J^1_d(C^n) \circ l^{1,n-1}_{d,E}\circ C^n\\
	&\qquad=J^1_d (\pi^{1,0}_{d,J^{n-1}_d E})\circ J^1_d(l^{1,n-1}_{d,E})\circ J^1_d(C^n) \circ l^{1,n-1}_{d,E}\circ C^n
	- \pi^{1,0}_{d,J^1_d J^{n-1}_d E}\circ J^1_d(l^{1,n-1}_{d,E})\circ J^1_d(C^n) \circ l^{1,n-1}_{d,E}\circ C^n.
	\end{split}
	\end{equation}
	To show this composition vanishes, we verify that the following diagram commutes:
	\begin{equation}\label{diag:theclaw}
		\begin{tikzcd}[column sep=50pt,row sep=20pt]
			J^n_dE\ar[dr,"\pi^{n,n-1}_{d,E}"]\ar[r,hook,"l^{1,n-1}_{d,E}"]&J^1_dJ^{n-1}_d E\ar[d,two heads,"\pi^{1,0}_{d,J^{n-1}_dE}"]\ar[r,hook,"J^1_d(C^n)"]&J^1_dJ^n_dE\ar[d,two heads,"\pi^{1,0}_{d,J^n_dE}"] \ar[r,"J^1_d (l^{1,n-1}_{d, E})"] & J^1_d J^1_d J^{n-1}_d E \ar[d,two heads,"\pi^{1,0}_{d,J^1_dJ^{n-1}_dE}"]\\
			J^{n-1}_d E\ar[r,equals]\ar[u,hook,"C^n"]	&J^{n-1}_dE\ar[r,hook,"C^n"']&J^n_dE \ar[r,hook,"l^{1,n-1}_{d,E}"']&J^1_dJ^{n-1}_dE\ar[r,equals]\ar[d,hook,"J^1_d (C^n)"']&J^1_d J^{n-1}_d E\\
			&&&J^1_dJ^{n}_dE\ar[r,"J^1_d (l^{1,n-1}_{d,E})"']\ar[ur,two heads,"J^1_d(\pi^{n,n-1}_{d,E})"]&J^1_d J^1_d J^{n-1}_d E\ar[u,two heads,"J^1_d (\pi^{1,0}_{d, J^{n-1}_d E})"']
		\end{tikzcd}
	\end{equation}
	This will end the proof, as the commutativity of \eqref{diag:theclaw} implies the equality between the bottom and the top compositions from the leftmost $J^{n-1}_d E$ to the rightmost $J^1_d J^{n-1}_d E$ herein.
	Thus, \eqref{eq:ImJ1Cn_is_sesqui} will vanish, being the difference between the two compositions.
	In turn, the desired factorization through $l^{\{n+1\}}_{d,E}$ will follow by the kernel universal property.

	The leftmost triangle of \eqref{diag:theclaw} commutes by the definition of a connection $C^n$, since $\pi^{n,n-1}_{d,E} \circ C^n = \id_{J^{n-1}_dE}$.
	The adjacent triangle commutes by definition of $\pi^{n,n-1}_{d,E}\colonequals\pi^{1,0}_{J^{n-1}_dE}\circ l^{1,n-1}_{d,E}$, cf.\ \cite[\jetsdefpiiotan]{FMW}.
	Applying $J^1_d$ to these two triangles yields the two triangles on the right.
	Finally, the morphisms $C^n$ and $l^{1,n-1}_{d,E}$ are left $A$-linear, hence we can apply the functor $J^1_d$ to them, and the naturality of $\pi^{1,0}_d$ with respect to $C^n$ and $l^{1,n-1}_{d,E}$ gives the commutativity of the two middle squares.
	Hence, \eqref{diag:theclaw} commutes as claimed.
\end{proof}

\begin{prop}
\label{prop:fullcurvature}
Given a $n$-connection $C^n\colon J^{n-1}_d E\hookrightarrow J^n_d E$:
\begin{enumerate}
\item\label{prop:fullcurvature:1}
The map
\begin{equation}\label{eq:fullcurvature}
-\widetilde{\DH}_{J^{n-1}_d E}\circ J^1_d(l^{1,n-1}_{d,E})\circ J^1_d(C^n) \circ l^{1,n-1}_{d,E}\circ C^n\colon J^{n-1}_dE \longrightarrow (\Omega^1_d\ltimes \Omega^2_d)J^{n-1}_dE.
\end{equation}
has image in $\Omega^2_d J^{n-1}_d E$, where it coincides with the curvature.
In other words, it factors uniquely through the inclusion $\Omega^2_d J^{n-1}_d E\hookrightarrow (\Omega^1_d\ltimes \Omega^2_d)J^{n-1}_dE$ as the curvature $R_{C^n}$.
\item\label{prop:fullcurvature:2}
$R_{C^n}\colon J^{n-1}_dE \longrightarrow \Omega^2_dJ^{n-1}_dE$ is left $A$-linear.
\item\label{prop:fullcurvature:3}
If $n=1$ or if $n>1$ and $\Omega^2_d$ is flat in $\ModA$ and (for $n\ge 3$) the $(n-1)$-jet sequence at $E$ is left exact, then the curvature $R_{C^n}$ has image in $\Omega^2_dS^{n-1}_d E$.
\item\label{prop:fullcurvature:4}
Assume $\Omega^1_d$ and $\Omega^2_d$ are flat in $\ModA$ and, if $n\ge 3$, assume that $\Omega^3_d$ is flat in $\ModA$ (or, for $n=3$, just $\Tor^A_1(\Omega^3_d,E)=0$) and that the $(n-1)$-jet sequence at $E$ is left exact, then the curvature $R_{C^n}$ has image in $\ker(\delta^{n-1,2}_d)$.
\end{enumerate}
\end{prop}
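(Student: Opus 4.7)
Decompose $\widetilde{\DH}=(\widetilde{\DH}^I,\widetilde{\DH}^{II})$ according to the splitting $\Omega^1_d\ltimes\Omega^2_d$. By Lemma~\ref{lemma:theclaw} there is a unique $\widetilde{C}^n\colon J^{n-1}_dE\to J^{\{n+1\}}_dE$ with
\begin{equation*}
J^1_d(C^n)\circ l^{1,n-1}_{d,E}\circ C^n=l^{\{n+1\}}_{d,E}\circ\widetilde{C}^n.
\end{equation*}
The kernel characterization of $J^{\{n+1\}}_d$ in diagram~\eqref{diag:inclusion_hol_sesqui} gives $\widetilde{\DH}^I_{J^{n-1}_d}\circ J^1_d(l^{1,n-1}_d)\circ l^{\{n+1\}}_d=0$, so the $\Omega^1_d$-component of~\eqref{eq:fullcurvature} vanishes, leaving precisely the $-\widetilde{\DH}^{II}$-term, which is $R_{C^n}$ by Definition~\ref{def:curvature}. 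Part~(ii) is then immediate: $C^n$, $l^{1,n-1}_d$, $J^1_d(-)$, and $\widetilde{\DH}^{II}$ (the last as the jet lift of a first-order natural linear differential operator) are each left $A$-linear, so their composition is.

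\textbf{Plan for (iii).} The case $n=1$ is vacuous since $S^0_d=\id_{\AMod}$. For $n\ge 2$, flatness of $\Omega^2_d$ combined with left exactness of the $(n-1)$-jet sequence at $E$ (automatic for $n=2$ by \cite[\jetsproponejses]{FMW}) identifies $\Omega^2_d(\iota^{n-1}_d)$ with the kernel of $\Omega^2_d(\pi^{n-1,n-2}_d)$. It thus suffices to verify $\Omega^2_d(\pi^{n-1,n-2}_d)\circ R_{C^n}=0$, which I would establish in four moves: (a) naturality of $\widetilde{\DH}^{II}$ pushes $\Omega^2_d(\pi^{n-1,n-2}_d)$ inside, producing $J^1_dJ^1_d(\pi^{n-1,n-2}_d)$; (b) on the holonomic subfunctor $J^n_d$ one has $J^1_d(\pi^{n-1,n-2}_d)\circ l^{1,n-1}_d=l^{1,n-2}_d\circ\pi^{n,n-1}_d$, a disguised form of semiholonomicity of $J^n_d$ obtained exactly as in the argument following diagram~\eqref{diag:trianglesquarebicomplex}; (c) the section property $\pi^{n,n-1}_d\circ C^n=\id$ collapses $J^1_d(\pi^{n,n-1}_d)\circ J^1_d(C^n)$ to the identity; (d) the defining holonomy vanishing $\widetilde{\DH}_{J^{n-2}_d}\circ J^1_d(l^{1,n-2}_d)\circ l^{1,n-1}_d=0$ of diagram~\eqref{diag:inclusion_hol_sesqui} applied after $C^n$ (which lands in $J^n_d$) annihilates in particular the $\widetilde{\DH}^{II}$-component.

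\textbf{Plan for (iv).} Using the factorization $\widetilde{C}^n$ from part~(i) and applying Lemma~\ref{lemma:sesquiholonomic_Spencer}.\eqref{lemma:sesquiholonomic_Spencer:1} with $n$ shifted to $n+1$, the curvature acquires the Spencer form
\begin{equation*}
R_{C^n}=\Spenc^{n,1}_d\circ\overline{\Spenc}^{\{n+1,0\}}_d\circ\widetilde{C}^n.
\end{equation*}
Post-composing with $\Spenc^{n-1,2}_d$ and invoking the Spencer complex relation $\Spenc^{n-1,2}_d\circ\Spenc^{n,1}_d=0$ from Theorem~\ref{theo:Spencer_is_complex} yields $\Spenc^{n-1,2}_d\circ R_{C^n}=0$. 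The commuting square of the Spencer bicomplex~\eqref{diag:Spencer_bicomplex} reads $\Spenc^{n-1,2}_d\circ\Omega^2_d(\iota^{n-1}_d)=-\Omega^3_d(\iota^{n-2}_d)\circ\delta^{n-1,2}_d$. Denoting by $R'_{C^n}\colon J^{n-1}_dE\to\Omega^2_dS^{n-1}_dE$ the lift from~(iii), so that $\Omega^2_d(\iota^{n-1}_d)\circ R'_{C^n}=R_{C^n}$, we obtain $\Omega^3_d(\iota^{n-2}_d)\circ\delta^{n-1,2}_d\circ R'_{C^n}=0$. The flatness hypotheses on $\Omega^1_d,\Omega^2_d,\Omega^3_d$ (weakened to $\Tor^A_1(\Omega^3_d,E)=0$ for $n=3$, since the $1$-jet sequence is always short exact, and trivial for $n\le 2$ where $\iota^{n-2}_d$ is an isomorphism) make $\Omega^3_d(\iota^{n-2}_d)$ a monomorphism, whence $\delta^{n-1,2}_d\circ R'_{C^n}=0$, as required.

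\textbf{Main obstacle.} The only nontrivial technical step is~(b) of~(iii): the equality of the inner and outer first-order projections on the holonomic subfunctor. This uses the characterization $\pi^{n,n-1}_d=\pi^{1,0}_{d,J^{n-1}_d}\circ l^{1,n-1}_d$ together with naturality of $\pi^{1,0}_d$, and the vanishing of $\widetilde{\DH}^I$ on $J^n_d$. Everything else reduces to naturality, the Spencer complex condition (Theorem~\ref{theo:Spencer_is_complex}), or the bicomplex identity (Proposition~\ref{prop:Spencer bicomplex_is_bicomplex}).
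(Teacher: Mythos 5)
Your parts (i) and (ii) follow the paper's route exactly: factor through $J^{\{n+1\}}_dE$ via Lemma~\ref{lemma:theclaw}, then kill the $\Omega^1_d$-component using the kernel description of $J^{\{n+1\}}_d$ (the paper cites \cite[\jetslemmaimageD]{FMW} for this factorization, but your direct argument from the second row of \eqref{diag:inclusion_hol_sesqui} is the same content), and deduce $A$-linearity from $A$-linearity of the factors. For (iii) and (iv) you diverge genuinely from the paper, which simply defers to \cite[\jetslemmaimageD]{FMW} and to \cite[\jetslemmaedhinkerspencer]{FMW}, \cite[\jetsrmkweakOthreeflat]{FMW}; you instead rebuild the statements from material internal to this paper. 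Your (iii) is correct and self-contained: flatness of $\Omega^2_d$ plus left exactness of the $(n-1)$-jet sequence identifies $\Omega^2_d(\iota^{n-1}_{d,E})$ with $\ker\Omega^2_d(\pi^{n-1,n-2}_{d,E})$, and your chain (a)--(d) (naturality of $\widetilde{\DH}^{II}$, the pentagon $J^1_d(\pi^{n-1,n-2}_d)\circ l^{1,n-1}_d=l^{1,n-2}_d\circ\pi^{n,n-1}_d$ from the argument after \eqref{diag:trianglesquarebicomplex}, the section property, and the defining vanishing $\widetilde{\DH}_{J^{n-2}_d}\circ J^1_d(l^{1,n-2}_d)\circ l^{1,n-1}_d=0$) does give $\Omega^2_d(\pi^{n-1,n-2}_{d,E})\circ R_{C^n}=0$. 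Likewise the Spencer reformulation $R_{C^n}=\Spenc^{n,1}_{d,E}\circ\overline{\Spenc}^{\{n+1,0\}}_{d,E}\circ\widetilde{C}^n$ and the use of Theorem~\ref{theo:Spencer_is_complex} and the bicomplex square are valid, and your $\Tor^A_1(\Omega^3_d,E)=0$ treatment of $n=3$ is exactly the right use of the weakened hypothesis.

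The gap is the last step of (iv) when $n\ge 4$. From $\Omega^3_d(\iota^{n-2}_{d,E})\circ\delta^{n-1,2}_{d,E}\circ R'_{C^n}=0$ you cancel $\Omega^3_d(\iota^{n-2}_{d,E})$, asserting that the flatness hypotheses make it a monomorphism. Flatness of $\Omega^3_d$ only reduces this to injectivity of $\iota^{n-2}_{d,E}\colon S^{n-2}_dE\to J^{n-2}_dE$, and that is neither among the hypotheses of the statement nor automatic: left exactness of the $(n-1)$-jet sequence controls $\iota^{n-1}_{d,E}$, not $\iota^{n-2}_{d,E}$, and the paper itself treats ``$\iota^{n-2}_{d,E}$ is a mono'' as an independent assumption for $n\ge 4$ in Theorem~\ref{theo:higher_connections_are_connections}, even in the presence of flatness of $\Omega^1_d$ and $\Omega^2_d$. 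So under the stated hypotheses your cancellation is unjustified for $n\ge 4$; as written your argument proves (iv) only for $n\le 3$, or for $n\ge 4$ under the additional assumption that $\iota^{n-2}_{d,E}$ (equivalently $\Omega^3_d(\iota^{n-2}_{d,E})$) is injective. The paper's proof avoids this by invoking \cite[\jetslemmaedhinkerspencer]{FMW} together with \cite[\jetsrmkweakOthreeflat]{FMW}, whose argument does not pass through postcomposition with $\Omega^3_d(\iota^{n-2}_{d,E})$; to make your route complete you would either need to prove injectivity of $\iota^{n-2}_{d,E}$ from the stated hypotheses or replace the cancellation step by a direct verification that $\delta^{n-1,2}_{d,E}$ annihilates the image.
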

\begin{proof}\
\begin{enumerate}
\item By Lemma \ref{lemma:theclaw}, the image of $J^1_d(C^n) \circ l^{1,n-1}_{d,E}\circ C^n \colon J^{n-1}_d E\hookrightarrow J^1_dJ^n_dE$ is contained in $J^{\{n+1\}}_d E$, i.e.\ we can write it as $l^{\{n+1\}}_{d,E}\circ f$ for a unique map $f\colon J^{n-1}_dE\to J^{\{n+1\}}_d E$.
We thus have the following equality:
\begin{equation}\label{eq:factorization_fullcurvature}
-\widetilde{\DH}_{J^{n-1}_d E}\circ J^1_d(l^{1,n-1}_{d,E})\circ J^1_d(C^n) \circ l^{1,n-1}_{d,E}\circ C^n
=-\widetilde{\DH}_{J^{n-1}_d E}\circ J^1_d(l^{1,n-1}_{d,E})\circ l^{\{n+1\}}_{d,E}\circ f.
\end{equation}
We also know that $\widetilde{\DH}_{J^{n-1}_d E}\circ J^1_d(l^{1,n-1}_{d,E})\circ l^{\{n+1\}}_{d,E}$ factors uniquely through the inclusion of $\Omega^2_d J^{n-1}_d E$ as
\begin{equation}
\widetilde{\DH}^{II}_{J^{n-1}_d E}\circ J^1_d(l^{1,n-1}_{d,E})\circ l^{\{n+1\}}_{d,E},
\end{equation}
cf.\ \cite[\jetslemmaimageD]{FMW}.
Hence, \eqref{eq:factorization_fullcurvature} factors through $\Omega^2_d J^{n-1}_d E\hookrightarrow (\Omega^1_d\ltimes \Omega^2_d)J^{n-1}_dE$ as
\begin{equation}
-\widetilde{\DH}^{II}_{J^{n-1}_d E}\circ J^1_d(l^{1,n-1}_{d,E})\circ l^{\{n+1\}}_{d,E}\circ f
=-\widetilde{\DH}^{II}_{J^{n-1}_d E}\circ J^1_d(l^{1,n-1}_{d,E})\circ J^1_d(C^n) \circ l^{1,n-1}_{d,E}\circ C^n
=R_{C^n}.
\end{equation}
\item The $A$-linearity follows from \eqref{prop:fullcurvature:1} given that \eqref{eq:fullcurvature} is $A$-linear.

\item Since, by Lemma \ref{lemma:theclaw}, the image of $J^1_d(C^n) \circ l^{1,n-1}_{d,E}\circ C^n \colon J^{n-1}_dE\hookrightarrow J^1_dJ^n_dE$ is contained in $J^{\{n+1\}}_d E$, we can apply \cite[\jetslemmaimageD]{FMW} in the appropriate grade.
\item This statement is trivially true for $n=1$.
The remaining cases follow \textit{mutatis mutandis} from \cite[\jetslemmaedhinkerspencer]{FMW} and \cite[\jetsrmkweakOthreeflat]{FMW}.
\qedhere
\end{enumerate}
\end{proof}
The following result generalizes the classical statement appearing in \cite[§IV.1]{libermann1997}.
\begin{prop}
The curvature $R_{C^n}$ vanishes if and only if $J^1_d(C^n) \circ l^{1,n-1}_{d,E} \circ C^n$ has image in $J^{n+1}_dE$.
\end{prop}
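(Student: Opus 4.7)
The plan is to leverage Lemma \ref{lemma:theclaw}, which already establishes that the composition $J^1_d(C^n) \circ l^{1,n-1}_{d,E} \circ C^n$ factors through the inclusion $l^{\{n+1\}}_{d,E}\colon J^{\{n+1\}}_d E \hookrightarrow J^1_d J^n_d E$. The proof then reduces to comparing the defining conditions of the two subfunctors $J^{n+1}_d E$ and $J^{\{n+1\}}_d E$ of $J^1_d J^n_d E$.

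Recall from diagram \eqref{diag:inclusion_hol_sesqui} (applied in degree $n+1$) that $J^{n+1}_d E$ is the kernel of the full map $\widetilde{\DH}_{J^{n-1}_d E} \circ J^1_d(l^{1,n-1}_{d,E})$, whereas $J^{\{n+1\}}_d E$ is the kernel of its first component $\widetilde{\DH}^I_{J^{n-1}_d E} \circ J^1_d(l^{1,n-1}_{d,E})$. Since Lemma \ref{lemma:theclaw} ensures that the image of our composition lies in $J^{\{n+1\}}_d E$, the $I$ component already vanishes on it. Therefore, the image lies in $J^{n+1}_d E$ if and only if the $II$ component $\widetilde{\DH}^{II}_{J^{n-1}_d E} \circ J^1_d(l^{1,n-1}_{d,E})$ also vanishes on it.

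By Definition \ref{def:curvature}, this latter vanishing is precisely the condition $R_{C^n} = 0$, which gives both directions of the equivalence simultaneously. I do not anticipate any significant obstacle here, as the statement amounts to an unpacking of the definitions once the factorization through $J^{\{n+1\}}_d E$ supplied by Lemma \ref{lemma:theclaw} is in hand; the only subtlety worth making explicit in the write-up is that $f\colon J^{n-1}_d E \to J^{\{n+1\}}_d E$ (from the proof of Lemma \ref{lemma:theclaw}) factors further through $J^{n+1}_d E \hookrightarrow J^{\{n+1\}}_d E$ exactly when the pulled-back $\widetilde{\DH}^{II}$ condition holds.
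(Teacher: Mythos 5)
Your argument is correct and follows essentially the same route as the paper: the paper's proof cites Proposition \ref{prop:fullcurvature}.\eqref{prop:fullcurvature:1}, whose content is precisely the combination of Lemma \ref{lemma:theclaw} with the splitting of $\widetilde{\DH}$ into its $I$ and $II$ components that you spell out by hand. The only difference is that you inline that proposition, comparing the kernel descriptions of $J^{n+1}_d E$ and $J^{\{n+1\}}_d E$ inside $J^1_d J^n_d E$ directly, which is a harmless rephrasing of the same argument.
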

\begin{proof}
By definition of $J^{n+1}_d E$, the map $J^1_d(C^n) \circ l^{1,n-1}_{d,E} \circ C^n$ has image in $J^{n+1}_d E$ if and only if \eqref{eq:fullcurvature} vanishes.
By Proposition \ref{prop:fullcurvature}.\eqref{prop:fullcurvature:1}, we know that \eqref{eq:fullcurvature} coincides with the composition
\begin{equation}
\begin{tikzcd}
J^{n-1}_dE \ar[r,"R_{C^n}"]& \Omega^2_dJ^{n-1}_dE\ar[r,hook]&(\Omega^1_d \ltimes \Omega^2_d)J^{n-1}_d E.
\end{tikzcd}
\end{equation}
Since the right map is a mono, this composition vanishes if and only if $R_{C^n}$ vanishes, completing the proof.
\end{proof}

\subsection{Relation with connections on jet modules}
\label{ss:Relation_with_connections_on_jet_modules}
In this section we characterize higher order connections in terms of connections on the corresponding higher jet bundles.
\begin{defi}\label{def:connection_corresponding_to_higher_connection}
For $n\ge 1$, the \emph{(left) connection associated to a (left) $n$-connection} $C^n\colon J^{n-1}_d E\hookrightarrow J^n_d E$ on $E$ in $\AMod$ is defined as
\begin{equation}
\label{eq:connection_corresponding_to_higher_connection}
\nabla^{C^n} \colonequals \Spenc_{d,E}^{n,0} \circ C^n\colon J^{n-1}_dE \longrightarrow \Omega^1_dJ^{n-1}_dE.
\end{equation}
\end{defi}
\begin{prop}
\label{prop:construction_connection_corresponding_to_higher_connection}
The $\bk$-linear map $\nabla^{C^n}$, cf.\ \eqref{eq:connection_corresponding_to_higher_connection}, associated to a $n$-connection $C^n\colon J^{n-1}_d E\hookrightarrow J^n_d E$ is a connection on $J^{n-1}_d E$.

If $n=1$, a $1$-connection $C^1$ is the right splitting of the $1$-jet sequence induced by its associated connection $\nabla^{C^1}$, cf.\ \cite[\jetspropconnexionsplits]{FMW}, and $\nabla^{C^1}=\nabla^1$, cf.\ Proposition \ref{prop:characterization_higher_connections}.
\end{prop}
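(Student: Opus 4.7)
The strategy is to verify the Leibniz rule for $\nabla^{C^n}$ directly, and then to handle the case $n=1$ by comparing two splittings of the $1$-jet sequence.

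For the first assertion, I would first observe that $\nabla^{C^n}$ is $\bk$-linear, since it is the composition of the $A$-linear map $C^n$ with the $\bk$-linear Spencer operator $\Spenc^{n,0}_{d,E}$. To verify the Leibniz rule, take $f\in A$ and $\eta\in J^{n-1}_d E$. Combining the $A$-linearity of $C^n$, the graded Leibniz rule for $\Spenc^{n,0}_{d,E}$ recorded in Remark \ref{rmk:Spencer_m=0}, and the splitting identity $\pi^{n,n-1}_{d,E}\circ C^n=\id_{J^{n-1}_d E}$, one computes
\begin{equation}
\nabla^{C^n}(f\eta)
=\Spenc^{n,0}_{d,E}(f\,C^n(\eta))
=df\otimes_A\pi^{n,n-1}_{d,E}(C^n(\eta))+f\,\Spenc^{n,0}_{d,E}(C^n(\eta))
=df\otimes_A\eta+f\,\nabla^{C^n}(\eta),
\end{equation}
which is exactly the Leibniz rule defining a left connection on $J^{n-1}_d E$ with values in $\Omega^1_dJ^{n-1}_d E$.

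For the case $n=1$, Proposition \ref{prop:characterization_higher_connections} associates to $C^1$ the unique $A$-linear left splitting $\lambda^1\colon J^1_dE\to\Omega^1_dE$ of the $1$-jet short exact sequence, and the corresponding connection viewed as a differential operator is $\nabla^1=\lambda^1\circ j^1_{d,E}$. The biproduct identity $\iota^1_{d,E}\circ\lambda^1+C^1\circ\pi^{1,0}_{d,E}=\id_{J^1_dE}$, precomposed with $j^1_{d,E}$, yields $C^1=j^1_{d,E}-\iota^1_{d,E}\circ\nabla^1$. Applying $\Spenc^{1,0}_{d,E}$ to this identity and invoking (i) $\Spenc^{1,0}_{d,E}\circ j^1_{d,E}=0$ from Theorem \ref{theo:Spencer_is_complex}, together with (ii) the equality $\Spenc^{1,0}_{d,E}\circ\iota^1_{d,E}=-\id_{\Omega^1_dE}$, which follows from the explicit formula $\Spenc^{1,0}_{d,E}=-\rho_{d,E}\circ l^{1,0}_{d,E}=-\rho_{d,E}$ of Remark \ref{rmk:Spencer_m=0} and the left-splitting relation $\rho_{d,E}\circ\iota^1_{d,E}=\id_{\Omega^1_dE}$ from the biproduct structure of the $1$-jet sequence (cf.\ \cite[\jetssssSplitting]{FMW}), gives
\begin{equation}
\nabla^{C^1}=\Spenc^{1,0}_{d,E}\circ C^1=-\Spenc^{1,0}_{d,E}\circ\iota^1_{d,E}\circ\nabla^1=\nabla^1.
\end{equation}
The remaining claim that $C^1$ is the right splitting of the $1$-jet sequence induced by $\nabla^{C^1}$ then follows immediately, since by construction $C^1$ is the right splitting corresponding to $\nabla^1$ under the connection--splitting equivalence of \cite[\jetspropconnexionsplits]{FMW}, and $\nabla^1=\nabla^{C^1}$.

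I expect the principal technical subtlety to be the sign bookkeeping in the $n=1$ step, specifically verifying $\Spenc^{1,0}_{d,E}\circ\iota^1_{d,E}=-\id_{\Omega^1_dE}$ from the precise conventions of \cite[\jetssssSplitting]{FMW}. This is a small routine check once the conventions are unwound, and the higher-order case requires no such delicacy: the Leibniz rule follows mechanically from the Leibniz-type identity already established for the Spencer operators together with the defining property $\pi^{n,n-1}_d\circ C^n=\id$.
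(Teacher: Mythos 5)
Your proof is correct, but it takes a genuinely different route from the paper's in both halves. For general $n$ the paper does not check the Leibniz rule pointwise: it exhibits the explicit lift $\widetilde{\nabla}^{C^n}=\widetilde{\Spenc}^{n,0}_{d,E}\circ J^1_d(C^n)$ and shows that the restriction symbol is $\id_{\Omega^1_d J^{n-1}_d E}$, invoking the symbol characterization of connections; your direct Leibniz computation via Remark \ref{rmk:Spencer_m=0} is more elementary and equally valid, but it does not produce the lift formula, which the paper reuses (in its own $n=1$ step and again in Theorem \ref{theo:higher_connections_are_connections} and Proposition \ref{prop:relation_jet_connection_sym_connection}). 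For $n=1$ the paper verifies the biproduct identity $C^1\circ\pi^{1,0}_{d,E}+\iota^1_{d,E}\circ\widetilde{\nabla}^{C^1}=\id_{J^1_dE}$ by a computation with the lift and only then reads off $\nabla^{C^1}=\nabla^1$; you instead decompose $C^1=j^1_{d,E}-\iota^1_{d,E}\circ\nabla^1$ and use $\Spenc^{1,0}_{d,E}\circ j^1_{d,E}=0$ together with $\Spenc^{1,0}_{d,E}\circ\iota^1_{d,E}=-\id_{\Omega^1_dE}$, obtaining $\nabla^{C^1}=\nabla^1$ first and deducing the splitting statement from the construction in Proposition \ref{prop:characterization_higher_connections}. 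Your sign check is right: $\Spenc^{1,0}_{d,E}=-\rho_{d,E}$ and $\rho_{d,E}\circ\iota^1_{d,E}=\id_{\Omega^1_dE}$, exactly as in the paper's Spencer-bicomplex computation. The only point worth making explicit in your final step is that the correspondence of \cite[\jetspropconnexionsplits]{FMW} proceeds through the (unique) first-order lift, so it coincides with the $n=1$ instance of Proposition \ref{prop:characterization_higher_connections}; the paper sidesteps this by proving the splitting identity directly. In sum, your argument is shorter and more elementary, while the paper's yields the lift and the splitting identity as reusable byproducts.
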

\begin{proof}
Being the composition of an $A$-linear map $C^n$ and $\Spenc^{n,0}_{d,E}$, which is a differential operator of order at most $1$, cf.\ Proposition \ref{prop:spencopsymbol}, the map $\nabla^{C^n}\colon J^{n-1}_d E\to \Omega^1_d J^{n-1}_d E$ is a differential operator of order at most $1$ on $J^{n-1}_dE$.
In particular, its (unique) $A$-linear lift to $J^1_d J^{n-1}_dE$ is given by
\begin{equation}
\label{eq:lift_connection_associated_to_higher_connection}
\widetilde{\nabla}^{C^n}
=\widetilde{\Spenc}^{n,0}_{d,E}\circ J^1_d(C^n).
\end{equation}

We will now prove that $\nabla^{C^n}$ is a connection by showing that its restriction symbol is $\id_{\Omega^1_d J^{n-1}_d E}$, cf.\ \cite[\symbolspropsymbolsofconnections]{Symbol}.
In order to compute the restriction symbol, we consider the following diagram: 
\begin{equation}
\begin{tikzcd}[column sep=40pt,row sep=20pt]
\Omega^1_d J^{n-1}_d E\ar[drr,equals,bend left=50pt]\ar[d,hook,"\iota^1_{d,J^{n-1}_d E}"']\ar[r,hook,"\Omega^1_d (C^n)"]&\Omega^1_d J^n_d E \ar[d,hook,"\iota^1_{d,J^n_d E}"'] \ar[dr,two heads,"\Omega^1_d(\pi^{n,n-1}_{d,E})",near start]\\
J^1_d J^{n-1}_d E\ar[rr,bend right=10pt,"\widetilde{\nabla}^{C^n}"']\ar[r,hook,"J^1_d (C^n)"]& J^1_d J^n_d E \ar[r,"\widetilde{\Spenc}^{n,0}_{d,E}",near start]&[40pt]\Omega^1_d J^{n-1}_d E
\end{tikzcd}
\end{equation}
The bottom triangle commutes by \eqref{eq:lift_connection_associated_to_higher_connection}.
The top triangle commutes by definition of $n$-connection and functoriality of $\Omega^1_d$.
The right triangle commutes by Proposition \ref{prop:spencopsymbol}.
The square commutes by the naturality of $\iota^1_d$ with respect to $C^n$.
It follows that the symbol of $\nabla^{C^n}$ is the identity, thus proving that $\nabla^{C^n}$ is a connection.

In particular, if $n=1$, the $1$-jet sequence is always exact, which implies that right splittings and left splittings are in bijective correspondence.
The correspondence described in \cite[\jetspropconnexionsplits]{FMW} induces a correspondence between a connection and the left splitting its lift induces.
Since $\widetilde{\nabla}^{C^1}$ and $C^1$ are left and right splittings, respectively, of the $1$-jet sequence, we are left to prove that they correspond to the same splitting.
Namely, the following equality
\begin{equation}
C^1\circ \pi^{1,0}_{d,E}+\iota^1_{d,E}\circ \widetilde{\nabla}^{C^1}
=\id_{J^1_d E}.
\end{equation}
By \eqref{eq:lift_connection_associated_to_higher_connection}, using \eqref{eq:lift_Spencer_n,0} and the definition of $\widetilde{\DH}^I_{d,E}$, cf.\ \cite[\jetseqiotaprimestuff]{FMW}, we obtain the desired equality via the following computation
\begin{equation}
\begin{split}
\iota^1_{d,E} \circ \widetilde{\nabla}^{C^1}
&=\iota^1_{d,E}\circ\widetilde{\Spenc}^{1,0}_{d,E}\circ J^1_d (C^1)\\
&=\iota^1_{d,E}\circ\widetilde{\DH}^I_{d,E}\circ J^1_d (C^1)\\
&=(J^1_d(\pi^{1,0}_{d,E})-\pi^{1,0}_{d,J^1_d E})\circ J^1_d(C^1)\\
&=J^1_d(\pi^{1,0}_{d,E})\circ J^1_d(C^1)-\pi^{1,0}_{d,J^1_d E}\circ J^1_d(C^1)\\
&=\id_{J^1_d E}- C^1\circ\pi^{1,0}_{d,E},
\end{split}
\end{equation}
where the last equality follows from the functoriality of $J^1_d$, the definition of a $1$-connection, and the naturality of $\pi^{1,0}_d$ with respect to $C^1$.
The last statement then follows from the fact that the differential operator $\nabla^1$ of order at most $1$ in Proposition \ref{prop:characterization_higher_connections} is obtained as $\nabla^1\colonequals \widetilde{\nabla}^{C^1}\circ j^1_{d,E}=\nabla^{C^1}$.
\end{proof}
We can actually do more, and define a notion of exterior covariant derivative corresponding to an $n$-connection as follows.
\begin{defi}\label{def:exterior_covariant_derivative_corresponding_to_higher_connection}
For $m\ge 0$ and $n\ge 1$, the \emph{exterior covariant derivative $d_{C^n}$ associated to $C^n$} is defined via the following commutative diagram.
\begin{equation}
\begin{tikzcd}
	{\Omega^m_dJ^{n-1}_dE} & {\Omega^{m+1}_dJ^{n-1}_dE} \\
	{\Omega^m_dJ^{n}_dE}
	\arrow["{\Spenc^{n,m}_{d,E}}"', from=2-1, to=1-2]
	\arrow["{\Omega^{m}_d(C^n)}"',hook, from=1-1, to=2-1]
	\arrow["{d_{C^n}}", from=1-1, to=1-2]
\end{tikzcd}
\end{equation}
\end{defi}
\begin{prop}
\label{prop:classicalexteriorderivativeformula}
Given an $n$-connection $C^n$ with induced connection $\nabla^{C^n} \colonequals \Spenc_{d,E}^{n,0} \circ C^n$, the exterior covariant derivative $d_{C^n}\colonequals \Spenc_{d,E}^{n,m} \circ C^n$ is the exterior covariant derivative associated to $\nabla^{C^n}$.
In particular, it satisfies
\begin{equation}
\label{eq:exterior_derivative_is_as_expected}
d_{C^n}(\omega\otimes_A\xi)
= d\omega\otimes_A \xi + (-1)^{\deg(\omega)}\omega\wedge\nabla^{C^n}\xi
\end{equation}
for all $\omega\otimes_A \xi\in \Omega^m_d J^n_d E$.
\end{prop}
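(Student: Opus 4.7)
The plan is to unwind the definition of $d_{C^n}$ and then apply the explicit Leibniz-type formula for the Spencer operator established in Proposition \ref{prop:Spencercorrespondence}. Starting from an element $\omega \otimes_A \xi \in \Omega^m_d J^{n-1}_d E$, Definition \ref{def:exterior_covariant_derivative_corresponding_to_higher_connection} gives
\begin{equation}
d_{C^n}(\omega \otimes_A \xi) = \Spenc^{n,m}_{d,E}(\omega \otimes_A C^n(\xi)).
\end{equation}

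Next, I would apply Proposition \ref{prop:Spencercorrespondence} to the argument $\omega \otimes_A C^n(\xi)$, which rewrites the right-hand side as
\begin{equation}
d\omega \otimes_A \pi^{n,n-1}_{d,E}(C^n(\xi)) + (-1)^{\deg(\omega)} \omega \wedge \Spenc^{n,0}_{d,E}(C^n(\xi)).
\end{equation}
Two immediate substitutions complete the calculation: by Definition \ref{def:higher_connection}, the morphism $C^n$ is a section of $\pi^{n,n-1}_{d,E}$, so $\pi^{n,n-1}_{d,E} \circ C^n = \id_{J^{n-1}_d E}$; and by Definition \ref{def:connection_corresponding_to_higher_connection}, $\Spenc^{n,0}_{d,E} \circ C^n = \nabla^{C^n}$. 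Substituting these identities yields formula \eqref{eq:exterior_derivative_is_as_expected} directly.

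The first claim of the proposition—namely, that $d_{C^n}$ coincides with the exterior covariant derivative associated to the connection $\nabla^{C^n}$ (cf.\ Proposition \ref{prop:construction_connection_corresponding_to_higher_connection})—then follows automatically, since \eqref{eq:exterior_derivative_is_as_expected} is precisely the graded Leibniz rule characterising the exterior covariant derivative of a connection on $J^{n-1}_d E$. There is no substantial obstacle here: the argument is an algebraic manipulation combining the definitions of $d_{C^n}$, $\nabla^{C^n}$, and of an $n$-connection, together with the already-established Proposition \ref{prop:Spencercorrespondence}. The only conceptual point to highlight is that evaluating $\Spenc^{n,m}_{d,E}$ on an element pushed forward along $C^n$ automatically separates into the two terms demanded by the Leibniz rule.
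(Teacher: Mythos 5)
Your proposal is correct and follows essentially the same route as the paper: unwind $d_{C^n}=\Spenc^{n,m}_{d,E}\circ\Omega^m_d(C^n)$, apply the Leibniz-type formula of Proposition \ref{prop:Spencercorrespondence} to $\omega\otimes_A C^n(\xi)$, and simplify using $\pi^{n,n-1}_{d,E}\circ C^n=\id$ and $\Spenc^{n,0}_{d,E}\circ C^n=\nabla^{C^n}$. The paper additionally remarks that the $m=0$ case gives $d_{C^n}=\nabla^{C^n}$ by definition, but your observation that \eqref{eq:exterior_derivative_is_as_expected} is the Leibniz rule characterising the exterior covariant derivative covers the same ground.
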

\begin{proof}
By definition, for $m=0$, we have $d_{C^n}=\nabla^{C^n}$, so we are left to prove \eqref{eq:exterior_derivative_is_as_expected}.
The statement follows from the following computation thanks to Proposition \ref{prop:Spencercorrespondence}.
\begin{equation}
\begin{split}
d_{C^n}(\omega\otimes_A \xi)
&=\Spenc^{n,m}_{d,E}\circ \Omega^m_d(C^n)(\omega\otimes_A\xi)\\
&=\Spenc^{n,m}_{d,E}(\omega\otimes_AC^n(\xi))\\
&=d\omega \otimes_A \pi^{n,n-1}_{d,E} (C^n(\xi)) + (-1)^{\deg(\omega)} \omega \wedge \Spenc^{n,0}_{d,E}(C^n(\xi))\\
&=d\omega \otimes_A \xi + (-1)^{\deg(\omega)} \omega \wedge \nabla^{C^n}(\xi),
\end{split}
\end{equation}
where the last equality follows from the definitions of $n$-connection and its associated connection.
\end{proof}
The following Proposition illuminates Definition \ref{def:curvature}, by showing that the curvature of a higher order connection coincides with the curvature of the associated connection.
\begin{prop}
\label{prop:higher_connection_curvature_is_curvature}
Given an $n$-connection $C^n$ and its associated connection $\nabla^{C^n}$, the curvature $R_{C^n}$ of $C^n$ coincides with the curvature $R_{\nabla^{C^n}}\colonequals d_{C^n}\circ\nabla^{C^n}$ of $\nabla^{C^n}$.
More generally, for $\omega\otimes_A \xi\in \Omega^m_d J^n_d E$, we have
\begin{equation}\label{eq:higher_connection_curvature_is_curvature}
d_{C^n}\circ d_{C^n}(\omega\otimes_A \xi)
=\omega\wedge R_{C^n}(\xi).
\end{equation}
\end{prop}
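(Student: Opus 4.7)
The plan is to establish the general formula \eqref{eq:higher_connection_curvature_is_curvature} first and deduce $R_{C^n}=R_{\nabla^{C^n}}$ as its special case. Indeed, by Definition \ref{def:exterior_covariant_derivative_corresponding_to_higher_connection} and Proposition \ref{prop:construction_connection_corresponding_to_higher_connection}, $R_{\nabla^{C^n}}\colonequals d_{C^n}\circ\nabla^{C^n} = d_{C^n}\circ d_{C^n}|_{J^{n-1}_d E}$; thus, once the general identity is proven, setting $m=0$ and $\omega=1$ gives the first claim.

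The first step will be a reduction to the case $m=0$. Writing $\omega\otimes_A\xi=\omega\wedge(1\otimes_A\xi)$ and iterating Proposition \ref{prop:classicalexteriorderivativeformula} on terms of the form $\omega\wedge\alpha\otimes_A\xi'$, one obtains the graded Leibniz identity
\[
d_{C^n}(\omega\wedge\eta)= d\omega\wedge\eta + (-1)^{\deg\omega}\omega\wedge d_{C^n}\eta
\]
for $\omega\in\Omega^\bullet_d$ and $\eta\in\Omega^\bullet_d J^{n-1}_d E$. Applying $d_{C^n}$ twice to $\omega\wedge\xi$, the two cross terms $(-1)^{\deg\omega+1}d\omega\wedge d_{C^n}\xi$ and $(-1)^{\deg\omega}d\omega\wedge d_{C^n}\xi$ cancel, and $d^2\omega=0$, leaving $d_{C^n}^2(\omega\wedge\xi)=\omega\wedge d_{C^n}^2\xi$. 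It then suffices to prove $d_{C^n}^2\xi=R_{C^n}(\xi)$ for $\xi\in J^{n-1}_d E$.

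The second step is a direct calculation. Unfolding the definitions,
\[
d_{C^n}^2\xi = \Spenc^{n,1}_{d,E}\circ \Omega^1_d(C^n)\circ \Spenc^{n,0}_{d,E}\circ C^n(\xi).
\]
Using Lemma \ref{lemma:SpencerDO_wrt_low_indices} to write $\Spenc^{n,0}_{d,E}=\Spenc^{1,0}_{d,J^{n-1}_d E}\circ l^{1,n-1}_{d,E}$ and $\Spenc^{n,1}_{d,E}=\Spenc^{1,1}_{d,J^{n-1}_d E}\circ \Omega^1_d(l^{1,n-1}_{d,E})$, this becomes
\[
d_{C^n}^2\xi = \Spenc^{1,1}_{d,J^{n-1}_d E}\circ \Omega^1_d(l^{1,n-1}_{d,E}\circ C^n)\circ \Spenc^{1,0}_{d,J^{n-1}_d E}\circ l^{1,n-1}_{d,E}\circ C^n(\xi).
\]
Naturality of $\Spenc^{1,0}_d$ with respect to the $A$-linear map $l^{1,n-1}_{d,E}\circ C^n\colon J^{n-1}_d E\to J^1_d J^{n-1}_d E$ then commutes $\Omega^1_d(l^{1,n-1}_{d,E}\circ C^n)$ past $\Spenc^{1,0}_{d,J^{n-1}_d E}$, turning the two leftmost Spencer operators into $\Spenc^{1,1}_{d,J^{n-1}_d E}\circ \Spenc^{1,0}_{d,J^1_d J^{n-1}_d E}$. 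Lemma \ref{lemma:Spencersquared}.\eqref{lemma:Spencersquared:4}, evaluated at the object $J^{n-1}_d E$, identifies this pair with $-\widetilde{\DH}^{II}_{d,J^{n-1}_d E}$, while functoriality of $J^1_d$ splits the residual $J^1_d(l^{1,n-1}_{d,E}\circ C^n)=J^1_d(l^{1,n-1}_{d,E})\circ J^1_d(C^n)$. The outcome is precisely the expression defining $R_{C^n}(\xi)$ in Definition \ref{def:curvature}.

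The main obstacle is essentially bookkeeping, i.e.\ keeping track at which object each component of the various natural transformations is evaluated; conceptually, the whole proof reduces to the Leibniz-type formula of Proposition \ref{prop:classicalexteriorderivativeformula} together with the identity $\Spenc^{1,1}_d\circ \Spenc^{1,0}_{d,J^1_d}=-\widetilde{\DH}^{II}_d$ of Lemma \ref{lemma:Spencersquared}.\eqref{lemma:Spencersquared:4}, so no new ingredients are required beyond those already at hand.
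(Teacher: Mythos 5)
Your proposal is correct and follows essentially the same route as the paper: the core $m=0$ identity is obtained from exactly the same ingredients (Lemma \ref{lemma:SpencerDO_wrt_low_indices}, naturality of $\Spenc^{1,0}_d$ with respect to $C^n$ and $l^{1,n-1}_{d,E}$, and Lemma \ref{lemma:Spencersquared}.\eqref{lemma:Spencersquared:4}), only written as an equational chain instead of the paper's commutative diagram. The sole difference is that you derive the general-$m$ formula by redoing the graded Leibniz cancellation by hand, where the paper simply invokes Proposition \ref{prop:classicalexteriorderivativeformula} together with the cited result on exterior covariant derivatives of connections; your cancellation argument is correct, so this is just a self-contained variant of the same step.
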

\begin{proof}
We start by proving \eqref{eq:higher_connection_curvature_is_curvature} in the case $m=0$.
Namely, we need to show that $d_{C^n}\circ \nabla^{C^n}(\xi)=R_{C^n}(\xi)$.
Consider the following diagram
\begin{equation}
\label{diag:torpedo}
\begin{tikzcd}[column sep=50pt,row sep=40pt]
J^1_d J^n_d E\ar[rrd,phantom,"\encircle{\barroman{VI}}" description]\ar[rr,"J^1_d(l^{1,n-1}_{d,E})"]\ar[dr,two heads,"\Spenc^{1,0}_{d,J^n_d E}"]&&J^{(2)}_d J^{n-1}_d E\ar[d,two heads,"\Spenc^{1,0}_{d,J^1_d J^{n-1}_d E}"']\ar[dd,two heads,"-\widetilde{\DH}^{II}_{d,J^{n-1}_d E}", controls={ +(0:3) and +(0:3)}]\ar[dd,bend left=50pt,phantom,"\encircle{\barroman{VII}}",near start]\\
J^1_d J^{n-1}_d E\ar[r,bend left=5pt,phantom,"\encircle{\barroman{V}}" description]\ar[u,hook,"J^1_d (C^n)"]\ar[dr,two heads,"\Spenc^{1,0}_{d,J^{n-1}_d E}"]&\Omega^1_d J^n_d E\ar[dr,"\Spenc^{n,1}_{d,E}"]\ar[r,"\Omega^1_d(l^{1,n-1}_{d,E})"]&\Omega^1_d J^1_d J^{n-1}_d E\ar[d,two heads,"\Spenc^{1,1}_{d,J^{n-1}_d E}",near start]\ar[d,bend right=40pt,phantom,"\encircle{\barroman{IV}}" description]\\
J^n_d E\ar[u,bend right=40pt,phantom,"\encircle{\barroman{III}}" description]\ar[u,hook,"l^{1,n-1}_{d,E}"]\ar[r,"\Spenc^{n,0}_{d,E}"]&\Omega^1_d J^{n-1}_d E\ar[r,"d_{C^n}"']\ar[r,bend left=15pt,phantom,"\encircle{\barroman{II}}" description]\ar[u,hook,"\Omega^1_d(C^n)"']&\Omega^2_d J^{n-1}_d E&\phantom{}\\
J^{n-1}_d E\ar[ur,"\nabla^{C^n}"']\ar[u,hook,"C^n"]\ar[rru,bend right=15pt,"R_{\nabla^{C^n}}"']\ar[ur,bend left=20pt,phantom,"\encircle{\barroman{I}}" description]\ar[urr,bend left=3pt,phantom,"\encircle{\barroman{VIII}}" description]
\end{tikzcd}
\end{equation}
The triangles \encircle{\barroman{I}} and \encircle{\barroman{II}} commute by definition of $\nabla^{C^n}$ and $d_{C^n}$, respectively.
The triangles \encircle{\barroman{III}} and \encircle{\barroman{IV}} commute by Lemma \ref{lemma:SpencerDO_wrt_low_indices}.
The squares \encircle{\barroman{V}} and \encircle{\barroman{VI}} commute by the naturality of $\Spenc^{1,0}_d$ with respect to $C^n$ and $l^{1,n-1}_{d,E}$, respectively.
The diagram \encircle{\barroman{VII}} commutes by Lemma \ref{lemma:Spencersquared}.\eqref{lemma:Spencersquared:4}.
Finally, \encircle{\barroman{VIII}} commutes by the definition of $R_{\nabla^{C^n}}$.
The commutativity of the diagram gives
\begin{equation}
R_{\nabla^{C^n}}
=-\widetilde{\DH}^{II}_{J^{n-1}_d}\circ J^1_d(l^{1,n-1}_{d,E})\circ J^1_d(C^n) \circ l^{1,n-1}_{d,E}\circ C^n
=R_{C_n},
\end{equation}
by definition of $R_{C^n}$.

By Proposition \ref{prop:classicalexteriorderivativeformula}, $d_{C^n}$ is the exterior derivative associated to the connection $\nabla^{C^n}$.
This implies the general formula \eqref{eq:higher_connection_curvature_is_curvature}, cf.\ \cite[\jetslemmaecdcurvaturelinear]{FMW}.
\end{proof}
In the following result we study the properties of connections on jet bundles arising from higher order connections and we show that, under suitable assumptions, we can invert this construction.
\begin{prop}
\label{prop:properties_connections_coming_from_higher_connections}
An $n$-connection $C^n\colon J^{n-1}_d E\hookrightarrow J^n_d E$ induces a connection $\nabla^{C^n}\colonequals \Spenc^{n,0}_{d,E}\circ C^n$ on $J^{n-1}_d E$ satisfying the following properties:
\begin{enumerate}
\item\label{prop:properties_connections_coming_from_higher_connections:1} In degree $m$, we have $\Omega^{m+1}_d(\pi^{n-1,n-2}_{d,E}) \circ d_{\nabla^{C^n}} =\Spenc_{d,E}^{n-1,m}$, and 
hence $\Omega^1_d(\pi^{n-1,n-2}_{d,E}) \circ \nabla^{C^n} =\Spenc_{d,E}^{n-1,0}$;
\item\label{prop:properties_connections_coming_from_higher_connections:2} Moreover, if $n=1$ or $n>1$ and $\Omega^2_d$ is flat in $\ModA$ and (if $n\ge 3$) the $(n-1)$-jet sequence is left exact, then the curvature $R_{\nabla^{C^n}}=d_{\nabla^{C^n}} \circ \nabla^{C^n} \colon J^{n-1}_dE\rightarrow \Omega^2_dJ^{n-1}_dE$ takes values in $\Omega^2_dS^{n-1}_dE$.
\end{enumerate}
\end{prop}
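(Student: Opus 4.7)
The plan is to derive both parts of the proposition by assembling previously established facts, rather than computing anything from scratch. The first part reduces to a diagram chase in the Spencer bicomplex, and the second is an immediate consequence of Proposition~\ref{prop:higher_connection_curvature_is_curvature} combined with Proposition~\ref{prop:fullcurvature}.

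For part \eqref{prop:properties_connections_coming_from_higher_connections:1}, I would begin by using Proposition~\ref{prop:classicalexteriorderivativeformula}, which identifies $d_{\nabla^{C^n}}$ with $d_{C^n}$. By Definition~\ref{def:exterior_covariant_derivative_corresponding_to_higher_connection}, this gives the factorisation $d_{\nabla^{C^n}} = \Spenc^{n,m}_{d,E}\circ \Omega^m_d(C^n)$ in degree $m$. Postcomposing with $\Omega^{m+1}_d(\pi^{n-1,n-2}_{d,E})$ and invoking the commutativity of the right-hand squares of the Spencer bicomplex, established in Proposition~\ref{prop:Spencer bicomplex_is_bicomplex}, I obtain
\begin{equation}
\Omega^{m+1}_d(\pi^{n-1,n-2}_{d,E})\circ \Spenc^{n,m}_{d,E} = \Spenc^{n-1,m}_{d,E}\circ \Omega^m_d(\pi^{n,n-1}_{d,E}).
\end{equation}
Since $C^n$ is by definition a section of $\pi^{n,n-1}_{d,E}$, functoriality of $\Omega^m_d$ yields $\Omega^m_d(\pi^{n,n-1}_{d,E})\circ \Omega^m_d(C^n) = \id_{\Omega^m_d J^{n-1}_d E}$, and the desired identity follows. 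The special case $m=0$ gives the ``hence'' statement, since then $d_{\nabla^{C^n}}$ restricts to $\nabla^{C^n}$.

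For part \eqref{prop:properties_connections_coming_from_higher_connections:2}, the strategy is even more direct: by Proposition~\ref{prop:higher_connection_curvature_is_curvature}, the curvature $R_{\nabla^{C^n}}$ of the connection $\nabla^{C^n}$ coincides with the curvature $R_{C^n}$ of the higher order connection $C^n$. Now Proposition~\ref{prop:fullcurvature}.\eqref{prop:fullcurvature:3} asserts, under precisely the hypotheses ($n=1$, or $n>1$ with $\Omega^2_d$ flat in $\ModA$ and the $(n-1)$-jet sequence left exact when $n\ge 3$), that $R_{C^n}$ factors through $\Omega^2_d S^{n-1}_d E \hookrightarrow \Omega^2_d J^{n-1}_d E$. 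Combining these two identifications completes the proof.

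Since the essential technical work has already been done in the preceding sections, there is no genuine obstacle here; the main point to be careful about is the bookkeeping, namely matching the flatness and exactness assumptions in part \eqref{prop:properties_connections_coming_from_higher_connections:2} with those required by Proposition~\ref{prop:fullcurvature}.\eqref{prop:fullcurvature:3}, and verifying that the degree-$m$ identity in part \eqref{prop:properties_connections_coming_from_higher_connections:1} genuinely reduces to the square of the bicomplex in the row indexed by $m$.
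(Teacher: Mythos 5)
Your proposal is correct and follows essentially the same route as the paper: part (1) is exactly the paper's diagram chase (the section property of $C^n$, the identification $d_{\nabla^{C^n}}=d_{C^n}=\Spenc^{n,m}_{d,E}\circ\Omega^m_d(C^n)$ via Proposition \ref{prop:classicalexteriorderivativeformula}, and the Spencer bicomplex square from Proposition \ref{prop:Spencer bicomplex_is_bicomplex}), written out in equations rather than as a diagram. Part (2) is likewise the paper's argument, combining Proposition \ref{prop:fullcurvature}.\eqref{prop:fullcurvature:3} with Proposition \ref{prop:higher_connection_curvature_is_curvature} under the same hypotheses.
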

\begin{proof}\
\begin{enumerate}
\item This point follows directly from the commutativity of the following diagram.
\begin{equation}
\begin{tikzcd}[column sep=50pt,row sep=20pt]
\Omega^m_d J^{n-1}_d E\ar[dr,equals]\ar[r,hook,"\Omega^m_d (C^n)"]\ar[rr,bend left=20pt,"d_{\nabla^{C^n}}"]&\Omega^m_d J^n_d E\ar[r,"\Spenc^{n,m}_{d,E}"]\ar[d,two heads,"\Omega^m_d(\pi^{n,n-1}_{d,E})"]&\Omega^{m+1}_d J^{n-1}_d E\ar[d,"\Omega^{m+1}_d(\pi^{n-1,n-2}_{d,E})"]\\
&\Omega^m_d J^{n-1}_d E\ar[r,"\Spenc^{n-1,m}_{d,E}"]&\Omega^{m+1}_d J^{n-2}_d E
\end{tikzcd}
\end{equation}
Here, the left triangle commutes by the definition of a higher order connection, while the top one commutes by definition of $d_{C^n}$, which coincides with $d_{\nabla^{C^n}}$ by Proposition \ref{prop:classicalexteriorderivativeformula}.
The square commutes by Proposition \ref{prop:Spencer bicomplex_is_bicomplex}.
The second formula of \eqref{prop:properties_connections_coming_from_higher_connections:1} is just the case $m=0$ of this formula.
\item Given the stated assumptions, we can apply Proposition \ref{prop:fullcurvature}.\eqref{prop:fullcurvature:3}, which tells us that $R_{C^n}$ has image in $\Omega^2_d S^{n-1}_d E$.
If we now apply Proposition \ref{prop:higher_connection_curvature_is_curvature}, we obtain the desired result.\qedhere
\end{enumerate}
\end{proof}
Further, we see that the statements of Proposition \ref{prop:properties_connections_coming_from_higher_connections}.\eqref{prop:properties_connections_coming_from_higher_connections:1} are equivalent.
\begin{lemma}
\label{lemma:condition_1_HCC_correspondence_1_to_m}
Let $\nabla\colon J^{n-1}_d E\to \Omega^1_d J^{n-1}_d E$ be a connection on $J^{n-1}_d E$, then $\Omega^{m+1}_d(\pi^{n-1,n-2}_{d,E}) \circ d_{\nabla} =\Spenc_{d,E}^{n-1,m}$ holds for all $m$ if and only if it holds for $m=0$, i.e.\ $\Omega^1_d(\pi^{n-1,n-2}_{d,E}) \circ \nabla =\Spenc_{d,E}^{n-1,0}$.
\end{lemma}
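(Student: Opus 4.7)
The plan is to observe that both sides of the claimed identity at degree $m$ satisfy the same graded Leibniz rule, anchored at $m=0$. The ``only if'' direction is immediate by specializing to $m=0$, so I concentrate on the ``if'' direction. The idea is to compute both sides on an elementary tensor $\omega \otimes_A \xi \in \Omega^m_d J^{n-1}_d E$ and reduce the identity to the hypothesis $\Omega^1_d(\pi^{n-1,n-2}_{d,E}) \circ \nabla = \Spenc^{n-1,0}_{d,E}$.

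Concretely, the exterior covariant derivative of a connection satisfies the Leibniz rule
\begin{equation*}
d_\nabla(\omega \otimes_A \xi) = d\omega \otimes_A \xi + (-1)^{\deg \omega}\, \omega \wedge \nabla \xi,
\end{equation*}
in analogy with Proposition \ref{prop:classicalexteriorderivativeformula}. On the other hand, Proposition \ref{prop:Spencercorrespondence} applied to $\Spenc^{n-1,m}_{d,E}$ gives
\begin{equation*}
\Spenc^{n-1,m}_{d,E}(\omega \otimes_A \xi) = d\omega \otimes_A \pi^{n-1,n-2}_{d,E}(\xi) + (-1)^{\deg \omega}\, \omega \wedge \Spenc^{n-1,0}_{d,E}(\xi).
\end{equation*}
Applying $\Omega^{m+1}_d(\pi^{n-1,n-2}_{d,E})$ to the first formula and using the naturality of $\wedge$ together with the $A$-linearity of $\pi^{n-1,n-2}_{d,E}$ yields
\begin{equation*}
\Omega^{m+1}_d(\pi^{n-1,n-2}_{d,E}) \circ d_\nabla(\omega \otimes_A \xi) = d\omega \otimes_A \pi^{n-1,n-2}_{d,E}(\xi) + (-1)^{\deg \omega}\, \omega \wedge \bigl(\Omega^1_d(\pi^{n-1,n-2}_{d,E})\circ\nabla\bigr)(\xi).
\end{equation*}
Substituting the $m=0$ hypothesis into the second summand converts the right-hand side into the Leibniz expansion of $\Spenc^{n-1,m}_{d,E}(\omega \otimes_A \xi)$, which closes the argument on elementary tensors. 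Since $\Omega^m_d J^{n-1}_d E$ is generated by such elementary tensors as a $\bk$-module, the two $\bk$-linear maps agree globally.

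I expect no genuine obstacle here: both the Spencer operator and the exterior covariant derivative are by construction first-order natural operators obeying the same graded Leibniz law over $d$ and their respective zeroth-order data $\Spenc^{n-1,0}_{d,E}$ and $\nabla$, so matching the zeroth-order data forces them to agree in all degrees. The only care required is in bookkeeping the sign conventions and verifying that the Leibniz identity for $d_\nabla$ is the one intended in the paper, which is the content of the discussion around Proposition \ref{prop:classicalexteriorderivativeformula}.
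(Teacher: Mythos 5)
Your proposal is correct and follows essentially the same route as the paper's proof: expand $d_\nabla(\omega\otimes_A\xi)$ via the graded Leibniz rule, apply $\Omega^{m+1}_d(\pi^{n-1,n-2}_{d,E})$, substitute the $m=0$ hypothesis, and identify the result with $\Spenc^{n-1,m}_{d,E}(\omega\otimes_A\xi)$ via Proposition \ref{prop:Spencercorrespondence}. No gaps to flag.
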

\begin{proof}
We only need to check that, if the formula is true for $m=0$, then it is true for all $m\ge 0$.
Given $\omega\otimes_A \xi\in \Omega^m_d J^{n-1}_d E$, we have
\begin{equation}
\begin{split}
\Omega^{m+1}_d(\pi^{n-1,n-2}_{d,E})\circ d_\nabla(\omega\otimes_A \xi)
&=\Omega^{m+1}_d(\pi^{n-1,n-2}_{d,E})(d\omega\otimes_A \xi+(-1)^m\omega\wedge \nabla \xi)\\
&=d\omega\otimes_A \pi^{n-1,n-2}_{d,E}(\xi)+(-1)^m\omega\wedge\Omega^1_d(\pi^{n-1,n-2}_{d,E}) (\nabla \xi)\\
&=d\omega\otimes_A \pi^{n-1,n-2}_{d,E}(\xi)+(-1)^m\omega\wedge\Spenc_{d,E}^{n-1,0}(\xi)\\
&=\Spenc^{n-1,m}_{d,E}(\omega\otimes_A \xi),
\end{split}
\end{equation}
where the last equality follows from Proposition \ref{prop:Spencercorrespondence}.
\end{proof}
We mention here also the following technical lemma that will come in handy in the coming theorem.
This lemma essentially shows that differential operators remain differential operators upon codomain restriction.
\begin{lemma}
\label{lemma:images_of_DOS_are_DOS}
Let $E$, $F$, and $G$ be $A$-modules, let $\Delta\colon E\to F$ be in $\Mod$ and let $m\colon F\hookrightarrow G$ be a monomorphism in $\AMod$ such that $m\circ \Delta\in \Diff^n_d(E,G)$, then $\Delta\in\EDiff^n_d(E,F)$, and if $\EJ^n_d E=J^n_d E$, then $\Delta\in\Diff^n_d(E,F)$.
\end{lemma}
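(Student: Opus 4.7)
The plan is to construct an elemental jet lift of $\Delta$ by restricting a given lift of $m\circ\Delta$ along $\iota_{\EJ^n_d E}$ and then co-restricting its image to $F$ via $m$. First, I would fix an $A$-linear lift $\widetilde{\Phi}\colon J^n_d E\to G$ of $\Phi\colonequals m\circ \Delta$, so that $\widetilde{\Phi}\circ j^n_{d,E}=m\circ\Delta$, and consider the restriction $\widetilde{\Phi}\circ\iota_{\EJ^n_d E}\colon \EJ^n_d E\to G$.

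The key step is to show that the image of $\widetilde{\Phi}\circ\iota_{\EJ^n_d E}$ is contained in $\im(m)$. This follows because every element of $\EJ^n_d E$ is an $A$-linear combination $\sum_i a_i\Ej^n_{d,E}(e_i)$ of prolongations, cf.\ \cite[\symbolspropelementaljetproperties]{Symbol}, and using the $A$-linearity of $\widetilde{\Phi}$ and $m$ together with the lifting identity, such a sum maps to $m\bigl(\sum_i a_i\Delta(e_i)\bigr)\in\im(m)$. Since $m$ is a monomorphism in the abelian category $\AMod$, the universal property then yields a unique $A$-linear factorization $\widetilde{\Delta}\colon \EJ^n_d E\to F$ with $m\circ\widetilde{\Delta}=\widetilde{\Phi}\circ\iota_{\EJ^n_d E}$.

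Finally, I would verify that $\widetilde{\Delta}$ is an elemental jet lift of $\Delta$ by composing with $\Ej^n_{d,E}$: this gives $m\circ\widetilde{\Delta}\circ\Ej^n_{d,E}=\widetilde{\Phi}\circ j^n_{d,E}=m\circ\Delta$, and cancelling $m$ on the left (which is legitimate as $m$ is mono) yields $\widetilde{\Delta}\circ\Ej^n_{d,E}=\Delta$. This establishes $\Delta\in\EDiff^n_d(E,F)$. Under the additional hypothesis $\EJ^n_d E=J^n_d E$, the map $\widetilde{\Delta}$ is automatically a lift to $J^n_d E$, giving $\Delta\in\Diff^n_d(E,F)$. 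There is essentially no obstacle: the argument is a direct application of the universal property of monomorphisms in the abelian category $\AMod$ combined with the generation of $\EJ^n_d E$ by prolongations, and the $A$-linearity of $\widetilde{\Delta}$ is ensured automatically by the uniqueness clause in the factorization property of $m$.
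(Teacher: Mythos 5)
Your proposal is correct. The paper proves the lemma differently in form, though the underlying mechanism is the same: it verifies the element-wise criterion of \cite[\symbolspropcriterionWDO]{Symbol}, namely that $\sum_i a_i\Delta(e_i)=0$ for every $\sum_i a_i\otimes e_i\in N^n_d(E)$, by computing $m\bigl(\sum_i a_i\Delta(e_i)\bigr)=\sum_i a_i\,(m\circ\Delta)(e_i)=0$ and cancelling the mono $m$; parts (i) and (iii) of that criterion then give $\Delta\in\EDiff^n_d(E,F)$ and, when $\EJ^n_d E=J^n_d E$, $\Delta\in\Diff^n_d(E,F)$. You instead construct the elemental lift explicitly: restrict a holonomic lift $\widetilde{\Phi}$ of $m\circ\Delta$ along $\iota_{\EJ^n_d E}$, use that $\EJ^n_d E$ is generated by prolongations to see the image lies in $\im(m)$, and factor through the monomorphism in $\AMod$. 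The two arguments are equivalent in content — both rest on $A$-linearity plus injectivity of $m$, and your factorization step is essentially an inline re-derivation of the relevant direction of the cited criterion (recall $\EJ^n_d E\cong(A\otimes E)/N^n_d(E)$) — so what you gain is a self-contained proof that does not invoke the criterion, at the cost of redoing what that criterion packages. All the individual steps you state (the identity $\iota_{\EJ^n_d E}\circ\Ej^n_{d,E}=j^n_{d,E}$, the $A$-linearity of the factored map, the cancellation of $m$, and the identification $\Ej^n_{d,E}=j^n_{d,E}$ when $\EJ^n_d E=J^n_d E$) are valid.
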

\begin{proof}
We use the criteria proved in \cite[\symbolspropcriterionWDO]{Symbol}.
For all $\sum_i a_i\otimes e_i\in N^n_d(E)$, we have
\begin{equation}
m\left(\sum_i a_i\Delta( e_i)\right)
=\sum_i a_i m\circ\Delta( e_i)
=0.
\end{equation}
Since $m$ is a mono, we have $\sum_i a_i\Delta (e_i)=0$, which implies $\Delta\in\EDiff^n_d(E,F)$, cf.\ \cite[\symbolspropcriterionWDOone]{Symbol}.
If moreover $\EJ^n_d E=J^n_d E$, then $\Delta\in\Diff^n_d(E,F)$ by \cite[\symbolspropcriterionWDOthree]{Symbol}.
\end{proof}
We now show that connections with the properties discussed in Proposition \ref{prop:properties_connections_coming_from_higher_connections} are exactly those arising from higher order connections.
\begin{theo}
\label{theo:higher_connections_are_connections}
Let $\Omega^\bullet_d$ be an exterior algebra over a $\bk$-algebra $A$ and let $E$ be in $\AMod$.
Assume that $n=1$, or that $n>1$ and the following conditions hold: $\Omega^1_d$ and $\Omega^2_d$ are flat in $\ModA$, $J^n_dE=\EJ^n_d E$, and the $n$-jet sequence at $E$ is exact.
Furthermore, assume that, when $n\ge 3$, the $(n-1)$-jet sequence is left exact and (when $n\ge 4$) that $\iota^{n-2}_{d,E}$ is a mono.
Then there is a bijective correspondence between $n$-connections on $E$ and connections on $J^{n-1}_d E$ such that
\begin{enumerate}
\item\label{theo:higher_connections_are_connections:1} $\Omega^1_d(\pi^{n-1,n-2}_{d,E}) \circ \nabla =\Spenc_{d,E}^{n-1,0}$;
\item\label{theo:higher_connections_are_connections:2} The curvature $R_\nabla \colon J^{n-1}_dE\rightarrow \Omega^2_dJ^{n-1}_dE$ has values in $\Omega^2_dS^{n-1}_dE$.
\end{enumerate}
The correspondence maps an $n$-connection $C^n$ to its associated connection $\nabla^{C^n}\colonequals \Spenc_{d,E}^{n,0} \circ C^n$, while the inverse construction maps a connection $\nabla\colon J^{n-1}_dE\to \Omega^1_d J^{n-1}_dE$ to the $n$-connection that corresponds, in the sense of Proposition \ref{prop:characterization_higher_connections}, to the differential operator $\nabla^n\colon E\to S^n_d E$ uniquely identified by the equality
\begin{equation}
\Omega^1_d (\iota^{n-1}_{d,E})\circ \iota^n_{\wedge,E}\circ\nabla^n
=\nabla\circ j^{n-1}_{d,E}\colon E\longrightarrow \Omega^1_d J^{n-1}_d E.
\end{equation}

This correspondence extends the one described by \cite[\jetspropconnexionsplits]{FMW}.
\end{theo}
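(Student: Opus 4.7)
The plan is to establish the two directions of the correspondence separately. For the forward direction, I would observe that given an $n$-connection $C^n$, the associated connection $\nabla^{C^n}\colonequals \Spenc^{n,0}_{d,E}\circ C^n$ is indeed a connection on $J^{n-1}_d E$ by Proposition \ref{prop:construction_connection_corresponding_to_higher_connection}, and it satisfies \eqref{theo:higher_connections_are_connections:1} and \eqref{theo:higher_connections_are_connections:2} by Proposition \ref{prop:properties_connections_coming_from_higher_connections}. Hence essentially all of the work lies in the inverse direction.

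For the inverse construction, starting from a connection $\nabla$ on $J^{n-1}_d E$ satisfying \eqref{theo:higher_connections_are_connections:1} and \eqref{theo:higher_connections_are_connections:2}, the first step is to factor $\nabla\circ j^{n-1}_{d,E}$ uniquely through $\Omega^1_d(\iota^{n-1}_{d,E})\circ\iota^n_{\wedge,E}\colon S^n_d E\to \Omega^1_d J^{n-1}_d E$. The initial factorization through $\Omega^1_d(\iota^{n-1}_{d,E})$ is forced by \eqref{theo:higher_connections_are_connections:1}, since $\Omega^1_d(\pi^{n-1,n-2}_{d,E})\circ\nabla\circ j^{n-1}_{d,E}=\Spenc^{n-1,0}_{d,E}\circ j^{n-1}_{d,E}=0$ by Theorem \ref{theo:Spencer_is_complex}; flatness of $\Omega^1_d$ together with left exactness of the $(n-1)$-jet sequence then yields $\ker(\Omega^1_d(\pi^{n-1,n-2}_{d,E}))=\im(\Omega^1_d(\iota^{n-1}_{d,E}))$, producing a unique $\nabla'\colon E\to\Omega^1_d S^{n-1}_d E$. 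The second factorization, through $\iota^n_{\wedge,E}$, is where condition \eqref{theo:higher_connections_are_connections:2} enters: I would translate the fact that $R_\nabla$ takes values in $\Omega^2_d S^{n-1}_d E$ into the vanishing of $\delta^{n-1,1}_d\circ\nabla'$, by unwinding $R_\nabla\circ j^{n-1}_{d,E}$ via the formula of Proposition \ref{prop:classicalexteriorderivativeformula} and the commuting squares of the Spencer bicomplex (Proposition \ref{prop:Spencer bicomplex_is_bicomplex}); then the identification $\ker(\delta^{n-1,1}_d)=\im(\iota^n_{\wedge,E})$ (coming from the relevant low-degree exactness of the Spencer $\delta$-sequence at $\Omega^1_d S^{n-1}_d E$, which is exactly what the flatness of $\Omega^2_d$ and the assumption that $\iota^{n-2}_{d,E}$ is a mono buy us) yields $\nabla^n\colon E\to S^n_d E$ with the stated defining property.

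With $\nabla^n$ in hand, I would next verify that it is a differential operator of order at most $n$ with restriction symbol $\id_{S^n_d E}$, so that Proposition \ref{prop:characterization_higher_connections} produces an $n$-connection. The order bound follows from $\nabla\circ j^{n-1}_{d,E}$ being of order at most $n$ by Proposition \ref{prop:properties_natDOs}\eqref{prop:composition_natDOs_is_natDO} (composition of the order-$1$ operator $\nabla$ with the order-$(n-1)$ operator $j^{n-1}_{d,E}$), combined with Lemma \ref{lemma:images_of_DOS_are_DOS} applied to the monomorphism $\Omega^1_d(\iota^{n-1}_{d,E})\circ\iota^n_{\wedge,E}$ and the assumption $J^n_d E=\EJ^n_d E$. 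The symbol identity is then read off from the factorization of $\iota^n_{d,E}$ through $\iota^1_{d,J^{n-1}_d E}\circ\Omega^1_d(\iota^{n-1}_{d,E})\circ\iota^n_{\wedge,E}$, cf.\ \cite[\jetsdiagdefiiotand]{FMW}, using that the restriction symbol of $\nabla$ is the identity.

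Bijectivity of the two assignments would follow from the uniqueness at each stage (of lifts in Proposition \ref{prop:characterization_higher_connections} under $J^n_d E=\EJ^n_d E$, and of the two successive factorizations through the monomorphisms $\Omega^1_d(\iota^{n-1}_{d,E})$ and $\iota^n_{\wedge,E}$), together with a direct check that starting from $C^n$, computing $\nabla^{C^n}\circ j^{n-1}_{d,E}$ and inverting the factorization recovers the $\nabla^n$ defining $C^n$ via Proposition \ref{prop:characterization_higher_connections}. The hard part will be the second factorization, namely converting \eqref{theo:higher_connections_are_connections:2} into $\delta^{n-1,1}_d\circ\nabla'=0$; this requires a careful diagram chase through the Spencer bicomplex exploiting both the flatness of $\Omega^2_d$ and the left exactness of the $(n-1)$-jet sequence, which are precisely the structural hypotheses listed for $n\ge 3$.
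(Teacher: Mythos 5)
Your construction matches the paper's proof in both directions: the forward direction via Propositions \ref{prop:construction_connection_corresponding_to_higher_connection} and \ref{prop:properties_connections_coming_from_higher_connections}, and the inverse via the two successive factorizations (first through $\Omega^1_d(\iota^{n-1}_{d,E})$ using \eqref{theo:higher_connections_are_connections:1}, $\Spenc^{n-1,0}_{d,E}\circ j^{n-1}_{d,E}=0$, flatness of $\Omega^1_d$ and left exactness of the $(n-1)$-jet sequence; then through $\iota^n_{\wedge,E}$ using \eqref{theo:higher_connections_are_connections:2}, the Spencer bicomplex, Lemma \ref{lemma:condition_1_HCC_correspondence_1_to_m}, flatness of $\Omega^2_d$ and $\iota^{n-2}_{d,E}$ being mono to cancel $\Omega^2_d(\iota^{n-2}_{d,E})$), followed by Lemma \ref{lemma:images_of_DOS_are_DOS} for the order bound and the pentagon from the definition of $\iota^n_d$ for the symbol. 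One small point: the hypotheses on $\Omega^2_d$ and $\iota^{n-2}_{d,E}$ are not needed to identify $\ker(\delta^{n-1,1}_{d,E})$ with $S^n_dE$ (that is essentially the definition of $S^n_dE$); they are needed to deduce $\delta^{n-1,1}_{d,E}\circ\nabla'^n=0$ from the curvature hypothesis, since the diagram chase only gives $\Omega^2_d(\iota^{n-2}_{d,E})\circ\delta^{n-1,1}_{d,E}\circ\nabla'^n=0$.

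The genuine gap is in the bijectivity step. What you propose to verify is only one composite: starting from $C^n$, the factorization of $\nabla^{C^n}\circ j^{n-1}_{d,E}$ recovers $\nabla^n$ and hence $C^n$. That gives injectivity of $C^n\mapsto\nabla^{C^n}$ (in fact the paper proves this injectivity separately by a direct computation) and surjectivity of the inverse assignment, but it does \emph{not} give surjectivity of $C^n\mapsto\nabla^{C^n}$ onto the set of connections satisfying \eqref{theo:higher_connections_are_connections:1}--\eqref{theo:higher_connections_are_connections:2}; equivalently, it does not show that the $C^n$ built from a given $\nabla$ satisfies $\Spenc^{n,0}_{d,E}\circ C^n=\nabla$. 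This does not follow formally from ``uniqueness at each stage'': the two constructions pass through $\nabla\circ j^{n-1}_{d,E}$, i.e.\ through the restriction of $\nabla$ to prolongations, and a priori two different connections satisfying the two conditions could have the same restriction. The paper closes this by the explicit computation \eqref{eq:rise_and_fall_of_the_connection}: using $C^n\circ\pi^{n,n-1}_{d,E}=\id-\iota^n_{d,E}\circ\widetilde{\nabla}^n$, the bicomplex relation $\Spenc^{n,0}_{d,E}\circ\iota^n_{d,E}=-\Omega^1_d(\iota^{n-1}_{d,E})\circ\delta^{n,0}_{d,E}$, and the decomposition $\widetilde{\nabla}=\nabla\circ\pi^{1,0}_{d,J^{n-1}_dE}+\rho_{d,J^{n-1}_dE}$, one gets $\Spenc^{n,0}_{d,E}\circ C^n\circ\pi^{n,n-1}_{d,E}=\nabla\circ\pi^{n,n-1}_{d,E}$ and cancels the epimorphism $\pi^{n,n-1}_{d,E}$. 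Alternatively, your scheme can be salvaged by showing that the map $\nabla\mapsto\nabla\circ j^{n-1}_{d,E}$ is injective on connections: the difference of two connections is $A$-linear, and under the standing hypotheses $J^{n-1}_dE=\EJ^{n-1}_dE$ is generated by prolongations, so an $A$-linear map vanishing on $j^{n-1}_{d,E}(E)$ vanishes; combined with your check of the one composite this yields the two-sided inverse. Either way, an explicit argument for this direction must be supplied --- it is the main computational content of the paper's bijectivity proof and is currently absent from your plan.
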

\begin{proof}
By Proposition \ref{prop:properties_connections_coming_from_higher_connections}, we know that $C^n$ induces a connection $\nabla^{C^n}$ satisfying \eqref{theo:higher_connections_are_connections:1} and \eqref{theo:higher_connections_are_connections:2}.

For the inverse construction, consider $\nabla''^n \colonequals \nabla \circ j^{n-1}_{d,E}\colon E \rightarrow \Omega^1_dJ^{n-1}_dE$.
We first show that $\nabla''^n$ has image in $\Omega^1_d S^{n-1}_d E$.
The case $n=1$ is straightforward.
For $n>1$, since $\Omega^1_d$ is flat in $\ModA$ and the $(n-1)$-jet sequence is left exact, we have the following left exact sequence:
\begin{equation}
\label{es:Omega1_jles}
\begin{tikzcd}[column sep=50pt,row sep=20pt]
0\ar[r]&[-20pt] \Omega^1_d S^{n-1}_d E \ar[r,hook,"\Omega^1_d(\iota^{n-1}_{d,E})"]& \Omega^1_d J^{n-1}_dE \ar[r,"\Omega^1_d(\pi^{n-1,n-2}_{d,E})"]& \Omega^1_d J^{n-2}_d E.
\end{tikzcd}
\end{equation}
Consider the following diagram
\begin{equation}
\begin{tikzcd}[column sep=50pt,row sep=30pt]
&[-20pt]E\ar[d,dashed,"\nabla'^n"']\ar[dr,"\nabla''^n"']\ar[r,near end,"j^{n-1}_{d,E}"']\ar[drr,bend left=40pt,"0"]&J^{n-1}_d E\ar[d,"\nabla"]\ar[dr,"\Spenc^{n-1,0}_{d,E}"']\\
0\ar[r]& \Omega^1_d S^{n-1}_d E \ar[r,hook,"\Omega^1_d(\iota^{n-1}_{d,E})"']& \Omega^1_d J^{n-1}_dE \ar[r,"\Omega^1_d(\pi^{n-1,n-2}_{d,E})"']& \Omega^1_d J^{n-2}_d E.
\end{tikzcd}
\end{equation}
The central triangle commutes by definition of $\nabla''^n$.
The right triangle commutes by \eqref{theo:higher_connections_are_connections:1}.
The top triangle commutes by Theorem \ref{theo:Spencer_is_complex}.
It follows, by the kernel universal property, that that $\nabla''^n$ factors uniquely through $\Omega^1_d(\iota^{n-1}_{d,E})$ as the dashed map $\nabla'^n$.
Now we prove that $\nabla'^n$ factors through $S^n_d E$.
This is straightforward for $n=1$, and for $n>1$, consider the following diagram
\begin{equation}\label{diag:bullet}
\begin{tikzcd}[column sep=50pt,row sep=30pt]
		E \ar[d,"\nabla'^n"']\ar[dr,"\nabla''^n"]\ar[r,"j^{n-1}_{d,E}"]&J^{n-1}_d E\ar[d,"\nabla"]\ar[dr,"R_\nabla"]\ar[dd,"0", controls={ +(0:6.5) and +(0:6.5)}]\\
		\Omega^1_d S^{n-1}_d E \ar[r,hook,"\Omega^1_d(\iota^{n-1}_{d,E})"] \ar[d,"-\delta^{n-1,1}_{d,E}"']	& \Omega^1_d J^{n-1}_d E \ar[d,"\Spenc_{d,E}^{n-1,1}"]\ar[r,"d_\nabla"]&\Omega^2_d J^{n-1}_d E\ar[dl,"\Omega^2_d(\pi^{n-1,n-2}_{d,E})"]\\
		\Omega^2_d S^{n-2}_d E \ar[r,hook,"\Omega^2_d (\iota^{n-2}_{d,E})"]& \Omega^2_d J^{n-2}_d E
\end{tikzcd}
\end{equation}
Each of the top three triangles in \eqref{diag:bullet} commute by definition.
The bottom left square diagram commutes by Proposition \ref{prop:Spencer bicomplex_is_bicomplex}.
The bottom triangle commutes by Lemma \ref{lemma:condition_1_HCC_correspondence_1_to_m} together with \eqref{theo:higher_connections_are_connections:1}.
Finally, the right curved triangle commutes by \eqref{theo:higher_connections_are_connections:2}.
It follows that
\begin{equation}
\Omega^2_d(\iota^{n-2}_{d,E}) \circ \delta^{n-1,1}_{d,E}\circ \nabla'^n
= 0.
\end{equation}
Since $\Omega^2_d$ is flat in $\ModA$ and $\iota^{n-2}_d$ is a mono, we have that $\Omega^2_d(\iota^{n-2}_{d,E})$ is also a mono, and thus $\delta^{n-1,1}_{d,E}\circ \nabla'^n=0$.
In turn, this implies that $\nabla'^n$ factors through $\ker(\delta^{n-1,1}_{d,E})=S^n_d E$, and we call the resulting map $\nabla^n\colon E\to S^n_d E$.
It follows that $\nabla^n$ is in fact a differential operator of order at most $n$ by Lemma \ref{lemma:images_of_DOS_are_DOS}, since $\Omega^1_d (\iota^{n-1}_{d,E})\circ \iota^n_{\wedge,E}$ is a mono and $\nabla''^n=\Omega^1_d (\iota^{n-1}_{d,E})\circ \iota^n_{\wedge,E}\circ \nabla^n$ is a differential operator of order at most $n$.

We will now compute the lift of $\nabla''^n$ via the following diagram obtained from \cite[\jetslemmaholprol]{FMW}.
\begin{equation}
\begin{tikzcd}[column sep=50pt,row sep=30pt]
J^n_d E\ar[r,hook,"l^{1,n}_{d,E}"]&J^1_d J^{n-1}_d E\ar[dr,"\widetilde{\nabla}"]\\
E\ar[r,hook,"j^{n-1}_{d,E}"]\ar[u,hook,"j^n_{d,E}"]\ar[rr,bend right=10pt,"\nabla''^n"']&J^{n-1}_d E\ar[u,hook,"j^1_{d,J^{n-1}_d E}"]\ar[r,"\nabla"]&\Omega^1_d J^{n-1}_d E
\end{tikzcd}
\end{equation}
This shows that the (unique) lift of $\nabla''^n$ to $J^n_d E$ is $\widetilde{\nabla}\circ l^{1,n}_{d,E}$.
Since $\nabla^n$ satisfies $\nabla''^n=\Omega^1_d (\iota^{n-1}_{d,E})\circ \iota^n_{\wedge,E}\circ \nabla^n$, we also know that its lift satisfies
\begin{equation}
\label{eq:lift_nabla^n}
\widetilde\nabla''^n=\Omega^1_d (\iota^{n-1}_{d,E})\circ \iota^n_{\wedge,E}\circ \widetilde\nabla^n.
\end{equation}

Now we will compute the restriction symbol of $\nabla^n$ using the following diagram
\begin{equation}
\label{diag:reverse_hurdy-gurdy}
\begin{tikzcd}
S^n_d E\ar[r,hookrightarrow,"\iota^n_{\wedge,E}"]\ar[d,hook,"\iota^n_{d,E}"]&\Omega^1_d S^{n-1}_d E\ar[r,hook,"\Omega^1_d(\iota^{n-1}_{d,E})"]&[40pt]\Omega^1_d J^{n-1}_d\ar[d,hook,"\iota^1_{d,J^{n-1}_d E}"']\ar[dr,equals]\\
J^n_d E\ar[rrrd,"\widetilde\nabla^n"']\ar[rr,hook,"l^{1,n-1}_{d,E}"]&&J^1_d J^{n-1}_d E\ar[r,"\widetilde{\nabla}"']&\Omega^1_d J^{n-1}_d E\\
&&&S^n_d E\ar[u,"\Omega^1_d (\iota^{n-1}_{d,E})\circ \iota^n_{\wedge,E}"']
\end{tikzcd}
\end{equation}
The pentagon commutes by \cite[\jetsdiagdefiiotand]{FMW}.
The square commutes by \eqref{eq:lift_nabla^n}.
The triangle commutes by \cite[\symbolspropsymbolsofconnections]{Symbol}.
We obtain the following equality
\begin{equation}
\Omega^1_d (\iota^{n-1}_{d,E})\circ \iota^n_{\wedge,E}\circ\widetilde{\nabla}^n\circ \iota^n_{d,E}
=\Omega^1_d (\iota^{n-1}_{d,E})\circ \iota^n_{\wedge,E},
\end{equation}
which in turn implies, by the fact that $\Omega^1_d (\iota^{n-1}_{d,E})\circ \iota^n_{\wedge,E}$ is a monomorphism, that the restriction symbol of $\nabla^n$ is $\id_{S^n_dE}$.
We can thus apply Proposition \ref{prop:characterization_higher_connections} to the differential operator $\nabla^n$ to obtain the associated $n$-connection.

We are left to show that these constructions are inverse to one another.
We start from a connection $\nabla\colon J^{n-1}_d E\to \Omega^1_d J^{n-1}_d E$ satisfying the properties \eqref{theo:higher_connections_are_connections:1} and \eqref{theo:higher_connections_are_connections:2}, and we construct the associated differential operator $\nabla^n\colon E\to S^n_d E$.
Then, we obtain the right split $C^n$ of the $n$-jet sequence corresponding to the left split $\widetilde{\nabla}^n$.
We have to prove the equality $\Spenc^{n,0}_{d,E}\circ C^n=\nabla$.
By definition of $C^n$, $\widetilde{\nabla}^n$, $\widetilde{\nabla}''^n$, and $\delta^{n,0}_{d,E}$, together with Proposition \ref{prop:Spencer bicomplex_is_bicomplex}, we compute
\begin{equation}\label{eq:rise_and_fall_of_the_connection}
\begin{split}
\Spenc^{n,0}_{d,E}\circ C^n\circ \pi^{n,n-1}_{d,E}
&=\Spenc^{n,0}_{d,E}\circ (\id_{J^n_d E}-\iota^n_{d,E}\circ \widetilde{\nabla}^n)\\
&=\Spenc^{n,0}_{d,E} -\Spenc^{n,0}_{d,E}\circ \iota^n_{d,E}\circ \widetilde{\nabla}^n\\
&=\Spenc^{n,0}_{d,E} + \Omega^1_d(\iota^{n-1}_{d,E})\circ\delta^{n,0}_{d,E}\circ \widetilde{\nabla}^n\\
&=\Spenc^{n,0}_{d,E} + \Omega^1_d(\iota^{n-1}_{d,E})\circ\iota^n_{\wedge,E}\circ \widetilde{\nabla}^n\\
&=\Spenc^{n,0}_{d,E} + \widetilde{\nabla}\circ l^{1,n-1}_{d,E}.
\end{split}
\end{equation}
The last equality follows from the square in \eqref{diag:reverse_hurdy-gurdy}.
The splitting in $\Mod$ of the $1$-jet exact sequence gives
\begin{equation}
\label{eq:1jes_split}
\id_{J^{n-1}_d E}
=j^1_{d,J^{n-1}_d E}\circ \pi^{1,0}_{d,J^{n-1}_d E}+\iota^1_{d,J^{n-1}_d E}\circ \rho_{d,J^{n-1}_d E},
\end{equation}
cf.\ \cite[\jetsproponejses]{FMW}.
If we compose both terms of this equality with $\widetilde{\nabla}$, we obtain
\begin{equation}
\label{eq:lift_decomposition_connection}
\widetilde{\nabla}
=\widetilde{\nabla}\circ j^1_{d,J^{n-1}_d E}\circ \pi^{1,0}_{d,J^{n-1}_d E}+\widetilde{\nabla}\circ \iota^1_{d,J^{n-1}_d E}\circ \rho_{d,J^{n-1}_d E}
=\nabla\circ \pi^{1,0}_{d,J^{n-1}_d E}+\rho_{d,J^{n-1}_d E}
\end{equation}
cf.\ \cite[\jetspropconnexionsplits]{FMW}.
If we substitute this expression for $\widetilde{\nabla}$ into \eqref{eq:rise_and_fall_of_the_connection}, by definition of the jet projection and Remark \ref{rmk:Spencer_m=0}, we obtain 
\begin{equation}
\Spenc^{n,0}_{d,E}\circ C^n\circ \pi^{n,n-1}_{d,E}
=\Spenc^{n,0}_{d,E} + \left(\nabla\circ \pi^{1,0}_{d,J^{n-1}_d E}+\rho_{d,J^{n-1}_d E}\right)\circ l^{1,n-1}_{d,E}
=\Spenc^{n,0}_{d,E} + \nabla\circ \pi^{n,n-1}_{d,E}-\Spenc^{n,0}_{d,E}
=\nabla\circ \pi^{n,n-1}_{d,E}.
\end{equation}
Since $\pi^{n,n-1}_{d,E}$ is an epi, it follows that we can cancel it from the extremal terms of this chain of equalities, obtaining that $\Spenc^{n,0}_{d,E}\circ C^n=\nabla$.

Since we proved that the construction of $\nabla$ from $C^n$ is left inverse to the construction of $C^n$ from $\nabla$, in order to show the bijection, it is sufficient to show that the first construction is injective.
In other words, we have to prove that given two $n$-connections $C^n$ and $C'^n$ such that $\Spenc^{n,0}_{d,E}\circ C^n=\Spenc^{n,0}_{d,E}\circ C'^n$, we have that $C^n=C'^n$.
Consider the following chain of equalities
\begin{equation}
\begin{split}
0
&=\iota^{1,0}_{d,J^{n-1}_d E}\circ\left(\Spenc^{n,0}_{d,E}\circ C^n-\Spenc^{n,0}_{d,E}\circ C'^n\right)\\
&=\iota^{1,0}_{d,J^{n-1}_d E}\circ\Spenc^{n,0}_{d,E}\circ(C^n- C'^n)\\
&=-\iota^{1,0}_{d,J^{n-1}_d E}\circ\left(\rho_{d,J^{n-1}_d E}\circ l^{1,n-1}_{d,E}\right)\circ(C^n- C'^n)\\
&=\left(j^1_{d,J^{n-1}_d E}\circ \pi^{1,0}_{d,J^{n-1}_d E}-\id_{J^1_d J^{n-1}_d E}\right)\circ l^{1,n-1}_{d,E}\circ(C^n- C'^n)\\
&=j^1_{d,J^{n-1}_d E}\circ \pi^{1,0}_{d,J^{n-1}_d E}\circ l^{1,n-1}_{d,E}\circ C^n
-j^1_{d,J^{n-1}_d E}\circ \pi^{1,0}_{d,J^{n-1}_d E}\circ l^{1,n-1}_{d,E}\circ C'^n
-l^{1,n-1}_{d,E}\circ(C^n- C'^n)\\
&=j^1_{d,J^{n-1}_d E}\circ \pi^{n,n-1}_{d,E}\circ C^n
-j^1_{d,J^{n-1}_d E}\circ \pi^{n,n-1}_{d,E}\circ C'^n
-l^{1,n-1}_{d,E}\circ(C^n- C'^n)\\
&=j^1_{d,J^{n-1}_d E}-j^1_{d,J^{n-1}_d E}-l^{1,n-1}_{d,E}\circ(C^n- C'^n)\\
&=-l^{1,n-1}_{d,E}\circ(C^n- C'^n)
\end{split}
\end{equation}
Since $l^{1,n}_{d,E}$ is a mono, we obtain that $C^n= C'^n$, completing this portion of the proof.

The final statement follows directly from Proposition \ref{prop:construction_connection_corresponding_to_higher_connection}.
\end{proof}
\begin{rmk}
Analogously to \eqref{eq:lift_decomposition_connection} in the proof of Theorem \ref{theo:higher_connections_are_connections}, we can deduce a decomposition of the lift of a connection $\nabla$ on $E$ as
\begin{equation}
\widetilde{\nabla}
=\nabla\circ \pi^{1,0}_{d,E}+\rho_{d,E}
=\nabla\circ \pi^{1,0}_{d,E}-\Spenc^{1,0}_{d,E}.
\end{equation}
\end{rmk}

\begin{rmk}
	The special case of Theorem \ref{theo:higher_connections_are_connections}, in the smooth setting, has appeared in the differential geometry literature (cf.\ \cite{eastwood2009higher}).
	An analogue in the holomorphic setting appears in \cite{HolomorphicHigherCon}.
\end{rmk}

\section{Quantization}
\label{s:Quantization}
In this section we discuss the notions of quantization, seen as a (right-) splitting of the symbol exact sequences, obtained from the definition of symbol \cite[\symbolsdefsymbolquotientdef]{Symbol}
\begin{equation}\label{eq:symbolexactsequence}
	\begin{tikzcd}
		0 & {\Diff^{n-1}_d(E,F)} &[20pt] {\Diff^{n}_d(E,F)} &[20pt] {\Symb^{n}_d(E,F)} & 0.
		\arrow[from=1-1, to=1-2]
		\arrow[hook, from=1-2, to=1-3]
		\arrow["{\symb^n_{d,E,F}}",two heads, from=1-3, to=1-4]
		\arrow["{q^n}", shift left=3, shift right=3, bend left=22pt,hook, from=1-4, to=1-3]
		\arrow[from=1-4, to=1-5]
	\end{tikzcd}
\end{equation}
\begin{rmk}
In this section, we will develop the notion of quantization in the setting of holonomic linear differential operators, but in principle one could also develop these notions {\it muatatis mutandis} for elemental or primitive differential operators as well.
\end{rmk}

\subsection{Splittings of the symbol exact sequence}
\begin{defi}
An \emph{$n$-quantization} for $(E,F)$ is an $\AHom(F,F)$-linear right splitting of $\symb^n_{d,E,F}$, i.e.\ 
\begin{equation}
q^n\colon \Symb^n_d(E,F)\longrightarrow \Diff^n_d(E,F).
\end{equation}
such that $\symb^n_{d,E,F}\circ q^n=\id_{\Symb^n_d(E,F)}$.

Given a collection $\{q^n |n\in \N\}$, where $q^n\colon \Symb^n_d(E,F)\longrightarrow \Diff^n_d(E,F)$ is an $n$-quantization for $(E,F)$, there exists a unique map
\begin{equation}
q\coloneq \sum_{n\in \N} q^n\colon \Symb^{\bullet}_d(E,F)\longrightarrow \Diff_d(E,F),
\end{equation}
called \emph{(full) quantization} for $(E,F)$.
\end{defi}
\begin{rmk}
A collection $\{q^n|n\in\N\}$, with $q^n$ an $n$-quantization for $(E,F)$, induces a unique full quantization for $(E,F)$ by the universal property of the coproduct, and the sum is well-defined, because elements of $\Symb^{\bullet}_d(E,F)$ are finite sums.
\end{rmk}
We can also give a version of this definition that is natural in $F$.
\begin{defi}
A \emph{natural $n$-quantization} for $E$ in $\AMod$ is a natural transformation of functors $\AMod\to \Mod$
\begin{equation}
q^n\colon \Symb^n_d(E,-)\longrightarrow \Diff^n_d(E,-)
\end{equation}
that is a right splitting of $\symb^n_{d,E}$, i.e.\ such that $\symb^n_{d,E}\circ q^n=\id_{\Symb^n_d(E,-)}$.

Given a collection $\{q^n |n\in \N\}$, where $q^n\colon \Symb^n_d(E,-)\longrightarrow \Diff^n_d(E,-)$ is a natural $n$-quantization for $E$, there exists a unique map
\begin{equation}
q\coloneq \sum_{n\in \N} q^n\colon \Symb^{\bullet}_d(E,-)\longrightarrow \Diff_d(E,-)
\end{equation}
called \emph{natural (full) quantization} for $E$.
\end{defi}
\begin{rmk}
For all $F$ in $\AMod$, the naturality with respect to endomorphisms of $F$ of a natural $n$-quantization implies that a natural $n$-quantization $q^n$ for $E$, in each component $q^n_F$, is an $n$-quantization for $(E,F)$.
\end{rmk}

\begin{rmk}
\label{rmk:0quantizationisidentity}
A $0$-quantization for $E$ is the identity, as it is the unique section of $\symb^0_{d,E,-}=\id_{\Diff^0_d(E,-)}$.
\end{rmk}
If we now assume the representability of differential operators of order $n$, cf.\ \cite[\symbolsssjetmodulesrepresentingobjects]{Symbol}, and of symbols of order $n$, cf.\ \cite[\symbolsssrepresentabilitysymbols]{Symbol}, we get the following result.
\begin{theo}\label{theo:quantizations_higher_connections}
Let $E$ in $\AMod$ be such that $J^n_d E\cong \EJ^n_d E$ and such that the $n$-jet sequence is split exact.
Then, natural $n$-quantizations $q^n\colon \Symb^n_d(E,-)\to \Diff^n_d(E,-)$ are in bijective correspondence with $n$-connections $C^n\colon J^{n-1}_d E\to J^n_d E$.
Explicitly, the correspondence is as follows
\begin{equation}\label{eq:explicit_quantization}
q^n(\sigma^n)=r^n_{d,E,S^n_d E}(\sigma^n)\circ \nabla^n,
\end{equation}
for all symbols $\sigma^n$, where $\nabla^n\colonequals \widetilde{\nabla}^n\circ j^n_{d,E}$ and $\widetilde{\nabla}^n\colon J^n_d E\twoheadrightarrow S^n_d E$ is the left split associated to $C^n$.
\end{theo}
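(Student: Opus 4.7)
The plan is to reduce the statement to Proposition \ref{prop:characterization_higher_connections}, which, under the standing hypotheses, already identifies $n$-connections $C^n$ with differential operators $\nabla^n\colon E\to S^n_dE$ of order at most $n$ whose restriction symbol is $\id_{S^n_dE}$. Explicitly, the identification is $\nabla^n=\widetilde{\nabla}^n\circ j^n_{d,E}$, where $\widetilde{\nabla}^n$ is the unique left split of the $n$-jet sequence associated to $C^n$. It then suffices to set up a natural bijection between such differential operators $\nabla^n$ and natural $n$-quantizations $q^n\colon \Symb^n_d(E,-)\to \Diff^n_d(E,-)$. The essential tool will be the representability of symbols in the second slot: the restriction symbol assembles into a natural isomorphism $r^n_{d,E,-}\colon \Symb^n_d(E,-)\xrightarrow{\sim} \AHom(S^n_dE,-)$ of functors $\AMod\to\Mod$, cf.\ \cite[\symbolspropsymbolsrepresentationwelldefined]{Symbol}.

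For the forward map, given $\nabla^n$ as above, I define the component at $F$ by $q^n_F(\sigma^n)\colonequals r^n_{d,E,F}(\sigma^n)\circ \nabla^n$, where $r^n_{d,E,F}(\sigma^n)\colon S^n_dE\to F$ represents $\sigma^n$ as an $A$-linear map. This is a composition of a differential operator of order at most $n$ with an $A$-linear morphism, hence a differential operator of order at most $n$. Naturality of $q^n$ in $F$ follows from the naturality of $r^n_{d,E,-}$. The splitting identity $\symb^n_{d,E,F}\circ q^n_F=\id_{\Symb^n_d(E,F)}$ is verified by computing the restriction symbol of $q^n_F(\sigma^n)$: a natural lift is $r^n_{d,E,F}(\sigma^n)\circ \widetilde{\nabla}^n$, which, precomposed with $\iota^n_{d,E}$, yields $r^n_{d,E,F}(\sigma^n)\circ \id_{S^n_dE}=r^n_{d,E,F}(\sigma^n)$ by Proposition \ref{prop:characterization_higher_connections}.

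For the inverse direction, I proceed by a Yoneda-style argument. Set $\sigma^n_0\colonequals (r^n_{d,E,S^n_dE})^{-1}(\id_{S^n_dE})\in \Symb^n_d(E,S^n_dE)$ and define $\nabla^n\colonequals q^n_{S^n_dE}(\sigma^n_0)\in\Diff^n_d(E,S^n_dE)$. Since $q^n_{S^n_dE}$ is a section of the symbol map, the symbol of $\nabla^n$ is $\sigma^n_0$, so its restriction symbol is $\id_{S^n_dE}$; Proposition \ref{prop:characterization_higher_connections} then produces the corresponding $n$-connection $C^n$.

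To conclude, I would verify that the two constructions are mutually inverse. In one direction, reconstructing $\nabla^n$ from the $q^n$ built via the forward map is immediate: $q^n_{S^n_dE}(\sigma^n_0)=r^n_{d,E,S^n_dE}(\sigma^n_0)\circ \nabla^n=\id_{S^n_dE}\circ \nabla^n=\nabla^n$. In the other direction, any $\sigma^n\in\Symb^n_d(E,F)$ is the image of $\sigma^n_0$ under $\Symb^n_d(E,r^n_{d,E,F}(\sigma^n))$, since $r^n_{d,E,-}$ is a natural isomorphism sending $\sigma^n_0$ to the identity. Applying the naturality square of $q^n$ to the $A$-linear map $r^n_{d,E,F}(\sigma^n)\colon S^n_dE\to F$ then yields $q^n_F(\sigma^n)=r^n_{d,E,F}(\sigma^n)\circ q^n_{S^n_dE}(\sigma^n_0)=r^n_{d,E,F}(\sigma^n)\circ \nabla^n$, matching formula \eqref{eq:explicit_quantization}. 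The main obstacle will be carefully tracking the Yoneda-style identification so that naturality of $q^n$ under the particular morphism $r^n_{d,E,F}(\sigma^n)$ correctly transports $\sigma^n_0$ to $\sigma^n$ in the symbol functor; once that is in place, the bijection follows directly.
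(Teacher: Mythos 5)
Your proposal is correct and follows essentially the same route as the paper: both arguments rest on the representability of $\Symb^n_d(E,-)$ by $S^n_d E$ (valid under the split-exactness hypothesis), a Yoneda-type identification of the natural section $q^n$ with a single datum on the representing object, and Proposition \ref{prop:characterization_higher_connections} (equivalently the left--right splitting correspondence) to pass to $n$-connections. The only cosmetic difference is that the paper transports both $\Diff^n_d(E,-)$ and $\Symb^n_d(E,-)$ to hom-functors and reads $q^n$ as precomposition with a left split $\lambda^n=q^n(\id_{S^n_dE})$, whereas you evaluate $q^n$ at the universal symbol $\sigma^n_0$ and verify the formula \eqref{eq:explicit_quantization} by a naturality square; the substance is the same.
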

\begin{proof}
The functor $\Diff^n_d(E,-)\colon \AMod\to \Mod$ is representable with representing object $J^n_d E$, cf.\ \cite[\symbolspropelementaljetpropertiesfour]{Symbol}.
Since the $n$-jet sequence is split exact, the functor $\Symb^n_d (E,-)$ is also representable with representing object $S^n_d E$, cf.\ \cite[\symbolspropsymbolsrepresentationwelldefined]{Symbol}.
Via these isomorphisms, the map $\symb^n_{d,E}$ is naturally isomorphic to
\begin{equation}
-\circ \iota^n_{d,E}\colon\AHom(J^n_d E,-)\longrightarrow\AHom(S^n_d E,-).
\end{equation}
Now, given an $n$-connection $C^n_d$, we consider the unique associated left split of the $n$-jet sequence $\widetilde{\nabla}^n$.
We can define
\begin{equation}
q^n\colonequals -\circ \widetilde{\nabla}^n\colon \AHom(S^n_d E,-)\longrightarrow\AHom(J^n_d E,-),
\end{equation}
which is an $n$-quantization, since $\symb^n_d \circ q^n=-\circ \widetilde{\nabla}^n\circ \iota^n_{d,E} =\id_{\AHom(S^n_d E,-)}$.
Vice versa, any natural transformation
\begin{equation}
q^n\colon \AHom(S^n_d E,-)\longrightarrow\AHom(J^n_d E,-)
\end{equation}
is, by naturality, of the form $-\circ \lambda^n$ for some $\lambda^n\colon J^n_d E\to S^n_d E$, by the Yoneda lemma.
One can construct $\lambda^n$ as $q^n(\id_{S^n_d E})$.
We want to prove that $\lambda^n_d$ is a left splitting of the $n$-jet sequence, and thus corresponds uniquely to an $n$-connection by Proposition \ref{prop:characterization_higher_connections}.
We compute the following
\begin{equation}
\lambda^n_d\circ \iota^n_{d,E}
=\symb^n_{d,E}(\lambda^n_d)
=\symb^n_{d,E}(q^n(\id_{S^n_d E}))
=\id_{S^n_d E}.
\end{equation}
The association we have just defined corresponds to \eqref{eq:explicit_quantization}, and it forms a bijection via the equivalence of left and right splittings of a short exact sequence.
\end{proof}
\begin{rmk}
For $n=1$ representability of differential operators and symbols always holds, so the hypotheses of Theorem \ref{theo:quantizations_higher_connections} holds automatically, cf.\ \cite[\jetspropconnexionsplits]{FMW}.
\end{rmk}

\subsection{Building higher order connections}
\begin{prop}
\label{prop:nablatilden+1}
Assume that in $\AMod$ we have a left splitting $\widetilde{\nabla}^n\colon J^n_d E\to S^n_d E$ for $\iota^n_{d,E}$ and a left splitting $\Sym^{1,n}\colon \Omega^1_d S^n_d E\twoheadrightarrow S^{n+1}_d E$ for $\iota^{n+1}_{\wedge,E}$.
Further, suppose we have a connection $\nabla^{S^n_d E}\colon S^n_d E\to \Omega^1_d S^n_d E$, then the composition
	\begin{equation}
		\label{eq:nablatilden+1}
		\widetilde{\nabla}^{n+1}
		= \Sym^{1,n} \circ \widetilde{\nabla}^{S^{n}_dE} \circ J^1_d(\widetilde{\nabla}^n) \circ l^{1,n}_{d,E}
	\end{equation}
	is a left splitting in $\AMod$ for $\iota^{n+1}_{d,E}$.
\end{prop}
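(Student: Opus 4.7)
The plan is to verify directly that $\widetilde{\nabla}^{n+1} \circ \iota^{n+1}_{d,E} = \id_{S^{n+1}_d E}$ by tracing the composition through the standard factorization of $\iota^{n+1}_{d,E}$ and applying naturality of the first-order jet inclusion.

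The key structural input is the commutative diagram defining $\iota^{n+1}_{d,E}$ (cf.\ \cite[\jetsdiagdefiiotand]{FMW}), which gives the factorization
\begin{equation}
l^{1,n}_{d,E}\circ \iota^{n+1}_{d,E}=\iota^1_{d,J^n_d E}\circ \Omega^1_d(\iota^n_{d,E})\circ \iota^{n+1}_{\wedge,E}.
\end{equation}
Substituting this into the composition defining $\widetilde{\nabla}^{n+1}$ and then using naturality of $\iota^1_d$ with respect to the $A$-linear morphism $\widetilde{\nabla}^n$, I would rewrite
\begin{equation}
J^1_d(\widetilde{\nabla}^n)\circ \iota^1_{d,J^n_d E}=\iota^1_{d,S^n_d E}\circ \Omega^1_d(\widetilde{\nabla}^n).
\end{equation}
Combining functoriality of $\Omega^1_d$ with the hypothesis $\widetilde{\nabla}^n\circ \iota^n_{d,E}=\id_{S^n_d E}$ reduces the inner factor $\Omega^1_d(\widetilde{\nabla}^n)\circ \Omega^1_d(\iota^n_{d,E})$ to the identity on $\Omega^1_d S^n_d E$.

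At this point the remaining composition is $\Sym^{1,n}\circ \widetilde{\nabla}^{S^n_d E}\circ \iota^1_{d,S^n_d E}\circ \iota^{n+1}_{\wedge,E}$. The next step is to invoke the fact that the lift $\widetilde{\nabla}^{S^n_d E}$ of a (first order) connection satisfies $\widetilde{\nabla}^{S^n_d E}\circ \iota^1_{d,S^n_d E}=\id_{\Omega^1_d S^n_d E}$, which is equivalent to the connection having symbol the identity (cf.\ \cite[\jetspropconnexionsplits]{FMW}). Finally, the hypothesis that $\Sym^{1,n}$ is a left splitting of $\iota^{n+1}_{\wedge,E}$ yields $\Sym^{1,n}\circ \iota^{n+1}_{\wedge,E}=\id_{S^{n+1}_d E}$, completing the verification. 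Left $A$-linearity of $\widetilde{\nabla}^{n+1}$ is automatic from the fact that each constituent is $A$-linear (for $\widetilde{\nabla}^{S^n_d E}$ and $J^1_d(\widetilde{\nabla}^n)$ this uses that lifts of differential operators are $A$-linear on jet modules).

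There is no substantive obstacle here beyond bookkeeping: the proof is essentially a one-line diagram chase once the factorization of $\iota^{n+1}_{d,E}$ is in place. The only point worth stressing is the naturality-of-$\iota^1_d$ step, which is what allows $\widetilde{\nabla}^n$ to be ``passed through'' the first-order jet structure at the price of replacing $J^1_d$ by $\Omega^1_d$; this is the same mechanism that produces the Spencer operator formulas in Section \ref{s:Spencer_operators_and_the_Spencer_complex}, and it is also the reason the connection on $S^n_d E$ needs to be a genuine connection (symbol identity) rather than an arbitrary first-order operator.
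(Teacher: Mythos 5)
Your proposal is correct and follows essentially the same argument as the paper: factor $l^{1,n}_{d,E}\circ\iota^{n+1}_{d,E}$ through $\iota^1_{d,J^n_dE}\circ\Omega^1_d(\iota^n_{d,E})\circ\iota^{n+1}_{\wedge,E}$, pass $\widetilde{\nabla}^n$ across via naturality of $\iota^1_d$, and then use the retraction properties of $\widetilde{\nabla}^n$, the symbol-identity property of the connection lift, and the retraction $\Sym^{1,n}$. The only cosmetic difference is the reference used for the symbol-identity characterization of connections, which does not affect the argument.
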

\begin{proof}
We prove that \eqref{eq:nablatilden+1} is a retraction of $\iota^{n+1}_{d,E}$.
\begin{equation}
\begin{tikzcd}
	S^{n+1}_d E\ar[rrrrrd,bend left=40pt,equals]\ar[d,"\iota^{n+1}_{d,E}"]\ar[r,"\iota^n_{\wedge,E}"]&\Omega^1_d S^n_d E\ar[rr,bend left=40pt,equals]\ar[r,"\Omega^1_d(\iota^n_{d,E})"]&\Omega^1_d J^n_d E\ar[d,"\iota^1_{d,J^n_d E}"']\ar[r,"\Omega^1_d(\widetilde{\nabla}^n)"]&\Omega^1_d S^n_d E \ar[d,"\iota^1_{d,S^n_d E}"']\ar[dr,equals]\\
	J^{n+1}_d E\ar[rr,"l^{1,n}_{d,E}"']&&J^1_d J^n_d E\ar[r,"J^1_d(\widetilde{\nabla}^n)"']&J^1_d S^n_d E\ar[r,"\widetilde{\nabla}^{S^n_d E}"']&\Omega^1_d S^n_d E\ar[r,"\Sym^{1,n}"']&S^{n+1}_d E
\end{tikzcd}
\end{equation}
The left pentagon commutes by definition of $\iota^{n+1}_{d,E}$, cf.\ \cite[\jetsdiagdefiiotand]{FMW}.
The central square commutes by naturality of $\iota^1_d$ with respect to $\widetilde{\nabla}^n$.
The right triangle commutes because the symbol of a connection is the identity, cf.\ \cite[\symbolspropsymbolsofconnections]{Symbol}.
Finally, the topmost curved maps are identities because $\widetilde{\nabla}^n$ is a retraction of $\iota^n_{d,E}$ with $\Omega^1_d$ a functor, and because $\Sym^{1,n}$ is a retraction of $\iota^n_{\wedge,E}$.
\end{proof}
\begin{cor}
\label{cor:splittingiotan}
Let $n\ge 0$, and suppose for all $0\le k< n$ we have a connection $\nabla^{S^k_d E}\colon S^k_d E\to \Omega^1_d S^k_d E$ on $S^k_d E$, and a left inverse $\Sym^{1,k}\colon \Omega^1_d S^k_d E\to S^{k+1}_d E$ of $\iota^{k+1}_{\wedge,E}$.
Then there exists a splitting for $\iota^n_{d,E}$.

In particular, if for all $n\ge 0$ we have a connection $\nabla^{S^n_d E}\colon S^n_d E\to \Omega^1_d S^n_d E$ on $S^n_d E$, and a left inverse $\Sym^{1,n}\colon \Omega^1_d S^n_d E\twoheadrightarrow S^{n+1}_d E$ for $\iota^{n+1}_{\wedge,E}$, then we have a left splitting $\widetilde{\nabla}^n$ for $\iota^n_{d,E}$ for all $n\ge 0$.
\end{cor}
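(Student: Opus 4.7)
The plan is to proceed by induction on $n$, where Proposition \ref{prop:nablatilden+1} serves as the engine of the inductive step. The base case $n=0$ is immediate: $S^0_d E \cong E \cong J^0_d E$ with $\iota^0_{d,E}$ the identity, so the identity itself is a splitting and the hypothesis set $\{0 \le k < 0\}$ is vacuous. (Equivalently, one could take $n=1$ as the base case, applying Proposition \ref{prop:nablatilden+1} with $\widetilde{\nabla}^0 = \id_E$ to the data $\nabla^{S^0_d E}$ and $\Sym^{1,0}$.)

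For the inductive step, assume we have already constructed a left splitting $\widetilde{\nabla}^n \colon J^n_d E \twoheadrightarrow S^n_d E$ of $\iota^n_{d,E}$ in $\AMod$, using the data $\{\nabla^{S^k_d E}, \Sym^{1,k}\}_{0 \le k < n}$. The additional inputs $\nabla^{S^n_d E}$ and $\Sym^{1,n}$ are available under the hypothesis, since $n < n+1$. Applying Proposition \ref{prop:nablatilden+1} to $\widetilde{\nabla}^n$, $\nabla^{S^n_d E}$, and $\Sym^{1,n}$ yields the left splitting
\begin{equation}
\widetilde{\nabla}^{n+1} = \Sym^{1,n} \circ \widetilde{\nabla}^{S^n_d E} \circ J^1_d(\widetilde{\nabla}^n) \circ l^{1,n}_{d,E}
\end{equation}
of $\iota^{n+1}_{d,E}$, which closes the induction. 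The second, stronger assertion of the corollary then follows immediately by applying the first part simultaneously at every $n \ge 0$.

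There is no genuine obstacle beyond a careful bookkeeping of indices: the key observation is that building a splitting at level $n$ requires the cumulative data for $0 \le k < n$, and the inductive step consumes precisely the new piece of data at level $k = n$ that becomes available when passing from $n$ to $n+1$. As such, the proof is essentially a one-line invocation of Proposition \ref{prop:nablatilden+1} combined with a transparent base case.
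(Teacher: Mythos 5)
Your proposal is correct and matches the paper's own argument: the paper likewise handles $n=0$ by noting $\iota^0_{d,E}=\id_E$ is split, and performs the inductive step by invoking Proposition \ref{prop:nablatilden+1} with the appropriate indices, deducing the second assertion by applying the first for every $n$.
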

\begin{proof}
For $n=0$, we have $\iota^0_{d,E}=\id_E$, which is naturally a split monomorphism.
For $n>0$, by inductive hypothesis we have a left splitting $\widetilde{\nabla}^{n-1}$ for $\iota^{n-1}_{d,E}$.
We can thus apply Proposition \ref{prop:nablatilden+1} with the appropriate indices to conclude the existence of a splitting $\widetilde{\nabla}^n$ of $\iota^n_{d,E}$.
\end{proof}
\begin{cor}[Full quantization]
\label{cor:full_quantisation}
Let $E$ in $\AMod$ be such that $J^n_d E\cong \EJ^n_d E$, such that the $n$-jet sequence at $E$ is right exact, and suppose that for all $0\le k<n$ we have a connection $\nabla^{S^k_d E}$ on $S^k_d E$ and a left splitting $\Sym^{1,k}$ for $\iota^{k+1}_{\wedge,E}$.
Then there is an induced natural $n$-quantization $q^n\colon \Symb^n_d(E,-)\to \Diff^n_d(E,-)$ mapping a symbol $\sigma^n$ into
\begin{equation}\label{eq:explicit_quantization_with_connections_on_Sn}
q^n(\sigma^n)
=r^n_{d,E,S^n_d E}(\sigma^n)\circ \Sym^{1,n-1} \circ \nabla^{S^{n-1}_dE} \circ\Sym^{1,n-2} \circ \nabla^{S^{n-2}_dE} \circ\dots\circ \Sym^{1,1} \circ \nabla^{S^1_dE} \circ \nabla^E.
\end{equation}

Let $E$ in $\AMod$ be such that for all $n\ge 0$ we have $J^n_d E\cong \EJ^n_d E$ such that the $n$-jet sequence at $E$ is right exact, and suppose that we have a connection $\nabla^{S^n_d E}$ on $S^n_d E$ and a left splitting $\Sym^{1,n}$ for $\iota^{n+1}_{\wedge,E}$.
Then there is an induced natural full quantization $q$ for $E$.
\end{cor}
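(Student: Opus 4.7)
The plan is to combine the explicit construction in Corollary \ref{cor:splittingiotan} with the bijection in Theorem \ref{theo:quantizations_higher_connections}, so that the proof reduces to an inductive unwinding of the iterated formula \eqref{eq:nablatilden+1}. First, I would apply Corollary \ref{cor:splittingiotan} to the given data $\{\nabla^{S^k_d E}, \Sym^{1,k}\}_{0\le k<n}$ to produce a left splitting $\widetilde{\nabla}^n \colon J^n_d E \to S^n_d E$ of $\iota^n_{d,E}$ in $\AMod$. The existence of such a left splitting makes $\iota^n_{d,E}$ a split monomorphism; together with the assumed right exactness of the $n$-jet sequence at $E$, this upgrades the $n$-jet sequence to a split exact sequence in $\AMod$.

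With the hypothesis $J^n_d E \cong \EJ^n_d E$ now combined with split exactness of the $n$-jet sequence, the assumptions of Theorem \ref{theo:quantizations_higher_connections} are fulfilled. Applying that theorem (via the bijection in Proposition \ref{prop:characterization_higher_connections}) to the $n$-connection corresponding to $\widetilde{\nabla}^n$ yields a natural $n$-quantization
\begin{equation}
q^n(\sigma^n) = r^n_{d,E,S^n_d E}(\sigma^n) \circ \nabla^n, \qquad \nabla^n \colonequals \widetilde{\nabla}^n \circ j^n_{d,E}.
\end{equation}

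To match the explicit formula \eqref{eq:explicit_quantization_with_connections_on_Sn}, I would then prove by induction on $n$ the recursion
\begin{equation}
\nabla^n = \Sym^{1,n-1} \circ \nabla^{S^{n-1}_d E} \circ \nabla^{n-1}.
\end{equation}
Starting from the definition \eqref{eq:nablatilden+1} of $\widetilde{\nabla}^n$, one precomposes with $j^n_{d,E}$ and rewrites $l^{1,n-1}_{d,E} \circ j^n_{d,E} = j^1_{d,J^{n-1}_d E} \circ j^{n-1}_{d,E}$ using the factorization of holonomic prolongations, cf.\ \cite[\jetslemmaholprol]{FMW}. Naturality of $j^1_d$ with respect to $\widetilde{\nabla}^{n-1}$ slides it past the prolongation, and the defining identity $\nabla^{S^{n-1}_d E} = \widetilde{\nabla}^{S^{n-1}_d E} \circ j^1_{d,S^{n-1}_d E}$ of a connection as a differential operator closes the step. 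Iterating the recursion down to $\nabla^0 = \id_E$ produces exactly the composition displayed in \eqref{eq:explicit_quantization_with_connections_on_Sn} (absorbing $\Sym^{1,0}$ into the canonical identification $S^1_d E \cong \Omega^1_d E$).

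The second assertion, on full quantizations, is then immediate: applying the above to every $n \ge 0$ simultaneously produces a family of natural $n$-quantizations, whose unique assembly into $q = \sum_{n\in \N} q^n$ via the universal property of the coproduct yields the desired natural full quantization for $E$. I do not anticipate any genuine obstacle; the proof is essentially a bookkeeping exercise assembling results already in place, with the only nontrivial content being the inductive manipulation to derive the closed form of $\nabla^n$.
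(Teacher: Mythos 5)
Your proposal is correct and follows essentially the same route as the paper's proof: invoke Corollary \ref{cor:splittingiotan} to get the left splitting $\widetilde{\nabla}^n$, note split exactness and apply Theorem \ref{theo:quantizations_higher_connections}, then derive the closed form of $\nabla^n$ by the same inductive chain (factorization of $j^n_d$ through $j^1_{d,J^{n-1}_dE}$, naturality of $j^1_d$ with respect to $\widetilde{\nabla}^{n-1}$, and the definition of a connection as $\widetilde{\nabla}^{S^{n-1}_dE}\circ j^1_d$), with the full quantization obtained by summing over $n$. No gaps.
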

\begin{proof}
The hypotheses of Corollary \ref{cor:splittingiotan} being satisfied, we thus have a splitting $\widetilde{\nabla}^n$ for $\iota^n_{d,E}$.
Given the stated hypotheses, this means that the $n$-jet sequence is split exact at $E$.
We can thus apply Theorem \ref{theo:quantizations_higher_connections}, showing that we have an $n$-quantization for $E$.
By \eqref{eq:explicit_quantization}, this quantization is given explicitly on a symbol $\sigma^n$ by
\begin{equation}
q^n(\sigma^n)=r^n_{d,E,S^n_d E}(\sigma^n)\circ \nabla^n.
\end{equation}
Where $\nabla^n=\widetilde{\nabla}^n\circ j^n_{d,E}$, so by \eqref{eq:nablatilden+1}, we write
\begin{equation}
\begin{split}
\nabla^n
&=\widetilde{\nabla}^n\circ j^n_{d,E}\\
&=\Sym^{1,n-1} \circ \widetilde{\nabla}^{S^{n-1}_d E} \circ J^1_d(\widetilde{\nabla}^{n-1}) \circ l^{1,n-1}_{d,E}\circ j^n_{d,E}\\
&=\Sym^{1,n-1} \circ \widetilde{\nabla}^{S^{n-1}_d E} \circ J^1_d(\widetilde{\nabla}^{n-1}) \circ j^1_{J^{n-1}_d E}\circ j^{n-1}_{d,E}\\
&=\Sym^{1,n-1} \circ \widetilde{\nabla}^{S^{n-1}_d E} \circ j^1_{d,E} \circ \widetilde{\nabla}^{n-1} \circ j^{n-1}_{d,E}\\
&=\Sym^{1,n-1} \circ \nabla^{S^{n-1}_d E} \circ \nabla^{n-1}.
\end{split}
\end{equation}
Thus, we obtain \eqref{eq:explicit_quantization_with_connections_on_Sn} by induction.

The second part of this result is obtained by applying the first to all $n$ and summing the $n$-quantizations.
\end{proof}
\begin{rmk}
	Corollary \ref{cor:full_quantisation} generalizes the result \cite[\symbolspropquantisation]{Symbol} in the noncommutative setting, as well as the classical the classical result \cite[Theorem~7, p.~90]{PalaisVectorBundles} from differential geometry.
\end{rmk}

In order to obtain the natural full quantization for $E$ in Corollary \ref{cor:full_quantisation}, we had to assume that each $S^n_d E$ came equipped with a left connection.
We will now show that this sufficient assumption is also necessary.
\begin{prop}
\label{prop:relation_jet_connection_sym_connection}
	Let $E$ in $\AMod$ be such that the $n$-jet sequence is split exact via an $n$-connection $C^n$.
	Given a connection $\nabla^{S^n_d E}$ on $S^n_d E$, there exists a canonical connection $\nabla^{J^n_d E}$ on $J^n_d E$ given by 
	\begin{equation}
	\label{eq:relation_jet_connection_sym_connection:jet}
		\nabla^{J^n_d E}
		\colonequals \Omega^1_d(C^n) \circ \Spenc^{n,0}_{d,E} + \Omega^1_d(\iota^n_{d,E}) \circ \nabla^{S^n_d E} \circ \widetilde{\nabla}^n,
	\end{equation}
	where $\widetilde{\nabla}^n$ is the left split corresponding to $C^n$.
	
	Vice versa, given a connection $\nabla^{J^n_d E}$ on $J^n_d E$, this induces a connection $\nabla^{S^n_d E}$ on $S^n_d E$ of the form
	\begin{equation}
	\label{eq:relation_jet_connection_sym_connection:sym}
		\nabla^{S^n_d E}
		\colonequals \Omega^1_d (\widetilde{\nabla}^n)\circ \nabla^{J^n_d E}\circ \iota^n_{d,E}.
	\end{equation}
	The latter construction is the left inverse of the former.
\end{prop}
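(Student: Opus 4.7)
The plan is to establish the proposition in three stages: (1) verify that the formula in \eqref{eq:relation_jet_connection_sym_connection:jet} defines a connection on $J^n_d E$, (2) verify that the formula in \eqref{eq:relation_jet_connection_sym_connection:sym} defines a connection on $S^n_d E$, and (3) show the composition produces the identity on connections on $S^n_d E$. Throughout, I would use the characterization of a connection as a differential operator of order at most $1$ with restriction symbol equal to the identity, cf.\ \cite[\symbolspropsymbolsofconnections]{Symbol}, together with the fact that $C^n$ and $\widetilde{\nabla}^n$ are paired splittings of a split short exact sequence, meaning $\pi^{n,n-1}_{d,E}\circ C^n=\id_{J^{n-1}_d E}$, $\widetilde{\nabla}^n\circ\iota^n_{d,E}=\id_{S^n_d E}$, $\widetilde{\nabla}^n\circ C^n=0$, and $C^n\circ\pi^{n,n-1}_{d,E}+\iota^n_{d,E}\circ\widetilde{\nabla}^n=\id_{J^n_d E}$.

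For step (1), I would first note that $\nabla^{J^n_d E}$ is a differential operator of order at most $1$ as a $\bk$-linear combination of two compositions of order at most $1$: indeed, $\Spenc^{n,0}_{d,E}$ is a differential operator of order at most $1$ (Proposition \ref{prop:spencopsymbol}), $\nabla^{S^n_d E}$ is a connection by hypothesis, and the remaining factors $\Omega^1_d(C^n)$, $\Omega^1_d(\iota^n_{d,E})$ and $\widetilde{\nabla}^n$ are all $A$-linear. By Proposition \ref{prop:properties_natDOs}, composing a differential operator of order at most $1$ on either side with $A$-linear maps yields another differential operator of order at most $1$. To compute the restriction symbol, I would use that the restriction symbol of $\Spenc^{n,0}_{d,E}$ is $\id_{\Omega^1_d}\otimes_A\pi^{n,n-1}_{d,E}$ and that of $\nabla^{S^n_d E}$ is $\id_{\Omega^1_d S^n_d E}$. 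Functoriality of taking restriction symbols with respect to $A$-linear pre/post composition then yields that the restriction symbol of $\nabla^{J^n_d E}$ is $\id_{\Omega^1_d}\otimes_A(C^n\circ\pi^{n,n-1}_{d,E}+\iota^n_{d,E}\circ\widetilde{\nabla}^n)=\id_{\Omega^1_d J^n_d E}$, using the splitting identity.

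For step (2), the argument is analogous and shorter: $\nabla^{S^n_d E}$ as defined is a differential operator of order at most $1$ (as the composition of the connection $\nabla^{J^n_d E}$ with the $A$-linear maps $\iota^n_{d,E}$ and $\Omega^1_d(\widetilde{\nabla}^n)$), and its restriction symbol is
\begin{equation}
\Omega^1_d(\widetilde{\nabla}^n)\circ\id_{\Omega^1_d J^n_d E}\circ\iota^n_{d,E}=\id_{\Omega^1_d}\otimes_A(\widetilde{\nabla}^n\circ\iota^n_{d,E})=\id_{\Omega^1_d S^n_d E}.
\end{equation}

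For step (3), I would plug the formula \eqref{eq:relation_jet_connection_sym_connection:jet} into \eqref{eq:relation_jet_connection_sym_connection:sym} and simplify using the splitting identities. The cross term yields $\Omega^1_d(\widetilde{\nabla}^n\circ C^n)\circ\Spenc^{n,0}_{d,E}\circ\iota^n_{d,E}=0$ since $\widetilde{\nabla}^n\circ C^n=0$, and the diagonal term yields $\Omega^1_d(\widetilde{\nabla}^n\circ\iota^n_{d,E})\circ\nabla^{S^n_d E}\circ\widetilde{\nabla}^n\circ\iota^n_{d,E}=\nabla^{S^n_d E}$, completing the left inverse check. No real obstacle arises: the only subtle point is to be careful about the symbol computation in step (1), which reduces transparently once one invokes the standard splitting identities.
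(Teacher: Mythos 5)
Your proposal is correct and follows essentially the same route as the paper: characterizing connections via the restriction-symbol criterion of \cite[\symbolspropsymbolsofconnections]{Symbol}, computing the symbols of the two formulas using the symbol of $\Spenc^{n,0}_{d,E}$ and the splitting identities (your ``functoriality of restriction symbols under $A$-linear pre/post composition'' is exactly the paper's explicit lift construction combined with naturality of $\iota^1_d$), and verifying the left-inverse property via $\widetilde{\nabla}^n\circ C^n=0$. The only blemish is notational: in step (2) the precomposed factor in the restriction symbol should be $\Omega^1_d(\iota^n_{d,E})$ rather than $\iota^n_{d,E}$, as your subsequent identification $\id_{\Omega^1_d}\otimes_A(\widetilde{\nabla}^n\circ\iota^n_{d,E})$ already indicates.
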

\begin{proof}
	In order to prove that \eqref{eq:relation_jet_connection_sym_connection:jet} is a connection, we use \cite[\symbolspropsymbolsofconnections]{Symbol}, and so we only need to prove that $\nabla^{J^n_d E}$ is a differential operator of order at most $1$ with restriction symbol $\id_{\Omega^1_d J^n_d E}$.
	Each summand is a differential operator, being composition of differential operators of order $1$ or $0$, and each summand contains exactly one differential operator of order at most $1$.
	It follows that $\nabla^{J^n_d E}$ is a differential operator of order at most $1$.
	Using this composition, we now construct a lift of this differential operator to $J^1_d J^n_d E$, which will be unique because first order differential operators are always representable by the $1$-jet module.
	The lift is as follows
	\begin{equation}
		\widetilde{\nabla}^{J^n_d E}
		= \Omega^1_d(C^n) \circ \widetilde{\Spenc}^{n,0}_{d,E} + \Omega^1_d(\iota^n_{d,E}) \circ \widetilde{\nabla}^{S^n_d E} \circ J^1_d (\widetilde{\nabla}^n).
	\end{equation}
	We will now compute the restriction symbol via \cite[\symbolspropsymbolsofconnections]{Symbol} and Proposition \ref{prop:spencopsymbol}.
	\begin{equation}
	\begin{split}
		\widetilde{\nabla}^{J^n_d E}\circ \iota^1_{d,J^n_d E}
		&= \Omega^1_d(C^n) \circ \widetilde{\Spenc}^{n,0}_{d,E}\circ \iota^1_{d,J^n_d E} + \Omega^1_d(\iota^n_{d,E}) \circ \widetilde{\nabla}^{S^n_d E}\circ\iota^1_{d,S^n_d E} \circ \Omega^1_d (\widetilde{\nabla}^n)\\
		&= \Omega^1_d(C^n) \circ \Omega^1_d(\pi^{n,n-1}_{d,E}) + \Omega^1_d(\iota^n_{d,E}) \circ \id_{\Omega^1_d S^n_d E} \circ \Omega^1_d (\widetilde{\nabla}^n)\\
		&=\Omega^1_d(C^n\circ \pi^{n,n-1}_{d,E}+\iota^n_{d,E}\circ \widetilde{\nabla}^n)\\
		&=\Omega^1_d(\id_{J^n_d E})\\
		&=\id_{\Omega^1_dJ^n_d E}.
	\end{split}
	\end{equation}
	It follows that $\nabla^{J^n_d E}$ is a connection on $J^n_d E$.
	
	Next, given a connection $\nabla^{J^n_d E}$ on $J^n_d E$, we prove that $\nabla^{S^n_d E}$ as in \eqref{eq:relation_jet_connection_sym_connection:sym} is a connection.
	Being the composition of a differential operators of order $1$ and two of order $0$, $\nabla^{S^n_d E}$ is a differential operator of order at most $1$, and its lift is as follows
	\begin{equation}
		\widetilde{\nabla}^{S^n_d E}
		=\Omega^1_d(\widetilde{\nabla}^n)\circ\widetilde{\nabla}^{J^n_d E}\circ J^1_d(\iota^n_{d,E})
		\colon J^1_d S^n_d E \longrightarrow \Omega^1_d S^n_d E.
	\end{equation}
	Thus, $\nabla^{S^n_d E}$ is a connection if and only if $\widetilde{\nabla}^{S^n_d E}\circ \iota^1_{d,S^n_d}=\id_{\Omega^1_d S^n_d E}$.
	We have
	\begin{equation}
	\begin{split}
		\widetilde{\nabla}^{S^n_d E}\circ \iota^1_{d,S^n_d}
		&= \Omega^1_d(\widetilde{\nabla}^n)\circ \widetilde{\nabla}^{J^n_d E} \circ J^1_d(\iota^n_{d,E})\circ \iota^1_{d,S^n_d E}\\
		&= \Omega^1_d(\widetilde{\nabla}^n)\circ \widetilde{\nabla}^{J^n_d E} \circ \iota^1_{d,J^n_d}\circ \Omega^1_d(\iota^n_{d,E})\\
		&= \Omega^1_d(\widetilde{\nabla}^n)\circ \id_{\Omega^1_d J^n_d E} \circ \Omega^1_d(\iota^n_{d,E})\\
		&= \Omega^1_d(\widetilde{\nabla}^n\circ\iota^n_{d,E})\\
		&=\id_{\Omega^1_d S^n_d E}.
	\end{split}
	\end{equation}
	Hence, $\nabla^{S^n_d E}$ is a connection, as desired.

	It remains to show that applying the latter construction to the former recovers the initial connection $\nabla^{S^n_d E}$.
	This holds by the following computation
	\begin{equation}
	\begin{split}
		&\Omega^1_d(\widetilde{\nabla}^n)\circ \nabla^{J^n_d E} \circ \iota^n_{d,E}\\
		&\qquad=\Omega^1_d(\widetilde{\nabla}^n)\circ(\Omega^1_d(C^n) \circ \Spenc^{n,0}_{d,E} + \Omega^1_d(\iota^n_{d,E}) \circ \nabla^{S^n_d E} \circ \widetilde{\nabla}^n)\circ \iota^n_{d,E}\\
		&\qquad=0+\Omega^1_d(\widetilde{\nabla}^n\circ \iota^n_{d,E}) \circ \nabla^{S^n_d E} \circ \widetilde{\nabla}^n\circ \iota^n_{d,E}\\
		&\qquad=\nabla^{S^n_d E}.
	\end{split}
	\end{equation}
\end{proof}
This immediately implies the following.
\begin{cor}[Partial converse to Corollary \ref{cor:full_quantisation}]
	Let $\Omega^1_d$ and $\Omega^2_d$ be flat in $\ModA$, and let $E$ in $\AMod$ be such that $J^n_d E\cong \EJ^n_d E$ and such that the $n$-jet sequence is exact.
	Suppose we have a natural full quantization $q$ for $E$.
	Then, for every $n\in \N$, we have a left connection $\nabla^{S^n_dE}$ on $S^n_dE$.
\end{cor}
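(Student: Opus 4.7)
The plan is to reverse the construction of Corollary \ref{cor:full_quantisation}: from the natural full quantization I will first extract an $n$-connection on $E$ for each $n\ge 1$, and then assemble the desired connections on the symbol modules $S^n_d E$ by combining Propositions \ref{prop:construction_connection_corresponding_to_higher_connection} and \ref{prop:relation_jet_connection_sym_connection}. Throughout, the hypotheses $J^n_d E\cong \EJ^n_d E$ and exactness of the $n$-jet sequence guarantee both the representability $\Diff^n_d(E,-)\cong \AHom(J^n_d E,-)$ and the injectivity of $\iota^n_{d,E}$ needed below; the flatness of $\Omega^1_d$ and $\Omega^2_d$ will enter through the invocation of Proposition \ref{prop:relation_jet_connection_sym_connection}.

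The crucial step is to extract from the natural $n$-quantization $q^n$ a left splitting $\widetilde{\nabla}^n\colon J^n_d E\to S^n_d E$ of $\iota^n_{d,E}$. The idea is to evaluate $q^n$ on a universal symbol. Under representability, $\Diff^n_d(E, J^n_d E)\cong \AHom(J^n_d E, J^n_d E)$, and the $n$-jet prolongation $j^n_{d,E}$ corresponds to $\id_{J^n_d E}$ on the right-hand side. I would let $\mu^n\in \AHom(J^n_d E, J^n_d E)$ be the element corresponding to $q^n_{J^n_d E}([j^n_{d,E}])\in \Diff^n_d(E, J^n_d E)$. The section identity $\symb^n_d\circ q^n=\id$ translates, via the restriction symbol, into $\mu^n\circ \iota^n_{d,E}=\iota^n_{d,E}$. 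On the other hand, naturality of $q^n$ with respect to $\pi^{n,n-1}_{d,E}\colon J^n_d E\to J^{n-1}_d E$, combined with the observation that $\pi^{n,n-1}_{d,E}\circ j^n_{d,E}=j^{n-1}_{d,E}$ has order at most $n-1$ and hence vanishes as a class in $\Symb^n_d(E, J^{n-1}_d E)$, forces $\pi^{n,n-1}_{d,E}\circ \mu^n=0$. By exactness of the $n$-jet sequence, there is then a unique $\widetilde{\nabla}^n\colon J^n_d E\to S^n_d E$ with $\mu^n=\iota^n_{d,E}\circ \widetilde{\nabla}^n$; combined with $\mu^n\circ\iota^n_{d,E}=\iota^n_{d,E}$ and the injectivity of $\iota^n_{d,E}$, this gives $\widetilde{\nabla}^n\circ \iota^n_{d,E}=\id_{S^n_d E}$. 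The $n$-jet sequence is thereby split exact, and Proposition \ref{prop:characterization_higher_connections} produces the corresponding $n$-connection $C^n\colon J^{n-1}_d E\hookrightarrow J^n_d E$.

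With $C^n$ available for every $n\ge 1$, I would next apply Proposition \ref{prop:construction_connection_corresponding_to_higher_connection} to the $(n+1)$-connection $C^{n+1}$ to obtain a connection $\nabla^{J^n_d E}\colonequals \Spenc^{n+1,0}_{d,E}\circ C^{n+1}$ on the jet module $J^n_d E$. Feeding the data $(C^n,\nabla^{J^n_d E})$ into Proposition \ref{prop:relation_jet_connection_sym_connection} then produces the desired connection $\nabla^{S^n_d E}\colonequals \Omega^1_d(\widetilde{\nabla}^n)\circ \nabla^{J^n_d E}\circ \iota^n_{d,E}$ on $S^n_d E$ for each $n\ge 1$; for $n=0$, the identification $S^0_d E=E$ together with $C^1$ being an ordinary $1$-connection gives $\nabla^E$ directly via \cite[\jetspropconnexionsplits]{FMW}. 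The main obstacle is the factorization argument in the middle paragraph, which depends on pairing the naturality of $q^n$ in its second argument with the vanishing of the lower-order prolongation as a class in $\Symb^n_d$. Without this specific compatibility, one would only learn that $\mu^n$ fixes $\iota^n_{d,E}(S^n_d E)$ pointwise, which is strictly weaker than retracting onto it, and no left splitting of $\iota^n_{d,E}$ could be produced.
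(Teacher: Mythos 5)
Your proposal is correct and follows essentially the same route as the paper: the paper obtains $C^n$ from $q^n$ via Theorem \ref{theo:quantizations_higher_connections}, passes to the connection on the jet module (the forward direction of Theorem \ref{theo:higher_connections_are_connections}, which is exactly your use of Proposition \ref{prop:construction_connection_corresponding_to_higher_connection}), and then applies Proposition \ref{prop:relation_jet_connection_sym_connection}, matching your argument up to an index shift. Your middle paragraph simply re-derives the quantization-to-splitting step of Theorem \ref{theo:quantizations_higher_connections} by a direct Yoneda-style evaluation instead of citing it (note that, as a consequence, the flatness hypotheses never actually enter your route, since Proposition \ref{prop:relation_jet_connection_sym_connection} does not require them), which is perfectly fine.
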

\begin{proof}
	For $n \in \N$, we have the following.
	First, the bijection from Theorem \ref{theo:quantizations_higher_connections} yields a higher order connection $C^n$ from the $n$-quantization $q^n$.
	Next, the bijection from Theorem \ref{theo:higher_connections_are_connections} takes $C^n$ to the left connection $\nabla^{J^{n-1}_dE}$ on $J^{n-1}_d E$.
	Finally, Proposition \ref{prop:relation_jet_connection_sym_connection} constructs the desired connection $\nabla^{S^{n-1}_d E}$ from $\nabla^{J^{n-1}_d E}$.
\end{proof}

Let us also compute an explicit expression for the exterior covariant derivative and the curvature of the associated connection on $J^n_d E$, arising from Proposition \ref{prop:relation_jet_connection_sym_connection}.
These are particularly interesting as they appear in the formulation of Theorem \ref{theo:higher_connections_are_connections}.
\begin{prop}
\label{prop:ECV_and_curvature_connection_Jn}
Let $E$ in $\AMod$ be such that the $n$-jet sequence at $E$ is split exact via an $n$-connection $C^n$.
Let $\nabla^{J^n_d E}\colon J^n_d E\to \Omega^1_d J^n_d E$ be induced by a connection $\nabla^{S^n_d E}$ on $S^n_d E$ as in \eqref{eq:relation_jet_connection_sym_connection:jet}.
Then, the associate exterior covariant derivative $d_{\nabla^{J^n_d E}}\colon \Omega^m_d J^n_d\longrightarrow \Omega^{m+1}_d J^n_d$, is of the form
\begin{equation}
\label{eq:ECV_connection_Jn}
d_{\nabla^{J^n_d E}}=\Omega^{m+1}_d(C^n)\circ d_{C^n}\circ\Omega^m_d(\pi^{n,n-1}_{d,E})+\Omega^{m+1}_d (\iota^n_{d,E})\circ d_{\nabla^{S^n_d E}}\circ \Omega^m_d(\widetilde{\nabla}^n)-\Omega^{m+1}_d(C^n\circ \iota^{n-1}_{d,E})\circ \delta^{n,m}_{d,E}\circ \Omega^m_d(\widetilde{\nabla}^n).
\end{equation}
Where $d_{C^n}$ and $d_{\nabla^{S^n_d E}}$ are the exterior covariant derivatives associated to $\nabla^{C^n}$ (cf.\ Definition \ref{def:exterior_covariant_derivative_corresponding_to_higher_connection}), and $\nabla^{S^n_d E}$, respectively, and $\delta^{n,m}_{d,E}$ is the Spencer $\delta$-operator.
In other words, with respect to the splitting $J^n_d E=J^{n-1}_d E\oplus S^n_d E$ induced by $C^n$, we can write
\begin{equation}
\label{eq:ECV_connection_Jn:matrix}
	d_{\nabla^{J^n_d E}}
	=\left(\begin{array}{c|c}
	d_{C^n} & -\Omega^{m+1}_d(\iota^{n-1}_{d,E})\circ\delta^{n,m}_{d,E}\\
	\hline
	0 & d_{\nabla^{S^n_d E}}
	\end{array}\right).
\end{equation}
Its curvature is
\begin{equation}
\label{eq:curvature_connection_Jn}
\begin{split}
	R_{\nabla^{C^n}}
	&=\Omega^2_d(C^n)\circ R_{C^n}\circ \pi^{n,n-1}_{d,E}
	+\Omega^2_d (\iota^n_{d,E})\circ R_{\nabla^{S^n_d E}}\circ \widetilde{\nabla}^n\\
	&\quad-\Omega^2_d(C^n)\circ \left(d_{C^n}\circ\Omega^1_d(\iota^{n-1}_{d,E})\circ\delta^{n,0}_{d,E}+\Omega^2_d(\iota^{n-1}_{d,E})\circ\delta^{n,1}_{d,E}\circ \nabla^{S^n_d E}\right)
	\circ \widetilde{\nabla}^n,
\end{split}
\end{equation}
or in matrix form
\begin{equation}
\label{eq:curvature_connection_Jn:matrix}
R_{\nabla^{C^n}}
=\left(\begin{array}{c|c}
	R_{C^n} & -d_{C^n}\circ\Omega^1_d(\iota^{n-1}_{d,E})\circ\delta^{n,0}_{d,E}-\Omega^2_d(\iota^{n-1}_{d,E})\circ\delta^{n,1}_{d,E}\circ \nabla^{S^n_d E}\\
	\hline
	0 & R_{\nabla^{S^n_d E}}
	\end{array}\right).
\end{equation}
\end{prop}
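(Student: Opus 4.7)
The plan is to exploit the direct sum decomposition $J^n_d E \cong J^{n-1}_d E \oplus S^n_d E$ coming from $C^n$ and $\iota^n_{d,E}$ (with complementary retractions $\pi^{n,n-1}_{d,E}$ and $\widetilde{\nabla}^n$) to reduce \eqref{eq:ECV_connection_Jn}--\eqref{eq:curvature_connection_Jn:matrix} to a computation on each summand separately, re-assembling via the identity $\id_{J^n_d E} = C^n \circ \pi^{n,n-1}_{d,E} + \iota^n_{d,E} \circ \widetilde{\nabla}^n$. The matrix forms \eqref{eq:ECV_connection_Jn:matrix} and \eqref{eq:curvature_connection_Jn:matrix} are the compact statements of this computation, and the additive forms \eqref{eq:ECV_connection_Jn} and \eqref{eq:curvature_connection_Jn} are obtained by substituting the decomposition back into the matrix forms.

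For the matrix form of $d_{\nabla^{J^n_d E}}$ I would apply the Leibniz-type identity of Proposition \ref{prop:classicalexteriorderivativeformula} separately to $\omega \otimes_A C^n(\alpha)$ and to $\omega \otimes_A \iota^n_{d,E}(s)$. On the $C^n$-summand, $\widetilde{\nabla}^n \circ C^n = 0$ annihilates half of \eqref{eq:relation_jet_connection_sym_connection:jet}, and the surviving $\Omega^1_d(C^n) \circ \Spenc^{n,0}_{d,E}$ piece combines with $\nabla^{C^n} = \Spenc^{n,0}_{d,E} \circ C^n$ and Definition \ref{def:exterior_covariant_derivative_corresponding_to_higher_connection} to yield $d_{\nabla^{J^n_d E}} \circ \Omega^m_d(C^n) = \Omega^{m+1}_d(C^n) \circ d_{C^n}$. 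On the $\iota^n_{d,E}$-summand, the crucial step is the Spencer bicomplex commuting square of Proposition \ref{prop:Spencer bicomplex_is_bicomplex} at bi-degree $(n,m)$, namely $\Spenc^{n,m}_{d,E} \circ \Omega^m_d(\iota^n_{d,E}) = -\Omega^{m+1}_d(\iota^{n-1}_{d,E}) \circ \delta^{n,m}_{d,E}$; this converts the term arising from $\Omega^1_d(C^n) \circ \Spenc^{n,0}_{d,E} \circ \iota^n_{d,E}$ into the $\delta^{n,m}$-contribution, while the $\nabla^{S^n_d E}$-piece immediately delivers $\Omega^{m+1}_d(\iota^n_{d,E}) \circ d_{\nabla^{S^n_d E}}$.

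For the curvature, I would use $R_{\nabla^{J^n_d E}} = d_{\nabla^{J^n_d E}} \circ \nabla^{J^n_d E}$ (as in Proposition \ref{prop:higher_connection_curvature_is_curvature}) and apply the degree-$1$ matrix form from the previous paragraph to $\nabla^{J^n_d E}(\xi)$ on each summand. On the $C^n$-summand the only surviving term is $\Omega^2_d(C^n) \circ d_{C^n} \circ \nabla^{C^n}$, which equals $\Omega^2_d(C^n) \circ R_{C^n}$ by Proposition \ref{prop:higher_connection_curvature_is_curvature}. On the $\iota^n_{d,E}$-summand, first rewriting $\nabla^{J^n_d E}(\iota^n_{d,E}(s))$ via the Spencer bicomplex as $-\Omega^1_d(C^n \circ \iota^{n-1}_{d,E})(\delta^{n,0}_{d,E}(s)) + \Omega^1_d(\iota^n_{d,E})(\nabla^{S^n_d E}(s))$, and then applying the degree-$1$ matrix form, produces the remaining three terms of \eqref{eq:curvature_connection_Jn}: $\Omega^2_d(\iota^n_{d,E}) \circ R_{\nabla^{S^n_d E}}(s)$ (from $d_{\nabla^{S^n_d E}} \circ \nabla^{S^n_d E}$), together with the two cross-terms $-\Omega^2_d(C^n) \circ d_{C^n} \circ \Omega^1_d(\iota^{n-1}_{d,E}) \circ \delta^{n,0}_{d,E}(s)$ and $-\Omega^2_d(C^n \circ \iota^{n-1}_{d,E}) \circ \delta^{n,1}_{d,E}(\nabla^{S^n_d E}(s))$ emerging from the top-right entry of the matrix in degree $1$.

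The main technical obstacle is the bookkeeping around the Spencer $\delta$-operators at general form degree: formula \eqref{eq:ECV_connection_Jn} involves $\delta^{n,m}_{d,E}$ for every $m$, not merely the degree-$0$ instance that would appear by naively wedging $\omega$ with $\Spenc^{n,0}_{d,E}(\iota^n_{d,E}(s))$. The general-$m$ bicomplex identity $\Spenc^{n,m}_{d,E} \circ \Omega^m_d(\iota^n_{d,E}) = -\Omega^{m+1}_d(\iota^{n-1}_{d,E}) \circ \delta^{n,m}_{d,E}$, applied as a single invocation of Proposition \ref{prop:Spencer bicomplex_is_bicomplex} rather than iteratively, absorbs the wedge with $\omega$ inside $\delta^{n,m}_{d,E}$, after which the remainder is sign bookkeeping and functoriality of $\Omega^{\bullet}_d$ applied to $C^n$, $\iota^n_{d,E}$, $\pi^{n,n-1}_{d,E}$ and $\widetilde{\nabla}^n$.
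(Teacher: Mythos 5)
Your proposal is correct and follows essentially the same route as the paper: both rest on the Leibniz rule of Proposition \ref{prop:classicalexteriorderivativeformula}, the splitting identity $\id_{J^n_d E}=C^n\circ\pi^{n,n-1}_{d,E}+\iota^n_{d,E}\circ\widetilde{\nabla}^n$ together with naturality of $\wedge$, the bicomplex square $\Spenc^{n,m}_{d,E}\circ\Omega^m_d(\iota^n_{d,E})=-\Omega^{m+1}_d(\iota^{n-1}_{d,E})\circ\delta^{n,m}_{d,E}$ of Proposition \ref{prop:Spencer bicomplex_is_bicomplex}, and then the curvature by composing the degree-$1$ and degree-$0$ matrix forms (the paper writes this as squaring the matrix, you as evaluating $d_{\nabla^{J^n_d E}}\circ\nabla^{J^n_d E}$ columnwise, which is the same computation). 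The only cosmetic difference is that you precompose with $\Omega^m_d(C^n)$ and $\Omega^m_d(\iota^n_{d,E})$ from the start, whereas the paper first derives $d_{\nabla^{J^n_d E}}=\Omega^{m+1}_d(\iota^n_{d,E})\circ d_{\nabla^{S^n_d E}}\circ\Omega^m_d(\widetilde{\nabla}^n)+\Omega^{m+1}_d(C^n)\circ\Spenc^{n,m}_{d,E}$ and then splits the Spencer operator.
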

\begin{proof}
We prove the formula for the exterior covariant derivative explicitly on an element $\omega\otimes_A \xi\in \Omega^m_d J^n_d E$
\begin{equation}
\label{eq:ECV_and_curvature_connection_Jn}
\begin{split}
	&d_{\nabla^{J^n_d E}}(\omega\otimes_A \xi)\\
	&\qquad=d\omega\otimes_A \xi+(-1)^{\deg(\omega)}\omega\wedge \nabla^{J^n_d E}(\xi)\\
	&\qquad=d\omega\otimes_A \left(\iota^n_{d,E}\circ \widetilde{\nabla}^n+C^n\circ \pi^{n,n-1}_{d,E}\right)(\xi)\\
	&\qquad\quad+ (-1)^{\deg(\omega)}\omega\wedge \Omega^1_d(C^n) \circ \Spenc^{n,0}_{d,E}(\xi) + (-1)^{\deg(\omega)}\omega\wedge \Omega^1_d(\iota^n_{d,E}) \circ \nabla^{S^n_d E} \circ \widetilde{\nabla}^n(\xi)\\
	&\qquad=\Omega^{m+1}(\iota^n_{d,E})\left(d\omega\otimes_A \widetilde{\nabla}^n(\xi)\right)+\Omega^{m+1}(C^n)\left( d\omega\otimes_A \pi^{n,n-1}_{d,E}(\xi)\right)\\
	&\qquad\quad + (-1)^{\deg(\omega)}\Omega^{m+1}_d(C^n) \left(\omega\wedge \Spenc^{n,0}_{d,E}(\xi)\right) + (-1)^{\deg(\omega)}\Omega^{m+1}_d(\iota^n_{d,E}) \left(\omega\wedge \nabla^{S^n_d E} \circ \widetilde{\nabla}^n(\xi)\right)\\
	&\qquad=\Omega^{m+1}(\iota^n_{d,E})\left(d\omega\otimes_A \widetilde{\nabla}^n(\xi)+ (-1)^{\deg(\omega)}\omega\wedge \nabla^{S^n_d E} \circ \widetilde{\nabla}^n(\xi)\right)\\
	&\qquad\quad +\Omega^{m+1}(C^n)\left( d\omega\otimes_A \pi^{n,n-1}_{d,E}(\xi)+ (-1)^{\deg(\omega)}\omega\wedge \Spenc^{n,0}_{d,E}(\xi)\right)\\
	&\qquad=\Omega^{m+1}(\iota^n_{d,E})\circ d_{\nabla^{S^n_d E}} (\omega\otimes_A \widetilde{\nabla}^n(\xi))
	+\Omega^{m+1}(C^n)\circ \Spenc^{n,m}_{d,E}(\omega\otimes_A \xi)\\
	&\qquad=\left(\Omega^{m+1}(\iota^n_{d,E})\circ d_{\nabla^{S^n_d E}}\circ \Omega^m_d(\widetilde{\nabla}^n)
	+\Omega^{m+1}(C^n)\circ \Spenc^{n,m}_{d,E}\right)(\omega\otimes_A \xi)
\end{split}
\end{equation}
The equalities follow from the Leibniz rule, cf.\ \eqref{eq:exterior_derivative_is_as_expected}, the splitting of $J^n_d E$, the naturality of $\wedge$ with respect to $\iota^n_{d,E}$ and $C^n$, and Proposition \ref{prop:Spencercorrespondence}.
Thanks to the splitting given by $C^n$, we have
\begin{equation}
\begin{split}
	\Spenc^{n,m}_{d,E}
	&=\Spenc^{n,m}_{d,E}\circ \Omega^m_d(C^n\circ \pi^{n,n-1}_{d,E}+\iota^n_{d,E}\circ \widetilde{\nabla}^n)\\
	&=\Spenc^{n,m}_{d,E}\circ \Omega^m_d(C^n)\circ \Omega^m_d(\pi^{n,n-1}_{d,E})+\Spenc^{n,m}_{d,E}\circ \Omega^m_d(\iota^n_{d,E})\circ \Omega^m_d(\widetilde{\nabla}^n)\\
	&=d_{C^n}\circ \Omega^m_d(\pi^{n,n-1}_{d,E})-\Omega^{m+1}_d(\iota^{n-1}_{d,E})\circ\delta^{n,m}_{d,E} \circ \Omega^m_d(\widetilde{\nabla}^n).
\end{split}
\end{equation}
Substituting this result into \eqref{eq:ECV_and_curvature_connection_Jn}, we obtain the desired formula \eqref{eq:ECV_connection_Jn} and the corresponding matrix form \eqref{eq:ECV_connection_Jn:matrix}, where we split $\Omega^m_d J^n_d E=\Omega^m_d J^{n-1}_d E\oplus \Omega^m_d S^n_d E$ using $C^n$.

In order to compute the curvature, one can use the explicit formula, but for convenience, we will use \eqref{eq:ECV_connection_Jn:matrix}.
The curvature is thus obtained as follows
\begin{equation}
\begin{split}
	R_{\nabla^{J^n_d E}}
	&=(d_{\nabla^{J^n_d E}})^2\\
	&=\left(\begin{array}{c|c}
	d_{C^n} & -\Omega^2_d(\iota^{n-1}_{d,E})\circ\delta^{n,1}_{d,E}\\
	\hline
	0 & d_{\nabla^{S^n_d E}}
	\end{array}\right)
	\circ
	\left(\begin{array}{c|c}
	d_{C^n} & -\Omega^1_d(\iota^{n-1}_{d,E})\circ\delta^{n,0}_{d,E}\\
	\hline
	0 & d_{\nabla^{S^n_d E}}
	\end{array}\right)\\
	&=\left(\begin{array}{c|c}
	d^2_{C^n} & -d_{C^n}\circ\Omega^1_d(\iota^{n-1}_{d,E})\circ\delta^{n,0}_{d,E}-\Omega^2_d(\iota^{n-1}_{d,E})\circ\delta^{n,1}_{d,E}\circ d_{\nabla^{S^n_d E}}\\
	\hline
	0 & d^2_{\nabla^{S^n_d E}}
	\end{array}\right)\\
	&=\left(\begin{array}{c|c}
	R_{C^n} & -d_{C^n}\circ\Omega^1_d(\iota^{n-1}_{d,E})\circ\delta^{n,0}_{d,E}-\Omega^2_d(\iota^{n-1}_{d,E})\circ\delta^{n,1}_{d,E}\circ d_{\nabla^{S^n_d E}}\\
	\hline
	0 & R_{\nabla^{S^n_d E}}
	\end{array}\right)
\end{split}
\end{equation}
This automatically gives \eqref{eq:curvature_connection_Jn}.
\end{proof}
\begin{rmk}
From the proof of Proposition \ref{prop:ECV_and_curvature_connection_Jn}, we obtain that in the presence of an $n$-connection $C^n$, the Spencer operator can be written as follows
\begin{equation}
	\Spenc^{n,m}_{d,E}
	=d_{C^n}\circ \Omega^m_d(\pi^{n,n-1}_{d,E})-\Omega^{m+1}_d(\iota^{n-1}_{d,E})\circ\delta^{n,m}_{d,E} \circ \Omega^m_d(\widetilde{\nabla}^n),
\end{equation}
or in matrix notation given by the splitting $\Omega^m_d J^n_d E=\Omega^m_d J^{n-1}_d E\oplus \Omega^m_d S^n_d E$ induced by $\Omega^m_d (C^n)$:
\begin{equation}
\Spenc^{n,m}_{d,E}
=\left(\begin{array}{c|c}
	d_{C^n} & -\Omega^{m+1}_d(\iota^{n-1}_{d,E})\circ\delta^{n,m}_{d,E}
	\end{array}\right).
\end{equation}
\end{rmk}

\subsubsection{Constructing connections on modules of symmetric forms}

\begin{prop}
\label{prop:cooking_up_connections_on_symmetric_forms}
	Consider a family of retractions $\Sym^{1,n}\colon \Omega^1_d S^n_d\twoheadrightarrow S^{n+1}_d$ of $\iota^{n+1}_\wedge$ in $\AModA$ for $n\in \N$.
	Suppose we have a bimodule connection $\nabla^{\Omega^1_d}$ on $\Omega^1_d$.
	Let $E$ be in $\AMod$, and equipped with a left connection $\nabla^E$.
	Then we have a family of left connections $\nabla^{S^n_d E}\colon S^n_d E \rightarrow \Omega^1_d S^n_d E$ for $n\in \N$.
\end{prop}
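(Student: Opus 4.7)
My plan is to construct the family $\{\nabla^{S^n_d E}\}_{n \in \N}$ by induction on $n$. The base case $n = 0$ is immediate, since $S^0_d E$ is canonically identified with $E$, so I would set $\nabla^{S^0_d E} \colonequals \nabla^E$. The induction step will reduce the construction of $\nabla^{S^n_d E}$ to the existence of $\nabla^{S^{n-1}_d E}$, the bimodule connection $\nabla^{\Omega^1_d}$, and the retraction data.

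For the inductive step, assume I have a left connection $\nabla^{S^{n-1}_d E}$. The first move is to build a left connection on $\Omega^1_d \otimes_A S^{n-1}_d E = \Omega^1_d S^{n-1}_d E$ via the standard tensor construction available whenever the left tensorand carries a bimodule connection: if $\sigma_{\Omega^1_d}\colon \Omega^1_d \otimes_A \Omega^1_d \to \Omega^1_d \otimes_A \Omega^1_d$ is the braiding associated to $\nabla^{\Omega^1_d}$, one sets
\begin{equation*}
\nabla^{\otimes}(\alpha \otimes_A \xi) \colonequals \nabla^{\Omega^1_d}(\alpha) \otimes_A \xi + (\sigma_{\Omega^1_d} \otimes_A \id_{S^{n-1}_d E})(\alpha \otimes_A \nabla^{S^{n-1}_d E}(\xi)).
\end{equation*}
The second move is to transport $\nabla^{\otimes}$ to a left connection on $S^n_d E$ using the section-retraction pair $(\iota^n_{\wedge,E}, \Sym^{1,n-1}_E)$ provided by hypothesis, by defining
\begin{equation*}
\nabla^{S^n_d E} \colonequals (\id_{\Omega^1_d} \otimes_A \Sym^{1,n-1}_E) \circ \nabla^{\otimes} \circ \iota^n_{\wedge,E}.
\end{equation*}

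To verify that $\nabla^{S^n_d E}$ is actually a left connection, I would appeal to \cite[\symbolspropsymbolsofconnections]{Symbol} and check either the Leibniz rule directly or the restriction symbol condition. The direct computation uses only left $A$-linearity of $\iota^n_{\wedge,E}$, the Leibniz rule for $\nabla^{\otimes}$, left $A$-linearity of $\Sym^{1,n-1}_E$, and finally the identity $\Sym^{1,n-1}_E \circ \iota^n_{\wedge,E} = \id_{S^n_d E}$; these ingredients combine to give $\nabla^{S^n_d E}(a\xi) = da \otimes_A \xi + a \nabla^{S^n_d E}(\xi)$ as required.

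The principal difficulty is bookkeeping rather than conceptual: one must check that $\nabla^{\otimes}$ descends to the balanced tensor product $\Omega^1_d \otimes_A S^{n-1}_d E$, and this is precisely where the hypothesis that $\nabla^{\Omega^1_d}$ is a \emph{bimodule} connection (as opposed to merely a left connection) is essential, the braiding $\sigma_{\Omega^1_d}$ encoding the required compatibility with the right $A$-action. Likewise, the $A$-$A$-bilinearity of $\Sym^{1,n-1}_E$ is what guarantees that $\id_{\Omega^1_d} \otimes_A \Sym^{1,n-1}_E$ is a well-defined left $A$-module map on the appropriate tensor product over $A$.
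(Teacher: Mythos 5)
Your proposal is correct and follows essentially the same route as the paper: induction on $n$, the tensor-product left connection on $\Omega^1_d S^{n-1}_d E$ built from the bimodule connection (the paper cites \cite[Theorem~3.78, p.~258]{BeggsMajid} where you write the braiding formula explicitly), and transport along the section--retraction pair $(\iota^n_{\wedge,E},\Sym^{1,n-1})$ to define $\nabla^{S^n_d E}$ as the same composite. The only cosmetic differences are that you verify the Leibniz rule directly while the paper checks the restriction symbol via \cite[\symbolspropsymbolsofconnections]{Symbol}, and you treat $n=1$ inside the uniform inductive step rather than separately.
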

\begin{proof}
	We proceed by induction on $n$.
	For $n=0$, the desired connection on $S^0_d E=E$ is $\nabla^E$.
	For the case $n=1$, we have $S^1_d E = \Omega^1_d E$.
	We recall that, since $\nabla^{\Omega^1_d}$ is a bimodule connection, we obtain a left connection on any tensor module $\Omega^1_d F$ for an $F$ in $\AMod$ equipped with a left connection $\nabla^F$, cf.\ \cite[Theorem~3.78, p.~258]{BeggsMajid} \textit{mutatis mutandis}.
	The left connection on $\Omega^1_d E$ is thus of this form.
	
	Next, for $n\ge 2$, by inductive hypothesis, we assume that we have a left connection $\nabla^{S^{n-1}_d E}$ on $S^{n-1}_d E$.
	Then, by the aforementioned argument, we also have a left connection $\nabla^{\Omega^1_d S^{n-1}_dE}$ on $\Omega^1_d S^{n-1}_d E$.
	We define $\nabla^{S^n_d E}$ as the following composition:
	\begin{equation}
		\begin{tikzcd}[column sep=60pt]
			{S^n_d E} & {\Omega^1_d S^{n-1}_dE} & {\Omega^1_d\Omega^1_d S^{n-1}_dE} &[20pt] {\Omega^1_d S^n_dE}
			\arrow["{\iota^n_{\wedge,E}}", from=1-1, to=1-2]
			\arrow["{\nabla^{\Omega^1_d S^{n-1}_dE}}", from=1-2, to=1-3]
			\arrow["{\Omega^1_d(\Sym^{1,n-1}\otimes_A \id_E)}", from=1-3, to=1-4]
		\end{tikzcd}
	\end{equation}
	Let us show that $\nabla^{S^n_dE}$ is a left connection on $S^n_d E$.
	This is a composition of three differential operators, two of order $0$ and one of order $1$, and as such it is a differential operator of order at most $1$.
	We prove that it is a left connection by showing that its restriction symbol is $\id_{\Omega^1_d S^n_d E}$, cf.\ \cite[\symbolspropsymbolsofconnections]{Symbol}.
	We do so via the following diagram
	\begin{equation}
	\label{diag:whale}
		\begin{tikzcd}[column sep=70pt, row sep=30pt]
			\Omega^1_d S^n_d E\ar[rrrd,bend left=20pt,equals] \ar[r,"\Omega^1_d(\iota^n_{\wedge,E})"']\ar[d,"\iota^1_{d,S^n_d E}"]& \Omega^1_d \Omega^1_d S^{n-1}_dE\ar[d,"\iota^1_{d,\Omega^1_d S^{n-1}_d E}"']\ar[dr,bend left=20pt,equals]\\
			J^1_d S^n_d E\ar[r,"J^1_d (\iota^n_{\wedge,E})"]\ar[rrr,bend right=10pt,"\widetilde{\nabla}^{S^n_d E}"']& J^1_d \Omega^1_d S^{n-1}_dE\ar[r,"\widetilde{\nabla}^{\Omega^1_d S^{n-1}_dE}"]&\Omega^1_d \Omega^1_d S^{n-1}_dE\ar[r,"\Omega^1_d (\Sym^{1,n-1}\otimes_A \id_E)"]&[20pt]\Omega^1_d S^n_d E
		\end{tikzcd}
	\end{equation}
	The commutativity of the bottom square is a consequence of the construction of a lift for a composition of differential operators, cf.\ \cite[\jetspropdifferentialoperatorcomposition]{FMW}.
	The left square commutes by the naturality of $\iota^1_d$ with respect to $\iota^n_{\wedge,E}$.
	The central triangle commutes because $\nabla^{\Omega^1_d S^{n-1}_d E}$ is a connection, and the top square commutes because $\Sym^{1,n-1}$ is a retract of $\iota^n_\wedge$.
	It follows that the exterior triangle in \eqref{diag:whale} commutes.
	Hence the restriction symbol of $\nabla^{S^n_d E}$ is $\id_{\Omega^1_d S^n_d E}$.
\end{proof}

We can now prove the following result generalizing \cite[Corollary, p.~90]{PalaisVectorBundles} and \cite[Theorem~11, p.~18]{Lychagin1999}.
\begin{cor}
\label{cor:generalization_of_classical_quantization}
Let $E$ in $\AMod$ be such that $J^n_d E\cong \EJ^n_d E$ and such that the $n$-jet sequence at $E$ is right exact, for all $n\in \N$.
Suppose that we have a family of retractions $\Sym^{1,n}$ of $\iota^{n+1}_\wedge$ in $\AModA$ for all $n\in \N$ and a bimodule connection $\nabla^{\Omega^1_d}$ on $\Omega^1_d$.
Then, for each left connection $\nabla^E$ on $E$ there is an induced natural full quantization $q$ for $E$.
\end{cor}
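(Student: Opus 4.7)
The plan is to chain together two results already established in the paper: Proposition \ref{prop:cooking_up_connections_on_symmetric_forms} to manufacture the necessary connections on each $S^n_d E$ from the single datum $(\nabla^{\Omega^1_d}, \nabla^E)$ together with the retractions $\Sym^{1,n}$, and then Corollary \ref{cor:full_quantisation} to assemble these connections into a natural full quantization.

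More precisely, I would first observe that the hypotheses of Proposition \ref{prop:cooking_up_connections_on_symmetric_forms} are exactly satisfied: we have the family $\Sym^{1,n}\colon \Omega^1_d S^n_d \twoheadrightarrow S^{n+1}_d$ of retractions of $\iota^{n+1}_\wedge$ in $\AModA$, the bimodule connection $\nabla^{\Omega^1_d}$ on $\Omega^1_d$, and the left connection $\nabla^E$ on $E$. Applying the proposition therefore produces, by induction on $n$, a family of left connections $\nabla^{S^n_d E}\colon S^n_d E \to \Omega^1_d S^n_d E$ for every $n \in \N$.

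Next, I would verify that the hypotheses of Corollary \ref{cor:full_quantisation} (its second, unbounded-$n$ version) now hold for $E$. By assumption, $J^n_d E \cong \EJ^n_d E$ and the $n$-jet sequence at $E$ is right exact for all $n$; the family $\nabla^{S^n_d E}$ just constructed provides the connection on $S^n_d E$, and the hypothesized $\Sym^{1,n}$ provides the required left splitting of $\iota^{n+1}_{\wedge,E}$ (the latter is in $\AModA$, but we only need its underlying left $A$-linear retraction). Applying Corollary \ref{cor:full_quantisation} then yields the desired natural full quantization $q$ for $E$, whose explicit form on each graded piece is given by \eqref{eq:explicit_quantization_with_connections_on_Sn}.

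There is essentially no obstacle beyond bookkeeping, since the two ingredients have been tailored to slot together; the only point worth remarking in the write-up is that the retractions $\Sym^{1,n}$, which live in $\AModA$, are used in Proposition \ref{prop:cooking_up_connections_on_symmetric_forms} through their $A$-bilinear character (to transport the bimodule connection via the tensor-product construction \cite[Theorem~3.78, p.~258]{BeggsMajid}), whereas in Corollary \ref{cor:full_quantisation} only their underlying left $A$-linear splitting role is required, so no compatibility issue arises between the two usages. The proof is therefore a direct invocation of the two cited results.
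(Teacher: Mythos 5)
Your proposal is correct and matches the paper's own proof, which is exactly the combination of Proposition \ref{prop:cooking_up_connections_on_symmetric_forms} (to produce the connections $\nabla^{S^n_d E}$ from $\nabla^{\Omega^1_d}$, $\nabla^E$, and the retractions $\Sym^{1,n}$) with Corollary \ref{cor:full_quantisation}. The additional remarks on how the $A$-bilinear retractions are used in each ingredient are fine but not needed.
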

\begin{proof}
It follows from Corollary \ref{cor:full_quantisation} together with Proposition \ref{prop:cooking_up_connections_on_symmetric_forms}.
\end{proof}

\subsection{Total symbols}
\label{s:Total_symbols}
In this section we will define the notion of total symbol for a differential operator, which we can interpret as a way to decompose a differential operator in components of homogeneous order.
In the classical setting of microlocal analysis, total symbols are obtained as functions on the cotangent space, via Fourier inversion of differential operators, cf.\ \cite[Example~3.1, p.~27]{microlocaldiffop}.
In order for total symbols to be well-defined in general, one needs a full quantization $q$ for $(E,F)$.
First, we give the following auxiliary definition.
\begin{defi}
\label{def:DOtruncation}
	Let $\Delta$ be a linear differential operator of order at most $n$.
	For all $k\in\N$, we define the \emph{$k$-truncation of $\Delta$ (induced by $q$)}, denoted by $\trunc{\Delta}^k_q$, as follows.
	Let $\trunc{\Delta}^k_q = \Delta$ for all $k\ge n$, and recursively define
	\begin{equation}
		\trunc{\Delta}^k_q
		= \trunc{\Delta}^{k+1}_q -q^{k+1}\circ \symb^{k+1}_{d,E,F}(\trunc{\Delta}^{k+1}_q)
	\end{equation}
	for $0 \le k < n$.
\end{defi}
In the following proposition, we prove that this definition is independent of $n$, and hence, the definition of $k$-truncation can be extended to $\Diff_d(E,F)$.
\begin{prop}
\label{prop:truncation_well_defined}
Let $q$ be a full quantization for $(E,F)$.
Then,
\begin{enumerate}
\item\label{prop:truncation_well_defined:1} Let $m\le n\in \N$, and let $\Delta\in \Diff^m_d (E,F)\subseteq \Diff^n_d(E,F)$.
Then, the $k$-truncation of $\Delta$ seen as a differential operator of order at most $m$ coincides with the $k$-truncation of $\Delta$ seen as a differential operator of order at most $n$.
\item\label{prop:truncation_well_defined:2} In particular, if $\Delta$ is a differential operator of order exactly $n$, we have $\trunc{\Delta}^k_q=\Delta$ for all $k\ge n$.
\item\label{prop:truncation_well_defined:3} For all $\Delta\in \Diff_q(E,F)$, we have $\trunc{\Delta}^k_q\in\Diff^k_d(E,F)$.
\item\label{prop:truncation_well_defined:4}
For $\Delta\in \Diff_d(E,F)$, we have $\trunc{\trunc{\Delta}^k_q}^h_q=\trunc{\Delta}^{\min(h,k)}_q$, for all $h, k\in\N$.
\item\label{prop:truncation_well_defined:5}
For $\Delta\in \Diff_d(E,F)$, if $\trunc{\Delta}^k_q=0$, then $\trunc{\Delta}^h_q=0$, for all $h\le k$.
\end{enumerate}
\end{prop}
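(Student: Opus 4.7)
The engine behind all five parts is the splitting identity $\symb^{k+1}_{d,E,F}\circ q^{k+1}=\id$ combined with exactness of the symbol sequence \eqref{eq:symbolexactsequence}, namely that $\Delta\in\Diff^{k+1}_d(E,F)$ with $\symb^{k+1}_{d,E,F}(\Delta)=0$ lies in $\Diff^k_d(E,F)$. I would therefore prove (iii) first and then deduce everything else from it combined with a bootstrapping argument for (i).

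For (iii), fix $n$ with $\Delta\in\Diff^n_d(E,F)$ and argue by descending induction on $k$, starting from $\trunc{\Delta}^n_q=\Delta\in\Diff^n_d(E,F)$. If $\trunc{\Delta}^{k+1}_q\in\Diff^{k+1}_d(E,F)$, then the recursion writes $\trunc{\Delta}^k_q$ as a sum of elements of $\Diff^{k+1}_d(E,F)$; applying $\symb^{k+1}_{d,E,F}$ and using $\symb^{k+1}_{d,E,F}\circ q^{k+1}=\id$ gives $\symb^{k+1}_{d,E,F}(\trunc{\Delta}^k_q)=0$, so exactness of \eqref{eq:symbolexactsequence} places $\trunc{\Delta}^k_q$ inside $\Diff^k_d(E,F)$. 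Part (ii) is immediate: for $k\ge n$ the defining clause gives $\trunc{\Delta}^k_q=\Delta$ whatever starting order we pick.

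For (i), write $\trunc{\Delta}^{k}_{q,n}$ for the truncation computed viewing $\Delta$ as a differential operator of order at most $n$. The case $k\ge n\ge m$ is tautological. For $m\le k<n$ I would prove by descending induction that $\trunc{\Delta}^k_{q,n}=\Delta$: the inductive step rests on $\symb^{k+1}_{d,E,F}(\Delta)=0$, which holds because $\Delta$ factors through $\Diff^m_d(E,F)\hookrightarrow\Diff^{k+1}_d(E,F)$ and exactness of \eqref{eq:symbolexactsequence} forces the symbol in degree $k+1>m$ to vanish. In particular $\trunc{\Delta}^m_{q,n}=\Delta=\trunc{\Delta}^m_{q,m}$, and then an easy induction on $k<m$ using that the recursion $\trunc{\Delta}^k_q=\trunc{\Delta}^{k+1}_q-q^{k+1}\circ\symb^{k+1}_{d,E,F}(\trunc{\Delta}^{k+1}_q)$ depends only on $\trunc{\Delta}^{k+1}_q$ propagates the equality down. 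This independence justifies dropping the subscript $n$ and treating $\trunc{\cdot}^k_q$ as a well-defined operation on $\Diff_d(E,F)$.

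Parts (iv) and (v) are then formal consequences. For (iv), if $h\ge k$ then (iii) places $\trunc{\Delta}^k_q$ in $\Diff^k_d(E,F)$, so (i) applied at the order of $\trunc{\Delta}^k_q$ gives $\trunc{\trunc{\Delta}^k_q}^h_q=\trunc{\Delta}^k_q=\trunc{\Delta}^{\min(h,k)}_q$. If $h<k$ I would descend on $h$: the case $h=k-1$ unfolds the recursion once, and assuming $\trunc{\trunc{\Delta}^k_q}^{h+1}_q=\trunc{\Delta}^{h+1}_q$ the defining recursion yields $\trunc{\trunc{\Delta}^k_q}^h_q=\trunc{\Delta}^{h+1}_q-q^{h+1}\circ\symb^{h+1}_{d,E,F}(\trunc{\Delta}^{h+1}_q)=\trunc{\Delta}^h_q$. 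Then (v) is instantaneous: $\trunc{\Delta}^h_q=\trunc{\Delta}^{\min(h,k)}_q=\trunc{\trunc{\Delta}^k_q}^h_q=\trunc{0}^h_q=0$. The only genuine subtlety, and the step I would write out with most care, is the well-posedness argument in (i): one must check that when $\Delta$ is reinterpreted at a higher nominal order, the finitely many extra recursion steps all act as the identity because the corresponding symbols vanish.
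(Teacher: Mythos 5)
Your proposal is correct and follows essentially the same route as the paper: descending inductions powered by the splitting identity $\symb^{k+1}_{d,E,F}\circ q^{k+1}=\id$ and the fact that operators in $\Diff^k_d(E,F)$ have vanishing $(k+1)$-symbol. The only differences are cosmetic — you prove (iii) before (i) and derive (v) from (iv) rather than directly from the recursion — and both orderings are sound.
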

\begin{proof}\
\begin{enumerate}
\item The statement is tautologically true for $k\ge n$ by definition.
For all $k$ such that $m\le k< n$, we need to show that $\trunc{\Delta}^k_q=\Delta$.
By the previous point, we know that this is true for $k=n$, so we proceed by induction.
We assume by inductive hypothesis on $k<n$ that $\trunc{\Delta}^{k+1}_q=\Delta$.
Then, $\symb^{k+1}_{d,E,F}(\trunc{\Delta}^{k+1}_q)=\symb^{k+1}_{d,E,F}(\Delta)$, but since $\Delta\in\Diff^m_d (E,F)\subseteq \Diff^k_d (E,F)$, we have $\symb^{k+1}_{d,E,F}(\Delta)=0$.
Since $q^{k+1}$ is $A$-bilinear, it follows by definition that $\trunc{\Delta}^k_q=\trunc{\Delta}^{k+1}_q-0=\Delta$.
The statement is now tautologically true also for $k<m$.
\item It follows as a direct consequence of \eqref{prop:truncation_well_defined:1}.
\item Suppose that $\Delta$ has order at most $n$.
We prove the statement by induction on $k$.
For $k\ge n$, $\trunc{\Delta}^k_q=\Delta\in\Diff^n_d(E,F)\subseteq \Diff^k_d(E,F)$.
Now, let $0\le k<n$ and assume by inductive hypothesis that $\trunc{\Delta}^{k+1}_q\in\Diff^{k+1}_d(E,F)$, then
\begin{equation}
	\trunc{\Delta}^k_q
		= \trunc{\Delta}^{k+1}_q -q^{k+1}\circ \symb^{k+1}_{d,E,F}(\trunc{\Delta}^{k+1}_q)\in\Diff^{k+1}_d(E,F).
\end{equation}
We prove that $\Delta^{(k)}$ has order at most $k$ by proving that its $(k+1)$-symbol vanishes.
We obtain
\begin{equation}
\begin{split}
	\symb^{k+1}_{d,E,F}(\trunc{\Delta}^k_q)
		&= \symb^{k+1}_{d,E,F}(\trunc{\Delta}^{k+1}_q -q^{k+1}\circ \symb^{k+1}_{d,E,F}(\trunc{\Delta}^{k+1}_q))\\
		&= \symb^{k+1}_{d,E,F}(\trunc{\Delta}^{k+1}_q) -\symb^{k+1}_{d,E,F}(q^{k+1}( \symb^{k+1}_{d,E,F}(\trunc{\Delta}^{k+1}_q)))\\
		&= \symb^{k+1}_{d,E,F}(\trunc{\Delta}^{k+1}_q) -\symb^{k+1}_{d,E,F}(\trunc{\Delta}^{k+1}_q)\\
		&=0.
	\end{split}
\end{equation}
\item For $h\ge k$, since $\trunc{\Delta}^k_q\in\Diff^k_d(E,F)$ by \eqref{prop:truncation_well_defined:3}, we obtain that $\trunc{\trunc{\Delta}^k_q}^h_q=\trunc{\Delta}^k_q$ by definition of $h$-truncation.
For $h<k$, we proceed by induction on $h$.
Since $\min(h+1,k)=h+1$, we obtain
\begin{equation}
		\trunc{\trunc{\Delta}^k_q}^h_q
		= \trunc{\trunc{\Delta}^k_q}^{h+1}_q -q^{h+1}\circ \symb^{h+1}_{d,E,F}(\trunc{\trunc{\Delta}^k_q}^{h+1}_q)
		= \trunc{\Delta}^{h+1}_q -q^{h+1}\circ \symb^{h+1}_{d,E,F}(\trunc{\Delta}^{h+1}_q)
		=\trunc{\Delta}^h_d.
	\end{equation}
\item It follows from the definition by the linearity of $q$.
\qedhere
\end{enumerate}
\end{proof}
Throughout this section, in order to simplify the notation, we will denote the $k$-symbol of the $k$-truncation of a linear differential operator $\Delta\colon E\to F$ of finite order as $[\Delta]^k_q = \symb^k_{d,E,F}(\trunc{\Delta}^k_q)$.

\begin{rmk}
\label{rmk:total_symbol_vanishing_for_high_k}
Given $\Delta\in\Diff^n_d(E,F)$, then $[\Delta]^k_q =0$ for $k> n$.
\end{rmk}
When the differential operator comes from an $n$-quantization, we can say more via the following lemma.
\begin{lemma}
\label{lemma:total_symbols_of_pure_quantization}
For all $\sigma\in\Symb^n_d(E,E)$, we have $[q^n(\sigma)]^k_q = \delta^{n,k}\sigma$, where $\delta^{n,k}$ is the Kronecker delta.
\end{lemma}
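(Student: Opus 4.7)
The plan is to split the argument into three cases according to whether $k$ is greater than, equal to, or less than $n$, each of which follows quickly from the machinery of Proposition \ref{prop:truncation_well_defined} together with the defining property of $q^n$ as a section of $\symb^n_{d,E,E}$.

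First I would dispose of the easy case $k>n$. Since $q^n(\sigma)\in\Diff^n_d(E,E)$, Remark \ref{rmk:total_symbol_vanishing_for_high_k} gives $[q^n(\sigma)]^k_q=0$, which matches $\delta^{n,k}\sigma=0$. Next, for $k=n$, Proposition \ref{prop:truncation_well_defined}.\eqref{prop:truncation_well_defined:2} (or equivalently \eqref{prop:truncation_well_defined:1}) shows that $\trunc{q^n(\sigma)}^n_q=q^n(\sigma)$, and then the equality $\symb^n_{d,E,E}\circ q^n=\id_{\Symb^n_d(E,E)}$ from the definition of an $n$-quantization yields
\begin{equation}
[q^n(\sigma)]^n_q=\symb^n_{d,E,E}(q^n(\sigma))=\sigma,
\end{equation}
as required.

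The remaining case $k<n$ is the crux, and it reduces to a single step of the recursion in Definition \ref{def:DOtruncation}. Starting from $\trunc{q^n(\sigma)}^n_q=q^n(\sigma)$, the recursive formula gives
\begin{equation}
\trunc{q^n(\sigma)}^{n-1}_q=q^n(\sigma)-q^n\bigl(\symb^n_{d,E,E}(q^n(\sigma))\bigr)=q^n(\sigma)-q^n(\sigma)=0,
\end{equation}
again by the section property of $q^n$. Then I would invoke Proposition \ref{prop:truncation_well_defined}.\eqref{prop:truncation_well_defined:5} to propagate this vanishing downwards, obtaining $\trunc{q^n(\sigma)}^k_q=0$ for every $k\le n-1$, whence $[q^n(\sigma)]^k_q=0=\delta^{n,k}\sigma$.

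Honestly, there is no serious obstacle here: the statement is essentially a compatibility check between the recursive truncation and the fact that $q^n$ is an $A$-bilinear section of the symbol map, and all three cases are direct applications of earlier results. The only place to be slightly careful is in the case $k=n$, where one must observe that $\trunc{\cdot}^n_q$ acts as the identity on $\Diff^n_d(E,E)$ rather than reapplying the recursive step, which is exactly the content of Proposition \ref{prop:truncation_well_defined}.\eqref{prop:truncation_well_defined:2}.
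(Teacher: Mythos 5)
Your proof is correct and follows essentially the same route as the paper: dispose of $k>n$ via Remark \ref{rmk:total_symbol_vanishing_for_high_k}, use the section property $\symb^n_{d,E,E}\circ q^n=\id$ for $k=n$, compute one step of the recursion to get $\trunc{q^n(\sigma)}^{n-1}_q=0$, and propagate downwards with Proposition \ref{prop:truncation_well_defined}.\eqref{prop:truncation_well_defined:5}. No gaps.
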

\begin{proof}
	Since $q^n(\sigma)\in \Diff^n_d (E,F)$, for all $k> n$, we have $[q^n(\sigma)]^k_q =0$ by Remark \ref{rmk:total_symbol_vanishing_for_high_k}.
	For $k=n$, we have by definition, $\trunc{q^n(\sigma)}^n_q=q^n(\sigma)$, and thus
	\begin{equation}
	[q^n(\sigma)]^{n}_q
	=\symb^n_{d,E,F}(q^n(\sigma))
	=\sigma,
	\end{equation}
	by definition of $n$-quantization.
	For $k=n-1$, we have
	\begin{equation}
	\trunc{q^n(\sigma)}^{n-1}_q
		= \trunc{q^n(\sigma)}^n_q -q^n\circ \symb^n_{d,E,F}(\trunc{q^n(\sigma)}^n_q)
		= q^n(\sigma) -q^n(\symb^n_{d,E,F}(q^n(\sigma)))
		= q^n(\sigma) -q^n(\sigma)
		=0.
	\end{equation}
	Proposition \ref{prop:truncation_well_defined}.\eqref{prop:truncation_well_defined:5}, yields $\trunc{q^n(\sigma)}^k_q=0$ for all $k\le n-1$.
	It follows that $[q^n(\sigma)]^k_q =0$ for all $k\le n-1$, which completes the proof.
\end{proof}
\begin{defi}[Total symbol]
\label{def:totalsymbol}
	Let $\Delta$ be a linear differential operator of finite order.
	We call the element
	\begin{equation}
\label{eq:totalsymbol}
		\symb_q(\Delta)\colonequals \sum_{k\in\N}[\Delta]^k_q \in \Symb^\bullet_d(E,F)
	\end{equation}
	the \emph{total symbol of $\Delta$ (with respect to the quantization $q$)}.
	
	We define the \emph{total symbol map (with respect to the quantization $q$)} to be the corresponding mapping 
	\begin{equation}
		\symb_q\colon \Diff_d(E,F)\longrightarrow \Symb^\bullet_d(E,F).
	\end{equation}
\end{defi}

\begin{prop}
\label{prop:isomorphism_quantization_total_symbol}
	A full quantization $q$ for $(E,F)$ has inverse $\symb_q$, realizing an $\AHom(F,F)$-linear isomorphism
	\begin{equation}
		\Diff_d(E,F)\simeq \Symb^\bullet_d(E,F).
	\end{equation}
	Consequently, a natural full quantization $q$ for $E$ realizes a natural isomorphism of functors $\AMod\to \Mod$
	\begin{equation}
		\Diff_d(E,-)\simeq \Symb^\bullet_d(E,-).
	\end{equation}
	Moreover, for $\Delta\in\Diff_d(E,F)$, we have
	\begin{equation}
	\label{eq:DO_decomposition}
		\Delta
		= \sum_{k\in\N} \Delta^{(k)}_q,
	\end{equation}
	where $\Delta^{(k)}_q\colonequals q^k([\Delta]^k_q)$.
\end{prop}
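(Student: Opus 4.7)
The plan is to prove that $q$ and $\symb_q$ are mutually inverse by verifying both compositions separately, with the decomposition formula \eqref{eq:DO_decomposition} emerging as the key identity for one of the two directions.

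First I would show $\symb_q \circ q = \id$. Given $\sigma = \sum_n \sigma^n \in \Symb^\bullet_d(E,F)$, by $\bk$-linearity of the truncation operation (Proposition \ref{prop:truncation_well_defined}.\eqref{prop:truncation_well_defined:5} and the definition) and of the symbol maps, one reduces the computation of $[q(\sigma)]^k_q$ to a sum of terms of the form $[q^n(\sigma^n)]^k_q$. Lemma \ref{lemma:total_symbols_of_pure_quantization} then collapses this to $\sigma^k$, giving $\symb_q(q(\sigma)) = \sum_k \sigma^k = \sigma$.

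Next I would establish the decomposition formula \eqref{eq:DO_decomposition} by a telescoping argument, which will simultaneously give $q \circ \symb_q = \id$. Fix $\Delta \in \Diff^n_d(E,F)$. Rearranging the recursive definition of truncation gives
\begin{equation}
\trunc{\Delta}^{k+1}_q - \trunc{\Delta}^k_q = q^{k+1}([\Delta]^{k+1}_q)
\end{equation}
for $0 \le k < n$. Summing this telescoping identity from $k=0$ to $k=n-1$, using $\trunc{\Delta}^n_q = \Delta$ (Proposition \ref{prop:truncation_well_defined}.\eqref{prop:truncation_well_defined:2}) and the identity $\trunc{\Delta}^0_q = q^0([\Delta]^0_q)$, which follows from Remark \ref{rmk:0quantizationisidentity} together with $\Diff^0_d = \Symb^0_d$, yields $\Delta = \sum_{k=0}^{n} q^k([\Delta]^k_q)$. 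By Remark \ref{rmk:total_symbol_vanishing_for_high_k}, all higher terms vanish, which extends the sum to all of $\N$. Applying $q$ to $\symb_q(\Delta) = \sum_k [\Delta]^k_q$ and using this identity gives $q(\symb_q(\Delta)) = \Delta$, which completes the isomorphism.

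Finally, I would verify the remaining structural claims. The $\AHom(F,F)$-linearity of $\symb_q$ follows from the linearity of each $q^k$ and each $\symb^k_{d,E,F}$, propagated through the recursive definition of truncation. The naturality statement, promoting the pointwise isomorphism to an isomorphism of functors $\AMod \to \Mod$, follows from naturality in $F$ of each $q^k$ and each $\symb^k_{d,E,-}$, which together ensure that the truncation construction commutes with morphisms in the codomain variable. I do not anticipate a main obstacle here: the telescoping step is essentially bookkeeping, and the key nontrivial input, namely that $q^n$ detects only the $n$-homogeneous component under iterated truncation, has been isolated in Lemma \ref{lemma:total_symbols_of_pure_quantization}. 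The mildly delicate point to handle carefully is to insist that sums over $\N$ are finite for each individual operator (by Remark \ref{rmk:total_symbol_vanishing_for_high_k}), so that no convergence issues arise.
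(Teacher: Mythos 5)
Your proposal is correct. The direction $\symb_q\circ q=\id$ is handled exactly as in the paper, via Lemma \ref{lemma:total_symbols_of_pure_quantization}. For the other direction you take a mildly different route: you rewrite the truncation recursion as $\trunc{\Delta}^{k+1}_q-\trunc{\Delta}^{k}_q=q^{k+1}([\Delta]^{k+1}_q)$ and telescope, which yields the decomposition \eqref{eq:DO_decomposition} directly and then gives $q\circ\symb_q=\id$ as an immediate consequence; the paper instead proves $q\circ\symb_q=\id$ by induction on the order of $\Delta$ (the same recursion, applied one step at a time, combined with the already-established $\symb_q\circ q=\id$) and only afterwards derives \eqref{eq:DO_decomposition} from the isomorphism. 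These are the same computation organized in opposite directions: your ordering makes \eqref{eq:DO_decomposition} independent of the first half of the proof and is arguably more transparent, at the cost of having to spell out the base identity $\trunc{\Delta}^0_q=q^0([\Delta]^0_q)$, which you justify correctly via Remark \ref{rmk:0quantizationisidentity}. Two small remarks: the $\bk$-linearity of the truncations $\trunc{\cdot}^k_q$, which you need in order to split $[q(\sigma)]^k_q$ into the terms $[q^n(\sigma^n)]^k_q$, follows from the recursive definition (each step subtracts a composite of linear maps) rather than from Proposition \ref{prop:truncation_well_defined}.\eqref{prop:truncation_well_defined:5}, so that citation should be adjusted; and your closing checks on $\AHom(F,F)$-linearity, naturality in the codomain, and finiteness of all sums are exactly the right supplementary points.
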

\begin{proof}
	We first prove that $\symb_q\circ q=\id_{\Symb^\bullet_d(E,F)}$.
	By the universal property of the coproduct, it is enough to show that for all $\sigma\in\Symb^n_d(E,F)$, we get $\symb_q\circ q(\sigma)=\sigma$, seen in $\Symb^\bullet_d (E,F)$ by extending it to $0$ in all components different from $n$.
	Lemma \ref{lemma:total_symbols_of_pure_quantization} yields the desired equality via the following computation
	\begin{equation}
	\symb_q \circ q(\sigma)
	=\symb_q\circ q^n(\sigma)
	=\sum_{k\in\N} [q^n(\sigma)]^k_q
	=\sigma.
	\end{equation}

	We now prove $q\circ\symb_q=\id_{\Diff_d(E,F)}$ elementwise on $\Delta\in\Diff_d(E,F)$.
	We proceed by induction on $n$, the order of $\Delta$.
	For the base case $n=0$, we have $\Delta\in\AHom(E,F)$.
	Remark \ref{rmk:total_symbol_vanishing_for_high_k} yields $\symb_q(\Delta)=[\Delta]^0_q=\Delta$.
	Thus, $q(\symb_d(\Delta))=q^0(\Delta)=\Delta$ by Remark \ref{rmk:0quantizationisidentity}.

	Now, assume the result holds for differential operators of order $n-1$, and let $\Delta$ be of order $n$.
	We have 
	\begin{equation}
		\trunc{\Delta}^{n-1}_q
		= \trunc{\Delta}^n_q - q^n([\Delta]^{n}_q)
		= \Delta - q^n(\symb^n_{d,E,F}(\Delta))
		= \Delta - q(\symb^n_{d,E,F}(\Delta)).
	\end{equation}
	Therefore, $\Delta=\trunc{\Delta}^{n-1}_q+q(\symb^n_{d,E,F}(\Delta))$, which by Proposition \ref{prop:truncation_well_defined}.\eqref{prop:truncation_well_defined:3} and the reverse implication yields
	\begin{equation}
	q\circ\symb_q(\Delta)
	=q\circ\symb_q(\trunc{\Delta}^{n-1}_q+q(\symb^n_{d,E,F}(\Delta)))
	=q\circ\symb_q(\trunc{\Delta}^{n-1}_q)+q(\symb_q(q(\symb^n_{d,E,F}(\Delta))))
	=\trunc{\Delta}^{n-1}_q+q(\symb^n_{d,E,F}(\Delta))
	=\Delta.
	\end{equation}
	Completing this portion of the proof.
	Finally, we prove \eqref{eq:DO_decomposition} via the following explicit computation
	\begin{equation}
		\Delta
		= q\circ \symb_q(\Delta)
		= \sum_{j\in\N} q^j\left(\sum_{k\in\N}[\Delta]^k_q \right)
		= \sum_{k\in\N} q^k[\Delta]^k_q.
	\end{equation}
\end{proof}
\begin{defi}
\label{defi:homogeneous_component_DO}
Given $\Delta\in\Diff_d(E,F)$, we term $\Delta^{(k)}_q\colonequals q^k([\Delta]^k_q)= q^k(\symb^k_{d,E,F}(\trunc{\Delta}^k_q))$ the \emph{$k$-homogeneous component} of $\Delta$ with respect to $q$.
\end{defi}
\begin{rmk}
As a classical example, to clarify the terminology, consider $\R^n$, with the canonical coordinates $x=(x_1,\dots,x_n)$.
It comes equipped with a canonical flat torsion-free affine connection for which the forms $dx_i$ are parallel.
This gives a quantization $q$.
Then, if $\Delta=\sum_j\sum_{i_1=1}^n\dots \sum_{i_j=1}^n a_{i_1,\dots,i_j}\partial_{x_{i_1}}\circ\dots\circ\partial_{x_{i_j}}$, we have $\Delta^{(k)}_q=\sum_{i_1=1}^n\dots \sum_{i_k=1}^n a_{i_1,\dots,i_k}\partial_{x_{i_1}}\circ\dots\circ\partial_{x_{i_k}}$.
\end{rmk}
\begin{rmk}
Using this notation, together with Proposition \ref{prop:isomorphism_quantization_total_symbol}, one can show that
\begin{equation}
\label{eq:truncation_decomposition}
	\trunc{\Delta}^m_q
	= \sum_{k=0}^m \Delta^{(k)}_q.
\end{equation}
\end{rmk}
\subsection{Star products}
\label{ss:starproduct}
In this section we show that, for a given $E$ in $\AMod$, a full quantization $q$ for $(E,E)$ induces a deformed algebra product on the symbol algebra $\Symb^\bullet_d(E,E)$.
Classically, the symbol algebra for differential operators on $\smooth{M}$ is identified with a dense subalgebra of $\smooth{T^\ast M}$.
Under this identification, this product is the star, or Moyal, product (cf.\ \cite{originalstarproduct}), as described in §\ref{s:introduction}.
Further, one can also equip the symbol algebra of differential operators on sections of a generic vector bundle with a deformed product, which can be shown to be related to the quantum mechanics of particles with inner structure on the base manifold, cf.\ \cite[Chapter~4]{Lychagin1999}.

\subsubsection{Polynomials with module coefficients}
\label{sss:Polynomials_with_module_coefficients}
Before discussing the star product, we will briefly recall some general algebraic facts that will be used in this section.
Recall that given $M$ in $\Mod$, one can use the extension of scalars given by the inclusion $\bk\hookrightarrow \bk[h]$ to produce a module
\begin{equation}
M[h]
\colonequals M\otimes \bk[h]
\cong
\bk[h]\otimes M.
\end{equation}
We have a split monomorphism mapping $m\in M\mapsto m\otimes 1\in M[h]$.
We can thus identify $m\otimes 1$ with $m$, interpreting an element $m\otimes p(h)$ as $m\cdot p(h)$, and thus viewing the elements in $M[h]$ as polynomials with coefficients in $M$ in a central formal variable $h$.
The adjunction of extension-restriction of scalars ensures the following universal property, cf.\ \cite[Theorem~8, p.~362]{dummit2004abstract}.
\begin{prop}[Universal property of the extension of scalars]
\label{prop:universal_prop_extension_of_scalars}
Given $M$ in $\Mod$, $N$ in ${}_{\bk[h]}\!\Mod$, and a $\bk$-linear map $\phi\colon M\to N$, there exists a unique $\bk[h]$-linear extension of $\phi$ to $M[h]$, i.e.\ a map $\hat{\phi}\colon M[h]\to N$ restricting to $\phi$ on $M$.
\end{prop}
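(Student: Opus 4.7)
The plan is to establish this proposition by the standard adjunction argument for extension of scalars, which in this concrete setting amounts to constructing $\hat\phi$ via the universal property of the tensor product $M \otimes \bk[h]$ and verifying its properties. Since $\bk[h]$ is commutative, no subtleties with left/right module structures arise; in particular $M[h] = M \otimes \bk[h]$ carries the canonical $\bk[h]$-module structure given by multiplication on the second factor, and $N$ is a $\bk[h]$-module by hypothesis.

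First, to establish existence, I would consider the map $M \times \bk[h] \to N$ defined by $(m, p(h)) \mapsto p(h)\cdot\phi(m)$, where the dot denotes the $\bk[h]$-action on $N$. This map is $\bk$-bilinear because $\phi$ is $\bk$-linear and the $\bk[h]$-action on $N$ is $\bk[h]$-bilinear, hence in particular $\bk$-bilinear in each slot. The universal property of the tensor product $M \otimes \bk[h]$ in $\Mod$ then yields a unique $\bk$-linear map $\hat\phi \colon M[h] \to N$ with $\hat\phi(m \otimes p(h)) = p(h)\cdot\phi(m)$. To verify $\bk[h]$-linearity, it suffices to check compatibility with the action of $h$ on elementary tensors: $\hat\phi(h\cdot(m \otimes p(h))) = \hat\phi(m \otimes h\,p(h)) = h\,p(h)\cdot\phi(m) = h\cdot\hat\phi(m \otimes p(h))$, and extend by $\bk$-linearity and additivity. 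Finally, $\hat\phi$ restricts to $\phi$ on $M$ under the identification $m \mapsto m \otimes 1$, since $\hat\phi(m\otimes 1) = 1\cdot\phi(m) = \phi(m)$.

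For uniqueness, any $\bk[h]$-linear map $\psi\colon M[h] \to N$ extending $\phi$ must satisfy, for each simple tensor, $\psi(m \otimes p(h)) = \psi(p(h)\cdot(m \otimes 1)) = p(h)\cdot\psi(m \otimes 1) = p(h)\cdot\phi(m) = \hat\phi(m \otimes p(h))$, and since elementary tensors generate $M[h]$ as a $\bk$-module, this forces $\psi = \hat\phi$.

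There is no real obstacle here; the statement is a direct instance of the extension-restriction of scalars adjunction for the inclusion $\bk \hookrightarrow \bk[h]$, and the proof is essentially bookkeeping on the tensor product universal property. The only mild point of care is to distinguish clearly between the $\bk[h]$-module structure on $M[h]$ coming from multiplication in the right factor and the $\bk[h]$-module structure on $N$ given by hypothesis, so that the verification of $\bk[h]$-linearity of $\hat\phi$ is unambiguous.
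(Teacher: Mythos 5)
Your proof is correct and matches the paper's intent: the paper simply invokes the extension--restriction of scalars adjunction (citing Dummit--Foote) rather than writing out a proof, and your argument via the universal property of $M\otimes\bk[h]$ is exactly the standard verification of that adjunction in this concrete case. Nothing is missing; your existence, $\bk[h]$-linearity, restriction, and uniqueness checks are all sound.
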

Recall that the extension of scalars a monoidal functor, as
\begin{equation}
(M\otimes M)\otimes \bk[h]
\cong M\otimes (M\otimes \bk[h])
\cong M\otimes M[h]
\cong M\otimes \bk[h] \otimes_{\bk[h]} M[h]
\cong M[h] \otimes_{\bk[h]} M[h].
\end{equation}
As a consequence of Proposition \ref{prop:universal_prop_extension_of_scalars}, given $N$ in ${}_{\bk[h]}\!\Mod$, a $\bk$-bilinear map $\varphi\colon M\otimes M\to N$ extends uniquely to a $\bk[h]$-bilinear map $\hat{\varphi}\colon M[h] \otimes_{\bk[h]} M[h]\to N$.
For example, given $E$ in $\AMod$, the composition of differential operators of finite order lifts to a $\bk[h]$-linear map
\begin{equation}
\circhat 
\colon \Diff_d(E,E)[h]\otimes_{\bk[h]} \Diff_d(E,E)[h]\longrightarrow \Diff_d(E,E)[h].
\end{equation}
Notice that the evaluation map $\ev_\hbar \colon \bk[h]\twoheadrightarrow \bk$ at $\hbar\in\bk$, is a $\bk[h]$-algebra epimorphism.
Therefore, by tensoring via $M$ in $\Mod$, it induces a $\bk[h]$-module epimorphism which we will denote with the same name $\ev_\hbar\colon M[h]\twoheadrightarrow M$, where $h$ acts on $M$ via multiplication by the scalar $\hbar$.
Recall that if $M=R$ is a unital associative $\bk$-algebra, the componentwise multiplication endows $R[h]\colonequals R\otimes \bk[h]$ with a $\bk$-algebra structure.
\subsubsection{Formal star product}
\label{sss:Formal_star_product}
From this point onwards, we will assume that we are given $E$ in $\AMod$ and a full quantization $q$ for $(E,E)$.
Before defining the star product on $\Symb^\bullet_d(E,E)$, we will define one on an algebra of formal polynomials in a central variable $h$.
\begin{defi}
\label{defi:formal_star_product}
We define the following map by $\bk$-linear extension via
\begin{align}
	\label{eq:formal_star_product}
	\star \colon \Symb^\bullet_d(E,E)\otimes \Symb^\bullet_d(E,E)\longrightarrow \Symb^\bullet_d(E,E)[h],
	&\hfill&
	a\star b\colonequals \sum_{k=0}^{n+m}h^k[q^n(a) \circ q^m(b)]^{n+m-k}_q,
\end{align}
where $a\in\Symb^n_d(E,E)$ and $b\in\Symb^m_d(E,E)$.
This map extends to an internal $\bk[h]$-bilinear operation $\starhat$ on $\Symb^\bullet_d(E,E)[h]$, cf.\ Proposition \ref{prop:universal_prop_extension_of_scalars}, which we term the \emph{formal star product} corresponding to $q$.

We call $(\Symb^\bullet_d(E,E)[h],\starhat )$ the \emph{formal $h$-deformed symbol algebra}.
\end{defi}

In order to study the properties of this operation, we also give the following map.
\begin{defi}
We define the following map, via the coproduct universal property, as
\begin{align}
\label{eq:definition__formal_deformed_quantization}
q_h
\colon \Symb^\bullet_d (E,E)\longrightarrow \Diff_d(E,E)[h],
&\hfill&
q_h
\colonequals \sum_{k\in\N} h^k q^k.
\end{align}
By Proposition \ref{prop:universal_prop_extension_of_scalars}, this map extends to a unique $\bk[h]$-linear map, which we call \emph{formal $h$-deformed quantization}
\begin{equation}
\hat{q}_h
\colon \Symb^\bullet_d (E,E)[h]\longrightarrow \Diff_d(E,E)[h].
\end{equation}
\end{defi}
\begin{lemma}
\label{lemma:properties_formal_quantization}
The formal $h$-deformed quantization $\hat{q}_h$ has the following properties:
\begin{enumerate}
\item\label{lemma:properties_formal_quantization:1} $\hat{q}_h$ is a monomorphism;
\item\label{lemma:properties_formal_quantization:2} $\hat{q}_h (a\starhat b)=\hat{q}_h(a)\circhat \hat{q}_h(b)$, or in other words, the following diagram of $\bk[h]$-linear maps commutes
\begin{equation}
\begin{tikzcd}
\Symb^\bullet_d (E,E)[h]\otimes_{\bk[h]} \Symb^\bullet_d (E,E)[h]\ar[d,"\hat{q}_h \otimes_{\bk[h]} \hat{q}_h"']\ar[r,"\starhat "]& \Symb^\bullet_d (E,E)[h]\ar[d,"\hat{q}_h"]\\
\Diff_d(E,E)[h]\otimes_{\bk[h]}\Diff_d(E,E)[h] \ar[r,"\circhat "]& \Diff_d(E,E)[h]
\end{tikzcd}
\end{equation}
\item\label{lemma:properties_formal_quantization:3} $\hat{q}_h (\id_E)=\id_E$.
\end{enumerate}
\end{lemma}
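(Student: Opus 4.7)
The plan is to dispatch the three statements in the order (iii), (i), (ii), with (ii) being the only one requiring real calculation. For (iii), $\id_E \in \AHom(E,E) = \Symb^0_d(E,E)$, so $\hat q_h(\id_E) = q_h(\id_E) = h^0 q^0(\id_E) = \id_E$ by Remark \ref{rmk:0quantizationisidentity}.

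For (i), I would use the decomposition $\Symb^\bullet_d(E,E) = \bigoplus_n \Symb^n_d(E,E)$ to write any element of $\Symb^\bullet_d(E,E)[h]$ uniquely as a finite sum $\sum_i (\sum_k \sigma_{i,k}) h^i$ with $\sigma_{i,k} \in \Symb^k_d(E,E)$. Applying $\hat q_h$ yields $\sum_{i,k} h^{i+k} q^k(\sigma_{i,k})$, whose coefficient of $h^n$ in $\Diff_d(E,E)[h]$ is $\sum_{k} q^k(\sigma_{n-k,k})$. If this vanishes for every $n$, then since the total quantization map $q = \sum_k q^k\colon \Symb^\bullet_d(E,E) \to \Diff_d(E,E)$ is an isomorphism by Proposition \ref{prop:isomorphism_quantization_total_symbol}, each $\sigma_{n-k,k}$ must vanish. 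Letting $n$ range over $\mathbb{N}$ forces every $\sigma_{i,k}=0$, giving injectivity.

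For (ii), by $\bk[h]$-bilinearity of both compositions and the universal property of the coproduct defining $\Symb^\bullet_d(E,E)$, it suffices to verify the identity on pure tensors $a \otimes b$ with $a\in \Symb^n_d(E,E)$ and $b\in \Symb^m_d(E,E)$. On the right-hand side,
\begin{equation}
\hat q_h(a) \circhat \hat q_h(b) = h^n q^n(a) \circhat h^m q^m(b) = h^{n+m}\, q^n(a)\circ q^m(b).
\end{equation}
On the left-hand side, the key observation is that $[q^n(a)\circ q^m(b)]^{n+m-k}_q$ lies in $\Symb^{n+m-k}_d(E,E)$, so by definition of $\hat q_h$,
\begin{equation}
\hat q_h(a\starhat b) = \sum_{k=0}^{n+m} h^k \cdot h^{n+m-k}\, q^{n+m-k}\!\left([q^n(a)\circ q^m(b)]^{n+m-k}_q\right) = h^{n+m}\sum_{j=0}^{n+m} q^j\!\left([q^n(a)\circ q^m(b)]^{j}_q\right),
\end{equation}
after reindexing $j = n+m-k$. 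Applying the decomposition \eqref{eq:DO_decomposition} from Proposition \ref{prop:isomorphism_quantization_total_symbol} to the differential operator $q^n(a)\circ q^m(b)$ (which has order at most $n+m$, so only terms with $j\le n+m$ survive), the inner sum collapses to $q^n(a)\circ q^m(b)$ itself, matching the right-hand side.

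The main conceptual point, rather than a technical obstacle, is that the definition of $\star$ in \eqref{eq:formal_star_product} is precisely rigged so that the $h^k$-weighting in $\hat q_h$ absorbs the discrepancy between the order of $q^n(a)\circ q^m(b)$ and the orders of its homogeneous components, causing the sum to telescope via \eqref{eq:DO_decomposition}; everything else is bookkeeping with the $\bk[h]$-linearity afforded by Proposition \ref{prop:universal_prop_extension_of_scalars}.
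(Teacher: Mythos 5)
Your proof is correct and follows essentially the same route as the paper: parts (ii) and (iii) are the paper's computations almost verbatim, reducing via the extension-of-scalars universal property to homogeneous $a,b$ and collapsing the sum through the decomposition \eqref{eq:DO_decomposition}. For injectivity you extract coefficients of $h^n$ and invoke the bijectivity of $q$ from Proposition \ref{prop:isomorphism_quantization_total_symbol}, whereas the paper composes with $\hat{\symb}_q$ and argues on bigraded components; this is only a cosmetic repackaging of the same ingredient.
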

\begin{proof}\
\begin{enumerate}
\item We consider the extension of scalars of the total symbol map, i.e.\ the following map
\begin{equation}
\hat{\symb}_q
\colon \Diff_d(E,E)[h]\longrightarrow \Symb^\bullet_d(E,E)[h],
\end{equation}
which is an isomorphism by Proposition \ref{prop:isomorphism_quantization_total_symbol} and the functoriality of the extension of scalars.

We will complete the proof by showing that the composition $\hat{\symb}_q\circ \hat{q}_h$ is a monomorphism.
This composition maps an element $h^i\sigma\in h^i\Symb^j_d(E,E)$ to
\begin{equation}
\hat{\symb}_q\circ \hat{q}_h(h^i\sigma)
=h^i\hat{\symb}_q(q_h(\sigma))
=h^i\sum_{k\in\N} h^k \symb_q(q^k(\sigma))
=h^{i+j} \symb_q(q^j(\sigma))
=h^{i+j} \sum_{k\in\N} [q^j(\sigma)]^k_q
=h^{i+j} \sigma.
\end{equation}
We can see the space $\Symb^\bullet_d(E,E)[h]$ as a bigraded $\bk$-module, and the map $\hat{\symb}_q\circ \hat{q}_h$ maps the component $h^i\Symb^j_d(E,E)[h]$ into the component $h^{i+j}\Symb^j_d(E,E)[h]$ monomorphically.
Since two distinct components are sent to distinct ones, it follows that $\hat{\symb}_q\circ \hat{q}_h$ is a mono, and thus so is $\hat{q}_h$.
\item For this formula we proceed by direct computation.
By Proposition \ref{prop:universal_prop_extension_of_scalars}, it is enough to prove it on elements $a\in\Symb^n_d(E,E)$ and $b\in\Symb^m_d(E,E)$.
We obtain
\begin{equation}
\label{eq:formal_quantization_preserves_multiplication:1}
\begin{split}
	\hat{q}_h(a\starhat b)
	&=\hat{q}_h(a\star b)\\
	&=\hat{q}_h\left( \sum_{k=0}^{n+m} h^k[q^n(a) \circ q^m(b)]^{n+m-k}_q \right)\\
	&=\sum_{k=0}^{n+m} h^k q_h\left([q^n(a) \circ q^m(b)]^{n+m-k}_q \right)\\
	&=\sum_{k=0}^{n+m} h^k h^{n+m-k} q^{n+m-k}\left([q^n(a) \circ q^m(b)]^{n+m-k}_q \right)\\
	&= h^{n+m}\sum_{k=0}^{n+m} q^k\left( [q^n(a) \circ q^m(b)]^k_q \right)\\
	&= h^{n+m}\sum_{k=0}^{n+m} \left(q^n(a) \circ q^m(b) \right)^{(k)}_q,
\end{split}
\end{equation}
where the last two equalities follow from a reparametrization, and from the definition of homogeneous component, respectively.
Similarly, for the other term we have
\begin{equation}
\label{eq:formal_quantization_preserves_multiplication:2}
\begin{split}
	\hat{q}_h(a)\circhat \hat{q}_h (b)
	&=q_h(a)\circhat q_h b)\\
	&=\left(\sum_{i\in \N} h^i q^i(a)\right)\circhat \left(\sum_{j\in \N} h^j q^j(b)\right)\\
	&=h^n q^n(a)\circhat h^m q^m(b)\\
	&=h^{n+m} q^n(a)\circ q^m(b).
\end{split}
\end{equation}
The expressions \eqref{eq:formal_quantization_preserves_multiplication:1} and \eqref{eq:formal_quantization_preserves_multiplication:2} are seen to be equal by applying \eqref{eq:DO_decomposition} to $q^n(a)\circ q^m(b)\in\Diff^{n+m}_d(E,E)$.
\item It follows from direct computation
\begin{equation}
q_h(\id_E)
=\sum_{k\in\N} h^k q^k(\id_E)
=q^0(\id_E)
=\id_E.
\end{equation}
\qedhere
\end{enumerate}
\end{proof}
\begin{prop}
\label{prop:formal_star_algebra}
The $\bk[h]$-module $\Symb^\bullet_d(E,E)[h]$ equipped with the star product $\starhat$ forms a unital associative filtered $\bk[h]$-algebra where the filtration is given by the partial sums
\begin{equation}
\bigoplus_{k=0}^n \Symb^k_d(E,E)[h].
\end{equation}
In particular, for all $a\in\Symb^n_d(E,E)$ and $b\in\Symb^m_d(E,E)$, we have
\begin{equation}
\label{eq:first_order_approximation_formal_star}
a\starhat b - a\cdot b
\in h \bigoplus_{k=0}^{m+n-1} \Symb^k_d(E,E)[h].
\end{equation}
Thus, $\starhat$ agrees with $\cdot$, the symbol multiplication (cf.\ \cite[\symbolsdefisymbolalgebra]{Symbol}), up to order $0$ in $h$, i.e.\ $a\starhat b=ab+O(h)$.
\end{prop}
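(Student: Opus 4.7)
The plan is to transfer the algebraic structure from $(\Diff_d(E,E)[h], \circhat)$ along the monomorphism $\hat{q}_h$ provided by Lemma \ref{lemma:properties_formal_quantization}. Since $\circ$ on $\Diff_d(E,E)$ is an associative unital composition with unit $\id_E$, its $\bk[h]$-bilinear extension $\circhat$ inherits associativity and unitality over $\bk[h]$. By Lemma \ref{lemma:properties_formal_quantization}.\eqref{lemma:properties_formal_quantization:2}--\eqref{lemma:properties_formal_quantization:3}, $\hat{q}_h$ is a unital multiplicative map, so for $a,b,c \in \Symb^\bullet_d(E,E)[h]$ one gets
\[
\hat{q}_h\bigl((a \starhat b) \starhat c\bigr) = \hat{q}_h(a) \circhat \hat{q}_h(b) \circhat \hat{q}_h(c) = \hat{q}_h\bigl(a \starhat (b \starhat c)\bigr),
\]
and $\hat{q}_h(\id_E \starhat a) = \hat{q}_h(a) = \hat{q}_h(a \starhat \id_E)$. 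Injectivity of $\hat{q}_h$, i.e., Lemma \ref{lemma:properties_formal_quantization}.\eqref{lemma:properties_formal_quantization:1}, then yields associativity of $\starhat$ and the fact that $\id_E$ is a unit for it.

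For the filtration claim I would unwind Definition \ref{defi:formal_star_product}: for homogeneous $a \in \Symb^n_d(E,E)$ and $b \in \Symb^m_d(E,E)$, each summand of $a \star b$ has the form $h^k [q^n(a) \circ q^m(b)]^{n+m-k}_q \in h^k \Symb^{n+m-k}_d(E,E)$. Hence $a \star b \in \bigoplus_{k=0}^{n+m} \Symb^k_d(E,E)[h]$, and extending $\bk[h]$-bilinearly to $\starhat$ yields the claimed filtration, since multiplying by powers of $h$ does not alter the symbol degrees appearing.

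For \eqref{eq:first_order_approximation_formal_star}, I would single out the $k=0$ summand in \eqref{eq:formal_star_product}. Because $q^n(a) \circ q^m(b)$ has order at most $n+m$, Proposition \ref{prop:truncation_well_defined}.\eqref{prop:truncation_well_defined:2} identifies $\trunc{q^n(a) \circ q^m(b)}^{n+m}_q$ with $q^n(a) \circ q^m(b)$ itself, so the $k=0$ term is simply $\symb^{n+m}_{d,E,E}(q^n(a) \circ q^m(b))$. By multiplicativity of symbols under composition, cf.\ \cite[\symbolspropsymbolcomposition]{Symbol}, together with the splitting identity $\symb^\bullet_{d,E,E} \circ q^\bullet = \id$, this equals $\symb^n_{d,E,E}(q^n(a)) \cdot \symb^m_{d,E,E}(q^m(b)) = a \cdot b$. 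The remaining summands each carry at least one factor of $h$ and have symbol degree at most $n+m-1$, giving the claimed containment and the statement $a \starhat b = ab + O(h)$. The only non-routine input — that $\hat{q}_h$ is an injective unital algebra homomorphism — has already been supplied by Lemma \ref{lemma:properties_formal_quantization}; consequently the present proposition amounts to bookkeeping of degrees together with a transfer of unitality and associativity across $\hat{q}_h$, with no serious obstacle remaining.
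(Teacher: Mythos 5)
Your proposal is correct and follows essentially the same route as the paper: associativity and unitality of $\starhat$ are transferred along the injective unital multiplicative map $\hat{q}_h$ from Lemma \ref{lemma:properties_formal_quantization}, and the filtration together with \eqref{eq:first_order_approximation_formal_star} is obtained by isolating the $k=0$ term in \eqref{eq:formal_star_product} and identifying it with $a\cdot b$ via the symbol multiplication and the splitting property of the quantization. The only cosmetic difference is that you invoke Proposition \ref{prop:truncation_well_defined} where the definition of truncation already suffices.
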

\begin{proof}
Associativity follows from the associativity of $\circhat$.
By Lemma \ref{lemma:properties_formal_quantization}.\eqref{lemma:properties_formal_quantization:2}, we have that for all $a\in\Symb^n_d(E,E)$, $b\in\Symb^m_d(E,E)$, and $c\in\Symb^l_d(E,E)$, we have
\begin{equation}
\label{eq:associativity}
\begin{split}
	\hat{q}_h((a\starhat b)\starhat c)
	&=\hat{q}_h((a\starhat b))\circhat \hat{q}_h(c)\\
	&=(\hat{q}_h(a)\circhat \hat{q}_h(b))\circhat \hat{q}_h(c)\\
	&=\hat{q}_h(a)\circhat (\hat{q}_h(b)\circhat \hat{q}_h(c))\\
	&=\hat{q}_h(a)\circhat \hat{q}_h(b\starhat c)\\
	&=\hat{q}_h(a\starhat (b\starhat c)).
\end{split}
\end{equation}
By Lemma \ref{lemma:properties_formal_quantization}.\eqref{lemma:properties_formal_quantization:1}, we conclude that $(a\starhat b)\starhat c=a\starhat (b\starhat c)$.
Full associativity of $\starhat$ follows by $\bk[k]$-bilinearity.

We now show that $\id_E\in\Symb^0_d(E,E)=\AHom(E,E)$ is the unit of $\starhat$.
First, notice that by Lemma \ref{lemma:properties_formal_quantization}, for all $a\in\Symb^n_d(E,E)$, we have
\begin{equation}
q_h(\id_E\starhat a)
=q_h(\id_E)\circhat q_h(a)
=\id_E\circ q^h(a)
=q^h(a).
\end{equation}
By Lemma \ref{lemma:properties_formal_quantization}.\eqref{lemma:properties_formal_quantization:1}, we deduce that $\id_E\starhat a=a$.
Analogously one proves also $a\starhat \id_E=a$, and the unitality on the whole of $\Symb^\bullet_d(E,E)[h]$ follows again by $\bk[h]$-bilinearity of $\starhat$.

In order to prove that $\starhat$ preserves the filtration it is enough to prove directly \eqref{eq:first_order_approximation_formal_star}.
This follows from the definition of $\starhat$, as we have
\begin{equation}
\begin{split}
	a\star b
	&= \sum_{k=0}^{n+m}h^k[q^n(a) \circ q^m(b)]^{n+m-k}_q \\
	&= [q^n(a) \circ q^m(b)]^{n+m}_q + \sum_{k=1}^{n+m}h^k[q^n(a) \circ q^m(b)]^{n+m-k}_q\\
	&= \symb^{n+m}_d(q^n(a) \circ q^m(b))+ h\left(\sum_{k=1}^{n+m}h^{k-1}[q^n(a) \circ q^m(b)]^{n+m-k}_q \right)\\
	&= a\cdot b+ h\left(\sum_{k=1}^{n+m}h^{k-1}[q^n(a) \circ q^m(b)]^{n+m-k}_q \right),
\end{split}
\end{equation}
where the last equality follows from the definition of symbol multiplication, cf.\ \cite[\symbolspropsymbolcomposition]{Symbol}, and the definition of quantization for $(E,E)$.
\end{proof}
\begin{rmk}
\label{rmk:formal_deformed_quantisation_is_morphism}
Lemma \ref{lemma:properties_formal_quantization} makes the map $\hat{q}_h\colon \left(\Symb^\bullet_d(E,E)[h],\starhat \right)\hookrightarrow \left(\Diff_d(E,E)[h],\circhat \right)$ into a unital monomorphism of $\bk[h]$-algebras.
This map is in general not epi (in particular not iso).
In fact, if we take a differential operator $\Delta$ of order exactly $1$, the element $\Delta=h^0 \Delta$ is not in the image of $q_h$.
\end{rmk}
\subsubsection{Parametrized star product}
\label{sss:Parametrized_star_product}
Consider a parameter $\hbar\in\bk$.
As mentioned in §\ref{sss:Polynomials_with_module_coefficients}, there exists a unique $\bk[h]$-linear epimorphism $\ev_{\hbar}\colon\Symb^\bullet_d(E,E)[h]\twoheadrightarrow \Symb^\bullet_d(E,E)$ mapping $h$ to $\hbar$.
This epimorphism induces an operation on $\Symb^\bullet_d(E,E)$ given as follows.
\begin{defi}\label{defi:parametrized_star_product}
The \emph{star product} $\star=\star_\hbar$ corresponding to $q$ for a parameter $\hbar$ is defined via the composition
\begin{equation}
	\label{eq:star_product}
	a \star b
	\colonequals \ev_{\hbar}(a \starhat b)
	= \sum_{k=0}^{n+m}\hbar^k[q^n(a) \circ q^m(b)]^{n+m-k}_q,
\end{equation}
where $a\in\Symb^n_d(E,E)$ and $b\in\Symb^m_d(E,E)$.
We call $(\Symb^\bullet_d(E,E),\star)$ the \emph{$\hbar$-deformed symbol algebra}.
\end{defi}
\begin{prop}
\label{prop:star_algebra}
The $\bk$-module $\Symb^\bullet_d(E,E)$ equipped with $\star$ as in \eqref{eq:star_product} forms a unital associative filtered $\bk$-algebra isomorphic to
\begin{equation}
\Symb^\bullet_d(E,E)[h]/(h-\hbar),
\end{equation}
where the filtration is inherited from the grading of $\Symb^\bullet_d(E,E)[h]$, hence given by the partial sums
\begin{equation}
\bigoplus_{k=0}^n \Symb^k_d(E,E).
\end{equation}
In particular, for all $a\in\Symb^n_d(E,E)$ and $b\in\Symb^m_d(E,E)$, we have
\begin{equation}
\label{eq:first_order_approximation_star}
a\star b -a\cdot b
\in\hbar \bigoplus_{k=0}^{m+n-1} \Symb^k_d(E,E).
\end{equation}
\end{prop}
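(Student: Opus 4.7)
The plan is to deduce the result from Proposition \ref{prop:formal_star_algebra} by specializing the formal parameter $h$ to $\hbar$. Observe that $\ev_\hbar\colon \bk[h]\twoheadrightarrow \bk$ is a $\bk$-algebra epimorphism with kernel the principal ideal $(h-\hbar)$; tensoring over $\bk$ with $\Symb^\bullet_d(E,E)$ yields the $\bk$-linear epimorphism $\ev_\hbar\colon \Symb^\bullet_d(E,E)[h]\twoheadrightarrow \Symb^\bullet_d(E,E)$, whose kernel is $(h-\hbar)\Symb^\bullet_d(E,E)[h]$. By Definition \ref{defi:parametrized_star_product}, we have $a\star b = \ev_\hbar(a\starhat b)$ for $a,b$ in the embedded constant subspace $\Symb^\bullet_d(E,E)\subseteq \Symb^\bullet_d(E,E)[h]$, and $\star$ is defined on general elements by $\bk$-bilinear extension.

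The key step is to show that $(h-\hbar)\Symb^\bullet_d(E,E)[h]$ is a two-sided ideal with respect to $\starhat$, so that the $\bk[h]$-algebra $(\Symb^\bullet_d(E,E)[h],\starhat)$ descends to a well-defined $\bk$-algebra structure on the quotient $\Symb^\bullet_d(E,E)[h]/(h-\hbar)$. This follows from the $\bk[h]$-bilinearity of $\starhat$: for $p,r\in \Symb^\bullet_d(E,E)[h]$, one has
\begin{equation*}
p\starhat\bigl((h-\hbar)r\bigr) = (h-\hbar)(p\starhat r),
\end{equation*}
and similarly on the other side, so the ideal is closed under $\starhat$-multiplication from both sides. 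The induced $\bk$-module isomorphism $\Symb^\bullet_d(E,E)[h]/(h-\hbar)\cong \Symb^\bullet_d(E,E)$ then transports the quotient algebra structure onto $\Symb^\bullet_d(E,E)$. To verify that the transported product coincides with $\star$, use the $\bk$-module direct-sum decomposition $\Symb^\bullet_d(E,E)[h]=\Symb^\bullet_d(E,E)\oplus (h-\hbar)\Symb^\bullet_d(E,E)[h]$, together with the fact that both the quotient product and $\star$ are $\bk$-bilinear and agree on pairs of constants by construction.

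With this identification in place, unitality, associativity, and preservation of the filtration all descend immediately from Proposition \ref{prop:formal_star_algebra}, since they are ring-theoretic properties preserved by quotients. In particular, $\id_E$ remains the unit, and the filtration $\bigoplus_{k=0}^n \Symb^k_d(E,E)[h]$ on the formal side is mapped onto $\bigoplus_{k=0}^n \Symb^k_d(E,E)$ by $\ev_\hbar$. The first-order approximation \eqref{eq:first_order_approximation_star} is obtained by applying $\ev_\hbar$ to \eqref{eq:first_order_approximation_formal_star} and noting that $a\cdot b\in \Symb^{n+m}_d(E,E)$ lies in the constant subspace, hence is fixed by $\ev_\hbar$. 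The only real subtlety is verifying that the quotient product genuinely agrees with the bilinear extension of \eqref{eq:star_product}, but this reduces to the tautological match on pairs of constants, making the proof essentially a direct transfer from the formal setting.
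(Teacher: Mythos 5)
Your proposal is correct and follows essentially the same route as the paper: identify $\ker(\ev_\hbar)$ with the ideal $(h-\hbar)$ in the formal $h$-deformed symbol algebra, transport the quotient algebra structure along the induced isomorphism onto $\Symb^\bullet_d(E,E)$ (where it agrees with $\star$ by construction on homogeneous elements and bilinearity), and obtain unitality, associativity, the filtration, and \eqref{eq:first_order_approximation_star} by applying $\ev_\hbar$ to the corresponding statements of Proposition \ref{prop:formal_star_algebra}. The only cosmetic difference is that the paper verifies $\ker(\ev_\hbar)\subseteq(h-\hbar)$ by an explicit telescoping factorization, whereas you invoke the standard description of the kernel after extension of scalars; both are fine, and your explicit remark that $(h-\hbar)\Symb^\bullet_d(E,E)[h]$ is a two-sided $\starhat$-ideal by $\bk[h]$-bilinearity makes the same point the paper leaves implicit.
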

\begin{proof}
The evaluation at $\hbar$ induces an isomorphism in ${}_{\bk[h]}\!\Mod$:
\begin{equation}
\Symb^\bullet_d(E,E)[h]/\ker(ev_\hbar)
\cong \Symb^\bullet_d(E,E).
\end{equation}
We show that $\ker(\ev_\hbar)$ is $(h-\hbar)$, i.e.\ the two-sided ideal generated by $h-\hbar$.
Since $\ev_\hbar (h-\hbar)=0$, we only have to prove that elements of the kernel are contained in this ideal.
Given an element $p(h)=\sum_{k\in\N}\sigma_k h^k\in\ker(\ev_\hbar)$, we have
\begin{equation}
p(h)
=p(h)-p(\hbar)
=\sum_{k\in\N}\sigma_k (h^k-\hbar^k)
=\sum_{k\in\N}\sigma_k (h^{k-1}+ h^{k-2}\hbar+\dots+\hbar^{k-1})\starhat (h-\hbar)
\in (h-\hbar).
\end{equation}
The isomorphism induces an algebra structure on $\Symb^\bullet_d(E,E)$, and the product $\starhat$ is mapped by the isomorphism (induced by $\ev_\hbar$) to the star product with parameter $\hbar$.
This turns $(\Symb^\bullet_d(E,E),\star)$ into an associative unital $\bk[h]$-algebra (hence $\bk$-algebra).
Applying $\ev_\hbar$ to \eqref{eq:first_order_approximation_formal_star}, yields \eqref{eq:first_order_approximation_star}, and hence the desired filtration.
\end{proof}
\begin{rmk}
\label{rmk:evaluation_algebra_homomorphism}
Proposition \ref{prop:star_algebra} shows that the evaluation map at the parameter $\hbar$ realizes a unital epimorphism of $\bk[h]$-algebras $\ev_\hbar\colon \left(\Symb^\bullet_d(E,E)[h], \starhat \right)\twoheadrightarrow \left(\Symb^\bullet_d(E,E),\star\right)$.
\end{rmk}
If we now consider the evaluation $\ev_\hbar\colon \Diff_d(E,E)[h]\twoheadrightarrow \Diff_d(E,E)$, we construct the following map.
\begin{defi}
We define the \emph{$\hbar$-deformed quantization} as the following composition:
\begin{align}
\label{eq:definition_deformed_quantization}
q_\hbar
\colonequals \ev_\hbar\circ q_h
\colon \Symb^\bullet_d (E,E)\longrightarrow \Diff_d(E,E),
&\hfill&
q_\hbar
\colonequals \sum_{k\in\N} \hbar^k q^k.
\end{align}
\end{defi}
\begin{rmk}
In some treatments of quantization, the deformed quantization is sometimes termed \emph{semiclassical quantization}, (cf.\ \cite[Chapter~4]{semiclassical} or \cite[Definition~5.43, p.~62]{Hintz}).
\end{rmk}
\begin{rmk}
\label{rmk:compatibility_quantizations_evaluations}
By definition, the following square of $\bk[h]$-linear (and hence $\bk$-linear) maps commutes.
\begin{equation}
\begin{tikzcd}
\Symb^\bullet_d (E,E)[h]\ar[r,"\hat{q}_h"]\ar[d,"\ev_{\hbar}"']& \Diff_d(E,E)[h]\ar[d,"\ev_{\hbar}"]\\
\Symb^\bullet_d (E,E)\ar[r,"q_{\hbar}"]& \Diff_d(E,E)
\end{tikzcd}
\end{equation}
\end{rmk}
\begin{prop}\
\label{prop:properties_deformed_quantization}
\begin{enumerate}
\item\label{prop:properties_deformed_quantization:1} The map $q_\hbar \colon (\Symb^\bullet_d(E,E),\star)\to (\Diff_d(E,E),\circ)$ is a filtered unital associative $\bk$-algebra morphism.
\item\label{prop:properties_deformed_quantization:2} The $0$-deformed quantization $q_0$ coincides with the projection to the $0$-grade
\begin{equation}
q_0\colon \Symb^\bullet_d(E,E) \longtwoheadrightarrow \AHom(E,E).
\end{equation}
\item\label{prop:properties_deformed_quantization:3} The $1$-deformed quantization is the quantization, i.e.\ $q_1=q$.
\end{enumerate}
\end{prop}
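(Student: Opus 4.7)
The plan is to deduce all three statements from the properties of the formal star algebra established in Lemma \ref{lemma:properties_formal_quantization} and Proposition \ref{prop:formal_star_algebra}, combined with the fact (Remark \ref{rmk:evaluation_algebra_homomorphism}) that the evaluation $\ev_\hbar$ descends $\starhat$ to $\star$ and $\circhat$ to $\circ$ as unital $\bk[h]$-algebra epimorphisms. The key mechanism is the compatibility square of Remark \ref{rmk:compatibility_quantizations_evaluations}, which encodes the identity $\ev_\hbar \circ \hat{q}_h = q_\hbar \circ \ev_\hbar$ and lets me translate properties of $\hat{q}_h$ into properties of $q_\hbar$.

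For the multiplicativity half of (i), given $a,b \in \Symb^\bullet_d(E,E)$ I would chain together three facts. First, since $\ev_\hbar\colon (\Diff_d(E,E)[h],\circhat) \to (\Diff_d(E,E),\circ)$ is a $\bk[h]$-algebra morphism,
\begin{equation}
q_\hbar(a) \circ q_\hbar(b) = \ev_\hbar\bigl(\hat{q}_h(a)\bigr) \circ \ev_\hbar\bigl(\hat{q}_h(b)\bigr) = \ev_\hbar\bigl(\hat{q}_h(a) \circhat \hat{q}_h(b)\bigr).
\end{equation}
Second, Lemma \ref{lemma:properties_formal_quantization}.\eqref{lemma:properties_formal_quantization:2} rewrites the argument as $\hat{q}_h(a \starhat b)$. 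Third, Remark \ref{rmk:compatibility_quantizations_evaluations} together with Definition \ref{defi:parametrized_star_product} gives
\begin{equation}
\ev_\hbar\bigl(\hat{q}_h(a \starhat b)\bigr) = q_\hbar\bigl(\ev_\hbar(a \starhat b)\bigr) = q_\hbar(a \star b),
\end{equation}
which closes the argument. Unitality then follows from Lemma \ref{lemma:properties_formal_quantization}.\eqref{lemma:properties_formal_quantization:3} via $q_\hbar(\id_E) = \ev_\hbar(\hat{q}_h(\id_E)) = \ev_\hbar(\id_E) = \id_E$. For filteredness, I will observe directly that $q^k$ sends $\Symb^k_d(E,E)$ into $\Diff^k_d(E,E) \subseteq \Diff^n_d(E,E)$ for every $k \le n$, so that $q_\hbar\bigl(\bigoplus_{k=0}^n \Symb^k_d(E,E)\bigr) \subseteq \Diff^n_d(E,E)$ term by term.

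Parts (ii) and (iii) are essentially tautological evaluations of the defining series $q_\hbar = \sum_{k\in\N} \hbar^k q^k$. At $\hbar = 0$, every summand with $k \ge 1$ vanishes and only $q^0$ survives, which by Remark \ref{rmk:0quantizationisidentity} is the identity on $\Symb^0_d(E,E) = \AHom(E,E)$; hence $q_0$ is exactly the projection onto the zero-graded component. At $\hbar = 1$, the series collapses to $\sum_{k\in\N} q^k = q$ by the definition of a full quantization. I do not anticipate any genuine obstacle in this proof: the whole argument is an exercise in bookkeeping between the formal algebra $\Symb^\bullet_d(E,E)[h]$ and its $\hbar$-specialization, and the only subtlety to watch is that the constant embedding $\iota\colon \Symb^\bullet_d(E,E) \hookrightarrow \Symb^\bullet_d(E,E)[h]$ is \emph{not} itself multiplicative for $\starhat$, so one must route the proof of multiplicativity through $\ev_\hbar$ rather than through $\iota$.
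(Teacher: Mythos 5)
Your proposal is correct and takes essentially the same route as the paper: the multiplicativity and unitality in (i) are deduced from Lemma~\ref{lemma:properties_formal_quantization} (equivalently Remark~\ref{rmk:formal_deformed_quantisation_is_morphism}) together with the fact that $\ev_\hbar$ is a unital algebra morphism (Remark~\ref{rmk:evaluation_algebra_homomorphism}), while (ii) and (iii) are the same direct evaluations of $q_\hbar=\sum_{k\in\N}\hbar^k q^k$ at $\hbar=0,1$. Your element-wise chaining through the compatibility square of Remark~\ref{rmk:compatibility_quantizations_evaluations} merely spells out the paper's one-line ``composition of algebra morphisms'' argument in detail, and your closing caveat that the constant embedding $\Symb^\bullet_d(E,E)\hookrightarrow\Symb^\bullet_d(E,E)[h]$ is not $\starhat$-multiplicative is exactly the right point to watch.
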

\begin{proof}
The point \eqref{prop:properties_deformed_quantization:1} follows because $q_{\hbar}$ is a composition of two $\bk$-algebra morphisms, cf.\ Remark \ref{rmk:formal_deformed_quantisation_is_morphism} and Remark \ref{rmk:evaluation_algebra_homomorphism}.
Points \eqref{prop:properties_deformed_quantization:2} and \eqref{prop:properties_deformed_quantization:3} follow from direct computation.
\end{proof}
\begin{prop}\
\label{prop:special_deformations}
\begin{enumerate}
\item\label{prop:special_deformations:1} The $0$-deformed symbol algebra is just the symbol algebra, i.e.\ $(\Symb^\bullet_d(E,E),\star_0) = (\Symb^\bullet_d(E,E),\cdot)$, where $\cdot$ is the symbol multiplication (cf.\ \cite[\symbolsdefisymbolalgebra]{Symbol}).
\item\label{prop:special_deformations:2} The $1$-deformed symbol algebra is isomorphic to the algebra of linear differential operators of finite order, i.e.\ $(\Symb^\bullet_d(E,E),\star_1)\cong (\Diff_d(E,E),\circ)$.
\end{enumerate}
\end{prop}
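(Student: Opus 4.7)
The plan is to derive both statements directly from the definition of $\star_\hbar$ in \eqref{eq:star_product} together with the results already established about $q_\hbar$, in particular Proposition \ref{prop:properties_deformed_quantization}.

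For \eqref{prop:special_deformations:1}, I would specialize the parameter to $\hbar=0$ in \eqref{eq:star_product}. Only the $k=0$ term survives, giving $a\star_0 b = [q^n(a)\circ q^m(b)]^{n+m}_q = \symb^{n+m}_d(q^n(a)\circ q^m(b))$ for $a\in\Symb^n_d(E,E)$ and $b\in\Symb^m_d(E,E)$. Since $q^n(a)$ and $q^m(b)$ are differential operators of order at most $n$ and $m$ respectively with symbols $a$ and $b$, the definition of symbol multiplication (cf.\ \cite[\symbolspropsymbolcomposition]{Symbol} and \cite[\symbolsdefisymbolalgebra]{Symbol}) gives precisely $\symb^{n+m}_d(q^n(a)\circ q^m(b)) = a\cdot b$. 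This equality extends to the full symbol algebra by $\bk$-bilinearity, yielding the claimed identity of algebras.

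For \eqref{prop:special_deformations:2}, I would invoke Proposition \ref{prop:properties_deformed_quantization}.\eqref{prop:properties_deformed_quantization:3} to identify $q_1 = q$, and Proposition \ref{prop:properties_deformed_quantization}.\eqref{prop:properties_deformed_quantization:1} to ensure that $q_1\colon (\Symb^\bullet_d(E,E),\star_1)\to(\Diff_d(E,E),\circ)$ is a unital $\bk$-algebra homomorphism. Proposition \ref{prop:isomorphism_quantization_total_symbol} then establishes that $q$ is a $\bk$-linear isomorphism with inverse $\symb_q$, so $q_1$ is an isomorphism of $\bk$-algebras.

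Neither step presents a real obstacle: both are essentially bookkeeping, combining the explicit definitions with results proved earlier in this section. The only subtle point is ensuring, in \eqref{prop:special_deformations:1}, that $\star_0$ is genuinely $\bk$-bilinear so that equality on homogeneous elements implies equality globally; this follows because $\star_0 = \mathrm{ev}_0 \circ \starhat$ is a composition of $\bk[h]$-linear maps (cf.\ Definition \ref{defi:parametrized_star_product}), and the filtered product structure on $(\Symb^\bullet_d(E,E),\star)$ from Proposition \ref{prop:star_algebra} confirms that setting $\hbar=0$ kills all terms of order strictly less than $n+m$, leaving only the symbol product.
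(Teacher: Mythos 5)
Your proposal is correct and follows essentially the same route as the paper: part (ii) is verbatim the paper's argument (identify $q_1=q$ via Proposition \ref{prop:properties_deformed_quantization}, then invoke Proposition \ref{prop:isomorphism_quantization_total_symbol} for invertibility), and part (i) amounts to the same computation the paper packages as \eqref{eq:first_order_approximation_star} in Proposition \ref{prop:star_algebra} and then evaluates at $\hbar=0$. Your unwinding of $[q^n(a)\circ q^m(b)]^{n+m}_q=\symb^{n+m}_d(q^n(a)\circ q^m(b))=a\cdot b$ is exactly the content of that cited step, so there is no real difference in substance.
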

\begin{proof}\
\begin{enumerate}
\item Proposition \ref{prop:star_algebra} yields \eqref{prop:special_deformations:1} by substituting $\hbar=0$ in \eqref{eq:first_order_approximation_star}.
\item We consider the map $q_1$.
By Proposition \ref{prop:properties_deformed_quantization}.\eqref{prop:properties_deformed_quantization:1}, $q_1$ is a unital homomorphism of $\bk$-algebras, and by Proposition \ref{prop:properties_deformed_quantization}.\eqref{prop:properties_deformed_quantization:3}, we know that $q_1=q$.
By Proposition \ref{prop:isomorphism_quantization_total_symbol}, we know this map is invertible, with inverse $\symb_q$, thus giving the desired isomorphism.\qedhere
\end{enumerate}
\end{proof}
\begin{rmk}
Let $\bk$ be a topological unital commutative ring, and let $\Symb^\bullet_d(E,E)$ be a topological $\bk$-module, then the map $\hbar\in\bk\mapsto a\star_{\hbar} b$ is polynomial with symbol coefficients, and thus obtained by ring operations of $\bk$ and module operations of $\Symb^\bullet_d(E,E)$, and as such it is continuous for all given $a,b\in\Symb^\bullet_d(E,E)$.
This means that $\lim\limits_{\hbar\to 0} a\star_\hbar b=a\star_0 b= a\cdot b$.
Moreover, we have $a\star b=a\cdot b + \mathcal{O}(\hbar)$ whenever $\bk$ and $\Symb^\bullet_d(E,E)$ have enough structure to yield a notion of $\mathcal{O}$.
\end{rmk}
\begin{rmk}
The parametrized star product of Definition \ref{defi:parametrized_star_product} can also be extended to the case where $\hbar$ is a central element of $(\Symb^{\bullet}_d(E,E),\cdot)$.
In that case, $\star$ also yields an associative algebra structure, however the algebra is no longer filtered, unless the degree of the symbol is $0$.
\end{rmk}
\subsubsection{Deformed total symbol}
\label{sss:Deformed total symbol}
We will now assume that the formal variable $h$ is invertible and we consider the space $\bk[h,h^{-1}]$ of finite Laurent series, i.e.\ polynomial expressions with coefficients in $\bk$ where the variable $h$ is allowed to appear with negative powers, or more formally the localization of $\bk[h]$ at the ideal $(h)$.

Given $M$ in $\Mod$, we can localize $M[h]$ at the ideal $(h)$, or equivalently extend the coefficients of $M$ to $\bk[h,h^{-1}]$, obtaining a module which we term $M[h,h^{-1}]$.
We interpret this operation as extending $M$ by a formal invertible central variable $h$.

By applying the localization functor by the ideal $(h)$ to the objects and morphisms (in ${}_{\bk[h]}\!\Mod$) of §\ref{sss:Formal_star_product}, we can obtain the following $\bk[h,h^{-1}]$-linear maps, which we will term and write as their $\bk[h]$-linear counterparts: the formal deformed quantization $\hat{q}_h \colonequals \sum_{k\in\N} h^k q^k
\colon \Symb^\bullet_d (E,E)[h,h^{-1}]\longrightarrow \Diff_d(E,E)[h,h^{-1}]$ and the formal star product
$\starhat \colon \Symb^\bullet_d(E,E)[h,h^{-1}]\otimes_{\bk[h,h^{-1}]} \Symb^\bullet_d(E,E)[h,h^{-1}]\longrightarrow \Symb^\bullet_d(E,E)[h,h^{-1}]$, such that
\begin{equation}
a\starhat b\colonequals \sum_{k=0}^{n+m}h^k[q^n(a) \circ q^m(b)]^{n+m-k}_q ,
\end{equation}
for $a\in\Symb^n_d(E,E)$ and $b\in\Symb^m_d(E,E)$.

Further, we can now define an inverse for the formal deformed quantization as follows.
\begin{defi}
We define the \emph{formal deformed total symbol} as the following map
\begin{align}\label{eq:formal_deformed_total_symbol}
\hat{\symb}_{q_h}\colon \Diff_d(E,E)[h,h^{-1}]\longrightarrow \Symb^\bullet_d(E,E)[h,h^{-1}],
&\hfill&
\hat{\symb}_{q_h}(\Delta)\colonequals \sum_{k\in\N}h^{-k}[\Delta]^k_q.
\end{align}
\end{defi}
\begin{prop}\
\label{prop:properties_formal_deformed_total_symbol}
\begin{enumerate}
\item\label{prop:properties_formal_deformed_total_symbol:1} $\hat{\symb}_{q_h}$ is the inverse of $\hat{q}_h$.
\item\label{prop:properties_formal_deformed_total_symbol:2} $a\starhat b= \hat{\symb}_{q_h}(q_h(a)\circhat q_h(b))$ for all $a,b\in\Symb^\bullet_d(E,E)[h,h^{-1}]$.
\end{enumerate}
\end{prop}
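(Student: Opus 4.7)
The plan is to reduce both items to elementwise verifications on bi-homogeneous elements (i.e., elements of the form $h^i\sigma$ with $\sigma\in\Symb^j_d(E,E)$ on the symbol side, and $h^i\Delta$ with $\Delta\in\Diff^n_d(E,E)$ on the operator side), and then extend by $\bk[h,h^{-1}]$-(bi)linearity. The key inputs are Lemma \ref{lemma:total_symbols_of_pure_quantization} (so that $[q^n(\sigma)]^k_q=\delta^{n,k}\sigma$), the decomposition \eqref{eq:DO_decomposition} from Proposition \ref{prop:isomorphism_quantization_total_symbol}, and Remark \ref{rmk:total_symbol_vanishing_for_high_k} (ensuring the defining sum of $\hat{\symb}_{q_h}$ is finite on any element of $\Diff_d(E,E)$). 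I treat $\hat{\symb}_{q_h}$ as the $\bk[h,h^{-1}]$-linear extension of \eqref{eq:formal_deformed_total_symbol} from $\Diff_d(E,E)$.

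For \eqref{prop:properties_formal_deformed_total_symbol:1}, I would verify the two composites separately. For $\hat{\symb}_{q_h}\circ\hat{q}_h=\id$ on a pure $h^i\sigma$ with $\sigma\in\Symb^j_d(E,E)$, only the $k=j$ term of $\hat{q}_h$ is nonzero, producing $h^{i+j}q^j(\sigma)$, and Lemma \ref{lemma:total_symbols_of_pure_quantization} then collapses $\hat{\symb}_{q_h}$ to the single surviving $k=j$ contribution $h^{i+j}\cdot h^{-j}\sigma=h^i\sigma$. For $\hat{q}_h\circ\hat{\symb}_{q_h}=\id$ on $\Delta\in\Diff_d(E,E)$, the $h^{-k}$ factors from $\hat{\symb}_{q_h}$ cancel the $h^k$ factors from $\hat{q}_h$, leaving $\sum_k q^k([\Delta]^k_q)=\sum_k\Delta^{(k)}_q=\Delta$ by \eqref{eq:DO_decomposition}. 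Both identities propagate to all of $\Symb^\bullet_d(E,E)[h,h^{-1}]$ and $\Diff_d(E,E)[h,h^{-1}]$ by $\bk[h,h^{-1}]$-linearity.

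For \eqref{prop:properties_formal_deformed_total_symbol:2}, $\bk[h,h^{-1}]$-bilinearity of $\starhat$ and $\circhat$, combined with $\bk[h,h^{-1}]$-linearity of $\hat{q}_h$ and (from part \eqref{prop:properties_formal_deformed_total_symbol:1}) of $\hat{\symb}_{q_h}$, reduces the identity to pure elements $a\in\Symb^n_d(E,E)$ and $b\in\Symb^m_d(E,E)$. For these, $q_h(a)\circhat q_h(b)=h^{n+m}\bigl(q^n(a)\circ q^m(b)\bigr)$. Applying $\hat{\symb}_{q_h}$, the terms with $k>n+m$ vanish by Remark \ref{rmk:total_symbol_vanishing_for_high_k} since $q^n(a)\circ q^m(b)\in\Diff^{n+m}_d(E,E)$, and the reindexing $k\mapsto n+m-k$ recovers exactly the defining formula \eqref{eq:formal_star_product} of $a\starhat b$.

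There is no real conceptual obstacle: the localization by $h$ is engineered precisely so that the $h^{-k}$ and $h^k$ factors cancel, upgrading Proposition \ref{prop:isomorphism_quantization_total_symbol} into a statement about mutual inverses in the $h$-localized setting. The only concern is bookkeeping, namely confirming that each nominally infinite sum terminates (automatic from order-finiteness of differential operators) and that the $\bk[h,h^{-1}]$-linear extensions are consistently defined; both are routine.
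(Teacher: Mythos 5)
Your proposal is correct and follows essentially the paper's own argument: part \eqref{prop:properties_formal_deformed_total_symbol:1} is checked on pure elements exactly as in the paper, using Lemma \ref{lemma:total_symbols_of_pure_quantization} for $\hat{\symb}_{q_h}\circ\hat{q}_h$ and the decomposition \eqref{eq:DO_decomposition} for $\hat{q}_h\circ\hat{\symb}_{q_h}$, then extending by $\bk[h,h^{-1}]$-linearity. The only difference is in part \eqref{prop:properties_formal_deformed_total_symbol:2}: the paper obtains it by applying part \eqref{prop:properties_formal_deformed_total_symbol:1} to the identity $\hat{q}_h(a\starhat b)=\hat{q}_h(a)\circhat\hat{q}_h(b)$ of Lemma \ref{lemma:properties_formal_quantization}.\eqref{lemma:properties_formal_quantization:2}, whereas you re-derive that identity by the direct computation $q_h(a)\circhat q_h(b)=h^{n+m}\bigl(q^n(a)\circ q^m(b)\bigr)$ followed by reindexing, which is the same calculation inlined rather than quoted.
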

\begin{proof}\
\begin{enumerate}
\item By direct computation on $\sigma\in\Symb^n_d(E,E)$, we obtain
\begin{equation}
\hat{\symb}_{q_h}\circ \hat{q}_h (\sigma)
=\hat{\symb}_{q_h}\left( \sum_{k\in \N} h^k q^k(\sigma)\right)
=h^n \hat{\symb}_{q_h}(q^n(\sigma))
=h^n \sum_{k\in\N}h^{-k}[(q^n(\sigma))]^k_q
=\sigma,
\end{equation}
where the last equality follows from Lemma \ref{lemma:total_symbols_of_pure_quantization}.
Similarly, for all $\Delta\in \Diff^n_d(E,E)$ we obtain
\begin{equation}
\hat{q}_h \circ \hat{\symb}_{q_h} (\Delta)
=\hat{q}_h \left(\sum_{k\in\N}h^{-k}[\Delta]^k_q\right)
=\sum_{k\in \N} h^{-k}\hat{q}_h ([\Delta]^k_q)
=\sum_{k\in \N} h^{-k}\sum_{j\in\N}h^j q^j([\Delta]^k_q)
=\sum_{k\in \N} q^k([\Delta]^k_q)
=\Delta
\end{equation}
where the last equality is \eqref{eq:DO_decomposition} from Proposition \ref{prop:isomorphism_quantization_total_symbol}.
\item It follows from \eqref{prop:properties_formal_deformed_total_symbol:1} by applying the map $\hat{\symb}_{q_h}$ to both terms of the equality $\hat{q}_h (a\starhat b)=\hat{q}_h(a)\circhat \hat{q}_h(b)$ for $a\in\Symb^n_d(E,E)$ and $b\in\Symb^m_d(E,E)$, cf.\ Lemma \ref{lemma:properties_formal_quantization}.\eqref{lemma:properties_formal_quantization:2}, and extending the result to all elements of $\Symb^\bullet_d(E,E)[h,h^{-1}]$.\qedhere
\end{enumerate}
\end{proof}

Given $\hbar\in \bk^{\times}$, i.e.\ a unit in $\bk$, we can uniquely extend the evaluation map to $\ev_{\hbar}\colon \bk[h,h^{-1}]\to \bk$.
More generally, tensoring it by $M$ in $\Mod$, we obtain a map $\ev_{\hbar}\colon M[h,h^{-1}]\to M$, which essentially fixes $M$ and maps $h$ to $\hbar$.
We can now construct the following map.
\begin{defi}
We define the \emph{deformed total symbol} as the following:
\begin{align}\label{eq:deformed_total_symbol}
\symb_{q_{\hbar}}\colon \Diff_d(E,E)\longrightarrow \Symb^\bullet_d(E,E),
&\hfill&
\symb_{q_{\hbar}}(\Delta)
\colonequals\ev_{\hbar}\circ \hat{\symb}_{q_h}(\Delta)
= \sum_{k\in\N}\hbar^{-k}[\Delta]^k_q.
\end{align}
\end{defi}
\begin{rmk}
In the classical setting, the map corresponding to \eqref{eq:deformed_total_symbol} is sometimes termed the \emph{Hamiltonian map} (cf.\ \cite[Section~4]{Lychagin1999}).
\end{rmk}
\begin{rmk}
\label{rmk:compatibility_quantizations_symbols_evaluations}
By definition, the following squares of $\bk[h,h^{-1}]$-linear (and hence $\bk$-linear) maps commute.
\begin{equation}
\label{diag:compatibility_quantizations_symbols_evaluations}
\begin{tikzcd}
\Symb^\bullet_d (E,E)[h,h^{-1}]\ar[r,"\hat{q}_h"]\ar[d,"\ev_{\hbar}"']& \Diff_d(E,E)[h,h^{-1}]\ar[d,"\ev_{\hbar}"]& \Diff_d(E,E)[h,h^{-1}]\ar[r,"\hat{\symb}_{\hat{q}_h}"]\ar[d,"\ev_{\hbar}"']& \Symb^\bullet_d (E,E)[h,h^{-1}]\ar[d,"\ev_{\hbar}"]\\
\Symb^\bullet_d (E,E)\ar[r,"q_{\hbar}"]& \Diff_d(E,E)&\Diff_d(E,E)\ar[r,"\symb_{q_{\hbar}}"]& \Symb^\bullet_d (E,E)
\end{tikzcd}
\end{equation}
\end{rmk}
We can prove the analogue of Proposition \ref{prop:properties_formal_deformed_total_symbol} in the following proposition.
\begin{prop}\
\label{prop:properties_deformed_total_symbol}
\begin{enumerate}
\item $\symb_{q_{\hbar}}$ is the inverse of $q_{\hbar}$.
\item $a \star b= \symb_{q_{\hbar}}(q_{\hbar}(a)\circ q_{\hbar}(b))$ for all $a,b\in\Symb^\bullet_d(E,E)$.
\end{enumerate}
\end{prop}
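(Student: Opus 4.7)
The plan is to deduce both assertions from their formal counterparts (Proposition \ref{prop:properties_formal_deformed_total_symbol}) by pushing everything through the evaluation map $\ev_\hbar$, using the two commuting squares of Remark \ref{rmk:compatibility_quantizations_symbols_evaluations}. Throughout we view $\Symb^\bullet_d(E,E)$ and $\Diff_d(E,E)$ as embedded in their Laurent extensions via $\sigma\mapsto \sigma\otimes 1$, on which $\ev_\hbar$ restricts to the identity.

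For (i), I would first check that $\symb_{q_\hbar}\circ q_\hbar\circ\ev_\hbar=\ev_\hbar$ on $\Symb^\bullet_d(E,E)[h,h^{-1}]$. Indeed, by the left square of \eqref{diag:compatibility_quantizations_symbols_evaluations}, $q_\hbar\circ\ev_\hbar=\ev_\hbar\circ\hat q_h$; by the right square, $\symb_{q_\hbar}\circ\ev_\hbar=\ev_\hbar\circ\hat\symb_{\hat q_h}$; and by Proposition \ref{prop:properties_formal_deformed_total_symbol}.\eqref{prop:properties_formal_deformed_total_symbol:1}, $\hat\symb_{\hat q_h}\circ\hat q_h=\id$. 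Since $\ev_\hbar$ is an epi, cancelling it on the right gives $\symb_{q_\hbar}\circ q_\hbar=\id_{\Symb^\bullet_d(E,E)}$. The other composition $q_\hbar\circ\symb_{q_\hbar}=\id_{\Diff_d(E,E)}$ is obtained in exactly the same way. Alternatively, one can mimic the direct calculation from the proof of Proposition \ref{prop:properties_formal_deformed_total_symbol}: for $\sigma\in\Symb^n_d(E,E)$ one has $q_\hbar(\sigma)=\hbar^n q^n(\sigma)$, hence
\begin{equation}
\symb_{q_\hbar}(q_\hbar(\sigma))
=\hbar^n\sum_{k\in\N}\hbar^{-k}[q^n(\sigma)]^k_q
=\hbar^n\cdot\hbar^{-n}\sigma
=\sigma,
\end{equation}
where the middle equality is Lemma \ref{lemma:total_symbols_of_pure_quantization}; the reverse composition follows analogously from \eqref{eq:DO_decomposition}.

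For (ii), the strategy is to apply $\ev_\hbar$ to the formal identity $a\starhat b=\hat\symb_{\hat q_h}(\hat q_h(a)\circhat \hat q_h(b))$ of Proposition \ref{prop:properties_formal_deformed_total_symbol}.\eqref{prop:properties_formal_deformed_total_symbol:2}, viewing $a,b\in\Symb^\bullet_d(E,E)$ as elements of the Laurent extension. By definition $\ev_\hbar(a\starhat b)=a\star b$, while successive application of the right square of \eqref{diag:compatibility_quantizations_symbols_evaluations}, the fact that $\ev_\hbar$ is a $\bk[h,h^{-1}]$-algebra homomorphism on $\Diff_d(E,E)[h,h^{-1}]$ (so it intertwines $\circhat$ with $\circ$), and the left square give
\begin{equation}
\ev_\hbar\bigl(\hat\symb_{\hat q_h}(\hat q_h(a)\circhat\hat q_h(b))\bigr)
=\symb_{q_\hbar}\bigl(\ev_\hbar(\hat q_h(a))\circ\ev_\hbar(\hat q_h(b))\bigr)
=\symb_{q_\hbar}(q_\hbar(a)\circ q_\hbar(b)),
\end{equation}
since $\ev_\hbar(a)=a$ and $\ev_\hbar(b)=b$.

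The only subtlety, and the step to handle most carefully, is the verification that $\ev_\hbar$ intertwines $\circhat$ on $\Diff_d(E,E)[h,h^{-1}]$ with $\circ$ on $\Diff_d(E,E)$; this is precisely the content of the fact that $\ev_\hbar$ is a unital $\bk$-algebra homomorphism on the polynomial extension (here crucially using that $\hbar\in\bk^\times$ so that the extension to $\bk[h,h^{-1}]$ is well defined), and it is this point that makes both the short diagrammatic argument in (i) and the passage from the formal to the parametrized star product in (ii) go through cleanly.
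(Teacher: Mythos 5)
Your proposal is correct and follows essentially the paper's own route: both parts are deduced from Proposition \ref{prop:properties_formal_deformed_total_symbol} by pushing through $\ev_\hbar$ via the commuting squares of Remark \ref{rmk:compatibility_quantizations_symbols_evaluations}, cancelling the epimorphism $\ev_\hbar$ for (i) and using its multiplicativity (Remark \ref{rmk:evaluation_algebra_homomorphism} and its counterpart for $\circhat$) for (ii). The alternative direct computation you sketch for (i) is also fine, but it adds nothing beyond the paper's argument.
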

\begin{proof}\
\begin{enumerate}
\item Consider the following diagram obtained by the composition of the squares in \eqref{diag:compatibility_quantizations_symbols_evaluations}
\begin{equation}
\begin{tikzcd}
\Symb^\bullet_d (E,E)[h,h^{-1}]\ar[rr,bend left=10pt,equals]\ar[r,"\hat{q}_h"']\ar[d,"\ev_{\hbar}"']& \Diff_d(E,E)[h,h^{-1}]\ar[r,"\hat{\symb}_{\hat{q}_h}"']\ar[d,"\ev_{\hbar}"]& \Symb^\bullet_d (E,E)[h,h^{-1}]\ar[d,"\ev_{\hbar}"]\\
\Symb^\bullet_d (E,E)\ar[r,"q_{\hbar}"]& \Diff_d(E,E)\ar[r,"\symb_{q_{\hbar}}"]& \Symb^\bullet_d (E,E)
\end{tikzcd}
\end{equation}
The two squares commute by Remark \ref{rmk:compatibility_quantizations_symbols_evaluations}, and the top triangle commutes by Proposition \ref{prop:properties_formal_deformed_total_symbol}.\eqref{prop:properties_formal_deformed_total_symbol:1}.
Since $\ev_{\hbar}$ is epi, we deduce $\symb_{q_{\hbar}}\circ q_{\hbar}=\id_{\Symb^\bullet_d(E,E)}$.
By considering the composition of the same squares in reverse order, we also prove that $q_{\hbar}\circ \symb_{q_{\hbar}}=\id_{\Diff_d(E,E)}$.
\item This point follows analogously via Remark \ref{rmk:compatibility_quantizations_symbols_evaluations} and Remark \ref{rmk:evaluation_algebra_homomorphism}, together with Proposition \ref{prop:properties_formal_deformed_total_symbol}.\eqref{prop:properties_formal_deformed_total_symbol:2}.\qedhere
\end{enumerate}
\end{proof}
\subsection{Quantization on the quaternions}
Consider the $\R$-algebra $\Hq$ of quaternions, equipped with the maximal exterior algebra generated by the $\{i,j\}$-terminal first order differential calculus, cf.\ \cite[\jetsdefiSterminal]{FMW}.
This has structure equation
\begin{equation}
\label{eq:quaternions_structure_equation}
	dk
	= -j di + i dj.
\end{equation}
The jet modules $J^n_d \mathbb{H}$ and the algebra $\DO_d\colonequals \Diff_d(\Hq,\Hq)$ were computed in \cite[\jetsssquaternions]{FMW}.
It was also shown that there exists a unique quantum metric, up to real scale, i.e.
\begin{equation}
\label{eq:quatmetric}
	g
	= di \otimes_\Hq dj - dj \otimes_\Hq di.
\end{equation}
See \cite[Chapters~1,~8]{BeggsMajid} for details about quantum metrics.

In order to demonstrate the quantization procedure, we will utilize a bimodule connection on $\Omega^1_d$.
A Levi-Civita connection for the metric \eqref{eq:quatmetric}, which in the noncommutative context need neither exist nor be unique, would be a natural choice.
\begin{prop}
\label{prop:quaternionlevicivita}
	There is a unique bimodule connection $\nabla$ on $\Omega^1_d$.
	We denote its corresponding generalized braiding by $\sigma$.
	The connection $\nabla$ is torsion free and metric, so it is a (in fact, the only) Levi-Civita connection for $g$.
	The curvature of this connection vanishes.
\end{prop}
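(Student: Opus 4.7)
The plan is to exhibit $\Omega^1_d$ explicitly as an $\Hq$-bimodule, classify the bimodule connections on it, and then verify the stated properties of the unique such connection. I would start by using the Leibniz rule applied to the basic relations $i^2 = j^2 = -1$, $ij = k$, $ji = -k$, $jk = i$, $ik = -j$, combined with the structure equation \eqref{eq:quaternions_structure_equation}, to read off the commutation relations between $di, dj$ and the quaternionic basis $\{i, j, k\}$. For instance, $di\cdot j = d(ij) - i\cdot dj = dk - i\, dj = -j\,di$; proceeding similarly one obtains a complete table of relations of the form $di\cdot q = \tau_q(q)\, di$ and $dj\cdot q = \tau_q'(q)\, dj$ for $q\in\{i,j,k\}$. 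These relations pin down the right action in terms of the left action, and one can check that $\Omega^1_d$ is a free left $\Hq$-module on $\{di, dj\}$; moreover, the elements $k\,di$ and $k\,dj$ turn out to be central in the bimodule, so $\Omega^1_d$ is a trivial bimodule over $\Hq$ in the parallelizable sense.

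Next, I would use this parallelization to classify bimodule connections. Because $e_1\colonequals k\,di$ and $e_2\colonequals k\,dj$ are central, a bimodule connection $\nabla$ is determined by its values $\nabla(e_1), \nabla(e_2)\in \Omega^1_d\otimes_\Hq\Omega^1_d$ together with its generalized braiding $\sigma$. Centrality of $e_\mu$ forces $\sigma$ to act as the identity on the central basis, and the two Leibniz rules (left and right) together with the bimodule relations (translated from $\{di, dj\}$ to $\{e_1, e_2\}$) then form a linear system whose solution is unique, establishing both existence and uniqueness. Equivalently, one can verify that the unique candidate is the trivialization connection $\nabla(e_\mu) = 0$ extended by Leibniz; uniqueness then amounts to showing any other bimodule connection differs by a bimodule tensor $T\colon \Omega^1_d\to \Omega^1_d\otimes_\Hq\Omega^1_d$ which must vanish by the rigidity of the bimodule structure.

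Finally, I would verify the three properties of $\nabla$. Torsion-freeness ($\wedge\circ\nabla = d$) reduces to checking $\wedge\nabla(e_\mu) = de_\mu = dk\wedge d\alpha_\mu$ for $\mu\in\{1,2\}$; using $dk = -j\,di + i\,dj$ and the relation $di\wedge dj = dj\wedge di$ (obtained by applying $d$ to $dk = -j\,di + i\,dj$), a direct computation shows both sides agree. Metricity with respect to $g$ in \eqref{eq:quatmetric} is checked in the central basis, where the usual bimodule metricity condition $(\nabla\otimes\id + (\sigma\otimes\id)(\id\otimes\nabla))g = 0$ collapses to $dg = 0$, true by construction. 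Flatness $R_\nabla = 0$ follows because $\nabla$ vanishes on the central basis, and the curvature is $\bk$-linear in the components.

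The main obstacle is the second step: carefully tracking all the bimodule relations and resolving them into a system whose unique solution gives the connection. A secondary subtlety lies in formulating metric-compatibility correctly in the bimodule-connection setting (involving $\sigma$), which must be handled consistently with the uniqueness argument; however, the extreme rigidity of the parallelization by central basis forms should make the system easy to solve explicitly and the checks in the final step direct.
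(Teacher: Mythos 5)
Your computation of the right-module relations is correct, and so is the observation that $k\,di$ and $k\,dj$ are central (it follows from $di\cdot i=-i\,di$, $di\cdot j=-j\,di$, $di\cdot k=k\,di$ and likewise for $dj$). The proof breaks down at the classification step, and it leads you to the wrong connection. The connection the proposition is about is the Grassmann connection for the frame $\{di,dj\}$, i.e.\ $\nabla(di)=\nabla(dj)=0$ with braiding $\sigma(\omega\otimes_\Hq\nu)=-\nu\otimes_\Hq\omega$ for $\omega,\nu\in\{di,dj\}$; it does \emph{not} annihilate your central basis, since $\nabla(k\,di)=dk\otimes_\Hq di\neq 0$. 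Conversely, your candidate with $\nabla(k\,di)=\nabla(k\,dj)=0$ is not torsion free: $T_\nabla(k\,di)=\wedge\nabla(k\,di)-d(k\,di)=-dk\wedge di$, and $dk\wedge di\neq 0$ because $dk\otimes_\Hq di=-j\,di\otimes_\Hq di+i\,dj\otimes_\Hq di$ does not lie in $S^2_d\Hq=\Hq g$ (its $di\otimes_\Hq di$-component is $-j$), while $\Omega^2_d\cong(\Omega^1_d\otimes_\Hq\Omega^1_d)/S^2_d\Hq$; in this maximal exterior algebra $di\wedge di\neq 0$ and only the relation $di\wedge dj=dj\wedge di$ holds. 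Your torsion check is internally inconsistent: you set $\nabla(e_\mu)=0$ and simultaneously ask $\wedge\nabla(e_\mu)$ to reproduce $de_\mu=dk\wedge d\alpha_\mu\neq 0$. So even granting your classification, the object you produce cannot satisfy the statement.

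The deeper gap is that neither claim driving your uniqueness argument is justified. Centrality of $e_\mu$ does not force $\sigma$ to act as the identity on the central basis: for the connection that actually satisfies the proposition one has $e_1\otimes_\Hq e_1=k\,di\otimes_\Hq k\,di=-di\otimes_\Hq di$, hence $\sigma(e_1\otimes_\Hq e_1)=-e_1\otimes_\Hq e_1$. And bimodule maps $T\colon\Omega^1_d\to\Omega^1_d\otimes_\Hq\Omega^1_d$ do not vanish by any rigidity of the bimodule: since $\{e_1,e_2\}$ is a central basis and each $d\mu\otimes_\Hq d\nu$ ($\mu,\nu\in\{i,j\}$) is central in $\Omega^1_d\otimes_\Hq\Omega^1_d$, such maps form a nonzero real vector space (send $e_\mu$ to any real combination of the $d\mu'\otimes_\Hq d\nu'$), so "differing by a bimodule tensor" does not give uniqueness. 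The actual content of the paper's proof sits exactly where your sketch waves: one parametrizes $\nabla$ on the frame $\{di,dj\}$ by quaternionic coefficients, writes the would-be braiding via $\sigma(\theta\otimes_\Hq dm)=dm\otimes_\Hq\theta+\nabla[\theta,m]-[\nabla\theta,m]$, and imposes well-definedness on relations such as $di\otimes_\Hq dk=j\,di\otimes_\Hq di-i\,di\otimes_\Hq dj$ coming from the structure equation \eqref{eq:quaternions_structure_equation}; it is these constraints, not the existence of a central frame, that pin down the coefficients, after which torsion-freeness, metricity and flatness are checked directly for $\nabla(di)=\nabla(dj)=0$. To repair your approach you would have to carry out that constraint analysis (or an equivalent one) rather than appeal to the trivialization of the central basis.
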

\begin{proof}
	We parametrize the set of connections on the parallelizable calculus $\Omega^1_d = {}_\Hq \langle di, dj \rangle$ as
	\begin{equation}
	\begin{split}
		&\nabla di = \alpha_{ii} di \otimes di + \alpha_{ij} di \otimes dj + \alpha_{ji} dj \otimes di + \alpha_{jj} dj \otimes dj,\\
		&\nabla dj = \beta_{ii} di \otimes di + \beta_{ij} di \otimes dj + \beta_{ji} dj \otimes di + \beta_{jj} dj \otimes dj.
		\end{split}
	\end{equation}
	The connection is a bimodule connection if and only if the generalized braiding, given, for $\theta, dm\in\Omega^1_d$, by
	\begin{equation}
		\sigma(\theta \otimes_\Hq dm)
		= dm \otimes_\Hq \theta + \nabla[\theta,m] -[\nabla \theta, m]
	\end{equation}
	is well-defined as a bilinear map, cf.\ \cite[p.~568]{BeggsMajid}.
	Using the structure equation \eqref{eq:quaternions_structure_equation}, we obtain
	\begin{equation}
		di \otimes_{\Hq} dk
		= j di \otimes_{\Hq} di - i di \otimes_{\Hq} dj.
	\end{equation}
	Applying $\sigma$ to both sides of this equality 	yields equations that the coefficients must satisfy.
	The unique solution is that all coefficients vanish, i.e.\ $\alpha_{ii} = \dots = \alpha_{jj} = \dots = \beta_{jj} = 0$.
	This means that $\nabla$ is the Grassmann connection for the frame $\{di, dj\}$.
	We deduce that the braiding has to be given by extending the following by $\Hq$-bilinearity
	\begin{align}
		\label{eq:quaternionbraiding}
		\sigma\colon \Omega^1_d \otimes_\Hq \Omega^1_d \longrightarrow \Omega^1_d \otimes_\Hq \Omega^1_d,
		&\hfill&
		\omega \otimes_\Hq \nu \longmapsto - \nu \otimes_\Hq \omega, \qquad \text{ for } \omega, \nu \in \{di, dj\}.
	\end{align}
	We compute the torsion of $\nabla$, cf.\ \cite[Definition~3.28, p.~224]{BeggsMajid},
	\begin{equation}
		T_\nabla(a di + b dj)
		= da \wedge di + \wedge(a \nabla di) -da \wedge di + db \wedge dj + \wedge(a \nabla dj) -db \wedge dj
		= 0.
	\end{equation}
	Finally, we compute the covariant derivative of $g$.
	\begin{equation}
		\nabla(g)
		= (\nabla \otimes \id) g + (\sigma \otimes \id) (\id \otimes \nabla) g
		= 0
	\end{equation}
	This vanishes since $g$ is a linear combination of tensor products of parallel forms.
	Hence, $\nabla$ is a Levi-Civita connection for $g$, and it is unique.
	Since the curvature is $\Hq$-linear, and the calculus is parallelizable, it is enough to compute it on the frame $\{di, dj\}$.
	We have $R_\nabla(di)=d_\nabla(\nabla(di))=d_\nabla(0)=0$, and similarly $R_\nabla(dj)=0$, whence $R_\nabla=0$.
\end{proof}
The bimodule connection $\nabla$ from Proposition \ref{prop:quaternionlevicivita}, together with its corresponding braiding $\sigma$, give rise to a full quantization as follows.
\begin{lemma}
\label{lemma:quantization_quaternions}
For all $n\in \N$, $J^n_d \Hq=\EJ^n_d \Hq$ and the $n$-jet sequence is exact.
Moreover, there exists a full quantization $q$ for $(\Hq,\Hq)$ induced by the bimodule connection $\nabla$ and
\begin{equation}
\Sym^{1,1} \colonequals \tfrac{1}{2}(\id_{\Omega^1_d\otimes_\Hq \Omega^1_d} + \sigma)\colon\Omega^1_d\otimes_\Hq \Omega^1_d\to \Omega^1_d\otimes_\Hq \Omega^1_d.
\end{equation}
The nonzero components of the quantization are as follows, where $\nabla^2(h) = -\Re(kh) g$:
\begin{align}
\label{eq:quantization_quaternions}
q^0=\id_{{}_{\Hq}\!\Hom(\Hq,\Hq)},
&\hfill&
q^1(\sigma^1)=r^1_{d,E,S^2_d E}(\sigma^1) \circ d,
&\hfill&
q^2(\sigma^2)=r^2_{d,E,S^2_d E}(\sigma^2) \circ \nabla^2.
\end{align}
\end{lemma}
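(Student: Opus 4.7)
The plan is to apply Corollary~\ref{cor:generalization_of_classical_quantization} with $A = E = \Hq$, using the Levi-Civita connection $\nabla$ from Proposition~\ref{prop:quaternionlevicivita} as the bimodule connection on $\Omega^1_d$, and the exterior derivative $d\colon\Hq \to \Omega^1_d$ as the left connection on $E$.

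First, I would verify the two jet-theoretic assertions $J^n_d \Hq = \EJ^n_d \Hq$ and the exactness of the $n$-jet sequence. These should follow from the explicit computation of jet modules on the quaternions carried out in \cite[\jetsssquaternions]{FMW}: the jet modules are generated as left $\Hq$-modules by prolongations of elements of $\Hq$ (the defining property of $\EJ^n_d$), and the explicit splittings displayed there exhibit exactness directly.

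Next, I would construct the family $\{\Sym^{1,n}\}_{n \in \N}$ of retractions of $\iota^{n+1}_{\wedge,\Hq}$ in ${}_\Hq\!\Mod_\Hq$ required by Corollary~\ref{cor:generalization_of_classical_quantization}. The map $\Sym^{1,0}$ can be taken to be the identity, since $\iota^1_{\wedge,\Hq}$ is itself the identity on $\Omega^1_d$. For $n = 1$, I would check directly that $\tfrac12(\id + \sigma)$ is an idempotent onto the symmetric component of $\Omega^1_d \otimes_\Hq \Omega^1_d$ that splits $\iota^2_{\wedge,\Hq}$, using the parallelism of the frame $\{di, dj\}$ and the explicit form \eqref{eq:quaternionbraiding} of $\sigma$. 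For $n \ge 2$, a degree count for the terminal maximal prolongation calculus (where $\Omega^m_d = 0$ for $m \ge 3$) should force $S^n_d \Hq = 0$, making the remaining retractions vacuous.

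With the hypotheses in place, Corollary~\ref{cor:generalization_of_classical_quantization} produces a natural full quantization $q$, and the explicit components \eqref{eq:quantization_quaternions} should fall out by specialising formula \eqref{eq:explicit_quantization_with_connections_on_Sn}. The $n = 0$ case reduces to Remark~\ref{rmk:0quantizationisidentity}; the $n = 1$ case gives $q^1(\sigma^1) = r^1_{d,E,S^1_d E}(\sigma^1) \circ d$ immediately since $\nabla^{S^0_d\Hq} = d$; and the $n = 2$ case demands identifying $\Sym^{1,1} \circ \nabla^{\Omega^1_d} \circ d$ with the map $\nabla^2(h) = -\Re(kh)\,g$. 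I expect this last identification to be the main obstacle: the verification requires expanding $dh$ for $h = h_0 + h_1 i + h_2 j + h_3 k$ using $dk = -j\,di + i\,dj$, applying the Grassmann connection $\nabla^{\Omega^1_d}$ (which annihilates the frame $\{di,dj\}$) via Leibniz, symmetrising with $\tfrac12(\id + \sigma)$, and collecting terms. The resulting expression should collapse, via the structure equation and the explicit form \eqref{eq:quatmetric} of the metric, to precisely $-\Re(kh)\,g$.
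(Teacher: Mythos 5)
Your proposal follows essentially the same route as the paper: apply Corollary \ref{cor:generalization_of_classical_quantization} with the Grassmann/Levi-Civita bimodule connection on $\Omega^1_d$, the exterior derivative $d$ as the left connection on $\Hq$, and $\Sym^{1,1}=\tfrac12(\id+\sigma)$ as the splitting of $\iota^2_\wedge$, then specialise \eqref{eq:explicit_quantization_with_connections_on_Sn} and compute $\nabla^2=\Sym^{1,1}\circ\nabla\circ d$ explicitly to get $-\Re(kh)\,g$. The jet-theoretic input is also the same in substance, though the paper derives $J^n_d\Hq=\EJ^n_d\Hq$ and exactness from general results (maximality of the exterior algebra, vanishing Spencer $\delta$-cohomology via \cite[\jetscorspencerdeltajes]{FMW}) rather than from the explicit splittings you invoke; both ultimately rest on \cite[\jetsssquaternions]{FMW}.

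The one point to repair is your justification that the retractions $\Sym^{1,n}$ for $n\ge 2$ are vacuous. You argue that ``$\Omega^m_d=0$ for $m\ge 3$'' forces $S^n_d\Hq=0$ by a degree count, but that implication is false in general: classically on a surface one has $\Omega^m=0$ for $m\ge 3$ while $S^n\neq 0$ for all $n$, and nothing in the Spencer $\delta$-sequence lets you conclude vanishing of $S^n_d$ from vanishing of high exterior degrees. The correct reason, in this example, is that the jets stabilise: $J^n_d\Hq=J^2_d\Hq$ for $n\ge 3$ (equivalently, the symbol algebra satisfies $p_i\cdot p_i=p_j\cdot p_j=0$, so symmetric forms of degree $\ge 3$ vanish), which together with exactness of the $n$-jet sequences gives $S^n_d\Hq=0$ for $n\ge 3$; this is exactly what the computation in \cite[\jetsssquaternions]{FMW} provides. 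With that substitution, and the check (as in the paper) that $\Sym^{1,1}\circ\iota^2_{\wedge,\Hq}$ is the identity on the generator $g$ of $S^2_d\Hq$, your argument goes through, and the final identification $h_k=-\Re(kh)$, $\nabla(dk)=-dj\otimes_\Hq di+di\otimes_\Hq dj=g$, $\Sym^{1,1}(g)=g$ yields the stated formula for $q^2$.
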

\begin{proof}
We always have $J^1_d \Hq=\EJ^1_d \Hq$ and $J^0_d \Hq=\EJ^0_d \Hq$, cf.\ \cite[\symbolspropelementaljetpropertiesone]{Symbol}.
In this example we chose the exterior algebra to be the maximal one, and thus $J^2_d \Hq=\EJ^2_d \Hq$, cf.\ \cite[\symbolspropelementaljetpropertiestwo]{Symbol}.
Moreover, this exterior algebra has vanishing Spencer $\delta$-cohomology.
Hence, the $n$-jet sequence at $\Hq$ is exact, cf.\ \cite[\jetscorspencerdeltajes]{FMW} and thus, for $n\ge 3$, we have $J^n_d \Hq=J^2_d \Hq=\EJ^2_d \Hq=\EJ^n_d\Hq$, cf.\ \cite[\jetsssquaternions]{FMW}.

We will produce a quantization using Corollary \ref{cor:generalization_of_classical_quantization} using $\Delta$ as a bimodule connection on $\Omega^1_d$, $\Sym^{1,1}$ as a retraction for $\iota^{n+1}_{\wedge}$, and $d\colon \Hq\to\Omega^1_d$ as connection on $\Hq$.
We are left to prove that $\Sym^{1,1}$ s a retraction for $\iota^2_{\wedge}$.
In fact, we have
\begin{equation}
\Sym^{1,1}\circ \iota^{n+1}_{\wedge}
=\tfrac{1}{2}(\id_{\Omega^1_d\otimes_\Hq \Omega^1_d} + \sigma)\circ \iota^{n+1}_{\wedge}
=\tfrac{1}{2}(\iota^{n+1}_{\wedge} + \sigma\circ \iota^{n+1}_{\wedge}).
\end{equation}
One shows this formula is the identity by checking it on an $\Hq$-generator for $S^2_d$, i.e.\ $g$, cf.\ \cite[\jetsssquaternions]{FMW}.
By \eqref{eq:explicit_quantization_with_connections_on_Sn}, the $n$-quantization is given by
\begin{equation}
q^n(\sigma^n)
=r^n_{d,E,S^n_d E}(\sigma^n)\circ \Sym^{1,n-1} \circ \nabla^{S^{n-1}_dE} \circ\Sym^{1,n-2} \circ \nabla^{S^{n-2}_dE} \circ\dots\circ \Sym^{1,1} \circ \nabla^{S^1_dE} \circ \nabla^E.
\end{equation}
The explicit formula \eqref{eq:quantization_quaternions} for $q^n$ for $n\neq 2$ is straightforward.
For $n=2$, we have
\begin{equation}
q^2(\sigma^2)
=r^2_{d,E,S^2_d E}(\sigma^2)\circ \Sym^{1,1} \circ \nabla^{\Omega^1_d} \circ d.
\end{equation}
We obtain \eqref{eq:quantization_quaternions} if we define $\nabla^2\colonequals \Sym^{1,1} \circ \nabla \circ d \colon \Hq\to \Omega^1_d$.
We compute the latter on a generic element $h=h_1+h_i i+h_j j+h_k k \in\Hq$.
\begin{equation}
\begin{split}
\nabla^2(h_1+h_i i+h_j j+h_k k)
&=\Sym^{1,1} (\nabla(h_i di+h_j dj+h_k dk))\\
&=h_k \Sym^{1,1} (\nabla(dk))\\
&=-\Re(kh) \Sym^{1,1} (\nabla( -j di + i dj))\\
&=-\Re(kh) \Sym^{1,1} (-dj\otimes_\Hq di + di\otimes_\Hq dj)\\
&=-\Re(kh) \Sym^{1,1} (g)\\
&=-\Re(kh) g.
\end{split}
\end{equation}
\end{proof}
It was shown in \cite[\jetsrmkLksecondorderDO]{FMW} that the left multiplication operator $L_k$ is a linear differential operator of order $2$.
By Proposition \ref{eq:DO_decomposition}, using the quantization $q$ of Lemma \ref{lemma:quantization_quaternions}, we can decompose $L_k$ as
\begin{equation}
	L_k
	= (L_k)^{(2)}_q + (L_k)^{(1)}_q + (L_k)^{(0)}_q.
\end{equation}
We compute the homogeneous components in the following proposition that shows that the highest homogeneous component is related to the Laplacian $\Delta$.
\begin{prop}
\label{prop:homogeneous_component_Lk}
	Given the hypotheses of Lemma \ref{lemma:quantization_quaternions}, the non-zero homogeneous components of $L_k$ are
	\begin{align}
	(L_k)^{(2)}_q = 2(\partial_i\circ \partial_j - \partial_j\circ \partial_i)=2\Delta,
	&\hfill&
		(L_k)^{(1)}_q = 2(R_j\circ \partial_i - R_i\circ \partial_j),
		&\hfill&
		(L_k)^{(0)}_q = R_k,
	\end{align}
	where $R_h(p)=ph$.
\end{prop}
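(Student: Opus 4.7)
The plan is to apply the iterative truncation procedure of Definition \ref{def:DOtruncation}, in the form of the decomposition \eqref{eq:DO_decomposition} from Proposition \ref{prop:isomorphism_quantization_total_symbol}, to the second-order operator $L_k$, using the explicit quantization maps $q^0$, $q^1$, $q^2$ recorded in \eqref{eq:quantization_quaternions}. Since $L_k$ has order at most $2$, only the components $(L_k)^{(0)}_q$, $(L_k)^{(1)}_q$, $(L_k)^{(2)}_q$ can be nonzero by Remark \ref{rmk:total_symbol_vanishing_for_high_k}.

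First, I would identify $\symb^2(L_k)\in\Symb^2_d(\Hq,\Hq)\cong {}_\Hq\!\Hom(S^2_d\Hq,\Hq)$. Since $S^2_d\Hq$ is generated as a left $\Hq$-module by the metric $g$ via the computations of \cite[\jetsssquaternions]{FMW}, this symbol is pinned down by the single value $\symb^2(L_k)(g)\in\Hq$. A direct computation of the lift of $L_k$ to $J^2_d\Hq$ and restriction along $\iota^2_{d,\Hq}$ produces this value. Combining it with $q^2(\sigma^2)(h)=r^2_{d,\Hq,S^2_d\Hq}(\sigma^2)(\nabla^2(h))=r^2_{d,\Hq,S^2_d\Hq}(\sigma^2)(-\Re(kh)g)$, and identifying the resulting $\R$-linear expression with the quaternionic Laplacian from \cite[\jetseqquaternionLaplacian]{FMW}, yields $(L_k)^{(2)}_q=2\Delta=2(\partial_i\circ\partial_j-\partial_j\circ\partial_i)$.

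Next, the first truncation $\trunc{L_k}^1_q\colonequals L_k-(L_k)^{(2)}_q$ is a differential operator of order at most $1$ by Proposition \ref{prop:truncation_well_defined}.\eqref{prop:truncation_well_defined:3}. Its symbol $\sigma^1\colonequals\symb^1(\trunc{L_k}^1_q)$ is a left $\Hq$-linear map $\Omega^1_d\to\Hq$, hence determined by the two values $\sigma^1(di),\sigma^1(dj)\in\Hq$. I would compute these by evaluating $\trunc{L_k}^1_q$ on the basis $\{1,i,j,k\}$ of $\Hq$, using the structure equation \eqref{eq:quaternions_structure_equation} together with the bimodule commutation relations in $\Omega^1_d$ that follow from it via Leibniz. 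Since $q^1(\sigma^1)(h)=\sigma^1(dh)=\partial_i(h)\,\sigma^1(di)+\partial_j(h)\,\sigma^1(dj)$, matching against the claim forces $\sigma^1(di)=2j$ and $\sigma^1(dj)=-2i$, reproducing $(L_k)^{(1)}_q=2(R_j\circ\partial_i-R_i\circ\partial_j)$.

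Finally, $\trunc{L_k}^0_q=L_k-(L_k)^{(2)}_q-(L_k)^{(1)}_q$ is $\Hq$-linear of order $0$ by Proposition \ref{prop:truncation_well_defined}.\eqref{prop:truncation_well_defined:3}, and hence determined by its value at $1\in\Hq$. Because $\partial_i$ and $\partial_j$ vanish on $\R$-constants, both higher components annihilate $1$, so $\trunc{L_k}^0_q(1)=L_k(1)=k=R_k(1)$, and left $\Hq$-linearity forces $(L_k)^{(0)}_q=R_k$. The main obstacle will be the explicit identification of $\symb^2(L_k)$ and the verification that $r^2_{d,\Hq,S^2_d\Hq}\circ\nabla^2$ applied to it reproduces $2\Delta$ with the correct signs: this requires unwrapping the symmetrization $\Sym^{1,1}$ inside $\nabla^2=\Sym^{1,1}\circ\nabla^{\Omega^1_d}\circ d$ and tracking the noncommutative bimodule action of $\Hq$ on $\Omega^1_d$ through the structure equation. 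The analogous, but less delicate, bookkeeping for the bimodule action is what determines $\sigma^1(di),\sigma^1(dj)$ in the first-order step.
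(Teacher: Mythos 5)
Your proposal follows essentially the same route as the paper: decompose $L_k$ via \eqref{eq:DO_decomposition}, compute $(L_k)^{(2)}_q$ through $\nabla^2(h)=-\Re(kh)g$ and the expansion of $\iota^2_{d,\Hq}(g)$ in prolongations, then evaluate the successive truncations (your frame computation $\sigma^1(di)=2j$, $\sigma^1(dj)=-2i$ is just the paper's evaluation of $\widetilde{\trunc{L_k}^1_q}\circ\iota^1_{d,\Hq}(dh)$ specialized to the basis, and the order-$0$ step via evaluation at $1$ is identical). The plan and the stated values are correct.
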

\begin{proof}
	We compute $(L_k)^{(2)}_q =q^2(\symb^2_d(L_k))=r^2_{d,E,S^2_d E}(\symb^2_d(L_k)) \circ \nabla^2$.
	We evaluate it on a generic quaternion $h=h_1+h_i i+h_j j+h_k k \in\Hq$.
	\begin{equation}
		(L_k)^{(2)}_q(h)
		=r^2_{d,E,S^2_d E}(\symb^2_d(L_k)) \circ \nabla^2(h)
		=r^2_{d,E,S^2_d E}(\symb^2_d(L_k)) (h_k g)
		=\widetilde{L}_k\circ \iota^2_{d,\Hq} (h_k g).
	\end{equation}
	One can verify the following equality by expanding in $J^{(2)}_d\Hq$.
	\begin{equation}
		\iota^2_d(g)
		= j\cdot j^2_d(i) - i\cdot j^2_d(j) + j^2_d(k) + k\cdot j^2_d(1)
	\end{equation}
	Thus we have
	\begin{equation}
	\begin{split}
		(L_k)^{(2)}_q(h)
		&=h_k\widetilde{L}_k(j\cdot j^2_d(i) - i\cdot j^2_d(j) + j^2_d(k) + k\cdot j^2_d(1))\\
		&=h_k\left(j\cdot \widetilde{L}_k(j^2_d(i)) - i\cdot \widetilde{L}_k(j^2_d(j)) + \widetilde{L}_k(j^2_d(k)) + k\cdot \widetilde{L}_k(j^2_d(1))\right)\\
		&=h_k\left(jki - ikj + k^2 + k^2\right)\\
		&=-4h_k
	\end{split}
	\end{equation}
	This differential operator can be recognized as $-2 (\partial_i\circ \partial_j - \partial_j\circ \partial_i) = 2[\partial_j, \partial_i]$, cf.\ \cite[\jetseqquaternionpartialderivatives]{FMW}, and hence $(L_k)^{(2)}_q =2\Delta$, cf.\ \cite[\jetseqquaternionLaplacian]{FMW}.
	Next, we compute the $1$-truncation of $L_k$ via the definition as $\trunc{L_k}^1_q = L_k - q^2([L_k]^2_q )$.
	The quantization of its symbol can be evaluated as $(L_k)^{(1)}_q(h) = \widetilde{\trunc{L_k}^1_q} \circ\iota^1_{d,\Hq}(dh)$.
	We convert the differential to a combination of prolongations as $\iota^1_{d,\Hq}(dh) = j^1_d(h)-h\cdot j^1_d(1)$, cf.\ \cite[\jetsrmkproldop]{FMW}.
	We now compute
	\begin{equation}
		(L_k)^{(1)}_q(h)
		= \widetilde{\trunc{L_k}^1_q} (j^1_d(h)-h\cdot j^1_d(1))
		= \trunc{L_k}^1_q(h)-h\trunc{L_k}^1_q(1)
		= (L_k-2\Delta)(h)-h(L_k-2\Delta)(1)
	\end{equation}
	As we noticed before, $2\Delta(h)=-4h_k$, thus giving
	\begin{equation}
		(L_k)^{(1)}_q(h)
		= kh+4h_k-hk+0
		= [k,h]+4h_k
		=0+2h_i j -2 h_j i +4 h_k
	\end{equation}
	This operator coincides with $2(R_j\circ \partial_i - R_i\circ \partial_j)$.
	Finally, the $0$-homogeneous component of $L_k$, which coincides with the $0$-truncation $(L_k)^{(0)}_q=\trunc{L_k}^0_q=L^1_k - q^1([L_k]^1_q )$, can be computed by taking its value at $1 \in \Hq$.
	We obtain $\trunc{L_k}^0_q(1) = k +0 = R_k(1)$, and since the order $0$ term is left $\Hq$-linear, the operators coincide on all $\Hq$.
\end{proof}
Finally, let us describe the star product on $\Symb^\bullet_d (\Hq, \Hq)$.
We will write it in terms of a set of generators, letting $x_i = \symb^0_d(R_i)=R_i$ and $x_j=\symb^0_d(R_j)=R_j$, and $p_i = \symb^1_d(\partial_i)$ and $p_j = \symb^1_d(\partial_j)$, play the r\^oles of generalized positions and momenta.
\begin{prop}
	The star product on $\Symb^\bullet_d(\Hq, \Hq)$ defined by the quantization from Lemma \ref{lemma:quantization_quaternions} is given by
	\begin{align}
		\label{eq:quanturmions}
			&x_a \star x_b = x_a \cdot x_b,
		&\hfill&
			&p_a \star x_b = -x_b \cdot p_a + \hbar\delta_{a,b},
		&\hfill&
			&x_a \star p_b = x_a \cdot p_b,
		&\hfill&
			&p_a \star p_b = p_a \cdot p_b,
	\end{align}
	where $a,b\in \{i,j\}$, and $\delta_{a,b}$ is the Kronecker delta.
	In particular, for all values of $\hbar$ we have that $x_i$ and $x_j$ generate a subalgebra isomorphic to $\Hq^{op}$, and $p_i\star p_j = -p_j\star p_i$, $p_i\star p_i = p_j\star p_j = 0$.
	In other words, the subalgebra generated by $x_i$ and $x_j$ and the one generated by $p_i$ and $p_j$ are independent of $\hbar$.
	The original symbol algebra structure is recovered for $\hbar=0$.
\end{prop}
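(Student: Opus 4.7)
The plan is to compute each of the four star products directly from Definition \ref{defi:parametrized_star_product}, which writes $a \star b$ as a finite sum involving the quantized compositions $q^n(a) \circ q^m(b)$ and their homogeneous components. All the necessary inputs are available: the explicit quantization formulas \eqref{eq:quantization_quaternions} from Lemma \ref{lemma:quantization_quaternions}, the description of $J^n_d \Hq$ and the partial derivatives $\partial_i, \partial_j$ from \cite[\jetsssquaternions]{FMW}, and the fact that $S^2_d \Hq$ is one-dimensional, spanned by $g = di \otimes_\Hq dj - dj \otimes_\Hq di$.

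The case $x_a \star x_b$ is immediate: both factors have order $0$, so only the $k=0$ term in \eqref{eq:star_product} survives, giving $x_a \star x_b = \symb^0_d(R_a \circ R_b) = R_{ba} = x_a \cdot x_b$. The identification $q \mapsto R_q$ is an antihomomorphism of $\Hq$ with composition, so the subalgebra generated by $x_i, x_j$ is isomorphic to $\Hq^{\mathrm{op}}$ independently of $\hbar$. The case $x_a \star p_b$ is similarly quick: since $R_a$ is $\Hq$-linear of order $0$, the composition $R_a \circ \partial_b$ is of order exactly $1$ with vanishing order-$0$ truncation, so $x_a \star p_b = [R_a \circ \partial_b]^1_q = x_a \cdot p_b$. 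The case $p_a \star x_b$ is the interesting one: expanding $\partial_a(hb)$ via the Leibniz rule $d(hb) = (dh)b + h(db)$, together with the bimodule relations in $\Omega^1_d$ coming from the $\{i,j\}$-terminal calculus, gives a commutation relation of the form $\partial_a \circ R_b = \Phi_{a,b} + \delta_{a,b}\,\id_\Hq$, where $\Phi_{a,b}$ is a first-order operator. Reading off the leading and constant terms from this expansion, and matching them with the $\hbar^0$ and $\hbar^1$ contributions in \eqref{eq:star_product}, will yield $p_a \star x_b = -x_b \cdot p_a + \hbar \delta_{a,b}$. Finally, for $p_a \star p_b$, the composition $\partial_a \circ \partial_b$ has order at most $2$ with leading symbol $p_a \cdot p_b$; the plan is to verify that $[\partial_a \circ \partial_b]^k_q = 0$ for $k \in \{0,1\}$ by direct computation on the basis $\{1, i, j, k\}$ of $\Hq$ and comparison with the explicit second-order quantization $q^2(\cdot) = r^2(\cdot) \circ \nabla^2$ from Lemma \ref{lemma:quantization_quaternions} (using $\nabla^2(h) = -\Re(kh)\,g$). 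The claimed identities $p_i \star p_j = -p_j \star p_i$ and $p_a \star p_a = 0$ then follow from $p_a \star p_b = p_a \cdot p_b$ together with the antisymmetry of $g$. The recovery of $\cdot$ at $\hbar = 0$ is Proposition \ref{prop:special_deformations}.\eqref{prop:special_deformations:1}.

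The main obstacle will be the bookkeeping of signs and bimodule relations in two places: first, in unpacking $\partial_a \circ R_b$ through the specific bimodule structure of $\Omega^1_d$ for the $\{i,j\}$-terminal calculus, where the interaction between the noncommutativity of $\Hq$ and the defining relations of the calculus must be tracked carefully to obtain the correct sign in front of $x_b \cdot p_a$; and second, in confirming that no order-$0$ or order-$1$ corrections survive in the decomposition of $\partial_a \circ \partial_b$ through the second-order quantization. Both are essentially direct but error-prone computations, where the braiding $\sigma$ from Proposition \ref{prop:quaternionlevicivita} and the structure equation $dk = -j\,di + i\,dj$ will have to be used repeatedly.
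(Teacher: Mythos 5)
Your plan is essentially the paper's own proof: the paper simply observes that the generators are $q$-homogeneous, i.e.\ $R_a=(R_a)^{(0)}_q$ and $\partial_a=(\partial_a)^{(1)}_q$, and then obtains \eqref{eq:quanturmions} by exactly the kind of explicit computation of the truncations/homogeneous components of $R_a\circ R_b$, $R_a\circ\partial_b$, $\partial_a\circ R_b$, $\partial_a\circ\partial_b$ that you outline, with the final statements coming from Proposition \ref{prop:special_deformations}.\eqref{prop:special_deformations:1}. Your sign analysis is sound (the calculus relations force $\partial_a\circ R_b=-R_b\circ\partial_a+\delta_{a,b}\id_\Hq$, whence the $-x_b\cdot p_a+\hbar\delta_{a,b}$ term), so the proposal is correct and follows the same route.
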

\begin{proof}
It follows from an explicit computation, once we decompose the generators via \eqref{eq:DO_decomposition}.
\begin{align}
R_i=(R_i)^{(0)}_q,
&\hfill&
R_j=(R_j)^{(0)}_q,
&\hfill&
\partial_i=(\partial_i)^{(1)}_q,
&\hfill&
\partial_j=(\partial_j)^{(1)}_q.
\end{align}
From these equalities we obtain \eqref{eq:quanturmions}.
This, in turn shows that $x_i$, $x_j$, $p_i$, and $p_j$ are also generators for the algebra $(\Symb^\bullet_d(\Hq,\Hq),\star)$.
The final statements can be recovered from \eqref{eq:quanturmions} and Proposition \ref{prop:special_deformations}.\eqref{prop:special_deformations:1}.
\end{proof}
Notice that \eqref{eq:quanturmions} yield relations that can be interpreted as the quaternionic analogue of the canonical commutation relations.
\bibliography{../Bibliography}
\bibliographystyle{alpha}
\end{document}